\documentclass[11pt,a4paper]{article}
\usepackage{graphicx} 
\usepackage{adjustbox}
\usepackage[margin=1.175in]{geometry}
\usepackage{amsmath}
\usepackage{amsthm}
\usepackage{thmtools}
\usepackage{thm-restate}
\usepackage{amssymb}
\usepackage{mathtools}
\usepackage{amsfonts}
\usepackage{booktabs}
\usepackage{pgfplots}
\pgfplotsset{compat=1.18}
\usepackage{enumitem}
\usepackage{comment}
\usepackage[
    colorlinks=true,
    linkcolor=blue,
    citecolor=blue,
    urlcolor=blue,
    hypertexnames=false
]{hyperref}
\usepackage[backend=biber,style=alphabetic, isbn=false,sorting=nyt]{biblatex}
\renewbibmacro{in:}{}
\AtEveryBibitem{%
  \clearfield{primaryClass}%
  \clearfield{EDITOR}
   \clearfield{eid}
   \clearfield{DOI}
}
\usepackage{xfrac}
\usepackage{bbold}
\usepackage{tikz} 
\usepackage{tikz-cd}
\usepackage{zref-clever}
\usepackage{quiver}
\zcsetup{cap,nameinlink}
\newcommand{\cref}{\zcref}

\usetikzlibrary{ 
	calc,%
	arrows,%
	shapes,
	shapes.geometric,
    positioning,
    fit,
    decorations.markings
} 
\usepackage{tikz-3dplot}
\usetikzlibrary{backgrounds, calc, math, patterns, arrows.meta}
\tikzcdset{
diagrams={>=stealth}}
 \tikzset{
             mid arrow/.style={
    decoration={
      markings,
      mark=at position 0.55 with {\arrow{>}}
    },
    postaction={decorate}
            },
    vertex/.style = {circle, fill, inner sep=2pt
            },
    HookedArrow/.style={right hook-latex
    }
            }

\usepackage[T1]{fontenc}
\usepackage{lmodern}
\usepackage{microtype}
\newcommand{\closedball}[1][r]{\overline{B}_{#1}}
\newcommand{\catRad}{\rho^{\kappa}_{M}}
\newcommand{\circCent}{c}

\newcommand{\cat}[1][C]{\textnormal{\textbf{#1}}}
\newcommand{\Top}{\cat[Top]}
\newcommand{\sSet}{\cat[sSet]}
\newcommand{\sCplx}{\cat[sCplx]}
\newcommand{\Set}{\cat[Set]}

\newcommand{\ho}{\textnormal{ho}}

\newcommand{\sd}{\textnormal{sd}}

\newcommand{\op}{\textnormal{op}}

\newcommand{\iSpaces}{\mathcal{S} \cat[pc]}

\newcommand{\met}{M}
\newcommand{\metapprx}{\mathbb{M}}
\newcommand{\fmet}[1][M]{(#1, f)}
\newcommand{\mbull}[1][\bullet]{M^{#1}}

\newcommand{\mapprxbull}[1][\bullet]{\mathbb{M}^{#1}}
\newcommand{\fmetapprx}[1][]{(\mathbb{M}_{#1}, \mathbb{f}_{#1})}
\newcommand{\fapprx}[1][f]{\mathbb{#1}}
\newcommand{\Rips}[1][\delta]{\mathcal{R}^{#1}}

\newcommand{\MRips}[1][\delta]{\mathcal{R}_m^{#1}}

\newcommand{\pos}{\mathbb{P}}
\newcommand{\inPos}{\mathbb{U}}
\newcommand{\flow}{+}
\newcommand{\supp}{\textnormal{supp}}

\newcommand{\persTop}[1][{\cat[I]}]{\Top^{#1}}

\newcommand{\abs}[1]{|#1|}
\newcommand{\real}[1]{|#1|}

\newcommand{\catK}[1][\kappa]{\textnormal{CAT}(#1)}
\newcommand{\cabK}[1][\kappa]{\textnormal{CBA}(#1)}

\newcommand{\R}{\mathbb{R}}
\renewcommand{\RN}{\mathbb{R}^N}
\newcommand{\RNPlus}{\mathbb{R}_{\geq 0}^N}
\newcommand{\rhoPos}{[0,\rho]}
\newcommand{\prodPos}{\mathbb{R}_{\geq 0} \times \inPos }
\newcommand{\prodPosR}{\mathbb{R}_{\geq 0} \times \RN }
\newcommand{\prodPosrho}{[0,\rho] \times \inPos}
\newcommand{\prodPosRhoRN}{ [0,\rho] \times \RN }
\newcommand{\conShr}{S}
\newcommand{\conShrM}[1][\rho]{\mathcal{J}_{M}^{#1}}
\newcommand{\conShrK}[1][\rho]{\mathcal{J}_{\kappa}^{#1}}
\newcommand{\conShrN}{\mathcal{J}_{n}}
\newcommand{\conShrTheta}{S_{\Theta}}
\newcommand{\conCosTheta}{C_{\Theta}}
\newcommand{\rhoTheta}{\rho_{\Theta}}
\newcommand{\conShrOne}[1][\rho]{\mathcal{J}_{1}^{#1}}

\newcommand{\lipConstRho}[1][\rho]{\mathcal{L}^{#1}_M}

\newcommand{\conCos}{C}
\newcommand{\sNerve}{N_s}
\newcommand{\sSetK}{\sSet_{\textnormal{Kan}}}
\newcommand{\diam}{\textnormal{diam}}
\newcommand{\circRad}[1][A]{\mathfrak{R}}
\makeatletter
\newcommand{\simeqdotted}{\mathrel{\mathpalette\simeqdotted@{}}}
\newcommand{\simeqdotted@}[2]{
  \setbox\z@=\hbox{$\m@th#1\sim$}
  \vcenter{\offinterlineskip
    \halign{##\cr
      \copy\z@\cr
      \noalign{\kern.15ex}
      \hbox to \wd\z@{\xleaders\hbox{\vrule height .06ex depth 0pt width .6ex\kern .3ex}\hfill}\cr
    }
  }
}
\makeatother

\newcommand{\define}[1]{\textit{#1}}

\newcommand{\dIntHo}{d_{\textnormal{intHo}}}
\newcommand{\MetRN}{\cat[Met]_{N}}
\newcommand{\dCor}[1][\alpha]{d_{\textnormal{Cor},#1}}
\newcommand{\vecepsilon}{\vec{\varepsilon}}
\newcommand{\epsmet}{\varepsilon_{\textnormal{met}}}
\newcommand{\epsfun}{\varepsilon_{\textnormal{fun}}}
\newcommand{\Lipf}[1][f]{L_{#1}}
\newcommand{\flexLipConstM}[1][\rho]{\mathcal{K}^{#1}_{M,L,K}}
\newcommand{\flexLipConstkappa}[1][\rho]{\mathcal{K}^{#1}_{\kappa,L,K}}
\newcommand{\eucLipConst}{\mathcal{L}_{n}}
\newcommand{\projError}[1][\rho]{\mathcal{E}^{#1}_{\tau_M}}
\newlength{\blockheadsep}
\usepackage{etoolbox} 

\makeatletter
\renewcommand\section{%
  \@startsection{section}{1}%
    {0pt}
    {-3.5ex \@plus -1ex \@minus -.2ex}
    {2.3ex \@plus .2ex}
    {\normalfont\large\bfseries}
}

\renewcommand\subsection{%
  \@startsection{subsection}{2}%
    {0pt}
    {0.9\baselineskip}
    {-0.6em}
    {\normalfont\bfseries}
}

\renewcommand{\@seccntformat}[1]{%
  \ifstrequal{#1}{subsection}%
    {\csname the#1\endcsname\hspace{\blockheadsep}}%
    {\csname the#1\endcsname\hspace{\blockheadsep}}%
}
\let\memoir@subsection\subsection
\renewcommand{\subsection}[2][]{%
  \ifstrempty{#1}%
    {\memoir@subsection[#2]{#2\@addpunct{.}}}
    {\memoir@subsection[#1]{#2\@addpunct{.}}}
}

\makeatother

\newtheoremstyle{blockyThm}%
  {0.9\baselineskip}
  {0.9\baselineskip}
  {\itshape}
  {0pt}
  {\bfseries}
  {.}
  {\blockheadsep}
  {\thmname{#1}\thmnumber{\ #2}\thmnote{ \normalfont(#3)}}

\theoremstyle{blockyThm}
\newtheorem{theorem}{Theorem}[section]
\newtheorem*{theorem*}{Theorem}
\newtheorem{lemma}[theorem]{Lemma}
\AddToHook{env/lemma/begin}{\zcsetup{countertype={theorem=lemma}}}

\newtheorem{corollary}[theorem]{Corollary}
\zcRefTypeSetup{corollary}{Name-sg =Corollary, Name-pl =Corollaries, name-sg =corollary, name-pl =corollaries}
\AddToHook{env/corollary/begin}{\zcsetup{countertype={theorem=corollary}}}
\newtheorem{proposition}[theorem]{Proposition}
\zcRefTypeSetup{proposition}{Name-sg =Proposition, Name-pl =Propositions, name-sg =proposition, name-pl =propositions}
\AddToHook{env/proposition/begin}{\zcsetup{countertype={theorem=proposition}}}

\zcRefTypeSetup{innercustomthm}{Name-sg =Theorem, Name-pl =Theorems, name-sg =theorem, name-pl =theorems}

\newtheoremstyle{blocky}%
  {0.9\baselineskip}
  {0pt}
  {\normalfont}
  {0pt}
  {\bfseries}
  {.}
  {\blockheadsep}
  {\thmname{#1}\thmnumber{\ #2}\thmnote{ \normalfont(#3)}}
\theoremstyle{blocky}
\newtheorem{definition}[theorem]{Definition}
\zcRefTypeSetup{definition}{Name-sg =Definition, Name-pl =Definitions, name-sg =definition, name-pl =definitions}
\AddToHook{env/definition/begin}{\zcsetup{countertype={theorem=definition}}}
\newtheorem{construction}[theorem]{Construction}
\zcRefTypeSetup{construction}{Name-sg =Construction, Name-pl =Constructions, name-sg =construction, name-pl =constructions}
\AddToHook{env/construction/begin}{\zcsetup{countertype={theorem=construction}}}
\newtheorem{example}[theorem]{Example}
\zcRefTypeSetup{example}{Name-sg =Example, Name-pl =Examples, name-sg =example, name-pl =examples}
\AddToHook{env/example/begin}{\zcsetup{countertype={theorem=example}}}
\newtheorem{recollection}[theorem]{Recollection}
\zcRefTypeSetup{recollection}{Name-sg =Recollection, Name-pl =Recollections, name-sg =recollection, name-pl =recollections}
\AddToHook{env/recollection/begin}{\zcsetup{countertype={theorem=recollection}}}
\newtheorem{remark}[theorem]{Remark}
\zcRefTypeSetup{remark}{Name-sg =Remark, Name-pl =Remarks, name-sg =remark, name-pl =remarks}
\AddToHook{env/remark/begin}{\zcsetup{countertype={theorem=remark}}}
\newtheorem{notation}[theorem]{Notation}
\zcRefTypeSetup{notation}{Name-sg =Notation, Name-pl =Notations, name-sg =notation, name-pl =notations}
\AddToHook{env/notation/begin}{\zcsetup{countertype={theorem=notation}}}

\zcRefTypeSetup{claim}{Name-sg =Claim, Name-pl =Claims, name-sg =claim, name-pl =claims}
\AddToHook{env/claim/begin}{\zcsetup{countertype={theorem=claim}}}

\zcRefTypeSetup{conjecture}{Name-sg =Conjecture, Name-pl =Conjectures, name-sg =conjecture, name-pl =conjectures}
\AddToHook{env/conjecture/begin}{\zcsetup{countertype={theorem=conjecture}}}

\zcRefTypeSetup{caveat}{Name-sg =Caveat, Name-pl =Caveats, name-sg =caveat, name-pl =caveats}
\AddToHook{env/caveat/begin}{\zcsetup{countertype={theorem=caveat}}}

\zcRefTypeSetup{question}{Name-sg =Question, Name-pl =Questions, name-sg =question, name-pl =questions}
\AddToHook{env/question/begin}{\zcsetup{countertype={theorem=question}}}

\zcRefTypeSetup{observation}{Name-sg =Observation, Name-pl =Observations, name-sg =observation, name-pl =observations}
\AddToHook{env/observation/begin}{\zcsetup{countertype={theorem=observation}}}

\newcounter{diagram}  
\zcRefTypeSetup{diagram}{Name-sg =Diagram, Name-pl =Diagrams, name-sg =diagram, name-pl =diagrams}
\newenvironment{diagram}[1][]{%
  \zcsetup{countertype={equation=diagram}}%
  \begin{equation}%
  \begin{tikzcd}[#1]%
}{%
  \end{tikzcd}%
  \end{equation}%
}
\newtheoremstyle{numonly}%
  {0.9\baselineskip}
  {0pt}
  {\normalfont}
  {0pt}
  {\bfseries}
  {}
  {\blockheadsep}
  {\thmnumber{#2}\thmnote{ \normalfont(#3)}}
\theoremstyle{numonly}
\newtheorem{block}[theorem]{Block} 
\zcRefTypeSetup{block}{Name-sg =Paragraph, Name-pl =Paragraphs, name-sg =paragraph, name-pl =paragraphs}
\AddToHook{env/block/begin}{\zcsetup{countertype={theorem=block}}}

\zcRefTypeSetup{pseudopropi}{Name-sg =Property, Name-pl =Properties, name-sg =property, name-pl =properties}
\zcRefTypeSetup{pseudopropii}{Name-sg =Property, Name-pl =Properties, name-sg =property, name-pl =properties}

\makeatletter
\newcommand{\ComputeEquiAngle}[2]{%
  \pgfmathsetmacro{\myside}{#1}%
  \pgfmathsetmacro{\mycurv}{#2}%
  \pgfmathparse{abs(\mycurv) < 0.000001 ? 1 : 0}%
  \ifnum\pgfmathresult=1
    \pgfmathsetmacro{\EquiAngle}{60}%
  \else
    \pgfmathparse{\mycurv > 0 ? 1 : 0}%
    \ifnum\pgfmathresult=1
      \pgfmathsetmacro{\u}{sqrt(\mycurv)*\myside} 
      \pgfmathsetmacro{\tmp}{cos(\u r)/(1+cos(\u r))}%
      \pgfmathsetmacro{\EquiAngle}{acos(\tmp)}%
    \else
      \pgfmathsetmacro{\u}{sqrt(-\mycurv)*\myside}%
      \pgfmathsetmacro{\tmp}{cosh(\u)/(1+cosh(\u))}%
      \pgfmathsetmacro{\EquiAngle}{acos(\tmp)}%
    \fi
  \fi
}
\makeatother
\newcommand{\curvedside}[5][]{%
  \path let
    \p1 = (#2),
    \p2 = (#3),
    \n1 = {atan2(\y2-\y1,\x2-\x1)}
  in
    \pgfextra{%
      \xdef\curvedsideoutangle{\n1 + #4}%
      \xdef\curvedsideinangle{\n1 + 180 - #4}%
    };
  \draw[#1] (#2)
    to[
      out=\curvedsideoutangle,
      in=\curvedsideinangle,
      looseness=#5
    ] (#3);
}

\newcommand{\DrawEquiTriangle}[3]{%
  \begin{tikzpicture}[scale=1, line cap=round, line join=round]
   \ComputeEquiAngle{#1}{#2}%

  \pgfmathsetmacro{\D}{#3}          
  \pgfmathsetmacro{\H}{0.82*\D}     
  \pgfmathsetmacro{\Loose}{1.3}    
    \pgfmathsetmacro{\angleLine}{-(0.5*\EquiAngle-30)}
    \coordinate (A) at (0,0);
  \coordinate (B) at (\D,0);
  \coordinate (C) at (0.5*\D,\H);
    \curvedside{A}{B}{\angleLine}{\Loose}
    \curvedside{B}{C}{\angleLine}{\Loose}
    \curvedside{C}{A}{\angleLine}{\Loose}
    \fill (A) circle (1.2pt);
    \fill (B) circle (1.2pt);
    \fill (C) circle (1.2pt);
  \end{tikzpicture}%
}

\title{Function-Rips complexes in persistent homotopy theory: \\
Stability and persistent Latschev theorems}
\author{Steve Oudot, Lukas Waas}
\date{\today}
\begin{document}
\maketitle
\begin{abstract}
    Classical results of Hausmann and Latschev show that Vietoris-Rips complexes can recover the homotopy type of a manifold, even from finite metric spaces that are nearby in Gromov-Hausdorff distance. We prove persistent homotopical versions of these theorems for metric spaces equipped with filtration functions. The central object of study is the so-called persistent homotopy type of the function-Rips complex, a filtered simplicial complex that combines a fixed Rips scale with the filtration data on the underlying space. Using techniques from $\catK$-geometry and persistent simplicial homotopy theory, we generalize Latschev's and Hausmann's theorems to the setting of spaces with filtration functions and homotopical interleavings. A fundamental ingredient is a new homotopical stability theorem. The fixed-scale function-Rips construction is known not to be globally stable with respect to function Gromov-Hausdorff distance and homotopical interleaving distance. Here, we show that it is nevertheless stable for appropriate choices of the Rips parameter at such pairs $(M,f)$ for which $M$ is a complete metric space of curvature bounded above, and $f$ is a Lipschitz continuous multivariate function.
\end{abstract}
\tableofcontents
\section{Introduction}
This work connects three lines of research in 
topological data analysis, metric topology and homotopy theory. The first one aims to generalize Latschev's result about the homotopy type of Vietoris-Rips complexes built on Gromov-Hausdorff approximations of compact Riemannian manifolds. 
\begin{theorem*}[Latschev~\cite{Latschev2001}]\label{thm:int_class_latschev}
 Let $\met$ be a closed Riemannian manifold. Then there exists $\delta_0>0$ such that, for every $0<\delta\leq\delta_0$, there exists an $\varepsilon_0>0$ such that the following holds:\\
 For any metric space $\metapprx$ with Gromov-Hausdorff distance to $\met$ less than $\varepsilon_0$, $\met$ is homotopy equivalent to the geometric realization $|\Rips(\metapprx)|$ of the Vietoris-Rips complex $\Rips(\metapprx)$ of parameter~$\delta$.
\end{theorem*}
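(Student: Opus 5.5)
We follow the classical Hausmann–Latschev strategy, organised around a comparison zig-zag between $\met$, $|\Rips(\met)|$ and $|\Rips(\metapprx)|$.

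\textbf{Step 1 (Hausmann).} Since $\met$ is a closed Riemannian manifold, it has sectional curvature bounded above by some $\kappa>0$ and positive injectivity radius. Hence there is $r_0>0$ such that balls of radius $<r_0$ are uniquely geodesic and geodesically convex, with geodesics depending continuously on their endpoints. This is the $\catK$ regime, where balls below the $\catK$ radius are convex. For $\delta\le\delta_0\ll r_0$ we construct a map $T\colon|\Rips(\met)|\to\met$ by induction over skeleta: vertices go to themselves, and a simplex $[x_0,\dots,x_k]$ is sent by coning $T|_{[x_0,\dots,x_{k-1}]}$ to $x_k$ along geodesics, or by a Karcher/circumcentre barycentric construction. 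All images stay in a convex ball of radius $\lesssim\delta$, which gives well-definedness and continuity.

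Next we show that $T$ is a homotopy equivalence. Take a fine triangulation $\met\cong|K|$ with mesh $<\delta$, giving an inclusion $\iota\colon|K|\to|\Rips(\met)|$. Then $T\circ\iota\simeq \mathrm{id}$ by straight-line geodesic homotopy in small balls. For $\iota\circ T\simeq\mathrm{id}$, we argue on homotopy groups. Any map of a sphere into $|\Rips(\met)|$ factors through a finite subcomplex. By subdividing and pushing vertices to nearby points, each simplex $\sigma$ and its image simplex span a simplex of $\Rips$ at a slightly larger scale. This is where one needs the standard "$\Rips$-contiguity" bookkeeping, controlled via the Lipschitz estimate of $T$ on simplices.

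\textbf{Step 2 (approximation).} Let $d_{GH}(\met,\metapprx)<\varepsilon$, realised by a correspondence $C$ of distortion $<2\varepsilon$. Choose maps $\phi\colon\metapprx\to\met$ and $\psi\colon\met\to\metapprx$ subordinate to $C$. These induce simplicial maps
\[
\Rips(\metapprx)\to\Rips[\delta+2\varepsilon](\met),\qquad \Rips(\met)\to\Rips[\delta+2\varepsilon](\metapprx).
\]
The composites are contiguous to the scale-change inclusions. The task is to show that $\Rips(\metapprx)\to\Rips[\delta+2\varepsilon](\metapprx)$ induces isomorphisms after factoring through $\met$. We define $T'\colon|\Rips(\metapprx)|\to\met$ as $T\circ|\phi|$, which is valid once $\delta+2\varepsilon\le\delta_0$. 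We then build a homotopy inverse candidate $|\psi|\circ\iota\colon\met\to|\Rips(\metapprx)|$ and check the two composites.
\begin{itemize}
\item $T'\circ|\psi|\iota\simeq\mathrm{id}_\met$: images are $O(\varepsilon)$-close, so a geodesic homotopy works.
\item $|\psi|\iota T'\simeq\mathrm{id}$: on a simplex $\sigma=[y_0,\dots,y_k]$ of $\Rips(\metapprx)$, the point $T'(x)$ lies within $O(\delta)$ of every $\phi(y_i)$. The vertices of the simplex of the fine triangulation carrying it are then within $O(\delta+\varepsilon)$ of the $y_i$.
\end{itemize}

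\textbf{Main obstacle.} The second composite is the hard step. The union of $\sigma$ with the image simplex has diameter up to roughly $2\delta+O(\varepsilon)$, exceeding $\delta$, so no naive contiguity holds. Following Latschev, we plan to handle this by working in the intermediate scale $\delta'=\delta+O(\varepsilon)$. We then prove that $\Rips(\metapprx)\hookrightarrow\Rips[\delta'](\metapprx)$ is a homotopy equivalence. The route is to factor it through a simplicial complex, a nerve of small convex balls of $\met$ pulled back to $\metapprx$, that is homotopy equivalent to $\met$ by the nerve lemma, exploiting convexity of $\delta$-balls from Step 1. Since every $\delta$-simplex of $\Rips(\metapprx)$ has its $\phi$-image in a small convex ball, the pulled-back nerve is contiguous to both scales. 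The requirement that $\varepsilon_0$ be small relative to $\delta$, for instance $\varepsilon_0\le c\delta$ with $c$ depending on $\kappa$, arises exactly here.
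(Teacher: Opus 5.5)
Your Step 1 is a legitimate Hausmann-style substitute for the paper's proof via metric thickenings and Karcher means (\cref{thm:persistent_hausmann}), and the reduction in Step 2 is sound. In fact, your $T$ maps each simplex into the closed convex hull of its vertices. Hence the union of $\sigma$ with its image simplex has diameter $\delta+O(\varepsilon+\mathrm{mesh})$ rather than $2\delta$. So everything reduces to showing that $\Rips(\metapprx)\hookrightarrow\Rips[\delta'](\metapprx)$ is a homotopy equivalence for some $\delta'=\delta+O(\varepsilon)$. That statement, however, is essentially the whole content of Latschev's theorem, and the nerve argument you sketch does not prove it.

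A factorization of the inclusion, up to contiguity, through a complex $N\simeq\met$ only shows that the inclusion factors through $\met$. To conclude that it is invertible you would need $\Rips(\metapprx)\to N$ to be an equivalence, which is the theorem itself. Nor can the nerve be made to sit between the scales $\delta$ and $\delta+O(\varepsilon)$. To capture the $\phi$-image of every $\delta$-simplex, the balls $B_i$ need radius $r$ at least the circumradius of such images. As soon as $\dim\met\geq 2$ this forces $2r>\delta$ (think of near-equilateral triangles of side close to $\delta$). This causes two problems:
\begin{itemize}
\item If you apply the nerve lemma to the cover of $\Rips(\metapprx)$ by the full Rips subcomplexes on the sets $\phi^{-1}(B_i)$, the pieces have diameter about $2r>\delta$. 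Their $\delta$-Rips complexes are then not simplices, and their contractibility is again a Latschev-type statement for approximations of convex balls.
\item Any map from the nerve back to $\metapprx$ sends a nerve simplex (balls with a common point) to a set of diameter up to $2r+O(\varepsilon)$ at best, i.e.\ into $\Rips[2r+O(\varepsilon)](\metapprx)$.
\end{itemize}
This is the Rips--\v{C}ech interleaving with its multiplicative loss. It controls images such as $\mathrm{im}(H_\ast\Rips(\metapprx)\to H_\ast\Rips[2\delta](\metapprx))$, as in the estimator discussed in the introduction, but it never yields an equivalence at a single scale.

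The missing idea is a map that \emph{decreases} the Rips scale on the $\met$ side. In the paper, the classical statement is the case $N=0$ of the persistent Latschev theorem plus Whitehead's theorem. For $N=0$ the weaker \cref{thm:weaklocalStabFull} already suffices, and it works as follows. Set $\rho=\delta+\epsmet$ and send a vertex $(x_0,\dots,x_n)$ of $\sd_b\Rips[\rho](\met)$ to the circumcentre $\circCent(\{x_0,\dots,x_n\})$. Edges of $\sd_b$ join nested sets $A\subseteq A'$. For these one has $d(\circCent(A),\circCent(A'))\leq S\diam(A')$ and $d(a,\circCent(A'))\leq\circRad(A')\leq S\diam(A')$ for $a\in A'$, where $S=\conShrM<1$ is the Jung constant (\cref{lem:distance_bounds_circum}). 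This induces a simplicial map $\sd_b\Rips[\bullet](\met)\to\Rips[S\bullet](\met)$ which, after including back into $\Rips[\bullet](\met)$, is homotopic to the last vertex map $\lambda_b$. Composing it with $\lambda_b^{-1}$ gives the loop
\[
\Rips(\metapprx)\to\Rips[\delta+\epsmet](\met)\dashrightarrow\Rips[S(\delta+\epsmet)](\met)\to\Rips[S(\delta+\epsmet)+\epsmet](\metapprx)\subseteq\Rips(\metapprx).
\]
It closes exactly when $\epsmet\leq\frac{1-S}{1+S}\delta$. The estimate $d(\lambda_b(x),\Theta(y))<S(\delta+\epsmet)\leq\delta-\epsmet$ for edges $x\to y$ makes it homotopic to the identity. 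The other composite needs only the shrinking property (\cref{lem:munchkin_lemma}).

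This yields $\Rips(\metapprx)\cong\Rips(\met)$ in $\ho(\iSpaces)$ with no intermediate-scale equivalence needed. Your anticipated constraint $\varepsilon_0\leq c\,\delta$ is exactly $c=\frac{1-S}{1+S}$, up to the factor $2$ between $d_{GH}$ and correspondence distortion. But it must come from this shrinking map, not from a nerve lemma.
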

To our knowledge, this result has been extended in the following directions:
replacing Riemannian manifolds with spaces of curvature bounded above~\cite{sushBoundedAbove}; quantifying the upper bounds on $\varepsilon$ and $\delta$~\cite{lim2024vietoris,Majhi2024_DCG_DemystifyingLatschev};  
  letting $\delta$ exceed the usual small scale bound on specific spaces, for instance, the circle~\cite{adamaszek2017vietoris} or ellipses~\cite{adamaszek2019vietoris}; considering 
variants of the Vietoris-Rips complex, such as the selective Rips complex~\cite{lemevz2022finite}. \\
Here, we extend Latschev's result in another direction: to spaces equipped with filtration functions (\cref{intThm:pers_lat_qual_version,intThm:pers_lat_quant_version} below). This requires the following generalizations:
\begin{enumerate}
	\item The role of the homotopy type of $M$ is taken by the so-called \textit{persistent homotopy type} of the sublevel set filtration of a function $f\colon M\to \RN$, denoted~$\mbull$~(\cref{notation:pers_homotopy_type}). 
	\item The role of the Vietoris-Rips complex of $\mathbb{M}$ is now taken by the filtered simplicial complex $\Rips[\delta](\mapprxbull)$, called the \textit{function-Rips} complex, obtained by filtering the ordinary Rips complex of fixed parameter $\delta$, $\Rips[\delta](\metapprx)$, through the Rips complexes of the same parameter~$\delta$ built on the sublevel sets of a function $\mathbb{f} \colon \metapprx \to \RN$ (\cref{ex:function-Rips complex}). 
	\item The role of correspondences of metric spaces (for the Gromov-Hausdorff distance) is taken by \textit{correspondences of space-function pairs}, which also take the distortion of function values into account (\cref{def:filtered_correspondence}). Such correspondences will be denoted in the form $ - \approx_{\epsmet,\epsfun} -$, where $\epsmet \geq 0$ controls the distortion in distances and $\epsfun \geq 0$ controls the distortion in function values.
	\item Finally, the role of homotopy equivalences is taken by the notion of \textit{interleavings in the persistent homotopy category} (\cref{rec:interleaving_dist}, see also \cite{LanariScoccola2023Rectification}), which are denoted in the form $- \simeq_{\varepsilon}-$ with interleaving parameter $\varepsilon \geq 0$.
\end{enumerate}
Throughout, we suppress geometric realization from the notation, and $\RN$ is always assumed to be equipped with the metric arising from the $\infty$-norm. \\
As our result is phrased in the language of persistence theory, it is fundamentally quantitative in nature. Let us, however, first state a version of the result that is as qualitative as possible to illustrate the analogy to the classical version of Latschev's theorem. We purposefully state it in a slightly weaker form than possible. We also assume that $f \colon M \to \RN$ is $1$-Lipschitz to simplify the statement. Recall that a metric space is called proper if every bounded closed subset in it is compact. 
\begin{theorem}[Persistent Latschev's Theorem - Qualitative Version]\label{intThm:pers_lat_qual_version}
 Let $\met$ be a proper metric space of curvature bounded above, and let $f \colon M \to \RN$ be a $1$-Lipschitz map. Then, there exists a $\delta_0 >0$ such that for every
 $0<\delta\leq\delta_0$, there exists an $\varepsilon_0>0$ such that the following holds: \\
 For any metric space-function pair $( \metapprx, \fapprx \colon \metapprx \to \RN)$ with function-correspondence distance to $\fmet$ less than $\varepsilon_0$, there is an interleaving
 \[
    \mbull \simeq_{\delta} \Rips[\delta](\mapprxbull)
 \]
 in the persistent homotopy category. 
 \end{theorem}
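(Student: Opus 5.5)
The plan is to factor the desired interleaving through the function-Rips complex of $M$ itself, via the triangle inequality for interleavings:
\[
\mbull \simeq_{\varepsilon_1} \Rips[\delta](\mbull) \simeq_{\varepsilon_2} \Rips[\delta](\mapprxbull),
\]
with $\varepsilon_1+\varepsilon_2\leq \delta$.

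\textbf{Step 1: a persistent Hausmann theorem.} For $\delta$ below a constant depending on the curvature bound $\kappa$ and on a lower bound for the injectivity/convexity radius of $M$ (finite on bounded regions since $M$ is proper), I will show $\mbull \simeq_{\delta/2} \Rips[\delta](\mbull)$. The idea is to construct a filtered map $\Rips[\delta](\mbull)\to \mbull[\bullet+\delta/2]$ by the usual center-of-mass (circumcenter) construction. A simplex of $\Rips[\delta](M^{t})$ has diameter $<\delta$, so its vertices lie in a ball of radius at most $\delta$ (in fact less, by a Jung-type bound in $\catK$ spaces). This ball is uniquely geodesic and convex, so the simplex can be sent to geodesic convex combinations or circumcenters.
\begin{itemize}
\item Because $f$ is $1$-Lipschitz, every point of the image satisfies $f\leq t+\delta$ componentwise (sup-norm). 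This gives the shift, and with the circumcenter bound roughly $\delta/2$.
\item In the other direction, I will use a subdivision/nerve argument. The inclusion $M^t\to |\Rips[\delta](M^t)|$ is only defined after choosing a fine triangulation or singular simplices, so I will work with singular simplices of small diameter mapping into $\Rips[\delta]$.
\item The two composites are homotopic to the structure maps via straight-line (geodesic) homotopies, which stay inside the appropriate shifted sublevel set, again by Lipschitzness.
\end{itemize}
Naturality in $t\in\RN$ must hold up to coherent homotopy. I will invoke the rectification results for the persistent homotopy category (LanariScoccola) so that a pointwise natural-up-to-homotopy construction suffices, or arrange strict naturality by defining the maps uniformly in $t$.

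\textbf{Step 2: stability at fixed scale.} Given a correspondence $M\approx_{\epsmet,\epsfun}\metapprx$ with $\epsmet,\epsfun<\varepsilon_0$, I choose the correspondence maps $\phi\colon M\to\metapprx$ and $\psi\colon\metapprx\to M$.
\begin{itemize}
\item The map $\phi$ sends $\Rips[\delta](M^t)$ into $\Rips[\delta+2\epsmet](\metapprx^{t+\epsfun})$, not into the Rips complex at scale $\delta$. Fixed-scale function-Rips is therefore not directly stable, which is the main obstacle.
\item To repair this, I will exploit the curvature bound. For $\delta'$ slightly larger than $\delta$, the inclusion $\Rips[\delta](M^\bullet)\to\Rips[\delta'](M^\bullet)$ is a homotopy equivalence up to a small shift. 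This follows because both are $\delta/2$-interleaved with $\mbull$ by Step 1, and the composite commutes with the Hausmann maps.
\item I then build a homotopy inverse $\Rips[\delta+2\epsmet](\metapprx^{\bullet})\to \Rips[\delta](\mbull[\bullet+\epsfun])$, up to a shift, as follows: map via $\psi$ into $\Rips[\delta+4\epsmet](M^{\bullet+\epsfun})$, then use the Hausmann-type retraction back to $\mbull$, followed by the inclusion into $\Rips[\delta](\mbull)$.
\end{itemize}
The key technical point is to show that composites such as $\phi\circ\psi$ are contiguous to the identity on $\Rips[\delta](\metapprx^\bullet)$ after a shift of order $\epsfun+\epsmet$. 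Contiguity, hence homotopy, holds because $d(\phi\psi x,x)\leq\epsmet$ keeps simplices together at the enlarged scale. I expect to handle the enlarged scale by passing through $M$: I will test whether a simplex of $\Rips[\delta](\metapprx^t)$ together with its $\phi\psi$-image spans a set whose $\psi$-image has small circumradius in $M$, and use convexity in $M$ to produce the homotopy.

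\textbf{Combining.} Choosing $\varepsilon_0$ small relative to $\delta$ and to the convexity radius makes all shifts total at most $\delta$, which yields $\mbull\simeq_\delta\Rips[\delta](\mapprxbull)$.

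I expect the hardest step to be the approximation side (Step 2): fixed-scale Rips is not globally stable, so all the homotopies must be routed through the geometry of $M$. I would first try a Latschev-style argument, extending the map $|\Rips[\delta](\metapprx^t)|\to M^{t+c\delta}$ skeleton by skeleton using convexity of small balls, and verifying that it respects the filtration.
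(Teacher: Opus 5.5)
The gap is in Step 2 and in the final bookkeeping.

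Write $S<1$ for Jung's constant $\conShrM$ at the working scale $\rho\geq\delta$, and recall $S\geq\frac12$ whenever $M$ is not discrete.

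\textbf{Step 1 has the wrong constant.} A set of diameter $<\delta$ can have circumradius close to $S\delta$ (e.g.\ $\delta/\sqrt3$ for an equilateral triangle in the plane). So the retraction costs $\conShrM[\delta]\delta$, not $\delta/2$, and this is sharp. For $M=\mathbb{R}^2$ and $f=-|\cdot|$ one has $H_1(\mbull[t])\cong\mathbb{Z}$ for all $t<0$, but $H_1(\Rips[\delta](\mbull[t]))=0$ for $-\delta/\sqrt3<t<0$. This forces interleaving distance at least $\delta/\sqrt3$. That error alone is repairable.

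\textbf{Step 2 is the real problem.} Every scale-decreasing map you use there passes through this retraction. Knowing that $\Rips[\delta](\mbull)$ and $\Rips[\delta'](\mbull)$ are both Hausmann-interleaved with $\mbull$ does not show that $\Rips[\delta](\mbull)\hookrightarrow\Rips[\delta'](\mbull)$ is an equivalence up to a \emph{small} shift. The inverse it produces, $\iota\circ K$, shifts the filtration by about $S\delta'$, however close $\delta'$ is to $\delta$. The statement itself is true, but proving it needs the construction described below. Likewise, your inverse ($\psi$, then retraction, then inclusion) shifts by $\epsfun+S(\delta+O(\epsmet))$.

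So Step 2 yields $\Rips[\delta](\mbull)\simeq\Rips[\delta](\mapprxbull)$ only with a constant of order $S\delta$. The triangle inequality with Step 1 then gives at least $\delta/2+S\delta\geq\delta$, even with your constant, plus positive correspondence terms. This exceeds $\delta$ however small $\varepsilon_0$ is. For the composition to close, the fixed-scale stability error must be below $(1-S)\delta$, i.e.\ it must tend to $0$ with $\epsmet,\epsfun$. That is exactly the non-formal point, since fixed-scale function-Rips is not globally stable.

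\textbf{How the paper closes this.} It uses a shrinking transformation $\Rips[\bullet_1](\mbull[\bullet_2])\to\Rips[S_t\bullet_1](\mbull[\bullet_2+C_t\bullet_1])$ whose cost $C_t$ can be made arbitrarily small. This comes from a pseudo-barycenter map on a new subdivision $\sd_r$, whose vertices are pairs $(\sigma,x)$ with $x$ a vertex of $\sigma$. The map sends $(\sigma,x)$ to the time-$t$ point on the geodesic from $x$ to the circumcenter of $\sigma$. Convexity of $d_x$ and of $d_x^2$ on small $\catK$ balls gives $C_t=tS$ and $S_t=(1-t)+tS$. The extra vertex datum is essential: with $\sd_b$, one of the four cross-distances is only bounded by $\delta$, and the convexity estimate is unavailable.

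Taking $t=\frac{2\epsmet}{(1-S)(\delta+\epsmet)}$ in the shrinking lemma gives $\Rips[\delta](\mapprxbull)\simeq_{\Lipf\frac{2S}{1-S}\epsmet+\epsfun}\Rips[\delta](\mbull)$. Adding Hausmann's $\conShrM[\delta]\delta$ keeps the total $\leq\delta$ once $\varepsilon_0$ is small relative to $(1-S)\delta$.

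\textbf{An alternative via your closing idea.} Your idea of working directly could also succeed if you drop the triangle inequality. Take the composite of $\iota\colon\mbull\to\Rips[\delta-\epsmet](\mbull)$ with the map induced by $\phi$. In the other direction, take the map induced by $\psi$ followed by the Hausmann retraction $\Rips[\delta+\epsmet](\mbull)\to\mbull[\bullet+S(\delta+\epsmet)]$. Then the Hausmann cost is paid only once. But proving that the round trip on $\Rips[\delta](\mapprxbull)$ is a structure map still needs a circumcenter shrinking map on a subdivision that is compatible with the correspondence when $\epsmet\leq\frac{1-S}{1+S}\delta$. Your contiguity sketch does not supply this.

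\textbf{Rectification.} For $N\geq2$, pointwise homotopy-natural data cannot be rectified, so the rectification fallback is unavailable. You need strictly filtered maps, which the paper obtains from metric thickenings and Karcher means.
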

Note that, in contrast to the setup of Latschev's theorem, we cannot hope to get a homotopy equivalence between filtered spaces here. This is because the individual sublevel sets of~$f$ may be highly pathological, even though $\met$ itself satisfies strict regularity conditions. Note also that we cannot expect interleavings at the level of topological spaces: any such interleaving would induce homeomorphisms of the colimits obtained by letting the filtration parameter pass to infinity, which evidently do not exist in general. Nevertheless, at the level of persistent homotopy theory, we can quantitatively bound the error between $\mbull$ and $\Rips[\delta](\mapprxbull)$.
In the special case where $N = 0$ (i.e., $\RN$ is a singleton), the homotopy-theoretic interleaving  $\mbull 
  \simeq_{\delta} \Rips[\delta](\mapprxbull)$ is just an isomorphism in the homotopy category, i.e., a zig-zag of weak homotopy equivalences. So, by an application of Whitehead's theorem, \cref{intThm:pers_lat_qual_version} recovers Latschev's result as stated above.
Notably, \cref{intThm:pers_lat_qual_version} holds at the homotopy level directly, not just at the homology level, as is often the case with interleaving results in persistence theory. Any interleaving in the persistent homotopy category will, in particular, induce an interleaving at the persistent homology level. This connects our result to another line of research, the aim of which is to estimate the persistent homology of functions from finite point samples. In~\cite{chazal2011scalar}, the authors proposed an estimator for the persistent homology of  $L^{\bullet}$, for a Lipschitz map $g\colon L\to \R$, from a finite sampling~$\mathbb{L} \subset L$ of its domain~$L$. The proof technique leveraged in that line of work made use of the interleaving between Vietoris-Rips and \v Cech complexes, and ultimately led to an estimator given by the image of morphisms of persistence modules $\textnormal{im}(H_{\ast} \Rips[\delta] (\mathbb{L}^{\bullet}) \to H_{\ast} \Rips[2\delta] (\mathbb{L}^{\bullet}))$, where $\mathbb{L}$ is filtered by $g|_{\mathbb{L}}$.
This estimator was then specialized to the case where $g$ is a density estimator in the context of unsupervised learning, yielding the clustering algorithm ToMATo~\cite{chazal2013persistence}. It was later extended to more general noise models~\cite{buchet2015topological} and, more recently, to $\RN$-valued maps:
\begin{theorem}[\cite{andre2025estimating}]\label{AndreAndSteve}
  Let $(L,g)$ be such that $L$ is a compact geodesic metric space with convexity radius $\varrho_L>0$ and $g\colon L \to \RN$ is 
  a Lipschitz map with Lipschitz constant $\Lipf[g]$. Let $\delta <\frac{1}{2} \varrho_L$ and $\varepsilon \leq \frac{1}{2}\delta.$ Then, for any subset $\mathbb{L} \subset L$ that is $\varepsilon$-dense in $L$, there is a 
  $2\Lipf[g]\delta$-interleaving $H_{\ast}(L^{\bullet}) \simeq_{2 \Lipf[g] \delta} \textnormal{im}(H_{\ast} \Rips[\delta] (\mathbb{L}^{\bullet}) \to H_{\ast} \Rips[2\delta] (\mathbb{L}^{\bullet}))$ of persistence modules. 
\end{theorem}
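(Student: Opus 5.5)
The plan follows the Čech–Rips sandwich argument of Chazal–Guibas–Oudot–Skraba, carried out in each parameter $u\in\RN$ and made functorial in $u$. Below, $u+t$ means $u+t(1,\dots,1)$. I use four filtered objects:
\begin{itemize}
\item $L^u=g^{-1}(\{v\leq u\})$, the sublevel sets;
\item $\mathbb{L}^u=\mathbb{L}\cap L^u$, the sampled sublevel sets;
\item $U_r^u=\bigcup_{p\in\mathbb{L}^u}\overline{B}(p,r)$, the offsets in $L$ of the sampled sublevel sets;
\item $\mathcal{C}_r(\mathbb{L}^u)$, the nerve of the cover $\{\overline{B}(p,r)\}_{p\in\mathbb{L}^u}$ of $U_r^u$.
\end{itemize}

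\emph{Step 1 (sandwich).} Because $L$ is geodesic, and with the convention $\Rips[\delta]=$ diameter at most $\delta$, we have inclusions of filtered complexes
\[
\mathcal{C}_{\delta/2}(\mathbb{L}^\bullet)\subset\Rips[\delta](\mathbb{L}^\bullet)\subset\mathcal{C}_{\delta}(\mathbb{L}^\bullet)\subset\Rips[2\delta](\mathbb{L}^\bullet)\subset\mathcal{C}_{2\delta}(\mathbb{L}^\bullet).
\]
For the first inclusion, use midpoints of geodesics. For the second, any vertex of a Rips simplex is a common point of all the balls of radius $\delta$. These inclusions commute with the structure maps in $u$. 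Hence, on homology, the map $H_\ast\Rips[\delta](\mathbb{L}^u)\to H_\ast\Rips[2\delta](\mathbb{L}^u)$ factors through $H_\ast\mathcal{C}_\delta(\mathbb{L}^u)$. It also sits between $H_\ast\mathcal{C}_{\delta/2}(\mathbb{L}^u)$ and $H_\ast\mathcal{C}_{2\delta}(\mathbb{L}^u)$.

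\emph{Step 2 (nerve lemma).} Since $2\delta<\varrho_L$, closed balls of radius $r\leq 2\delta$ are geodesically convex. Finite intersections of such balls are therefore convex, hence contractible or empty. The functorial (persistent) nerve lemma then gives homotopy equivalences $\mathcal{C}_r(\mathbb{L}^u)\simeq U_r^u$. These are natural in $u$ and in $r\in\{\delta/2,\delta,2\delta\}$. Consequently, the image module
\[
\textnormal{im}\bigl(H_\ast\Rips[\delta](\mathbb{L}^\bullet)\to H_\ast\Rips[2\delta](\mathbb{L}^\bullet)\bigr)
\]
is sandwiched between the offset modules $H_\ast U_{\delta/2}^\bullet$ and $H_\ast U_{2\delta}^\bullet$.

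\emph{Step 3 (comparing offsets to sublevel sets).} Let $\Lipf[g]$ be the Lipschitz constant of $g$.
\begin{itemize}
\item If $x\in U_r^u$, then $d(x,p)\leq r$ for some $p\in\mathbb{L}^u$, so $g(x)\leq u+\Lipf[g]r$. Thus $U_r^u\subset L^{u+\Lipf[g]r}$.
\item If $x\in L^u$, then $\varepsilon$-density gives $p\in\mathbb{L}$ with $d(x,p)\leq\varepsilon$, and $g(p)\leq u+\Lipf[g]\varepsilon$. Thus $L^u\subset U_{r}^{u+\Lipf[g]\varepsilon}$ for every $r\geq\varepsilon$, in particular for $r=\delta/2$.
\end{itemize}

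\emph{Step 4 (interleaving).} Define the two interleaving maps.
\begin{itemize}
\item From $H_\ast(L^u)$ to the image at $u+\Lipf[g]\varepsilon$: map into $H_\ast U_{\delta/2}^{u+\Lipf[g]\varepsilon}\cong H_\ast\mathcal{C}_{\delta/2}$, then push through $\Rips[\delta]\to\Rips[2\delta]$.
\item From the image at $u$ to $H_\ast(L^{u+2\Lipf[g]\delta})$: include $\Rips[2\delta]\subset\mathcal{C}_{2\delta}$, pass to $U_{2\delta}^u$, then include into $L^{u+2\Lipf[g]\delta}$.
\end{itemize}
Both shifts are at most $2\Lipf[g]\delta$, since $\varepsilon\leq\delta/2$. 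After shifting appropriately, the two round-trip composites are structure maps:
\begin{itemize}
\item On the $L$ side, the composite is induced by inclusions of subspaces of $L$, by naturality of the nerve equivalences.
\item On the image side, an element of the image lifts to $H_\ast\Rips[\delta]$. Its round trip equals the image of that element under the structure map, because all maps are induced by inclusions among the complexes and offsets.
\end{itemize}
A restriction to the image is well defined since image modules of morphisms between interleaved sandwiches are interleaved (a standard lemma from \cite{chazal2011scalar}).

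\emph{Main obstacle.} The delicate part is Step 2. It needs a nerve lemma for \emph{closed} convex covers of the offsets that is natural both in the filtration parameter $u\in\RN$ and in the radius $r$, so that the homology isomorphisms commute with all inclusions. I would first pass to open balls of slightly larger radius, still below $\varrho_L$, apply the standard functorial open-cover nerve theorem, and compare back by a limiting argument. Alternatively, I would use a functorial nerve theorem for finite closed convex covers of geodesic spaces. Once naturality is established, the remaining steps are routine bookkeeping of shifts.
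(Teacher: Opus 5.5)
The paper gives no proof of this statement. It quotes it from \cite{andre2025estimating} and only remarks that the proof there runs through the Vietoris--Rips/\v{C}ech interleaving. That is exactly the sandwich strategy you use, in the style of \cite{chazal2011scalar}. The paper's own theorems instead use the curvature bound and shrinking transformations, precisely to avoid both the image and \v{C}ech complexes. Your skeleton is sound, and the $L$-side round trip is fine as written. Incidentally, $\mathcal{C}_{\delta/2}\subset\mathcal{R}^{\delta}$ is just the triangle inequality; geodesics enter only through convexity in the nerve step.

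The image-side round trip is \emph{not} justified by ``all maps are induced by inclusions''. Set $w=u+2L_g\delta+L_g\varepsilon$. The composite contains the inclusion $U^u_{2\delta}\subset L^{u+2L_g\delta}\subset U^w_{\delta/2}$, followed by the \emph{inverse} of the nerve isomorphism for $U^w_{\delta/2}$. This inclusion shrinks the radius, so it is not a map of covers, and naturality of the nerve isomorphism says nothing about it. This is precisely where the image is needed, and you must use both ends of the sandwich:
\begin{itemize}
\item Since your class lifts to $\beta\in H_\ast\mathcal{R}^{\delta}(\mathbb{L}^u)$, and $\mathcal{R}^{\delta}\subset\mathcal{R}^{2\delta}\subset\mathcal{C}_{2\delta}$ factors through the \v{C}ech inclusion $\mathcal{C}_{\delta}\subset\mathcal{C}_{2\delta}$, the resulting class in $H_\ast U^u_{2\delta}$ already comes from $H_\ast U^u_{\delta}$.
\item The final map factors as $\mathcal{C}_{\delta/2}(\mathbb{L}^w)\subset\mathcal{C}_{\delta}(\mathbb{L}^w)\subset\mathcal{R}^{2\delta}(\mathbb{L}^w)$.
\end{itemize}
By naturality, the composite then becomes $H_\ast\mathcal{C}_{\delta}(\mathbb{L}^u)\cong H_\ast U^u_{\delta}\to H_\ast U^w_{\delta}\cong H_\ast\mathcal{C}_{\delta}(\mathbb{L}^w)$. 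This is a same-radius inclusion of covers, hence the structure map.

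Two technical remarks. First, your proposed limiting argument from open balls is not automatic, since singular homology does not commute with decreasing intersections. The natural route is your second option: a functorial nerve theorem for finite closed covers by compact geodesically convex sets, which handles finite $\mathbb{L}$.

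Second, your switch to the closed Rips convention is not cosmetic. The paper's $\mathcal{R}^{\delta}$ is defined by strict inequalities, and with that convention and non-strict density the boundary case $\varepsilon=\delta/2$ actually fails. Take $k>8$ equally spaced points on a circle and let $\delta$ equal their spacing. Then $\mathcal{R}^{\delta}(\mathbb{L}^u)$ has no edges, so the degree-$1$ image vanishes for all $u$, while $H_1(L^u)\neq 0$ for large $u$. So either keep closed Rips complexes with closed balls, as you did, or use open balls together with strict $\varepsilon$-density.
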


\cref{intThm:pers_lat_qual_version} complements
 this result by showing that, under additional regularity assumptions on $\met$ (namely, that $\met$ has curvature bounded above), a single filtered Rips complex is enough to estimate the persistent homology of~$f$, via a different proof approach that does not proceed through the interleaving between Vietoris-Rips and \v Cech complexes. 
The question of whether this is possible had been open since the beginning of this line of work, and it has important implications, including algorithmic ones. For instance, computing a free presentation of $\textnormal{im}(H_{\ast} \Rips[\delta] (\mapprxbull) \to H_{\ast} \Rips[2\delta] (\mapprxbull))$
involves computing the free cover of a certain pullback, for which efficient specialized algorithms exist only when $N=1$ or $2$ (see \cite{andre2025estimating}); otherwise, one must resort to Schreyer's algorithm \cite{schreyer1991standard} with doubly exponential complexity in $N$. By contrast,  free presentations of $H_{\ast}\Rips[\delta](\mapprxbull)$ can be computed efficiently 
for any~$N\geq 1$ via specialized algorithms for the computation of Gr\"obner bases~\cite{gafvert2021topological}.

For the purpose of topological data analysis, one is naturally interested in a more quantitative version of \cref{intThm:pers_lat_qual_version}, specifying the bounds $\delta_0$ and $\varepsilon_0$, and giving a more informative interleaving constant. For the non-filtered case, such a result was first proven in \cite{Majhi2024_DCG_DemystifyingLatschev} for Riemannian manifolds, and then later extended to spaces of bounded curvature in \cite{sushBoundedAbove}. We will compare with that result later on in the introduction. For now, let us state our quantitative persistent version and note that it is not a formal consequence of the non-persistent case. \footnote{Indeed, the proof requires a refinement of techniques on both the geometric and the homotopy-theoretic sides.}
 To this end, we will require the following constants relating to the geometry of $M$.
\begin{enumerate}
    \item $\catRad$ denotes the minimum of the scale at which $M$ behaves like a $\catK$ space and a certain diameter bound depending on the curvature of $M$ (see \cref{not:conJung}). For example, if $M$ is compact, then $\catRad$ is simply the minimum of the convexity radius of $M$ and $\frac{\varpi_{\kappa }}{2}$, where $\varpi_{\kappa}$ denotes the diameter of the model space of constant curvature $\kappa$. Specifically, this means $\varpi_{\kappa} =\frac{\pi}{\sqrt{\kappa}}$ if $\kappa >0$ and $\varpi_{\kappa}=\infty$ otherwise.
    \item By $\conShrM \leq 1$, we denote Jung's constant of $M$ at scale $\rho \leq \catRad$ (see \cref{not:conJung}). It is given by taking the supremum over the ratios $\frac{\circRad(A)}{\diam (A)}$, where $A$ ranges over subsets of $M$ with positive diameter smaller than $\rho$ (see \cref{not:conJung}). Here $\circRad(A)$ denotes the radius of a minimal enclosing ball of $A$, and $\diam(A)$ denotes the diameter of $A$. Generally, $\conShrM  <1$ whenever $\rho < \frac{\varpi_{\kappa}}{2}$ (see \cref{cor:global_bound}). \footnote{Some extra care needs to be taken in the degenerate case where $M$ is discrete and no such set $A$ of positive diameter smaller than $\rho$ exist. Then formally $\conShrM = 0$. In this case, one needs to make the extra assumption $\epsmet <\delta$ in the theorems below, which are otherwise a consequence of the bound $\epsmet \leq \frac{1-\conShrM}{1 + \conShrM} \delta$. For the sake of presentation in this introduction, we just assume $\conShrM >0$, but note that we also discuss this special case in  \cref{rem:discrete_case,rem:discrete_later,appendix:discrete_case}.}
    \item We furthermore denote $\lipConstRho:= \frac{2\conShrM}{1-\conShrM}$. Note that we allow for $\conShrM =1$ here, in which case we formally set $\lipConstRho = \infty$. All stated results remain true with the formal convention $\lipConstRho \cdot 0 = 0$.
\end{enumerate}
\begin{theorem}[Persistent Latschev's Theorem - Quantitative Version (\cref{thm:strengthened_persistent_latschev})]\label{intThm:pers_lat_quant_version}
Let $M$ be a proper metric space of curvature bounded above by $\kappa$, and $f \colon M \to \RN$ a Lipschitz map, with Lipschitz constant $\Lipf$. \\
Let $0 < \delta \leq \catRad$. 
     Fix any $\rho$ with $\delta \leq\rho \leq \catRad$. Let $\epsmet \geq 0$ be such that
	 \[
	 \epsmet \leq \rho - \delta \quad \text{and} \quad \epsmet \leq \frac{1 - \conShrM}{1+ \conShrM} \delta,
	\]
	and let $\epsfun \geq 0$.
    Then, for any metric space $\metapprx$ equipped with a not necessarily continuous map $\fapprx \colon \metapprx \to \RN$, any correspondence $\fmetapprx \approx_{\epsmet,\epsfun} \fmet$ induces an interleaving 
     \[\Rips[\delta](\mapprxbull) \simeq _{\Lipf(\conShrM[\delta] \delta + \lipConstRho\epsmet) + \epsfun} \mbull\]  
     in the persistent homotopy category.
\end{theorem}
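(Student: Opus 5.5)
Our plan is to factor the interleaving through an intermediate filtered object built on $M$ itself: the function-Rips complex $\Rips[\delta](\mbull)$ (and its variants at scales between $\delta$ and $\rho$). The argument then splits into two comparisons. The first is a \emph{geometric} comparison $\Rips[\delta](\mbull)\simeq \mbull$, which uses the $\catK$ structure. The second is a \emph{combinatorial} comparison between $\Rips[\delta](\mapprxbull)$ and a Rips complex on $M$ of slightly different scale, induced by the correspondence. Interleavings compose additively in the persistent homotopy category, so the constants add. We expect the Jung-constant term $\Lipf\conShrM[\delta]\delta$ to come from the first comparison, and $\Lipf\lipConstRho\epsmet+\epsfun$ from the second.

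\textbf{Step 1 (Hausmann-type map with filtration control).} We define a map $\real{\Rips[\delta](M^{a})}\to M^{a+\Lipf\conShrM[\delta]\delta}$ by sending each vertex to itself and extending over simplices. A simplex $\sigma$ has diameter $<\delta\le\catRad$, so it lies in a ball where $M$ is $\catK$. It therefore has a unique circumcenter $\circCent(\sigma)$ at distance at most $\conShrM[\delta]\diam\sigma$ from every vertex. We use the barycentric subdivision: each flag $\sigma_0<\dots<\sigma_k$ maps to the geodesic simplex on the circumcenters, built by iterated geodesic coning, which is continuous by uniqueness of geodesics. Every point of the image lies in the convex hull of $\sigma_k$, within $\conShrM[\delta]\delta$ of a vertex of $\sigma_k$. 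Since that vertex lies in $M^{a}$, the point lies in $M^{a+\Lipf\conShrM[\delta]\delta}$. In the other direction, we use the inclusion of $M^{a}$ as vertices into the singular/Vietoris-Rips side, modelled by the standard zig-zag through the simplicial set of small singular simplices. The two composites are homotopic to the structure maps by straight-line homotopies in convex balls, which stay in the shifted sublevel set by the same estimate. This gives $\Rips[\delta](\mbull)\simeq_{\Lipf\conShrM[\delta]\delta}\mbull$.

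\textbf{Step 2 (transfer along the correspondence).} Let $C$ be the correspondence. A simplex $\tau$ of $\Rips[\delta](\metapprx^{a})$ maps, via any choice of correspondents, to a set in $M$ of diameter $<\delta+\epsmet\le\rho$ whose $f$-values are shifted by at most $\epsfun$. Such a set is not a $\delta$-simplex, so we do not land in $\Rips[\delta](M)$. Instead we send it to its circumcenter and compare at the level of the geometric model from Step 1. The circumcenter of a set of diameter $<\delta+\epsmet$ lies within $\conShrM(\delta+\epsmet)$ of the points. We then need this to lie back in a $\delta$-ball around correspondents, which requires $\conShrM(\delta+\epsmet)+\epsmet\le\delta$ up to a factor. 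This is exactly the hypothesis $\epsmet\le\frac{1-\conShrM}{1+\conShrM}\delta$, and it is what makes the construction a simplicial map into a Rips-type complex. The filtration shift is bounded by $\Lipf$ times the circumcenter displacement. The displacement contributed by $\epsmet$ is controlled by the geometric series in $\conShrM$, giving $\Lipf\frac{2\conShrM}{1-\conShrM}\epsmet=\Lipf\lipConstRho\epsmet$, plus $\epsfun$. The choices of correspondents are contiguous, so the homotopy classes are well-defined. The reverse map $\mbull\to\Rips[\delta](\mapprxbull)$ we get by composing Step 1's inverse with choosing correspondents.

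\textbf{Main obstacle.} The hard part is assembling these maps into an honest interleaving in the persistent homotopy category. All maps must be natural in the filtration parameter, and the required homotopies must be coherent, not merely pointwise. We would work with persistent simplicial sets, using contiguity classes for the discrete side and the convexity-based homotopies for the geometric side, and invoke the rectification results recalled with \cref{rec:interleaving_dist}. Within this, verifying that the iterated-circumcenter construction in Step 2 stays below scale $\rho$ (using $\epsmet\le\rho-\delta$) while producing exactly the constant $\lipConstRho$ is the most delicate estimate.
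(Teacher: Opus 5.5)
\textbf{Verdict: genuine gap in Step 2.} Your overall architecture matches the paper's: a Hausmann-type interleaving $\mbull\simeq\Rips[\delta](\mbull)$, composed with a comparison of $\Rips[\delta](\mapprxbull)$ and $\Rips[\delta](\mbull)$. Your Step 1 (geodesic circumcenter simplices plus small singular simplices) is a reasonable substitute for the paper's metric-thickening and Karcher-mean argument. One caveat there: ``by the same estimate'' is too quick for the homotopy on the $M$ side. A geodesic from $\sigma(q)$ to its image need not stay within $\conShrM[\delta]\delta$ of a vertex unless the singular simplices are taken much finer than $\delta$.

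The gap is in Step 2. Sending each set of correspondents to its circumcenter displaces points by up to $\conShrM(\delta+\epsmet)$, so the filtration shift is $\Lipf\conShrM(\delta+\epsmet)+\epsfun$. Nothing in that construction produces a geometric series or the constant $\frac{2\conShrM}{1-\conShrM}\epsmet$. What you actually obtain is the paper's weaker \cref{thm:weaklocalStabFull}. Combined with Step 1 this gives $\Lipf\bigl(\conShrM[\delta]\delta+\conShrM(\delta+\epsmet)\bigr)+\epsfun$. That exceeds the claimed constant by $\Lipf\frac{\conShrM}{1-\conShrM}\bigl((1-\conShrM)\delta-(1+\conShrM)\epsmet\bigr)\ge 0$. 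Equality holds only when $\epsmet=\frac{1-\conShrM}{1+\conShrM}\delta$, and a term $\Lipf\conShrM\delta$ survives as $\epsmet\to 0$.

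Your reverse map has a related defect. Choosing correspondents sends $\Rips[\delta](\mbull)$ into $\Rips[\delta+\epsmet](\mapprxbull[\bullet+\epsfun])$, not to scale $\delta$. So the scale must first be shrunk on the $M$ side, and that shrinking step again costs $\Lipf$ times the displacement it uses.

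\textbf{The missing idea} is to move only part of the way to the circumcenter. For a subdivision vertex $a$, the paper takes $\Theta_t(a)$ to be the point at time $t$ on the geodesic from the last vertex $\lambda(a)$ to the circumcenter.
\begin{itemize}
\item This map still contracts edges, now by $S_t=(1-t)+t\conShrM$, while moving points by only $t\conShrM$ times the scale.
\item Fed into the bivariate correspondence interleaving via the shrinking trick (\cref{lem:munchkin_lemma}), the forward map is correspondence followed by the shrinking map on $M$, landing at scale $S_t(\delta+\epsmet)$. The backward map is the shrinking map on $M$ followed by correspondence, landing at scale $S_t\delta+\epsmet$.
\item The interleaving identities ($\delta$-compatibility, \cref{thm:back_propagation_rips}) hold as long as $S_t(\delta+\epsmet)+\epsmet\le\delta$.
\item Taking $t=\frac{2\epsmet}{(1-\conShrM)(\delta+\epsmet)}$ makes this an equality. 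The requirement $t\le 1$ is exactly your hypothesis $\epsmet\le\frac{1-\conShrM}{1+\conShrM}\delta$.
\item The resulting cost is $\Lipf t\conShrM(\delta+\epsmet)=\Lipf\lipConstRho\epsmet$. This, not a geometric series, is where $\lipConstRho$ comes from.
\end{itemize}

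The hard part is showing that $\Theta_t$ still shrinks edges by $S_t$. The paper uses convexity of $d_x$ together with the four-point estimate \cref{cor:pseudo_convexity_of_dist}, which comes from strong convexity of $d_x^2$. That estimate needs three of the four cross distances to be below $\conShrM\delta$. It fails on the barycentric subdivision, because the last vertex of the larger simplex need not lie in the smaller one. The paper therefore builds the subdivision $\sd_r$, whose vertices are a simplex together with one of its vertices and whose edges force $x_b\in\sigma_a$. It then proves that $\lambda_r$ is a weak equivalence.

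Finally, your worry about rectification and higher coherence is unnecessary. An interleaving in $\ho(\iSpaces^{\RN})$ only needs filtration-natural maps and homotopies. The paper checks these through vertex-level distance inequalities, using the universal property of the Rips simplicial set.
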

Observe that, in this quantitative version, there are three summands to the homotopy theoretic interleaving error:
\begin{enumerate}
	\item The first part, $\Lipf \conShrM[\delta] \delta$, is independent of the approximation parameters $\epsmet$ and $\epsfun$, and only depends on the choice of $\delta$, the geometry of $M$ and the Lipschitz constant of $f$.
	\item The second part, $\Lipf \lipConstRho \epsmet$, is linear in the metric distortion parameter $\epsmet$, with coefficient depending on the geometry of $M$ and the Lipschitz constant of $f$.
	\item The third part, $\epsfun$, is linear in the function distortion parameter $\epsfun$ (with coefficient $1$), and independent of the geometry of $M$ and the Lipschitz constant of $f$.
\end{enumerate}
Let us spell out some special cases of this result, providing explicit constants. 
It follows from a classical result of \cite{LangSchroeder1997Jung} that Jung's constant $\conShrM$, for $\rho \in (0, \frac{\varpi_{\kappa}}{2}]$, is bounded above by 
\[
\conShrK := \begin{cases} \frac{ \arcsin({\sqrt{2} \sin( \frac{\rho\sqrt{\kappa}}{2})) }}{\rho \sqrt{\kappa}} & \textnormal{, for $\kappa >0$} \\
\frac{1}{\sqrt{2}} & \textnormal{, for $\kappa \leq 0$.}
\end{cases} \]
In particular, for $\rho \leq \frac{\varpi_{\kappa}}{4}$ one can explicitly compute 
\[
\conShrK \leq 0.73 <\frac{3}{4}.
\]
Using this simplified bound of $\frac{3}{4}$ one obtains
\[
\frac{1 - \conShrM}{1+ \conShrM} > \frac{1- \frac{3}{4}}{1 + \frac{3}{4}} = \frac{1}{7}, \quad \text{and} \quad \lipConstRho < \frac{2 \frac{3}{4}}{1- \frac{3}{4}} = 6,
\]
independently of $\kappa$ and $M$ (see \cite{sushBoundedAbove} where similar constants appear). It follows that with these choices, it suffices to have $\epsmet \leq \min \{\rho - \delta, \frac{1}{7} \delta\}$ to obtain an interleaving 
\[\Rips[\delta](\mapprxbull) \simeq _{\Lipf(\frac{3}{4}\delta + 6\epsmet) + \epsfun} \mbull. \]
For the special case where $N = 0$, i.e., the non-filtered case, these choices recover the quantitative version of Latschev's theorem from \cite{Majhi2024_DCG_DemystifyingLatschev,sushBoundedAbove} \footnote{The result is stated in somewhat different form there. Firstly, one needs to note the difference between Hausdorff distance and correspondence distance, which introduces a bound $\frac{1}{14}\delta$ as opposed to $\frac{1}{7}\delta$. Showing that the two statements are then equivalent is a straightforward rearrangement of inequalities. Note that we could derive a statement with marginally weaker assumptions by using $0.73$ instead of $\frac{3}{4}$.}.
Stating the results in terms of the Jung constant $\conShrM$ instead of a global $\frac{3}{4}$ bound has the additional advantage of providing a larger range for $\delta$, as well as better bounds for spaces $M$ of low curvature and dimension. Suppose, for example, we are trying to estimate the sublevel set persistence of a function $f$ defined on a convex set $M$ in $\mathbb{R}^n$ (with non-empty interior) with $n=2$. Then we have 
\[
\conShrM = \sqrt{\frac{n}{2(n+1)}} = \sqrt{\frac{1}{3}}
\]
and thus 
\[
\frac{1 - \conShrM}{1+ \conShrM} \geq \frac{1- \sqrt{\frac{1}{3}}}{1 + \sqrt{\frac{1}{3}}} = 2 -\sqrt{3} \approx 0.268 \quad \text{and} \quad \lipConstRho \leq \frac{2 \sqrt{\frac{1}{3}}}{1- \sqrt{\frac{1}{3}}} = 1 + \sqrt{3} \approx 2.73.
\]\\
Similarly to the proof of the classical Latschev theorem, the proof of our persistent version (\cref{intThm:pers_lat_quant_version}) is a two-step process. We think that, in the quantitative case, it is particularly helpful to discuss these steps separately. Firstly, one needs a persistent version of Hausmann's Theorem (see \cite{Hausmann1995}), allowing us to relate $\mbull$ to $\Rips[\delta](\mbull)$. 
\begin{theorem}[Persistent Hausmann's Theorem (\cref{thm:persistent_hausmann})]\label{thm:persistent_hausmann_intro} 
Let $M$ be a proper metric space of curvature bounded above by $\kappa$, $f \colon M \to \RN$ be Lipschitz with Lipschitz constant $\Lipf$, and $0 <\delta \leq \catRad$. 
Then there is an interleaving \[\mbull \simeq _{\Lipf\conShrM[\delta]\delta} \Rips[\delta](\mbull) \] in the persistent homotopy category.
\end{theorem}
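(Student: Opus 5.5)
We will construct the interleaving through a zig-zag of natural maps of $\RN$-indexed filtered spaces, using the classical Hausmann construction (a "center of mass" map from the Rips complex to $M$) run sublevel-set by sublevel-set. Set $\varepsilon := \Lipf\conShrM[\delta]\delta$ and write $M^{a} = f^{-1}(\{x \leq a\})$, with $\Rips[\delta](M^a)$ the Rips complex of parameter $\delta$ on $M^a$.

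The first map is the vertex inclusion. To turn it into a genuine map of spaces, we replace $M^a$ by its singular simplicial set, or by the Rips complex at scale $0^+$ via the nerve of a cover by small balls. Concretely, we will use the map $\sd\,\textnormal{Sing}(M^a) \to \Rips[\delta](M^a)$ sending a singular simplex to its vertices. This is not well defined on simplices of large diameter, so we restrict to the weakly equivalent subcomplex of singular simplices of diameter less than $\delta$ (small simplices theorem). This gives a natural map $\mbull \to \Rips[\delta](\mbull)$ in the persistent homotopy category, with no shift.

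The second map goes backwards with shift $\varepsilon$. For a simplex $\sigma=\{x_0,\dots,x_k\}\subset M^a$ of diameter less than $\delta \leq \catRad$, we have $\circRad(\sigma)\leq \conShrM[\delta]\diam(\sigma) < \conShrM[\delta]\delta$. The $\catK$ structure at scale $\catRad$ makes the circumcenter $c(\sigma)$ unique and continuous, and by $\catK$ convexity the closed ball $\closedball[\circRad(\sigma)](c(\sigma))$ is convex. We then define $\Phi\colon |\Rips[\delta](M^a)|\to M$ inductively on skeleta by geodesic coning, or by barycentric subdivision: send the barycenter of each face $\tau$ to $c(\tau)$ and extend linearly along geodesics. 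Every point in the image of $|\sigma|$ then lies within the convex ball of radius $\circRad(\sigma)$ around $c(\sigma)$, which contains $c(\tau)$ for all faces $\tau\subseteq\sigma$. Hence each image point lies within distance $\conShrM[\delta]\delta$ of some vertex $x_i\in M^a$, so $f$ takes values at most $a+\Lipf\conShrM[\delta]\delta$ on it. Strictly, the bound is $\le$ the sup, and the hypothesis $\delta\le\catRad$ with the definition of $\conShrM$ as a supremum gives the non-strict inequality needed. This yields maps $\Rips[\delta](M^a)\to M^{a+\varepsilon}$, natural in $a$.

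It remains to check the two triangle identities up to homotopy compatible with the filtration. For $M^a\to\Rips[\delta](M^a)\to M^{a+\varepsilon}$ compared with the inclusion, a small singular simplex is sent to a map homotopic to itself via the geodesic homotopy. The simplex and its circumcenter image lie together in a convex ball of radius less than $\conShrM[\delta]\delta$, so the straight-line homotopy stays within $M^{a+\varepsilon}$. For $\Rips[\delta](M^a)\to M^{a+\varepsilon}\to\Rips[\delta](M^{a+\varepsilon})$ the argument is the same. Because we are working with the point-set Rips complex on a space, we factor through $\Rips[\delta](M^{a+\varepsilon})$ applied to the image points, using that $\Phi(|\sigma|)\cup\sigma$ has diameter less than $\delta$. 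This holds because everything lies in the minimal enclosing ball of $\sigma$, of radius $\circRad(\sigma)$, whose diameter is controlled by $2\circRad(\sigma)$. That bound may exceed $\delta$, so we will instead use a contiguity argument on subdivisions: each simplex together with the image of its sub-simplices spans a simplex in $\Rips[\delta](M^{a+\varepsilon})$.

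We expect this second triangle to be the main obstacle. The diameter control needed for contiguity is delicate, and it is where we would most likely need to replace the naive simplicial argument with a filtered version of Hausmann's original ordering and induction argument. Making the homotopies natural in $a\in\RN$, so that they genuinely live in the persistent homotopy category, also needs care, which we would handle via simplicial (Kan) models.
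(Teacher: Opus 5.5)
Your route differs from the paper's. The paper never builds a map out of $|\Rips[\delta](M^a)|$ by hand. It passes to the metric thickening $\MRips(M^\bullet)$, which is equivalent to $|\Rips[\delta](M^\bullet)|$ by Gillespie's theorem. There the Karcher mean $K$ gives $K\circ\iota=\mathrm{id}$ on the nose, and $\iota\circ K\simeq s$ via the straight-line homotopy $tK(\mu)+(1-t)\mu$, whose support $\supp(\mu)\cup\{K(\mu)\}$ has diameter $<\delta$ by \cref{lem:distances_to_karcher}.

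\textbf{The gap.} In your route the second triangle is never proved: you suspect the needed diameter bound fails and defer to an unspecified contiguity or Hausmann-type argument. The missing idea is that $\Phi(|\sigma|)$ lies in the closed convex hull $C_\sigma$ of $\sigma$, i.e.\ the intersection of all closed convex sets containing $\sigma$. By \cref{theo:ex_of_circumcenter}, every $c(\tau)$ with $\tau\subseteq\sigma$ lies in $C_\sigma$, and geodesic coning stays inside convex sets. By \cref{lem:intersection_trick}, applied with closed balls of radius $\diam(\sigma)<\catRad$, we get $\diam(C_\sigma)\le\diam(\sigma)<\delta$. Your estimate via $2\circRad(\sigma)$ is simply too coarse. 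With this in hand:
\begin{itemize}
\item $\sigma\mapsto\Phi|_{|\sigma|}$ is a simplicial map $\Psi\colon\Rips[\delta](M^a)\to\mathrm{Sing}^{<\delta}(M^{a+\varepsilon})$, natural in $a$.
\item Composing with evaluation recovers $\Phi$.
\item Since $\Phi(x)=c(\{x\})=x$, your vertex map sends $\Psi(\sigma)$ back to $\sigma$ itself.
\end{itemize}
So the second triangle holds on the nose in the homotopy category, and no contiguity argument is needed.

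\textbf{The $\Phi$ filtration bound.} Your argument that $\Phi$ lands in $M^{a+\varepsilon}$ is a non sequitur. Lying within $\circRad(\sigma)$ of $c(\sigma)$ only puts a point within $2\circRad(\sigma)$ of a vertex. Instead, take a point in the geodesic simplex of a chain $\tau_0\subsetneq\dots\subsetneq\tau_k$ and pick $x\in\tau_0$. Every $c(\tau_i)$ then lies in the convex ball $\closedball[\circRad(\sigma)](x)$. So the point is within $\circRad(\sigma)<\conShrM[\delta]\delta$ of $x\in M^a$.

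\textbf{The first triangle.} The reason you give (both lie in a convex ball of radius $<\conShrM[\delta]\delta$) does not keep the geodesic homotopy inside $M^{a+\varepsilon}$. What you actually get is that the homotopy stays in the closed convex hull of the image of the singular simplex. That hull has diameter $<\delta$, so the homotopy lies only in $M^{a+\Lipf\delta}$. This does not support your asymmetric $(0,\varepsilon)$ claim. However, $\conShrM[\delta]\ge\frac12$ in the non-discrete case (the discrete case is trivial), so $\Lipf\delta\le 2\varepsilon$. That is exactly the room available for a symmetric $\varepsilon$-interleaving once you replace your first map by $s_\varepsilon\circ j$.

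With these repairs your argument goes through. Unlike the paper's, it would not need properness, since no continuity of Karcher means is used. As written, though, its key step is missing.
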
%
This result, which we prove in \cref{sec:Hausmann}, is a straightforward consequence of a powerful new technique called metric thickening, which provides a more geometric model for the Vietoris-Rips complex~\cite{metric_reconstruction_via_optimal_transport,Gillespie2024VietorisThickenings}. \\
The crucial and technically more difficult part of proving \cref{intThm:pers_lat_quant_version} is to relate $\Rips[\delta](\mbull)$ and $\Rips[\delta](\mapprxbull)$. Note that if the operation 
\[
\mapprxbull \mapsto \Rips[\delta](\mapprxbull)
\]
were stable with respect to the distances associated with correspondences and homotopy-theoretic interleavings, then a bound on the distance of $\Rips[\delta](\mbull)$ and $\Rips[\delta](\mapprxbull)$ would follow. This connects our investigation to a third line of research: the study of stability properties of multivariate Vietoris-Rips constructions (see, for example, \cite{blumberg2024stability,BlumbergLesnick2023HomotopyInterleaving,rolle2024stable}.) Such stability results are generally obtained by letting all relevant persistence parameters vary simultaneously.
Indeed, we prove that the bivariate function-Rips construction $\Rips[\bullet](\mapprxbull)$ is $1$-Lipschitz stable with respect to correspondences of metric space-function pairs (see \cref{section:bivariate_stabiity}). The proof of this is rather straightforward; in homological settings, it was carried out in \cite{ccgmo-ghsssp-09} and then extended to $\RN$-valued functions in~\cite{andre2025estimating}. \\
While such multiparameter results are often appealing from a theoretical perspective, the addition of another parameter usually comes at a cost, both with respect to interpretability (the lack of a barcode) and computability (\cite{BotnanLesnick2023}).
Furthermore, to obtain our version of a persistent Latschev theorem, we fundamentally need a version in which we keep the Vietoris-Rips parameter $\delta$ fixed.
\\
The operation of fixing a single parameter in a multivariate persistent object is, however, generally unstable, at least on a global level. This forces our investigation into a direction much less discussed in the literature of TDA and persistence theory: the study of the behavior of generally unstable persistent constructions at a geometrically well-behaved (filtered) space $\fmet$. As these types of \define{stability-at-a-point results} describe the behavior of an invariant under small perturbations of nice objects, we also refer to them as \define{perturbative stability results} here.
Unlike global stability results, the proofs of which are generally formal in nature, such results require explicitly taking the geometry of $M$ into account. Indeed, here we prove the following result, which constitutes the technical heart of this article.
\begin{theorem}[Stability at $\cabK$ spaces (\cref{thm:local_stability_function_rips})]\label{thm:local_stab_intro}
Let $M$ be a complete metric space of curvature bounded above by $\kappa$, and $f \colon M \to \RN$ be Lipschitz, with Lipschitz constant $\Lipf \geq 0$. 
Let $0 <\delta \leq \catRad$ and fix $\rho$ with $\delta \leq \rho \leq \catRad$. Finally, let $\epsmet \geq 0$ be such that
	 \[
	 \epsmet \leq \rho - \delta \quad \text{and} \quad \epsmet \leq \frac{1 - \conShrM}{1+ \conShrM} \delta,
	\]
	and let $\epsfun \geq 0$.
    Then, for any metric space $\metapprx$ equipped with a not necessarily continuous map $\fapprx \colon \metapprx \to \RN$, any correspondence $\fmetapprx \approx_{\epsmet, \epsfun} \fmet$ induces an interleaving  \[
	\Rips[\delta](\mapprxbull) \simeq _{\Lipf \lipConstRho \epsmet + \epsfun} \Rips[\delta](\mbull) \] in the persistent homotopy category.
\end{theorem}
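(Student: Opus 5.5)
The plan is to build the two interleaving maps as simplicial maps between function-Rips complexes, and then show that their composites are contiguous to the structure maps, up to homotopy. The correspondence $C \subset \metapprx \times M$ has metric distortion $\epsmet$ and function distortion $\epsfun$.

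\emph{Direction $\Rips[\delta](\mapprxbull) \to \Rips[\delta](\mbull)$.} Choose for each $x \in \metapprx$ a partner $\phi(x) \in M$ with $(x,\phi(x)) \in C$. A $\delta$-simplex $\sigma$ of $\metapprx$ is sent to the set $\phi(\sigma)$. This set only has diameter $< \delta + \epsmet \le \rho$, so it need not be a $\delta$-simplex in $M$.

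To repair this, we use the $\catK$ geometry at scale $\rho \le \catRad$. The set $\phi(\sigma)$ has a unique circumcenter $c$, with $\circRad(\phi(\sigma)) \le \conShrM\, \diam \phi(\sigma)$. We contract $\phi(\sigma)$ towards $c$ along geodesics by a factor $t$ so that the image has diameter $<\delta$. The contraction is controlled by $\catK$ comparison, which says geodesic homotheties towards a point are Lipschitz at this scale. The hypothesis $\epsmet \le \frac{1-\conShrM}{1+\conShrM}\delta$ should be exactly what guarantees that a point moves by at most $\conShrM(\delta+\epsmet) - \ldots \le \lipConstRho\epsmet/2$-type amounts. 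Its function value therefore changes by at most $\Lipf \lipConstRho \epsmet$.

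Since a fixed choice per simplex is not functorial, I would instead work at the level of barycentric subdivision. One assigns to each simplex $\sigma$ of $\sd\,\Rips[\delta](\mapprxbull)$ a vertex, namely a suitably shrunk point near the circumcenter of $\phi(\sigma)$. One then checks that chains $\sigma_0 \subset \dots \subset \sigma_k$ map to $\delta$-simplices, because nested sets have nearby circumcenters at this scale. One also checks that filtration values shift by $\Lipf\lipConstRho\epsmet + \epsfun$. Since $\sd$ is weakly equivalent to the identity functorially, this gives a morphism in the persistent homotopy category.

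\emph{Direction $\Rips[\delta](\mbull) \to \Rips[\delta](\mapprxbull)$.} This direction is harder. A $\delta$-simplex in $M$ maps under a choice $\psi$ to a set of diameter $< \delta + \epsmet$ in $\metapprx$, and $\metapprx$ has no geometry to contract with. Instead, I would precompose with a shrinking inside $M$. Using the metric-thickening model (as in the proof of \cref{thm:persistent_hausmann_intro}), or the same circumcenter subdivision trick, one homotopes $\Rips[\delta](\mbull)$ into $\Rips[\delta - \epsmet'](\mbull)$-type simplices. Here each simplex is pulled toward its circumcenter by a factor making the diameter $<\delta - \epsmet$, which then maps via $\psi$ into $\Rips[\delta](\metapprx)$. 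The inequality $\epsmet \le \frac{1-\conShrM}{1+\conShrM}\delta$ again ensures the displacement costs at most $\Lipf\lipConstRho\epsmet$ in function value, plus $\epsfun$ from $\fapprx$ versus $f$.

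\emph{Interleaving identities.} Both composites send a vertex to a point whose distance to the original is controlled, and within a simplex all the points involved lie in a common ball. For the composite on $\metapprx$, one shows that $\sigma \cup \psi\phi(\sigma)$ (suitably shrunk) is a simplex at the shifted filtration level. This gives contiguity to the structure map, and contiguous maps are homotopic in a way natural in the filtration parameter. For the composite on $M$, one uses that geodesic contractions to circumcenters vary continuously in $\catK$ balls. This yields a straight-line homotopy in the thickening, which is filtration-respecting up to the shift.

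The main obstacle is functoriality. Circumcenters and contractions must be chosen compatibly across faces and across filtration levels so that one gets genuine natural transformations, or homotopy-coherent ones. I would first try to use the subdivision $\sd$ together with the monotonicity of circumcenters on nested sets in $\catK$ space, falling back on the metric thickening model, where geodesic convex combinations are canonical.
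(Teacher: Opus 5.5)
There is a genuine gap: you never supply the construction that carries the theorem, and what you do write proves only the weak bound of \cref{thm:weaklocalStabFull}. Your two maps are in substance the paper's $s\circ\tau\circ\varphi$ and $s\circ\psi\circ\tau$, with $\tau$ a shrinking of the Rips complex of $M$ realized by a vertex map on a subdivision.

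Suppose the vertex $\sigma$ of $\sd\,\Rips[\delta](\metapprx)$ is sent to (a point near) $c_\sigma:=\circCent(\phi(\sigma))$. Then nearness of nested circumcenters does give the edge condition, since $d(c_\sigma,c_{\sigma'})\le\conShrM(\delta+\epsmet)\le\delta-\epsmet$. But $c_\sigma$ may lie at distance $\conShrM(\delta+\epsmet)$ from every point of $\phi(\sigma)$. The filtration shift is then $\Lipf\conShrM(\delta+\epsmet)+\epsfun$, which does not vanish as $\epsmet\to0$.

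A shift linear in $\epsmet$ forces the image of $\sigma$ to stay within $O(\epsmet)$ of $\phi(\sigma)$. An example is the point $\gamma_\sigma(t)$, for small $t$, on the geodesic from the last vertex $x_\sigma$ to $c_\sigma$. For $\sigma\subset\sigma'$ you then need to bound $d(\gamma_\sigma(t),\gamma_{\sigma'}(t))$ for two geodesics with different endpoints and different centers. Neither nearness of circumcenters nor Lipschitz continuity of a homothety towards a single point provides this; for $\kappa>0$ such a homothety need not even be $(1-t)$-Lipschitz.

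This is the paper's technical core:
\begin{itemize}
\item Convexity of $d_x^2$ (\cref{lem:geodes_dist}) gives $d(\gamma(t),\eta(t))<((1-t)+t\conShrM)\delta$ whenever $d(x_0,y_0)<\delta$ and the three cross distances are $<\conShrM\delta$ (\cref{cor:pseudo_convexity_of_dist}).
\item Convexity of $d_x$ (\cref{prop:distance_to_point_conv}) controls distances from last vertices.
\item With the barycentric subdivision, the cross distance $d(c_\sigma,x_{\sigma'})$ is only $<\delta$, because the last vertex of $\sigma'$ need not lie in $\sigma$. \cref{lem:geodes_dist} then gives only $\sqrt{1-t(1-(\conShrM)^2)}\,\delta$, which for small $\epsmet$ leads to a constant strictly larger than $\lipConstRho$.
\item The subdivision $\sd_r$ fixes this: its vertices are pairs $(\sigma,x)$ with $x\in\sigma$, and its edges $a\to b$ force $x_b\in\sigma_a$.
\item Finally, $t=\frac{2\epsmet}{(\delta+\epsmet)(1-\conShrM)}$ makes $S_t:=(1-t)+t\conShrM$ satisfy $\epsmet=\frac{1-S_t}{1+S_t}\delta$. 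It also turns the cost $\Lipf t\conShrM(\delta+\epsmet)$ into exactly $\Lipf\lipConstRho\epsmet$. Your placeholder ``$\conShrM(\delta+\epsmet)-\ldots$'' stands where this computation belongs.
\end{itemize}

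The other gap is the coherence problem you flag and leave open. The paper needs no coherent choices. Every map and every homotopy is a morphism from a filtered simplicial set (subdivided, possibly multiplied with $\Delta^1$) into a reparametrized Rips simplicial set. Such a morphism is determined by a vertex map obeying strict distance and filtration inequalities (\cref{obs:universal_property_rips_filtered_sset}).

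The shrinking is a single morphism $\tau$ in the persistent homotopy category with $s\circ\tau=s$. Proving this needs an additional edge condition, $d(\lambda(x),\Theta_t(y))<S_t\delta$ for the natural weak equivalence $\lambda\colon\sd_r\Rightarrow 1_{\sSet}$, not just a bound between images. Both interleaving identities then follow from the diagram chase of \cref{lem:munchkin_lemma}. Its only further geometric input is $d(\lambda(x),\psi\Theta_t(\sd\phi(y)))<\epsmet+S_t(\delta+\epsmet)\le\delta$.

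Your fallback to straight-line homotopies in the metric thickening would additionally require continuous maps on the thickening. Circumcenters of supports are not continuous in the Wasserstein topology. The paper proves continuity of Karcher means only for proper $M$, whereas this theorem assumes only completeness.
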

First, observe two special boundary cases of this result. 
\begin{enumerate}
	\item If $N =0$, \cref{thm:local_stab_intro} specializes to the fact that (under appropriate assumptions on $\delta$) the homotopy type of $\Rips[\delta](\metapprx)$ is constant in a neighborhood of $M$ in the Gromov-Hausdorff distance (compare with \cite{sushBoundedAbove}).
	\item If $\epsmet = 0$, then a correspondence $\fmetapprx \approx_{0, \epsfun} \fmet$ specifies an isometry of the underlying spaces. Identifying $\metapprx$ with $M$ via this isometry, one recovers the classical, much easier to prove fact that the operation $\Rips[\delta](-)$ is Lipschitz with respect to the supremum distance on functions on a fixed metric space $M$ (\cite{ChazalDeSilvaOudot2014}).
\end{enumerate}
Beyond these boundary cases, it follows immediately from the composability of interleavings that \cref{thm:local_stab_intro} together with \cref{thm:persistent_hausmann_intro} implies \cref{intThm:pers_lat_quant_version}.\\
Aside from this, while fundamentally not a global stability result, \cref{thm:local_stab_intro} guarantees Lipschitz continuity \define{at the point $\fmet$} in a sufficiently small neighborhood (see \cref{subsec:lipschitz_at_a_point}).
In particular, it gives precise quantitative descriptions of the asymptotic behavior of the persistent homotopy types of $\Rips[\delta](\mapprxbull)$ as $\fmetapprx$ approaches $\fmet$.
Let us make this explicit in a specialized case, namely when we assume that $\Lipf \leq 1$ and $\rho \leq \frac{\varpi_{\kappa}}{4}$. We denote by $\dCor[\infty]$ the infimum over all non-negative real numbers $\varepsilon= \max\{\epsmet, \epsfun\}$ such that there exists a correspondence $\fmetapprx \approx_{\epsmet, \epsfun} \fmet$ (see \cref{def:filtered_correspondence,con:corresp_distance}), and let $\dIntHo$ be the interleaving distance in the ($\RN$-)persistent homotopy category (see \cref{rec:interleaving_dist}). Then one obtains the following corollary of \cref{thm:local_stab_intro}. We purposefully state a version that is weaker than necessary here, for ease of readability (see \cref{subsec:lipschitz_at_a_point} for stronger and more general statements).
\begin{corollary}\label{cor:local_lipschitz_continuity}
Let $M$ be a complete metric space of curvature bounded above by $\kappa$, and $f \colon M \to \RN$ be $1$-Lipschitz. Let $0 <\delta < \min \{\catRad, \frac{\varpi_{\kappa}}{4}\}$. 
Then the assignment 
\[
\fmetapprx \mapsto \Rips[\delta](\mapprxbull)
\]
is $7$-Lipschitz continuous at $\fmet$ with respect to the distance $\dCor[\infty]$ and the interleaving distance $\dIntHo$. More precisely, one has 
\[
\dIntHo(\Rips[\delta](\mapprxbull), \Rips[\delta](\mbull)) \leq 7  \dCor[\infty](\fmetapprx, \fmet) 
\]
for all $\fmetapprx$ in an open ball of radius $\min\{\frac{1}{7}\delta, \min \{\catRad, \frac{\varpi_{\kappa}}{4}\} - \delta\}$ around $\fmet$.
\end{corollary}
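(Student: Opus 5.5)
The corollary follows from \cref{thm:local_stab_intro} once the parameters are chosen suitably; the work is mostly bookkeeping with the constants. Set $\rho := \min\{\catRad, \frac{\varpi_{\kappa}}{4}\}$, so that $\delta < \rho \leq \catRad$. Since $\rho \leq \frac{\varpi_\kappa}{4}$, the Lang--Schroeder bound recalled above gives $\conShrM \leq \conShrK \leq 0.73 < \frac{3}{4}$. Consequently
\[
\frac{1-\conShrM}{1+\conShrM} > \frac{1}{7} \qquad\text{and}\qquad \lipConstRho < 6 .
\]
In the degenerate discrete case $\conShrM = 0$ we have $\lipConstRho = 0$, and the hypothesis $\epsmet < \delta$ needed there is implied by the radius bound below.

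Now fix $\fmetapprx$ with $d := \dCor[\infty](\fmetapprx, \fmet) < r := \min\{\frac{1}{7}\delta, \rho - \delta\}$. For any $\varepsilon$ with $d < \varepsilon < r$, the definition of $\dCor[\infty]$ as an infimum gives a correspondence $\fmetapprx \approx_{\epsmet,\epsfun} \fmet$ with $\max\{\epsmet,\epsfun\} \leq \varepsilon$. This correspondence satisfies the hypotheses of \cref{thm:local_stab_intro}:
\[
\epsmet \leq \varepsilon < \rho - \delta \qquad\text{and}\qquad \epsmet < \tfrac{1}{7}\delta < \tfrac{1-\conShrM}{1+\conShrM}\,\delta .
\]
Also $M$ is complete and $\Lipf \leq 1$, as required.

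Applying \cref{thm:local_stab_intro} gives an interleaving with parameter
\[
\Lipf \lipConstRho \epsmet + \epsfun \leq 6\varepsilon + \varepsilon = 7\varepsilon .
\]
Hence $\dIntHo(\Rips[\delta](\mapprxbull), \Rips[\delta](\mbull)) \leq 7\varepsilon$. Letting $\varepsilon \downarrow d$ yields the claimed bound $7d$.

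The only point needing care is this infimum step. Correspondences realizing exactly $d$ need not exist, so the argument must pass through nearby $\varepsilon > d$. The strict inequality $d < r$ is what leaves room to pick such $\varepsilon$ while staying inside the admissible range. The interleaving distance is itself an infimum over interleaving parameters, so the limiting step is formal.

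A secondary check is that the uniform estimate $\conShrK \leq 0.73$ really holds on all of $(0, \frac{\varpi_\kappa}{4}]$. For $\kappa \leq 0$ it is $\frac{1}{\sqrt{2}}$. For $\kappa > 0$, after rescaling, it reduces to monotonicity of $t \mapsto \arcsin(\sqrt{2}\sin(t/2))/t$ on $(0, \frac{\pi}{4}]$, evaluated at the endpoint. Both facts are stated in the text preceding the corollary, so I would cite them directly.
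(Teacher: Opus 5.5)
Your proposal is correct and is the argument the paper intends. The paper gives no separate proof: the corollary is the specialization $\rho=\min\{\catRad,\frac{\varpi_{\kappa}}{4}\}$ of \cref{thm:local_stab_intro}, with \cref{cor:global_bound} supplying $\frac{1-\conShrM}{1+\conShrM}>\frac{1}{7}$ and $\lipConstRho<6$ (the introduction's computation, and the example after \cref{thm:pointwise_lipsch_continuity}), followed by passing to the infimum over correspondences, exactly as you do.

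One small fix: when $\catRad=\infty$ (possible for $\kappa\le 0$), your $\rho$ is infinite, while the theorem needs a real $\rho$. Take instead $\rho=\min\{\catRad,\frac{\varpi_{\kappa}}{4},2\delta\}$; this $\rho$ still satisfies $\rho-\delta>\varepsilon$ and \cref{cor:global_bound} still applies, so nothing else in your argument changes.
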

Finally, let us discuss the proof strategy for \cref{thm:local_stability_function_rips}. Our techniques are quite general and may be of independent interest.
Our strategy is to leverage the bivariate interleaving $\Rips[\bullet](\mbull) \simeq_{\epsmet,\epsfun} \Rips[\bullet] (\mapprxbull)$ from the correspondence $\fmetapprx \approx_{\epsmet,\epsfun} \fmet$ of \cref{section:bivariate_stabiity} (see \cref{prop:bivariate_interleaving}) and turn it into a univariate interleaving of $\Rips[\delta](\mbull)$ and $\Rips[\delta] (\mapprxbull)$.
To this end, in \cref{section:shrinking_trick}, we introduce and study the general notion of a \textit{shrinking transformation}: a transformation of the form $F^{\bullet_1,\bullet_2} \to F^{S\bullet_1,\bullet_2 + C\bullet_1}$, with $C \geq 0$ and $0 < S < 1$ (we also allow $S=1$ for formal reasons). This additional structure allows one to move backward in one parameter at the cost of increasing the other in a bivariate persistent object $F^{\bullet,\bullet}$, as illustrated in \cref{fig:shrinking-trans}. Shrinking transformations can be used to turn interleavings in two variables $F^{\bullet,\bullet} \simeq_{\varepsilon_1, \varepsilon_2} G^{\bullet, \bullet}$ into interleavings of the form $F^{ \delta,\bullet} \simeq_{C\delta + C \varepsilon_1 + \varepsilon_2} G^{ \delta,\bullet}$ (\cref{lem:munchkin_lemma}). In this manner, they provide a general tool to obtain stability properties of persistent operations that fix a single parameter. 
\begin{figure}[t]
\centering
\includegraphics[width=0.75\textwidth]{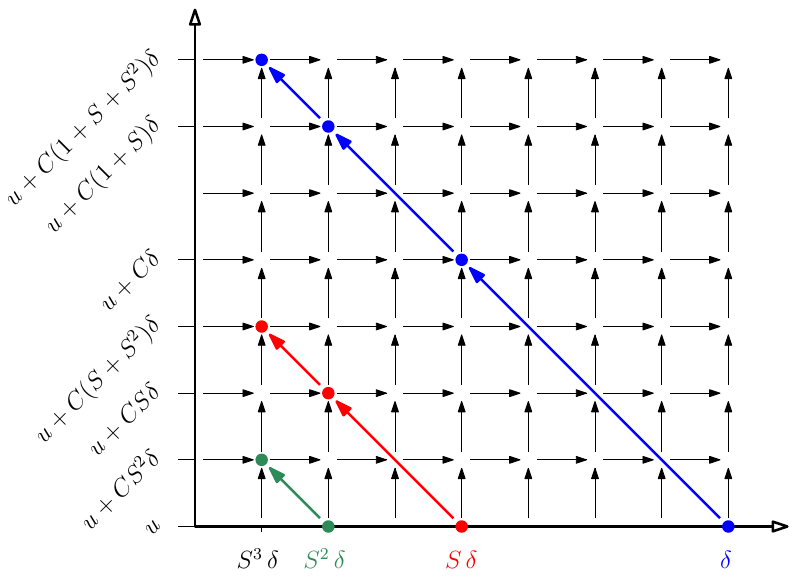}
\caption{Shrinking transformation arrows (colored) and the object's structure morphisms (black).}
\label{fig:shrinking-trans}
\end{figure}
Then, in \cref{section:shrinking_for_rips}, we provide conditions on $\met$ that guarantee the existence of such a shrinking transformation on $\Rips[\bullet](\mbull)$ (\cref{thm:back_propagation_rips}), namely the existence of what we call a \define{pseudo-barycenter map} (see \cref{def:pseudo_bar}). This is a map from the vertex set of a subdivision of $\Rips[\delta](M)$ into $M$ fulfilling certain distance conditions. One example of such a map - for the case of the barycentric subdivision - can be constructed by using centers of minimal enclosing balls - which explains the relevance of Jung's constant in our results. In this manner, one obtains a shrinking transformation with $\conShr=\conShrM$ and $\conCos = \Lipf \conShr$.\footnote{Such a map was already used implicitly in the proof of Latschev's theorem in \cite{sushBoundedAbove}.} (\cref{prop:existence_of_pseudo_bar_classical}). Together with
\cref{prop:bivariate_interleaving,lem:munchkin_lemma}, this already implies a version of \cref{thm:local_stab_intro} that provides an interleaving of the form 
\[
\Rips[\delta](\mapprxbull) \simeq _{\Lipf (\conShrM \delta + \conShrM\epsmet) + \epsfun} \Rips[\delta](\mbull) 
\]
as we state in the preliminary result \cref{thm:weaklocalStabFull}.
From this interleaving, one can already obtain a weakened version of \cref{intThm:pers_lat_quant_version}.
However, the bound depending on $\delta$ still restricts this result from being a proper stability result, as asserted in \cref{thm:local_stab_intro}. Indeed, as $\epsmet$ and $\epsfun$ converge to $0$ this weakened form guarantees no convergence of $\Rips (\mapprxbull)$ to $\Rips (\mbull)$.
Our results on pseudo-barycenter maps are, however, phrased in greater generality, pertaining to alternative subdivision functors and other constructions besides centers of minimal enclosing balls. In \cref{sec:local_stability_function_rips}, we construct such a subdivision functor (\cref{thm:properties_alt_subdiv}) and use several convexity properties of distance functions on $\cabK$ spaces (see \cref{prop:distance_to_point_conv,cor:pseudo_convexity_of_dist}) to prove the existence of a pseudo-barycenter map with cost constant $C$ arbitrarily close to $0$. As a consequence, we obtain \cref{thm:local_stab_intro} (\cref{thm:local_stability_function_rips} in the main body of the text).\\
Another advantage of the generality in which the notions of shrinking transformation and pseudo-barycenter map are developed is that they also apply to other geometric assumptions than spaces whose metric is (essentially) geodesic. Indeed, in \cref{sec:Euclidean}, we also prove versions of \cref{thm:local_stab_intro,thm:persistent_hausmann_intro,intThm:pers_lat_quant_version} for the case of subspaces $A$ of Euclidean space, where the role of the curvature bound $\kappa$ is replaced by the reach of $A$ (see \cref{thm:hausmann_Euclidean,thm:local_stability_Euclidean,thm:latschev_Euclidean}).
\section{Background}\label{sec:background}
In this section, we recall some of the relevant background information on persistent homotopy theory as well as metric spaces of bounded curvature.
\subsection{Spaces of curvature bounded above}
As mentioned in the introduction, our results will pertain to \textit{metric spaces of curvature bounded above}. We recommend \cite{BBI2001,BridsonHaefliger1999} as a more detailed source for the following definitions. 
\begin{recollection}[Geodesics]
    Let $M$ be a metric space with metric $d$. By a \textit{constant speed geodesic} we mean a map $\gamma \colon [0,1] \to M$ such that $d(\gamma(s),\gamma(t))=|s-t|d(\gamma(0), \gamma(1))$, for all $s,t \in [0,1]$. 
\end{recollection}
\begin{notation}[Comparison spaces]\label{rec:comparison_spaces}
    Let $\kappa \in \mathbb{R}$.
    We denote by $M_{\kappa}$ the following spaces:
    \begin{enumerate}
        \item For $\kappa > 0$, the sphere of radius $\frac{1}{\sqrt{\kappa}}$ in $\R^3$, equipped with the geodesic distance;
        \item For $\kappa = 0$, the Euclidean plane $\mathbb R^2$ equipped with the Euclidean distance;
        \item For $\kappa < 0$, the hyperbolic plane $\mathbb{H}$ with its metric rescaled by the factor  $\frac{1}{\sqrt{-\kappa}}$.
    \end{enumerate}
    We denote by $\varpi_{\kappa}$ the diameter of $M_{\kappa}$ ($\varpi_{\kappa} = \frac{\pi}{\sqrt{\kappa}}$ if $\kappa >0 $ and $\varpi_{\kappa} = \infty$ if $\kappa \leq 0$).
\end{notation}
\begin{recollection}[$\catK$ spaces]\label{rec:catK}
    Let $M$ be a metric space. By a \define{geodesic triangle in $M$} we mean three points $x,y,z \in M$, together with geodesics $\gamma_{x,y}, \gamma_{y,z}, \gamma_{z,x}$, connecting the points $x,y,z$ in the obvious manner. We will write $T(x,y,z)$ for such a triangle (less accurately so, if the connecting geodesics are not unique). When we refer to a triangle $T(x,y,z)$ as a subspace of $M$, we will mean the union of the images of its defining geodesics.
    By a \textit{comparison triangle} of $x,y,z$ in $M_{\kappa}$ we mean a triangle $T(\overline{x}, \overline{y},\overline{z})$ in $M_{\kappa}$ with the same pairwise distances $d(x,y) = d(\overline{x},\overline{y})$, $d(y,z) = d(\overline{y},\overline{z})$ and $d(z,x) = d(\overline{z},\overline{x})$. 
    If the perimeter of $T(x,y,z)$, $ d(x,y) + d(y,z) + d(z,x)$, is smaller than $2\varpi_{\kappa}$ then such a comparison triangle always exists and is unique up to isometry. Following the constant-speed parametrizations, we obtain a unique geodesic-preserving map $T(\overline{x}, \overline{y}, \overline{z}) \to T(x,y,z)$ mapping $\overline{x}\mapsto x$, $\overline{y} \mapsto y$ and $\overline{z} \mapsto z$. We say that $M$ \textit{is a $\catK$ space} if any two points of distance smaller than $\varpi_{\kappa}$ are connected by a geodesic, and $T(\overline{x}, \overline{y}, \overline{z}) \to T(x,y,z)$ is $1$-Lipschitz for every triangle $T(x,y,z)$ of perimeter smaller than $2\varpi_{\kappa}$ (see \cref{fig:equitriangles} for an illustration). 
\end{recollection}
\begin{figure}[h]
    \centering
    \begin{minipage}{0.3\textwidth}
        \centering
        \DrawEquiTriangle{2}{-1}{2}
    \end{minipage}
    \hfill
    \begin{minipage}{0.3\textwidth}
        \centering
        \DrawEquiTriangle{2}{0}{2}
    \end{minipage}
    \hfill
    \begin{minipage}{0.3\textwidth}
        \centering
        \DrawEquiTriangle{2}{1}{2}
    \end{minipage}
    \caption{Illustration of equilateral triangles in curvatures $\kappa<0$, $\kappa=0$, and $\kappa>0$.}
    \label{fig:equitriangles}
\end{figure}
\begin{recollection}[$\cabK$ spaces]\label{rec:cbaK}
	Let $\kappa \in \mathbb{R}$. We say that a metric space $M$ is \textit{of curvature bounded above by $\kappa$} if every point $x \in M$ admits a neighborhood that is a $\catK$ space. We will often just say that \textit{$M$ is a $\cabK$ space} to indicate this. 
\end{recollection}
\begin{notation}
Given $r >0$ and $x \in M$, we denote by $\closedball[r](x) := \{ y \in M \mid d(x,y) \leq r\}$ the closed ball of radius $r$ centered at $x$.
\end{notation}
\begin{notation}\label{not:catRad}
Let $M$ be a metric space, $\kappa \in \mathbb{R}$ and $x \in M$. We denote by 
\[
\catRad(x) := \min \{\sup \{ r > 0 \mid \closedball[r](x) \text{ is a } \catK \text{ space} \} , \frac{\varpi_{\kappa}}{2}\}.
\]
Furthermore, we denote 
\[
\catRad :=\inf_{x \in M} \catRad (x).
\]
\end{notation}
\begin{recollection}
Recall that any ball of radius smaller than $\frac{\varpi_{\kappa}}{2}$ in a $\catK$ space is itself a $\catK$ space. Thus, it follows that whenever $r < \catRad(x)$, the closed ball $\closedball[r](x)$ is a $\catK$ space. 
\end{recollection}
\begin{recollection}
	If $M$ is a $\cabK$ space, then by definition $\catRad(x) > 0$ for all $x \in M$. If, in addition to this, $M$ is compact, then $\catRad > 0$. To see this, observe that the function $\catRad(\cdot) \colon M \to (0,\infty]$ is lower semi-continuous, and thus attains a minimum on the compact space $M$.
\end{recollection}
\begin{recollection}\label{rec:convex}
   We adopt the following strong notion of convexity (this is slightly stronger than the one in \cite{BBI2001} which does not require uniqueness): We say that a subset $A \subset M$ is \textit{convex}, if every pair of points $x,y \in A$ is connected by a unique constant speed geodesic $\gamma$ from $x$ to $y$ in $M$, and that geodesic lies in $A$. 
\end{recollection}
\begin{recollection}
Recall that the \textit{convexity radius} of a metric space $M$ is defined as the supremum of all $r > 0$ such that every closed ball of radius smaller than $r$ in $M$ is convex. 
\end{recollection}
\begin{remark}
	Suppose that $M$ is a $\cabK$ space that is additionally proper, i.e., such that bounded closed sets are compact. Denote by $\rho$ the convexity radius of $M$. Then 
	\[
	\min \{\rho, \frac{\varpi_{\kappa}}{2}\} = \catRad.
	\]
	Indeed, any convex compact ball of radius smaller than $\min \{\rho, \frac{\varpi_{\kappa}}{2}\}$ in a $\cabK$ space is a $\catK$ space (see \cite[Ch II, Cor. 4.12]{BridsonHaefliger1999}). Conversely, and independently of the properness of $M$, any closed ball of radius smaller than $\frac{\varpi_{\kappa}}{2}$ in a $\catK$ space is convex.
\end{remark}
	
    \begin{remark}\label{rec:facts_about_balls_in_bounded_curvature}
		Throughout this article, we will constantly be working at scales smaller than $\catRad$. It follows by the definition of $\catRad$ that at such scales, we can generally freely cite results about $\catK$ spaces. 
\end{remark}
\subsection{Circumcenters and Jung constants in $\cabK$-spaces}
The crucial geometric ingredient in the proof of Latschev's theorem in \cite{Majhi2024_DCG_DemystifyingLatschev} is the notion of a circumcenter. We will also be leveraging this notion in our persistent setting. We therefore recall some basic properties of circumcenters in bounded curvature. For the remainder of this subsection, we fix a complete metric space $M$ of curvature bounded above by $\kappa \in \mathbb{R}$.
We recall the fundamental properties for the convenience of the reader.
\begin{definition}\label{def:chebcenter}\cite{LangSchroeder1997Jung}
    Given a finite non-empty subset $A \subset M$, suppose that the function 
    \begin{align*}
       \mathfrak{r} \colon M &\to [0,\infty) \\
        x &\mapsto \max_{a \in A} d(x,a)
    \end{align*}
    admits a unique minimizer $c \in M$. Then $c$ is called the \define{circumcenter}, or \define{Chebyshev center}, of $A$, and denoted $\circCent(A)$\footnote{One should be warned that this notion of circumcenter does not generally agree with the classical notion of a circumcenter of a triangle.}. The value $ \mathfrak{r}(\circCent(A))$ is called the \define{circumradius of $A$}, and denoted $\circRad(A)$.
\end{definition}
\begin{remark}
	Note that $\circCent(A)$ is the center of a minimal enclosing ball of $A$. In particular, if $A$ is contained in a ball of radius $r$ around some point $x$, then $\circRad(A) \leq r$. 
\end{remark}
In bounded curvature, circumcenters exist at a sufficiently small scale:
\begin{theorem}\cite{LangSchroeder1997Jung}\label{theo:ex_of_circumcenter}
Let $A \subset M$ be a finite non-empty subset such that $\diam(A) < \catRad$. Then $\circCent(A)$ exists. Furthermore, $\circCent(A)$ is contained in any closed convex subset containing $A$. 
\end{theorem}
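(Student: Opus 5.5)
We must prove that for a finite nonempty $A\subset M$ with $\diam(A)<\catRad$, the function $\mathfrak r(x)=\max_{a\in A}d(x,a)$ has a unique minimizer, and that this minimizer lies in every closed convex set containing $A$. The idea is to localize the whole problem to a single small ball which is a $\catK$ space and small enough that distance functions are strictly convex, and then use the standard argument for $\catK$ spaces of small diameter.

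\emph{Step 1 (localization).} Fix $a_0\in A$ and put $r:=\diam(A)$, so that $A\subset\closedball[r](a_0)$ and $r<\catRad\le\catRad(a_0)$. Choose $r'$ with $r<r'<\catRad(a_0)$ and set $B:=\closedball[r'](a_0)$. By the recollection after \cref{not:catRad}, $B$ is a $\catK$ space. Its diameter is at most $2r'<\varpi_\kappa$, and by the remark preceding it, $B$ is convex in $M$. Any point $x$ with $d(x,a_0)>r'$ satisfies $\mathfrak r(x)\ge d(x,a_0)>r'>r\ge\mathfrak r(a_0)$. Hence every near-minimizer of $\mathfrak r$ lies in $B$, and it suffices to minimize $\mathfrak r$ over $B$.

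\emph{Step 2 (existence via a minimizing sequence).} Work in the complete $\catK$ space $B$; it is closed in the complete space $M$. Let $m:=\inf\mathfrak r$ and take $x_n\in B$ with $\mathfrak r(x_n)\to m$. For the midpoint $z$ of $x_n,x_k$, apply the CAT($\kappa$) comparison to the triangles $T(a,x_n,x_k)$ for each $a\in A$. These triangles have perimeter below $2\varpi_\kappa$ because all points lie in $B$ and $4r'<2\varpi_\kappa$ after shrinking $r'$; note $\catRad\le\varpi_\kappa/2$ gives $r'<\varpi_\kappa/2$. This yields a uniform strict-convexity estimate of the form
\[
\cos_\kappa d(a,z)\ \ge\ \frac{\cos_\kappa d(a,x_n)+\cos_\kappa d(a,x_k)}{2\cos_\kappa\!\big(d(x_n,x_k)/2\big)}
\]
for $\kappa>0$ (the Euclidean parallelogram inequality for $\kappa\le 0$, after comparing with $\kappa=0$ or the hyperbolic analogue). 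Taking the maximum over $a$ and using $\mathfrak r(z)\ge m$ forces $d(x_n,x_k)\to0$. So $(x_n)$ is Cauchy, and its limit $c$ is a minimizer by continuity of $\mathfrak r$.

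\emph{Step 3 (uniqueness).} Apply the same inequality to two minimizers $c,c'$. The midpoint would satisfy $\mathfrak r(z)<m$ unless $c=c'$, using that $m\le r<\varpi_\kappa/2$, so the cosines are positive.

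\emph{Step 4 (the convexity claim).} Let $C$ be closed and convex with $A\subset C$. Intersecting with $B$ (both convex), we may use the nearest-point projection $\pi$ onto the complete convex set $C\cap B$ inside the $\catK$ space $B$; it is well defined at scales below $\varpi_\kappa/2$. Projections onto convex sets in $\catK$ spaces of small diameter do not increase distance to points of the set: for $a\in A$ and $x\notin C$, the angle at $\pi(x)$ between $x$ and $a$ is at least $\pi/2$, and comparison then gives $d(\pi(x),a)\le d(x,a)$. Hence $\mathfrak r(\pi(c))\le\mathfrak r(c)$, and uniqueness gives $c=\pi(c)\in C$.

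The main obstacle is Step 2 for $\kappa>0$: getting the quantitative midpoint inequality from comparison triangles and checking that all perimeter and diameter bounds stay below $\varpi_\kappa$. This is exactly where the hypothesis $\diam(A)<\catRad\le\varpi_\kappa/2$ is used. Here I would follow Lang–Schroeder's spherical computation directly rather than rederive it.
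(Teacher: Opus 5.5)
Your proof is correct in outline and takes a genuinely different route from the paper. The paper takes $B=\closedball[\diam(A)](x)$ and cites Lang--Schroeder's Theorem~A for the circumcenter $c_B(A)$ in that $\catK$ ball. It then derives both remaining claims from a single device. The closed convex hulls $K\subset K'$ of $A$ and of $A\cup\{c_B(A)\}$ have diameter at most $\diam(A)<\varpi_\kappa/2$ by \cref{lem:intersection_trick}, so the Kimura--Sasaki projection $K'\to K$ does not increase distances to points of $K$. This places $c_B(A)$ in every closed convex set containing $A$, which makes it independent of $B$, hence a global minimizer.

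You proceed differently in three places:
\begin{enumerate}
\item[(i)] You localize directly, since $\mathfrak r(x)\ge d(x,a_0)>r'>\mathfrak r(a_0)$ off $B$. This gives global minimality at once and is simpler than the paper's independence-of-$B$ argument.
\item[(ii)] You reprove the $\catK$ case via the midpoint inequality. On the model sphere this is an identity (take $\bar z=(\bar x+\bar y)/|\bar x+\bar y|$), and comparison turns it into an inequality. So your proof is self-contained where the paper cites.
\item[(iii)] You obtain the convexity claim by projecting onto $C\cap B$ and using the obtuse angle at the foot point, avoiding convex hulls altogether.
\end{enumerate}

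Step (iii) needs more than you wrote when $\kappa>0$. The set $C\cap B$ can have diameter close to $2r'$, hence close to $\varpi_\kappa$. At that scale the slogan ``projections onto convex sets of small diameter do not increase distances to points of the set'' is false: in $M_\kappa$, a right angle at $\pi(x)$ with $d(\pi(x),a)>\varpi_\kappa/2$ gives $d(x,a)<d(\pi(x),a)$. What rescues the step is that you only project $c$, for which $d(c,a)\le m<\varpi_\kappa/2$ and $d(c,\pi(c))\le m$.

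Write $\cos_\kappa(t)=\cos(\sqrt{\kappa}\,t)$. The comparison triangle of $c,\pi(c),a$ has perimeter at most $4m<2\varpi_\kappa$ and angle at least $\pi/2$ at $\overline{\pi(c)}$. The spherical law of cosines then gives $\cos_\kappa d(c,a)\le\cos_\kappa d(c,\pi(c))\,\cos_\kappa d(\pi(c),a)$. The left side and the first factor are positive, so $\cos_\kappa d(\pi(c),a)>0$, and then $d(\pi(c),a)\le d(c,a)$. This bound is exactly what the paper's diameter control on $K'$ supplies, and you should state it explicitly.

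Finally, you do not need $B$ to be convex in $M$; the remark you cite concerns balls inside a $\catK$ space, not inside $M$. A geodesic of $B$ joining two points of $C\cap B$ is a geodesic of $M$. By the uniqueness built into the paper's notion of convexity, it is the geodesic lying in $C$, so $C\cap B$ is convex in $B$.
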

\begin{proof}
	\cite[Thm. A]{LangSchroeder1997Jung} treats the case of a $\catK$ space, stating the existence and uniqueness of a minimizer of $\mathfrak{r}$. We now generalize this statement to spaces of bounded curvature. To this end, fix any closed ball $B$ of radius less than or equal to $\diam(A)$ that contains $A$, for example, the ball of radius $\diam(A)$ centered at some $x \in A$. 
    By the assumption $\diam(A) <\catRad$, such a ball is a complete $\catK$ space. 
    Hence, it follows from the $\catK$ case that $A$ admits a circumcenter in $B$, which we denote by $c_B(A)$. 
    Note, furthermore, that any ball of diameter smaller than $\catRad \leq \frac{\varpi_{\kappa
    }}{2}$ in a $\catK$ space is convex. Next, we make the following observations:
    \begin{enumerate}
        \item Given any other $x \in M$, with $\mathfrak{r}(x) \leq \mathfrak{r}(c_{B}(A))$, it holds by definition that the closed ball $B' = \closedball[ {\mathfrak{r}(x)}](x)$ contains $A$. 
        \item Suppose now that we have shown that $c_B(A)$ is independent of the choice of $B$. Then it follows that $c_B(A)$ is also the unique minimizer of $\mathfrak{r}$ on $B'$. As $\mathfrak{r}(x) \leq \mathfrak{r}(c_B(A))$, we obtain that $x = c_B(A)$. Hence, we have shown that $c_B(A)$ is a global minimizer.
    \end{enumerate}
 To prove the independence of $B$, we show the following stronger statement: Given any closed convex set $C$ containing $A$, it holds that $c_B(A) \in C$. Note that this does indeed imply the independence of the choice of $B$, as it implies that for any other such $B'$, we have $c_B(A), c_{B'}(A) \in B \cap B'$ and thus $c_B(A) = c_{B'}(A)$ by the uniqueness of minimizers of $\mathfrak{r}$ on the complete $\catK$ space $B \cap B'$. Furthermore, observe that this stronger statement also implies the second statement of the theorem. Now, let $K$ be the intersection of all closed convex subsets containing $A$ and let $K'$ be the intersection of all closed convex subsets containing $A$ and $c_{B}(A)$. By definition, $K$ and $K'$ are again convex and closed. Note, furthermore, that $K \subset K'$ and $\diam(K') \leq \diam(A) < \catRad \leq \frac{\varpi_\kappa}{2}$ (see \cref{lem:intersection_trick} below). 
    We can now apply \cite[Cor. 2.10]{KimuraSasaki2023PerturbationsResolvent}, which states that under this diameter bound on $K'$ the nearest point projection map $\pi \colon K' \to K$ is well-defined and has the property that 
    \[
    d(x, \pi(y)) \leq d(x,y) 
    \]
    for any $x \in K$ and $y \in K'$. In particular, it follows that 
    \[
    d(a, \pi (c_B(A))) \leq d(a, c_B(A)),
    \]
    for all $a \in A$. As $K' \subset B$, it follows that $ \pi (c_B(A)) $ is also a minimizer of $\mathfrak{r}$ on $B$. Consequently, $\pi (c_B(A)) = c_B(A)$ and thus $c_B(A) \in K$. This finishes the proof.
\end{proof}
\begin{lemma}\label{lem:intersection_trick}
    Suppose that $X$ is a metric space and that $\mathcal{I}$ is a set of subsets of $X$ that is stable under intersection. Suppose, furthermore, that $\delta \geq 0$ is such that every closed ball of radius $\delta$ in $X$ is in $\mathcal{I}$. Let $A \subset X$ be such that $\diam(A) \leq \delta$. Then $\diam( \cap_{B \in \mathcal{I},A \subset B} B) \leq \delta$. 
\end{lemma}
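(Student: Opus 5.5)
The statement follows almost directly from the definitions. Write $K := \bigcap_{B \in \mathcal{I}, A \subset B} B$. I will show that every point of $K$ lies within distance $\delta$ of every point of $K$, by a two-step "bootstrapping" argument that uses closed balls of radius $\delta$ as members of $\mathcal{I}$.

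The first step shows that $K$ is contained in every ball of radius $\delta$ centred at a point of $A$. For $a \in A$, the closed ball $\closedball[\delta](a)$ contains $A$, since $\diam(A) \leq \delta$. It belongs to $\mathcal{I}$ by hypothesis, so it is one of the sets in the intersection, and hence $K \subset \closedball[\delta](a)$. Thus $d(x,a) \leq \delta$ for all $x \in K$ and $a \in A$.

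The second step fixes $x \in K$. By the first step, $A \subset \closedball[\delta](x)$, and this ball lies in $\mathcal{I}$. It is therefore again one of the sets being intersected, so $K \subset \closedball[\delta](x)$. This means $d(x,y) \leq \delta$ for all $y \in K$. Since $x \in K$ was arbitrary, $\diam(K) \leq \delta$.

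Stability of $\mathcal{I}$ under intersection is not really needed for the bound itself; it only ensures that $K$ is meaningfully the smallest element of $\mathcal{I}$ containing $A$, which is how it is used in \cref{theo:ex_of_circumcenter}. There are two degenerate cases to handle. If $A$ is empty, there are no constraints from the first step, so I would note that the statement is only applied to non-empty $A$, or treat that case separately. If $\delta = 0$, closed balls are points, and the same argument goes through verbatim. I expect no real obstacle here; the only subtle point is noticing that the ball centred at a point of $K$ (rather than only at points of $A$) also contains $A$, which is exactly what the first step provides.
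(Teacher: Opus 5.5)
Your proof is correct and is essentially the paper's argument: $K \subset \closedball[\delta](a)$ for every $a \in A$, hence $A \subset \closedball[\delta](x)$ for each $x \in K$, hence $K \subset \closedball[\delta](x)$. You also don't need to set aside the case $A = \emptyset$, since your second step applies verbatim there: $\emptyset \subset \closedball[\delta](x)$ holds trivially.
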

\begin{proof}
Denote $C_A = \cap_{B \in \mathcal{I},A \subset B} B$. 
    Note that for any $x\in C_A$, we have $x\in \closedball[\delta](a)$ for every $a \in A$, as $\closedball[\delta](a)$ is a set in the defining intersection of $C_A$. Consequently, $A \subset \closedball[\delta](x)$. As the latter ball is again in $\mathcal{I}$, it follows that $C_A \subset \closedball[\delta](x)$. Consequently, $d(x,y) \leq \delta$ for all $y \in C_A$. As $x$ was arbitrary, we deduce that $\diam(C_A) \leq \delta$.
\end{proof}
A crucial quantity to consider for our purposes will then be the following:
\begin{notation}\label{not:conJung}
	Given $0<\rho \leq \catRad$, we denote 
	\[
	\conShrM:= \sup \{ \frac{\circRad(A)}{\diam(A)} \mid A \subset M, \textnormal{ finite, } 0 < \diam(A) < \rho \}.
	\]
	We refer to $\conShrM$ as the \textit{Jung constant} of $M$ at scale $\rho$.
\end{notation}
\begin{remark}\label{rem:discrete_case}
	Note that, as long as the set over which we are taking the supremum is non-empty, it follows from the reverse triangle inequality that $\conShrM \geq \frac{1}{2}$. Note that this set can only be empty if $M$ is discrete at scale $\rho$, i.e., if there are no non-degenerate pairs of points in $M$ at a distance smaller than $\rho$; in this case, the supremum is taken over an empty set (and in $\mathbb{R}_{\geq 0}$), and thus $\conShrM = 0$.
\end{remark}
\begin{block}
The Jung constant $\conShrM$ quantifies the worst-case ratio of the circumradius and diameter of a set at scale $\rho$. 
In the case where $M$ is a convex subset of $\mathbb{R}^n$ (of full dimension), one has 
\[
\conShrM = \sqrt{\frac{n}{2(n+1)}},
\]
independently of $\rho$, by the classical Jung's theorem. 
\end{block}
\begin{lemma}\label{lem:distance_bounds_circum}
Let $A\subset A'$ be two finite, non-empty subsets of $M$ such that $\diam(A') < \rho \leq \catRad$. Then 
\( d(\circCent(A), \circCent(A')) \leq \conShrM \, \diam(A')\).\\ Furthermore, the inequality \(d(\circCent(A),x) \leq \diam(A')\) holds for all \(x \in A'\).
\end{lemma}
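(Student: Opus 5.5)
The plan is to exploit the convexity statement in \cref{theo:ex_of_circumcenter}: since $\diam(A')<\rho\leq\catRad$, the circumcenter $\circCent(A)$ exists and lies in every closed convex set containing $A$. Put $D:=\diam(A')$. I will trap $\circCent(A)$ inside the closed ball of radius $\circRad(A')$ centered at $\circCent(A')$. This immediately gives the first inequality, because $\circRad(A')\leq \conShrM\,D$ by the definition of the Jung constant in \cref{not:conJung}.

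First I check that this ball is admissible. We have $\circRad(A')\leq D<\catRad$, since $A'\subset\closedball[D](x)$ for any $x\in A'$. So $\closedball[\circRad(A')](\circCent(A'))$ is a closed ball of radius smaller than $\catRad\leq\frac{\varpi_\kappa}{2}$. By \cref{rec:facts_about_balls_in_bounded_curvature} and the recollections preceding it, such a ball is convex in the strong sense of \cref{rec:convex}. It is closed, and it contains $A'$, hence $A$, by definition of $\circRad(A')$. Applying the second part of \cref{theo:ex_of_circumcenter} to $A$, which has $\diam(A)\leq D<\catRad$, gives $\circCent(A)\in\closedball[\circRad(A')](\circCent(A'))$, so
\[
d(\circCent(A),\circCent(A'))\leq\circRad(A')\leq\conShrM\,D.
\]
One degenerate case needs a separate remark. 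If $D=0$, then $A=A'$ is a single point, both sides vanish, and the supremum in \cref{not:conJung} plays no role.

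For the second inequality, fix $x\in A'$. The closed ball $\closedball[D](x)$ contains $A'\supset A$. Its radius is $D<\catRad$, so it is again a closed convex set, and \cref{theo:ex_of_circumcenter} gives $\circCent(A)\in\closedball[D](x)$, i.e.\ $d(\circCent(A),x)\leq D$. (Alternatively: $\circRad(A)\leq D$, but that bounds the distance only to points of $A$, so the convexity argument is the cleaner route.)

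The only delicate point is confirming that closed balls of radius $<\catRad$, centered at arbitrary points of $M$, are convex in the strong sense, with unique geodesics in $M$. Such a ball lies inside a $\catK$ ball around its center, of radius below $\catRad(x)$. Closed balls of radius $<\frac{\varpi_\kappa}{2}$ there are convex, and geodesics between points at distance $<\varpi_\kappa$ are unique. I would cite this fact as recalled in the remark on convexity radius rather than reprove it.
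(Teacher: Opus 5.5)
Your proposal is correct and is essentially the paper's proof. Both arguments trap $\circCent(A)$ in the closed convex balls $\closedball[\diam(A')](x)$, for $x\in A'$, and $\closedball[\circRad(A')](\circCent(A'))$ via the second statement of \cref{theo:ex_of_circumcenter}, and then bound $\circRad(A')\leq \conShrM\,\diam(A')$ by the definition of the Jung constant; your separate treatment of $\diam(A')=0$ and your remark on convexity of balls of radius below $\catRad$ only make explicit what the paper leaves implicit.
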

\begin{proof}
	By \cref{theo:ex_of_circumcenter}, we have that $\circCent(A) \in \closedball[\diam(A')](x)$, for any $x \in A'$, and thus $d(\circCent(A),x) \leq \diam(A')$, for all $x \in A'$. 
	Since $\closedball[\circRad(A')](\circCent(A'))$ also contains $A'$, it follows from \cref{theo:ex_of_circumcenter} that $\circCent(A) \in \closedball[\circRad(A')](\circCent(A'))$, or in other words, $d(\circCent(A), \circCent(A')) \leq \circRad(A') \leq \conShrM \diam(A')$, as claimed.
\end{proof}
\begin{remark}
	As the scale approaches zero, the geometry of a $\cabK$ space $M$ approaches that of a $\cabK[0]$ space. More precisely, for $r>0$ it holds that the rescaling of $M$ by $\frac{1}{r}>0$ is (generally at best) a $\cabK[\kappa r^2]$ space. As $\conShrM$ is defined in terms of a ratio that is invariant under such rescaling, and takes arbitrarily small scales into account, we can generally not expect it to behave better than in the case of curvature bounded above by $0$. 
\end{remark}
Let us recall some global bounds on $\conShrM$, depending only on $\kappa \in \mathbb{R}$.
In the general case of bounded curvature, one can derive from \cite[Thm. A]{LangSchroeder1997Jung} the following bounds on $\conShrM$.
\begin{notation}\label{not:conJungGlob}
 We denote by $\conShrOne[]$ the continuous function 
	\begin{align*}
	\conShrOne[] \colon [0, \frac{\pi}{2}] &\to \mathbb{R} \\
	\rho &\mapsto \begin{cases} \frac{1}{\sqrt{2}} & \text{if } \rho = 0 \\
	\frac{\arcsin ( \sqrt{2} \sin (\frac{\rho}{2}) )}{\rho} & \text{if } \rho > 0
	\end{cases}
	\end{align*}
	and by $s_{\kappa} \colon [0, \frac{\varpi_{\kappa}}{2}] \to [0, \frac{\pi}{2}]$ the function given by $\rho \mapsto \sqrt{\kappa}\rho$, when $\kappa>0$. Furthermore, we denote
    \begin{align*}
        \conShrK[] \colon [0, \frac{\varpi_{\kappa}}{2}] &\to \mathbb{R} \\
        \rho &\mapsto \begin{cases}
	(\conShrOne[] \circ s_{\kappa})(\rho) & \text{for } \kappa > 0; \\
	\frac{1}{\sqrt{2}} & \text{for } \kappa \leq 0.
	\end{cases}
    \end{align*}
	Given $\rho \in [0, \frac{\varpi_{\kappa}}{2}]$, we will denote the value $\conShrK[](\rho)$ in the form $\conShrK$ and call it the \textit{model Jung constant} for curvature bounded by $\kappa$ at scale $\rho$.
\end{notation}
It then follows from \cite[Thm. A]{LangSchroeder1997Jung} that the following holds.
\begin{theorem}\cite{LangSchroeder1997Jung}
	Let $0<\rho \leq \catRad$. Then $\conShrM \leq \conShrK$.
\end{theorem}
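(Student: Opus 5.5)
We deduce the bound from \cite[Thm.~A]{LangSchroeder1997Jung} applied locally. Fix a finite $A \subset M$ with $0 < \diam(A) < \rho \leq \catRad$. We must show $\circRad(A) \leq \conShrK \diam(A)$. Write $D := \diam(A)$.

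\emph{Localization.} Pick $x \in A$ and let $B := \closedball[D](x)$. Since $D < \catRad$, the ball $B$ is a complete $\catK$ space, as in the proof of \cref{theo:ex_of_circumcenter}. It contains $A$, and by that theorem the circumcenter $\circCent(A)$ computed in $M$ agrees with the one computed in $B$. So $\circRad(A)$ is the same quantity whether measured in $M$ or in $B$, and it suffices to prove the inequality inside the $\catK$ space $B$.

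\emph{Apply Lang--Schroeder.} Theorem~A of \cite{LangSchroeder1997Jung} states the following. In a complete $\catK$ space, a set $A$ of diameter $D$, with $D < \frac{\varpi_\kappa}{2}$ when $\kappa>0$, has circumradius $r$ satisfying
\[
\sin\bigl(\sqrt{\kappa}\, r\bigr) \leq \sqrt{2}\,\sin\Bigl(\frac{\sqrt{\kappa}\, D}{2}\Bigr) \quad (\kappa>0), \qquad r \leq \frac{D}{\sqrt{2}} \quad (\kappa \leq 0).
\]
For $\kappa \leq 0$ this gives $\circRad(A) \leq \frac{1}{\sqrt{2}} D = \conShrK D$ directly, since $\conShrK = \frac{1}{\sqrt 2}$ in this case.

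\emph{The case $\kappa>0$.} Rescale so that $\kappa = 1$; the ratio $\circRad(A)/D$ is scale invariant. Since $\sqrt{\kappa}\,\circRad(A) \leq \sqrt{\kappa}\, D < \frac{\pi}{2}$, we may apply $\arcsin$ monotonically. This gives $\circRad(A)/D \leq \conShrOne[](s_\kappa(D)) = \conShrK[](D)$.

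\emph{Monotonicity.} It remains to show $\conShrK[](D) \leq \conShrK[](\rho)$ for $D < \rho$, i.e.\ that $g(t) := \arcsin(\sqrt2 \sin(t/2))/t$ is nondecreasing on $(0,\pi/2]$. Substituting $u = t/2$, this is equivalent to $h(u) := \arcsin(\sqrt2 \sin u)/u$ being nondecreasing on $(0,\pi/4]$. By a standard l'Hôpital-type monotonicity rule, since $h(0^+)$ is finite and the numerator vanishes at $0$, it suffices that the derivative of the numerator,
\[
\frac{\sqrt2 \cos u}{\sqrt{1-2\sin^2 u}} = \frac{\sqrt2\cos u}{\sqrt{\cos 2u}},
\]
is nondecreasing. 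This holds because $\cos^2 u/\cos 2u = \frac{1}{2}\bigl(1 + \sec 2u\bigr)$ is increasing on $(0,\pi/4)$.

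\emph{Conclusion.} Taking the supremum over all admissible $A$ yields $\conShrM \leq \conShrK$. If no admissible $A$ exists, then $\conShrM = 0$ by \cref{rem:discrete_case} and the bound is trivial.

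\emph{Main obstacle.} The difficulty is matching hypotheses precisely. One must confirm that Theorem~A's diameter restriction is met, which holds since $D < \catRad \leq \varpi_\kappa/2$. One must also confirm that the circumcenter in $B$ is the global one, which \cref{theo:ex_of_circumcenter} guarantees.
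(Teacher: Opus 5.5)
Your proof is correct and takes the same route as the paper, which simply deduces the bound from Theorem~A of \cite{LangSchroeder1997Jung}. Your localization to the complete $\catK$ ball $\closedball[D](x)$ (as in the proof of \cref{theo:ex_of_circumcenter}) and your monotonicity check for $\rho \mapsto \conShrK$ (which the paper only asserts in \cref{rem:explicit_numbers}) fill in exactly the steps the paper leaves implicit; for the localization you do not even need to identify the two circumcenters, since the circumradius measured in $M$ is trivially at most the one measured in the subspace $\closedball[D](x)$.
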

\begin{figure}
	\centering
	\includegraphics[width=0.8\textwidth]{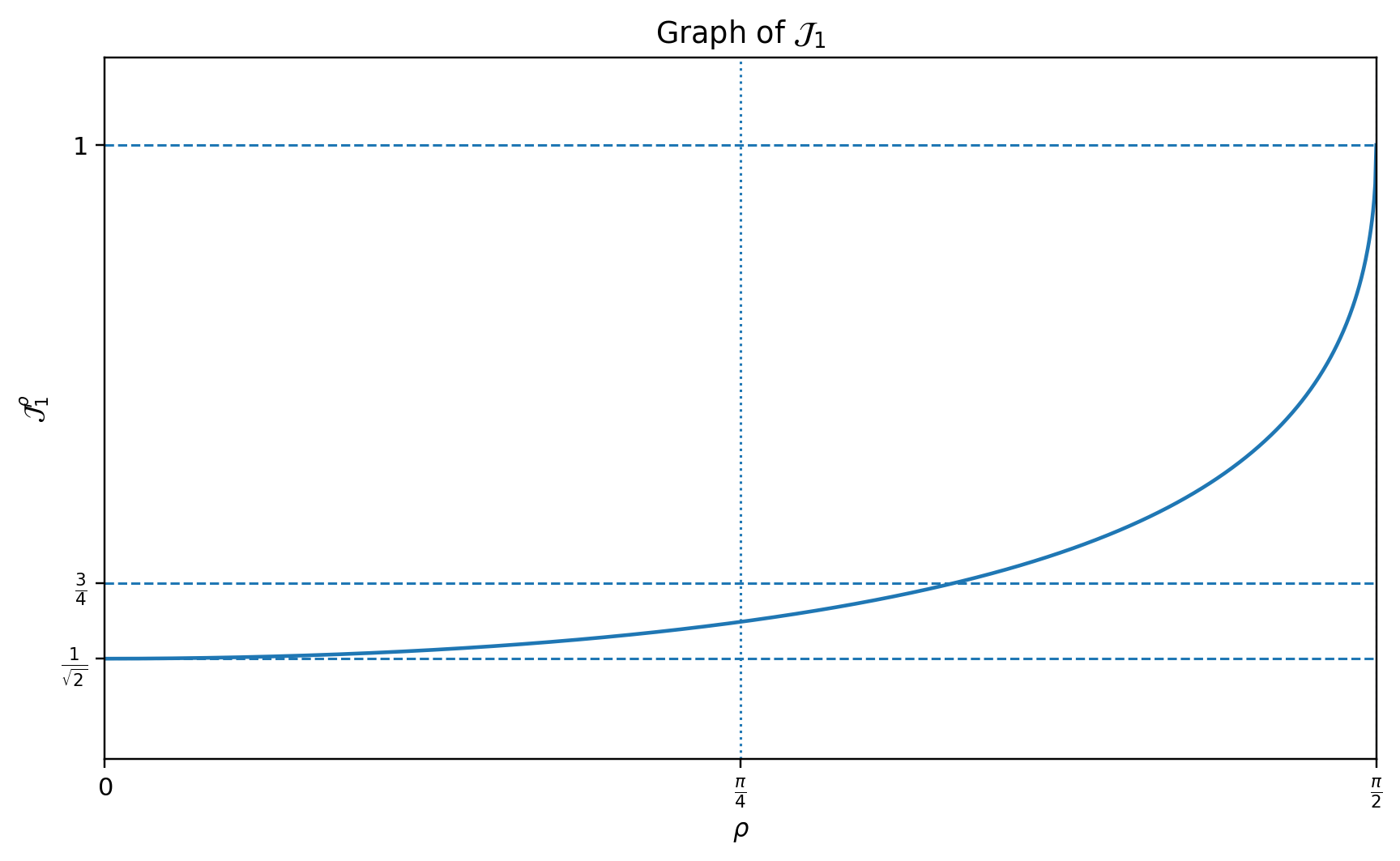}
	\caption{Graph of $\conShrK[]$ for $\kappa = 1$.}
    \label{fig:graph_S}
\end{figure}
\begin{remark}\label{rem:explicit_numbers}
	It will be helpful to have some explicit properties of the function 
	\(
	\conShrK[]
	\)
	in mind (see \cref{fig:graph_S} for a plot of the case $\kappa=1$). Observe that for all $\kappa >0$ the function $\conShrK[]$ has the following properties:
	\begin{enumerate}
		\item $\conShrK$ is strictly increasing in $\rho$;
		\item $\conShrK[0] = \frac{1}{\sqrt{2}}$, $\conShrK[\frac{\varpi_{\kappa}}{4}] \approx 0.728$ and $\conShrK[\frac{\varpi_{\kappa}}{2}] =1$;
		\item In particular, $\conShrK[\rho] \in (\frac{1}{\sqrt{2}}, 1)$, for $\rho \in (0, \frac{\varpi_{\kappa}}{2})$.
	\end{enumerate}
\end{remark}
Hence, one obtains the following bounds on $\conShrM$ independently of $\kappa$ (see also \cite[Appendix]{Majhi2024_DCG_DemystifyingLatschev}, for a similar computation). 
\begin{corollary}\label{cor:global_bound}
	Let $0 <\rho \leq \min \{ \catRad, \frac{\varpi_{\kappa}}{4} \}$. Then $\conShrM \leq \conShrK[\rho] < \frac{3}{4}$.
\end{corollary}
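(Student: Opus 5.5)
The statement has two inequalities, and I will prove them separately. The first, $\conShrM \leq \conShrK[\rho]$, is exactly the preceding theorem of Lang and Schroeder. Its hypothesis $0<\rho\leq\catRad$ holds because we assume $\rho \leq \min\{\catRad, \frac{\varpi_{\kappa}}{4}\}$. So the only real content is the strict bound $\conShrK[\rho] < \frac{3}{4}$ for $0<\rho\leq \frac{\varpi_{\kappa}}{4}$.

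I split into cases according to the sign of $\kappa$. If $\kappa\leq 0$, then by \cref{not:conJungGlob} we have $\conShrK[\rho] = \frac{1}{\sqrt 2} \approx 0.7071 < \frac34$, and there is nothing more to do. If $\kappa>0$, I reduce to the normalized function: $\conShrK[\rho] = \conShrOne[](\sqrt\kappa\,\rho)$, and $\rho\leq \frac{\varpi_\kappa}{4} = \frac{\pi}{4\sqrt\kappa}$ becomes $t := \sqrt\kappa\,\rho \in (0,\frac{\pi}{4}]$. By the monotonicity recorded in \cref{rem:explicit_numbers}, $\conShrOne[]$ is strictly increasing, so it suffices to show $\conShrOne[](\frac{\pi}{4}) < \frac34$.

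The explicit evaluation is
\[
\conShrOne[](\tfrac{\pi}{4}) = \frac{4}{\pi}\arcsin\bigl(\sqrt2\sin(\tfrac{\pi}{8})\bigr),
\]
where $\sin(\frac{\pi}{8}) = \frac{\sqrt{2-\sqrt2}}{2}$. This gives $\sqrt2\sin(\frac{\pi}{8}) = \sqrt{\frac{2-\sqrt2}{2}} = \sqrt{1-\frac{1}{\sqrt2}} \approx 0.5412$. The inequality $\conShrOne[](\frac{\pi}{4}) < \frac34$ is equivalent to $\arcsin(0.5412\ldots) < \frac{3\pi}{16}$, i.e. to $1-\frac{1}{\sqrt2} < \sin^2(\frac{3\pi}{16})$. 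Using $\sin^2(\frac{3\pi}{16}) = \frac{1-\cos(\frac{3\pi}{8})}{2}$ and $\cos(\frac{3\pi}{8}) = \sin(\frac{\pi}{8}) \approx 0.3827$, the right side is about $0.3087$ and the left about $0.2929$. I would certify this with crude rational bounds, for instance $\frac{1}{\sqrt2} > 0.7071$ and $\sin(\frac{\pi}{8}) < 0.3827$. The computation gives $\conShrOne[](\frac{\pi}{4})\approx 0.728$, consistent with \cref{rem:explicit_numbers}.

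The main obstacle is the monotonicity of $\conShrOne[]$, which \cref{rem:explicit_numbers} asserts but does not prove. If I must verify it, I would write $\conShrOne[](t) = g(t)/t$ with $g(t)=\arcsin(\sqrt2\sin(t/2))$, note $g(0)=0$, and show $g$ is convex on $[0,\frac{\pi}{2}]$. Convexity with $g(0)=0$ makes the chord slope $g(t)/t$ nondecreasing. For the convexity, differentiate: $g'(t) = \frac{\sqrt2\cos(t/2)/2}{\sqrt{1-2\sin^2(t/2)}} = \frac{\cos(t/2)}{\sqrt2\sqrt{\cos t}}$. This is increasing in $t$ on $[0,\frac{\pi}{2})$, which one checks by differentiating $\cos^2(t/2)/\cos t = \frac{1+\cos t}{2\cos t}$, an increasing function of $t$ there.
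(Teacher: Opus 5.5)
Your proposal is correct and follows the paper's own route: the paper obtains the corollary from the Lang--Schroeder bound $\conShrM \leq \conShrK$ together with the properties of $\conShrK[]$ listed in \cref{rem:explicit_numbers}. Those properties are the constant value $\frac{1}{\sqrt{2}}$ for $\kappa \leq 0$, and, for $\kappa > 0$, strict monotonicity and $\conShrK[\frac{\varpi_{\kappa}}{4}] \approx 0.728$. Your convexity argument for $g(t)=\arcsin(\sqrt{2}\sin(t/2))$ and the exact comparison $1-\frac{1}{\sqrt{2}} < \sin^2(\frac{3\pi}{16})$ usefully supply the verifications that the paper only asserts.
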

The number $\frac{3}{4}$ is the global bound of $\conShrM$ employed in \cite{Majhi2024_DCG_DemystifyingLatschev,sushBoundedAbove}.
\subsection{Models for persistent homotopy types}
As explained in the introduction, stability results in this article will be expressed at the level of \define{persistent homotopy theory} rather than persistent homology. Our objects of concern are thus \define{persistent homotopy types}. Conceptually speaking, these are functors from an appropriate poset such as $\R$ into the homotopy theory ($(\infty,1)$-category) of spaces. In this text, we will use three explicit models for \textit{persistent homotopy types}, arising from spaces, simplicial complexes, and simplicial sets, the last of which we discuss in \cref{subsec:simplicial_sets}.
\begin{notation}
    We denote by $\sCplx$ the category of simplicial complexes and simplicial maps, and by $\Top$ the category of topological spaces and continuous maps. 
	We denote by $\real{K}$ the topological realization of a simplicial complex $K$. Often, we will just treat a complex as a space, and leave the realization implicit. The standard $n$-simplex, given by the set of non-empty subsets of $\{0, \dots, n\}$, will be denoted $\Delta^n_c$. 
\end{notation}
\begin{definition}
In the following, $\pos$ will always denote a partially ordered set. 
	When we treat $\pos$ as a category, we mean the category whose objects are given by the elements of $\pos$, and where there is a unique morphism $x \to y$ whenever $x \leq y$. \\
	Let $\cat[C]$ be a category. A \textit{$\pos$-persistent object} in $\cat[C]$ is a functor $F \colon \pos \to \cat[C]$. A morphism of $\pos$-persistent objects is a natural transformation between such functors. We denote by $\cat[C]^{\pos}$ the category of $\pos$-persistent objects in $\cat[C]$ and their morphisms.
\end{definition}
\begin{notation}
We will usually denote persistent objects $F \colon \pos \to \cat[C]$ in the form $F^{\bullet}$, to indicate their functoriality in $\pos$.
When the objects in a category $\cat[C]$ have a specific name, such as simplicial complexes, topological spaces or metric spaces, we will often refer to $\pos$-persistent objects in $\cat[C]$ by adding the prefix persistent to that name. 
\end{notation}
\begin{notation}
    $N \geq 0$ will always be a non-negative integer and $\RN$ will be considered as a partially ordered set via componentwise comparison. When treated as a metric space $\RN$ will be equipped with the metric arising from the $\infty$-norm.
\end{notation}
\begin{notation}
By a \textit{space-function pair} (often just pair), we will mean a topological space $X$ together with a function $f \colon X \to \RN$. When we speak of a \textit{metric pair}, this will mean that we equip $X$ with a metric, inducing its topology.  
\end{notation}
\begin{example}\label{ex:sublevel_pers_space}
    Given a pair $(X,f)$, the sublevel sets $X^u := f^{-1}\{v \in \RN \mid v \leq u\}$, together with functoriality given by inclusions, give rise to a persistent space $u \mapsto X^{u}$, which we denote by $X^{\bullet}$. We abuse notation here insofar as the construction evidently depends on $f$. As all structure maps are given by inclusions $X^{\bullet}$ is often also referred to as a \textit{filtered space}.
\end{example}
\begin{example}[Function-Rips complex]
\label{ex:function-Rips complex}
Given a metric space $\mathbb{M}$ and $\delta \geq 0$, the open Vietoris-Rips complex of $\mathbb{M}$ (only Rips complex henceforth), denoted $\Rips[\delta](\mathbb{M})$, is the simplicial complex whose set of simplices is given by 
\[
\{\{x_0, \dots, x_n\} \mid x_i \in \mathbb{M}, d(x_i,x_j) <\delta \textnormal{, for all }0  \leq i,j \leq n \}.
\]
Varying the parameter $\delta \geq 0$, one obtains a persistent simplicial complex $\Rips[\bullet](\mathbb{M}) \colon \mathbb{R}_{\geq 0} \to \sCplx$, with functoriality on relations given by inclusions. Suppose now that $\mathbb{M}$ is additionally equipped with a (not-necessarily continuous) function $\mathbb{f} \colon \mathbb{M} \to \RN$. Then, for every $u \in \RN$, one can consider the Vietoris-Rips complex of $\mathbb{M}^{u} := \{ x \in \mathbb{M} \mid \mathbb{f}(x) \leq u\}$. 
Varying $u$ with $\delta$ fixed gives rise to a persistent simplicial complex $\Rips[\delta](\mapprxbull) \colon \RN \to \sCplx$, $u \mapsto  \Rips[\delta](\mathbb{M}^{u})$, called the \textit{function-Rips complex}.
Varying both $u$ and $\delta$ gives rise to the bivariate variant $\Rips[\bullet](\mapprxbull) \colon \prodPosR \to \sCplx$, $(u,\delta) \mapsto  \Rips[\delta](\mathbb{M}^{u})$, called the \textit{bivariate function-Rips complex}. 
\end{example}
\subsection{Persistent homotopy theory}
\label{sec:pers_homotopy_theory}
Conceptually speaking, a $\pos$-persistent homotopy type should be a $\pos$-indexed functor valued in the \textit{homotopy theory} or \textit{$\infty$-category} of spaces, not just the ordinary $1$-category of spaces (see also \cite{Jardine2020PersistentHomotopy,blumberg2024stability}).
As we do not expect familiarity with $\infty$-categorical language, we will instead work with the more elementary notion of relative categories, which we refer to as homotopy theories here.\footnote{This is justified insofar as relative categories provide a model for $(\infty,1)$-categories \cite{LurieHTT,BarwickKan2012RelativeCategories,Bergner2009Survey}.} 
\begin{definition}
	By a \textit{homotopy theory} we will mean a relative category; that is, a pair consisting of a category $\cat[C]$ and a wide subcategory $W \subset \cat[C]$. The morphisms in $W$ are called \textit{weak equivalences}. A morphism in $\cat[C]$ that is in $W$ will often be denoted by the symbol $\xrightarrow{\simeq}$.
\end{definition}
\begin{remark}
   By taking $W$ to be the class of identity morphisms in $\cat[C]$, we can treat any ordinary category (1-category) $\cat[C]$ as a homotopy theory ($\infty$-category) given by $(\cat[C], \{1_{c} \mid c\in \cat[C]\} )$. In this sense, any statement made concerning general homotopy theories in this article also applies to ordinary categories such as vector spaces. In particular, all of the interleaving results discussed in \cref{section:shrinking_trick} specialize to results for ordinary persistence modules.
\end{remark}
\begin{example}
Recall that a weak homotopy equivalence between topological spaces is a continuous map that induces isomorphisms on the sets of path components and on all homotopy groups. The category $\Top$ of topological spaces equipped with the subcategory of weak homotopy equivalences forms a homotopy theory which we denote by $\iSpaces$.\footnote{It follows as a consequence of Whitehead's theorem that this homotopy theory is equivalent to the one given by CW complexes and homotopy equivalences. 
}
\end{example}
\begin{notation}
	Given a homotopy theory $\mathcal{C} = ( \cat[C], W)$ and another category $\mathbb{I}$, we denote by $\mathcal{C}^{\mathbb{I}}$ the homotopy theory $(\cat[C]^{\mathbb{I}}, W^{\mathbb{I}})$, where $W^{\mathbb{I}}$ is the wide subcategory of $\cat[C]^{\mathbb{I}}$ consisting of those natural transformations $\varphi \colon F \Rightarrow G$ such that for every object $i \in \mathbb{I}$, the morphism $\varphi_i \colon F(i) \to G(i)$ is in $W$\footnote{Note that this will generally only produce the $\infty$-categorical functor category when $\mathcal{C}$ is sufficiently well-behaved, for example, when it extends to a model category, as is the case for our examples (\cite{LurieHTT}).}. To indicate that we study functors (persistent objects) $F \colon \mathbb{I} \to \cat[C]$ in the context of the whole homotopy theory $\mathcal{C}$ we will often use the notation $F \colon \mathbb{I} \to \mathcal{C}$. 
\end{notation}
\begin{example}
	When $\mathcal{C} = \iSpaces$ is the homotopy theory of spaces, then $\iSpaces^{\pos}$ will be referred to as the \textit{$\pos$-persistent homotopy theory of spaces}.
\end{example}
\begin{notation}
	Given a homotopy theory $\mathcal{C} = ( \cat[C], W)$, one can associate to it its homotopy category $\ho(\mathcal{C})$, defined as the ($1$-categorical) localization $\cat[C][W^{-1}]$ of $\cat[C]$ at the weak equivalences $W$. 
\end{notation}
\begin{recollection}
	Together with the canonical localization functor, $\cat[C] \to \cat[C][W^{-1}] = \ho(\mathcal{C})$, $\ho(\mathcal{C})$ is characterized by the universal property that any functor $\cat[C] \to \cat[D]$ that sends weak equivalences to isomorphisms factors uniquely through $\ho(\mathcal{C})$. Explicitly, objects of $\ho(\mathcal{C})$ are the same as those of $\cat[C]$, and morphisms are given by equivalence classes of zig-zags of morphisms in $\cat[C]$, where backward-pointing arrows are weak equivalences.
	In this sense, $\ho(\mathcal{C})$ is the $1$-category obtained by formally inverting the weak equivalences.
    \end{recollection}
    \begin{notation}\label{notation:pers_homotopy_type}
We abuse notation insofar as we use the same symbols for objects and morphisms in $\cat[C]$ and their images in $\ho (\mathcal{C})$. In the context of the persistent homotopy theory $\iSpaces^{\pos}$, a persistent space or persistent simplicial complex $X^{\bullet}$ will often be referred to as a \textit{persistent homotopy type}. Note that while, set-theoretically speaking, the persistent homotopy type $X^{\bullet}$ is the same as its underlying persistent space, in the context of the homotopy category $\ho(\iSpaces^{\inPos})$, two persistent homotopy types are isomorphic when they are connected by a zig-zag of weak equivalences. 
\end{notation}
\begin{remark}
The reader not familiar with abstract homotopy theory should have the following important caveat in mind: Given some indexing category $I$ and a homotopy theory $\mathcal{C}$, there is a canonical comparison functor $\ho (\mathcal{C}^I) \to (\ho \mathcal{C})^I.$
This functor is generally far from being essentially surjective or fully faithful. For example, when $\mathcal{C} = \iSpaces$, the left-hand side encodes homotopy coherent diagrams (i.e., diagrams together with choices of homotopies of different dimensions), and the right-hand side encodes homotopy commutative diagrams. It is, by this point, a widely established fact that the former is the natural, \define{conceptually correct}, and mathematically richer setting to study diagrammatic phenomena in homotopy theory.
However, in some special examples, such as when $I = \mathbb{N}$ or a finite linear poset, the functor turns out to induce a bijection on isomorphism classes (see, for example, \cite[Lemma A.2]{MaderWaas2024}). This has the effect that in many $1$-parameter persistence settings, the difference can be neglected, at least on the object level.
\end{remark}
\subsection{Interlude: Some remarks on persistent homotopy types}
In this subsection, we recall some facts about the persistent homotopy theory of spaces. Although none of these will be needed for the proofs in this article, we expect that they will help to obtain a more concrete picture of persistent homotopy types and of the category \(\ho(\iSpaces^{\pos})\), for readers less familiar with abstract homotopy theory.
For instance, one may ask for a concrete description of the morphism sets in \(\ho(\iSpaces^{\pos})\). One way to obtain such a description is through the theory of combinatorial model categories and projective model structures (see \cite{Hirschhorn2003}), which we will use freely below. As the material in this section is primarily for the reader's intuition, and as the insights are standard from a model categorical perspective, we will keep the proofs to a minimum.
\begin{recollection}
    Recall that a topological cell complex is a space $C$ obtained by transfinitely iterated attachments of disks $D^n$ of varying dimensions along their boundaries $\partial D^n=S^{n-1}$; see \cite{Hirschhorn2018QuillenTop} for a precise definition.\footnote{Unlike for a CW complex, one does not require the cells to be attached in ascending order of dimension.}
A chosen cell structure on $C$ - encoded by the maps $D^n \to C$ arising through the above gluing process - determines a set-theoretic decomposition into disjoint open cells
\[
C=\coprod_i \sigma_i.
\]
We write $\sigma \preceq \tau$ if $\sigma \cap \overline{\tau}\neq\emptyset$, and denote by $\le$ the transitive closure of this relation.
\end{recollection}
 \begin{definition}
     Suppose we are given a pair $(C, C \xrightarrow{f} \pos)$, with $C \in \Top$ a cell complex with a fixed cell decomposition $C= \coprod_{i}\sigma_i$ such that $f$ is constant on open cells and monotone with respect to the partial order $\leq$ on open cells, when seen as a map from open cells into $\pos$. Denote by $C^{\bullet}$ the associated persistent space, defined as in \cref{ex:sublevel_pers_space}. A persistent space $C^{\bullet}$ arising in this manner will be referred to as a \define{filtered cell complex}.
 \end{definition}
 \begin{remark}\label{rem:cof_gen}
     For the reader familiar with the theory of cofibrantly generated model categories, we note that filtered cell complexes are precisely the absolute cell complexes with respect to the standard cofibrant generators in the projective model structure (see \cite[Thm. 11.6.1]{Hirschhorn2018QuillenTop}), inherited from the standard cofibrant generators of the Quillen model structure (see \cite{Hirschhorn2003} and also \cite[Ch. 9]{waas2025stratified}).
 \end{remark}
 \begin{notation}
 Given $X^{\bullet} \in \Top^{\pos}$, we denote by $X^{\bullet} \times [0,1]$ the persistent space obtained by taking indexwise products with the unit interval. This persistent space will be referred to as the \define{persistent cylinder}, and morphisms $X^{\bullet} \times [0,1] \to Y^{\bullet}$ will be referred to as \define{persistent homotopies}.
 Given another persistent space $Y^{\bullet} \in \Top^{\pos}$, we denote by $[X^{\bullet}, Y^{\bullet}]$ the set of persistent homotopy classes, i.e., the quotient of $ \Top^{\pos}(X^{\bullet}, Y^{\bullet})$ under the equivalence relation 
 \[
 h_0 \sim h_1 \iff \textnormal{There exists } H \colon X^{\bullet} \times [0,1] \to Y^{\bullet} \textnormal{ such that } H_0 =h_0 \textnormal{ and } H_1=h_1.
 \]
 \end{notation}
 \begin{remark}\label{rem:pers_is_filtered}
     If $X^{\bullet}$ and $Y^{\bullet}$ both arise from pairs $(X, f\colon X \to \pos)$ and $(Y, g\colon Y \to \pos)$, then $[X^{\bullet}, Y^{\bullet}]$ is in canonical bijection with equivalence classes of maps $h \colon X \to Y$ such that $g(h(x)) \leq f(x)$, under homotopies $H \colon X \times [0,1] \to Y$ such that $g(H(x,t)) \leq f(x)$.
 \end{remark}
 \begin{block}
     It is not hard to see that persistently homotopic arrows are identified in $\ho(\iSpaces^{\pos})$. Hence, there are canonical maps
     \[
     [X^{\bullet}, Y^{\bullet}] \to \ho( \iSpaces^{\pos})(X^{\bullet}, Y^{\bullet}).
     \]
     However, these maps are generally neither injective nor surjective. Nevertheless, in the case of filtered cell complexes, this turns out to be the case. In fact, the theory of projective model structures guarantees that one has the following two results:
  \end{block}
 \begin{proposition}\label{prop:compute_mapping_spaces}
     Let $Y^{\bullet} \in \Top^{\pos}$ and $C^{\bullet}$ be a filtered cell complex. Then the canonical natural map 
     \(
     [C^{\bullet}, Y^{\bullet}] \to \ho( \iSpaces^{\pos})(C^{\bullet}, Y^{\bullet})
     \)
     is a bijection.
 \end{proposition}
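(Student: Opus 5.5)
The plan is to obtain the statement from the projective model structure on $\Top^{\pos}$, the one induced by the Quillen model structure on $\Top$. In this structure the weak equivalences and the fibrations are both defined indexwise. The argument has four steps.

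\emph{Step 1: fibrancy of $Y^{\bullet}$.} Every topological space is fibrant in the Quillen model structure, since all maps to a point are Serre fibrations. Fibrations in the projective structure are indexwise, so every $Y^{\bullet}$ is projectively fibrant.

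\emph{Step 2: cofibrancy of $C^{\bullet}$.} The projective structure is cofibrantly generated. Its generating cofibrations are
\[
\pos(p,-)\times S^{n-1} \to \pos(p,-)\times D^n, \qquad p \in \pos,
\]
which are free functors on the Quillen generators. By \cref{rem:cof_gen}, a filtered cell complex is a transfinite composite of pushouts of these maps. A cell $\sigma$ with filtration value $f(\sigma)=p$ is attached via $\pos(p,-)\times D^n$. The monotonicity of $f$ is exactly the condition that the attaching map of $\sigma$ lands in the part of the complex already built in filtration degrees $\leq p$. Hence $C^{\bullet}$ is projectively cofibrant.

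\emph{Step 3: cylinder object.} I will check that $C^{\bullet}\times[0,1]$ is a good cylinder object for $C^{\bullet}$. The map $C^{\bullet}\sqcup C^{\bullet}\to C^{\bullet}\times[0,1]$ is a relative filtered cell complex: give it the product cell structure, with $[0,1]$ having two $0$-cells and one $1$-cell. It is therefore a projective cofibration. The projection $C^{\bullet}\times[0,1]\to C^{\bullet}$ is an indexwise homotopy equivalence, hence a weak equivalence.

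\emph{Step 4: conclusion.} The standard theorem for model categories states that for $A$ cofibrant and $B$ fibrant, $\ho(A,B)$ equals the set of maps $A\to B$ modulo left homotopy, and that any good cylinder object can be used to define left homotopy. By Steps 1–3 this left homotopy relation is exactly persistent homotopy. The resulting identification is compatible with the canonical map $[C^{\bullet},Y^{\bullet}]\to \ho(\iSpaces^{\pos})(C^{\bullet},Y^{\bullet})$, because both are induced by the localization functor. Finally, the homotopy category of the model structure coincides with the localization $\cat[C][W^{-1}]$ used to define $\ho(\iSpaces^{\pos})$.

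I expect the main obstacle to be Step 2: matching the combinatorial definition of a filtered cell complex with the cell complexes built from the generators $\pos(p,-)\times(S^{n-1}\to D^n)$. This needs care with transfinite attachment, and with the fact that the sublevel-set persistent space computed in $\Top$ agrees with the colimit of the cell attachments. Here I would appeal to \cite[Thm. 11.6.1]{Hirschhorn2018QuillenTop} and to \cref{rem:cof_gen}, rather than redoing the argument.
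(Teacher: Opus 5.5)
Your proposal is correct and follows the same route as the paper. The paper also cites the cofibrant--fibrant description of hom-sets in the homotopy category of the projective model structure, using that every object is fibrant and that filtered cell complexes are cofibrant by \cref{rem:cof_gen}; your Step 3 (checking that $C^{\bullet}\times[0,1]$ is a good cylinder object, so that model-categorical left homotopy agrees with persistent homotopy) is a detail the paper leaves implicit.
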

 \begin{proof}
     This is the classical characterization of hom-sets in the homotopy category between a cofibrant and a fibrant object, together with the fact that every object in the projective model structure (with the Quillen model structure on $\Top$) is fibrant, and every filtered cell complex is cofibrant (see \cref{rem:cof_gen}).
 \end{proof}
 \begin{proposition}\label{prop:cof_replacement}
     Given any $X^{\bullet} \in \Top^{\pos}$, there exists a filtered cell complex $C^{\bullet} \in \Top^{\pos}$, together with a weak equivalence $C^{\bullet} \xrightarrow{\simeq} X^{\bullet}$.
 \end{proposition}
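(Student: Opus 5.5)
The most direct route is the small object argument applied in the projective model structure on $\Top^{\pos}$, made explicit enough that the resulting cofibrant object is visibly a filtered cell complex. The generating cofibrations of the projective structure are the maps $\pos(p,-)\times S^{n-1} \to \pos(p,-)\times D^n$ for $p \in \pos$ and $n \geq 0$, where $\pos(p,-)$ is the representable persistent set (a point at every $u \geq p$, empty otherwise). The generating acyclic cofibrations are $\pos(p,-)\times D^n \to \pos(p,-)\times D^n\times[0,1]$. By the Yoneda lemma, a map $\pos(p,-)\times S^{n-1} \to X^{\bullet}$ is the same as a map $S^{n-1}\to X^p$. So attaching such a generating cell to a persistent space means gluing a disk $D^n$ whose image appears from filtration value $p$ on.

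The plan is to factor $\emptyset \to X^{\bullet}$, via the small object argument with respect to the generating cofibrations, as a relative cell complex $\emptyset \to C^{\bullet}$ followed by a map $C^{\bullet}\to X^{\bullet}$ with the right lifting property against all $\pos(p,-)\times S^{n-1}\to\pos(p,-)\times D^n$. There are three steps.

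\begin{enumerate}
\item \emph{$C^{\bullet}$ is a filtered cell complex.} Concretely, $C^{\bullet}$ is built by transfinitely attaching disks $D^n$, each labeled by a parameter $p$. Its colimit $C=\operatorname{colim}_u C^u$ (or rather the underlying total space of all attached cells) is a topological cell complex. I assign to each open cell its label $p$, giving $f$. Monotonicity follows because a cell labeled $p$ is attached along a map $S^{n-1}\to C^p$. Hence the closure of a cell $\tau$ labeled $q$ meets only cells whose labels are $\leq q$, and this persists under the transitive closure. One then checks that the sublevel set $f^{-1}(\{\leq u\})$ is exactly $C^u$. This holds since colimits in $\Top^{\pos}$ are computed pointwise and $\pos(p,u)$ is nonempty iff $p\leq u$. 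This matches \cref{rem:cof_gen}, and I would cite \cite[Thm. 11.6.1]{Hirschhorn2018QuillenTop} for the model structure.

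\item \emph{$C^{\bullet}\to X^{\bullet}$ is a pointwise weak equivalence.} By adjunction, the right lifting property against $\pos(p,-)\times(S^{n-1}\to D^n)$ is equivalent to the map $C^p\to X^p$ having the right lifting property against $S^{n-1}\to D^n$. In other words, each $C^p\to X^p$ is a Serre trivial fibration, hence a weak homotopy equivalence.
\end{enumerate}

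The main obstacle is the smallness needed to run the small object argument in $\Top$. Spheres are compact, so they are small relative to closed $T_1$ inclusions, and the pointwise cell attachments are of this type. This is the standard argument for the Quillen model structure, carried over pointwise. The point requiring care is that the filtered-cell-complex bookkeeping in the first step respects the possibly non-CW order of attachments. That is exactly why the definition allows transfinite, dimension-unordered attachments and uses the transitive closure of $\preceq$.
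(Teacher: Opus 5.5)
Your proposal is correct and follows the paper's argument. The paper proves this in one line: it invokes the small object argument for the projective generating cofibrations $\pos(p,-)\times S^{n-1}\to \pos(p,-)\times D^n$, and uses \cref{rem:cof_gen} to identify the resulting absolute cell complexes with filtered cell complexes. You spell out what the paper leaves to citations: the adjunction showing each $C^{p}\to X^{p}$ is a trivial Serre fibration, the smallness of spheres relative to closed $T_1$ inclusions, and the monotonicity of the cell labels. One small slip: you announce three steps but list only two.
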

 \begin{proof}
     This follows by the small object argument (see \cite[§10.5]{Hirschhorn2003}) together with \cref{rem:cof_gen}.
 \end{proof}
 \begin{block}
     Together, these two results have the following consequence. Denote by $\ho \cat[Cell](\pos)$ the category of ($\pos$)-filtered cell complexes with morphisms given by persistent homotopy classes. By \cref{rem:pers_is_filtered}, we may equivalently think of the hom-sets as certain homotopy classes of filtration-preserving maps.
 \end{block}
 \begin{theorem}\label{thm:equ_with_bifib}
     The canonical functor 
     \[
     \ho \cat[Cell](\pos) \to \ho( \iSpaces^\pos) 
     \]
     is an equivalence of categories.
 \end{theorem}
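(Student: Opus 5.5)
To show the canonical functor $\ho \cat[Cell](\pos) \to \ho(\iSpaces^{\pos})$ is an equivalence, I check that it is well defined, fully faithful, and essentially surjective.

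\emph{Well-definedness.} The functor sends a filtered cell complex to itself and a persistent homotopy class $[h]$ to the image of $h$ under the localization $\Top^{\pos} \to \ho(\iSpaces^{\pos})$. This is well defined because persistently homotopic maps become equal in the homotopy category, as observed just before \cref{prop:compute_mapping_spaces}. Concretely, the projections $X^{\bullet}\times[0,1] \to X^{\bullet}$ are indexwise homotopy equivalences, hence weak equivalences. Both end-inclusions $i_0, i_1$ are sections of this projection, so $i_0$ and $i_1$ become equal after localization, and therefore $H\circ i_0 = H \circ i_1$ there.

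\emph{Functoriality.} Composition in $\ho\cat[Cell](\pos)$ is induced by composition of representatives. Identities are sent to identities. So the assignment is indeed a functor.

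\emph{Fully faithful.} For filtered cell complexes $C^{\bullet}, D^{\bullet}$, the map on hom-sets is exactly the canonical map
\[
[C^{\bullet}, D^{\bullet}] \to \ho(\iSpaces^{\pos})(C^{\bullet}, D^{\bullet}).
\]
By \cref{prop:compute_mapping_spaces}, applied with $Y^{\bullet}=D^{\bullet}$, this map is a bijection. Hence the functor is fully faithful.

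\emph{Essentially surjective.} Given any $X^{\bullet}\in\Top^{\pos}$, \cref{prop:cof_replacement} provides a filtered cell complex $C^{\bullet}$ and a weak equivalence $C^{\bullet} \xrightarrow{\simeq} X^{\bullet}$. This weak equivalence becomes an isomorphism in $\ho(\iSpaces^{\pos})$, so every object lies in the essential image.

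A fully faithful, essentially surjective functor is an equivalence of categories. The only point that needs care is checking that the well-definedness argument above matches the bijection in \cref{prop:compute_mapping_spaces}; all the genuine model-categorical content is already contained in the two cited propositions.
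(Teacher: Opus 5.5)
Your proposal is correct and follows the paper's route. The paper states the theorem as a direct consequence of \cref{prop:compute_mapping_spaces}, which gives full faithfulness, and \cref{prop:cof_replacement}, which gives essential surjectivity, exactly as you argue; your explicit checks of well-definedness and functoriality are harmless elaborations of points the paper leaves implicit.
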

 \begin{block}
     In this sense, one can equivalently use the setting of filtered cell complexes and filtered homotopies to perform persistent homotopy theory.
 \end{block}
 \begin{remark}
    Let us end this section with a remark on the purpose of persistent homotopy theory.
     A common misconception about persistent homotopy theory seems to be that it primarily aims to compute persistent versions of homotopy groups from data. From a practical computation and interpretation perspective, this does not currently seem like a promising approach. Instead, the role of persistent homotopy theory is to provide a general level at which one can prove inference results that then descend to computable invariants such as persistent homology, as well as other invariants. One simple and complementary invariant to persistent homology is the persistent path components.
 \end{remark}
 \begin{example}
Given a persistent homotopy type $X^{\bullet} \in \iSpaces^{\pos}$, one can consider the persistent set $\pi_0(X^{\bullet})$, which is obtained by computing path-components indexwise. This is a more informative invariant than the $0$-th persistent homology (compare \cite{Curry2018Fiber,CurryHangMioNeedhamOkutan2022DMT,BeersGrindstaff2025Intrinsic}). Indeed, 
\[
H_0(X^{\bullet}, \mathbb{k}) \cong \mathbb{k}\langle \pi_0(X^{\bullet}) \rangle
\]
where $\mathbb{k}\langle - \rangle$ denotes the functor mapping a set $S$ to the free $\mathbb{k}$-vector space on $S$. If $X^{\bullet} = \Rips[\bullet](\metapprx)$ is the Rips-persistent homotopy type of a metric space $\metapprx$, then $\pi_0(X^{\bullet})$ encodes the same information as the merge tree of $\metapprx$. This is generally richer information. Consider, for example, the illustration of two finite point clouds in $\mathbb{R}^1$, their merge trees, and their $0$-persistent homology in \cref{fig:mergetrees}.
 \end{example}
 
\begin{figure}[ht]
\centering
\resizebox{\textwidth}{!}{
\begin{tikzpicture}[
    dot/.style={circle,fill=black,inner sep=1.5pt},
    axis/.style={-{Latex[length=2mm]}, thin},
    bar/.style={line width=0.9pt},
    panel/.style={draw, rounded corners=2pt, thin},
    assign/.style={|-{Latex[length=2.2mm]}, line width=0.9pt},
    every node/.style={font=\scriptsize}
]

\def\W{4.6}      
\def\H{3.6}      
\def\GapX{1.4}   
\def\GapY{1.2}   

\pgfmathsetmacro{\XA}{0}
\pgfmathsetmacro{\XB}{\XA+\W+\GapX}
\pgfmathsetmacro{\XC}{\XB+\W+\GapX}

\pgfmathsetmacro{\YB}{0}
\pgfmathsetmacro{\YT}{\H+\GapY}

\node[font=\normalsize] at (\XA+0.5*\W,\YT+\H+0.55) {point cloud in $\mathbb{R}^1$};
\node[font=\normalsize] at (\XB+0.5*\W,\YT+\H+0.55) {merge tree};
\node[font=\normalsize] at (\XC+0.5*\W,\YT+\H+0.55) {$H_0$ barcode};


\draw[panel] (\XA,\YT) rectangle ++(\W,\H);
\draw[panel] (\XB,\YT) rectangle ++(\W,\H);
\draw[panel] (\XC,\YT) rectangle ++(\W,\H);

\draw[panel] (\XA,\YB) rectangle ++(\W,\H);
\draw[panel] (\XB,\YB) rectangle ++(\W,\H);
\draw[panel] (\XC,\YB) rectangle ++(\W,\H);

\draw[assign] (\XA+\W+0.18,\YT+0.5*\H) -- (\XB-0.18,\YT+0.5*\H)
  node[midway,above] {$\pi_0 \circ \Rips[\bullet]$};

\draw[assign] (\XA+\W+0.18,\YB+0.5*\H) -- (\XB-0.18,\YB+0.5*\H)
  node[midway,above] {$\pi_0 \circ \Rips[\bullet]$};

\draw[assign] (\XB+\W+0.18,\YT+0.5*\H) -- (\XC-0.18,\YT+0.5*\H)
  node[midway,above] {$\mathbb{k} \langle - \rangle$};

\draw[assign] (\XB+\W+0.18,\YB+0.5*\H) -- (\XC-0.18,\YB+0.5*\H)
  node[midway,above] {$\mathbb{k} \langle - \rangle$};

\node[font=\Large] at (\XC+0.5*\W,\YB+\H+0.5*\GapY) {$=$};

\begin{scope}[shift={(\XA,\YT)}]
  \draw[axis] (0.45,1.95) -- (4.10,1.95) node[below] {$\mathbb{R}$};

  \foreach \x/\lab in {0.55/1,1.01/2,1.93/3,2.39/4,3.77/5}{
    \draw (\x,2.03) -- (\x,1.87);
    \node[dot,label=above:$\lab$] at (\x,1.95) {};
  }

  \node at (0.78,1.55) {$1$};
  \node at (1.47,1.55) {$2$};
  \node at (2.16,1.55) {$1$};
  \node at (3.08,1.55) {$3$};
\end{scope}
\begin{scope}[shift={(\XC,\YT)}]
  \draw[axis] (0.45,0.55) -- (3.90,0.55) node[below] {$\delta$};
  \foreach \x/\lab in {0.45/0,1.35/1,2.25/2,3.15/3}{
    \draw (\x,0.61) -- (\x,0.49);
    \node[below=2pt] at (\x,0.49) {$\lab$};
  }

  \draw[bar] (0.45,2.95) -- (1.35,2.95);
  \draw[bar] (0.45,2.45) -- (1.35,2.45);
  \draw[bar] (0.45,1.95) -- (2.25,1.95);
  \draw[bar] (0.45,1.45) -- (3.15,1.45);
  \draw[bar,->] (0.45,0.95) -- (3.80,0.95);
\end{scope}

\begin{scope}[shift={(\XA,\YB)}]
  \draw[axis] (0.45,1.95) -- (4.10,1.95) node[below] {$\mathbb{R}$};

  \foreach \x/\lab in {0.55/1,1.01/2,1.93/3,3.31/4,3.77/5}{
    \draw (\x,2.03) -- (\x,1.87);
    \node[dot,label=above:$\lab$] at (\x,1.95) {};
  }

  \node at (0.78,1.55) {$1$};
  \node at (1.47,1.55) {$2$};
  \node at (2.62,1.55) {$3$};
  \node at (3.54,1.55) {$1$};
\end{scope}
\begin{scope}[shift={(\XB,\YT)}]
  \draw[axis] (0.45,0.55) -- (3.90,0.55) node[below] {$\delta$};
  \foreach \x/\lab in {0.45/0,1.35/1,2.25/2,3.15/3}{
    \draw (\x,0.61) -- (\x,0.49);
    \node[below=2pt] at (\x,0.49) {$\lab$};
  }

  \foreach \y/\lab in {2.95/1,2.45/2,1.95/3,1.45/4,0.95/5}
    \node[left] at (0.37,\y) {$\lab$};

  \draw (0.45,2.95) -- (1.35,2.95);
  \draw (0.45,2.45) -- (1.35,2.45);
  \draw (1.35,2.95) -- (1.35,2.45);

  \draw (0.45,1.95) -- (1.35,1.95);
  \draw (0.45,1.45) -- (1.35,1.45);
  \draw (1.35,1.95) -- (1.35,1.45);

  \draw (1.35,2.70) -- (2.25,2.70);
  \draw (1.35,1.70) -- (2.25,1.70);
  \draw (2.25,2.70) -- (2.25,1.70);

  \draw (0.45,0.95) -- (3.15,0.95);
  \draw (2.25,2.20) -- (3.15,2.20);
  \draw (3.15,2.20) -- (3.15,0.95);

  \draw (3.15,1.575) -- (3.80,1.575);
\end{scope}

\begin{scope}[shift={(\XB,\YB)}]
  \draw[axis] (0.45,0.55) -- (3.90,0.55) node[below] {$\delta$};
  \foreach \x/\lab in {0.45/0,1.35/1,2.25/2,3.15/3}{
    \draw (\x,0.61) -- (\x,0.49);
    \node[below=2pt] at (\x,0.49) {$\lab$};
  }

  \foreach \y/\lab in {2.95/1,2.45/2,1.95/3,1.45/4,0.95/5}
    \node[left] at (0.37,\y) {$\lab$};

  \draw (0.45,2.95) -- (1.35,2.95);
  \draw (0.45,2.45) -- (1.35,2.45);
  \draw (1.35,2.95) -- (1.35,2.45);

  \draw (1.35,2.70) -- (2.25,2.70);
  \draw (0.45,1.95) -- (2.25,1.95);
  \draw (2.25,2.70) -- (2.25,1.95);

  \draw (0.45,1.45) -- (1.35,1.45);
  \draw (0.45,0.95) -- (1.35,0.95);
  \draw (1.35,1.45) -- (1.35,0.95);

  \draw (2.25,2.325) -- (3.15,2.325);
  \draw (1.35,1.20) -- (3.15,1.20);
  \draw (3.15,2.325) -- (3.15,1.20);

  \draw (3.15,1.7625) -- (3.80,1.7625);
\end{scope}
\begin{scope}[shift={(\XC,\YB)}]
  \draw[axis] (0.45,0.55) -- (3.90,0.55) node[below] {$\delta$};
  \foreach \x/\lab in {0.45/0,1.35/1,2.25/2,3.15/3}{
    \draw (\x,0.61) -- (\x,0.49);
    \node[below=2pt] at (\x,0.49) {$\lab$};
  }

  \draw[bar] (0.45,2.95) -- (1.35,2.95);
  \draw[bar] (0.45,2.45) -- (1.35,2.45);
  \draw[bar] (0.45,1.95) -- (2.25,1.95);
  \draw[bar] (0.45,1.45) -- (3.15,1.45);
  \draw[bar,->] (0.45,0.95) -- (3.80,0.95);
\end{scope}

\end{tikzpicture}
}
\caption{Two point clouds in $\mathbb{R}^1$ with different merge trees but identical $H_0$ barcode.}\label{fig:mergetrees}
\end{figure}
\subsection{Persistent simplicial sets}\label{subsec:simplicial_sets}
At least half of the persistent homotopy types we are studying in this article arise from purely combinatorial data provided in the form of persistent simplicial complexes. It is thus convenient to have a model for the persistent homotopy theory of spaces that is more combinatorial in nature. This can be achieved by working with simplicial sets. The reader not familiar with the theory of simplicial sets can treat them as a black box that extends the category of simplicial complexes in a convenient way (see \cite{GoerssJardine2009,Jardine2020PersistentHomotopy} for an introduction). 
\begin{recollection}
Recall that, conceptually speaking, a simplicial set is like a simplicial complex, where the simplices are ordered, and one allows for faces of simplices to collapse to lower dimensions. Categorically, this idea can be formalized as follows: Denote by $\Delta$ the category of finite linear posets $[n] = \{ 0 \leq \dots \leq n\}$, for $n \geq 0$, with order-preserving maps. The category of simplicial sets $\sSet$ is the category of functors from $\Delta^{\op}$ into $\Set$, i.e., $\Set^{\Delta^{\op}}$. Given a simplicial set $X \in \sSet$ and $n \geq 0$, the set $X([n])$ is denoted $X_n$, and called the set of $n$-simplices of $X$. A simplex $\sigma \in X_n$ that is not in the image of a structure map $X_k \to X_n$, for some $k<n$, is called non-degenerate.
\end{recollection}
\begin{example}\label{ex:function_rips_sset} 
As a simplicial set, the function-Rips complex at $u \in \RN$ and $\delta \geq 0$ can be modeled by a simplicial set with $n$-simplices
\[
\Rips[u,\delta]\fmet_n = \{ (x_0, \dots, x_n) \mid x_i \in M; d(x_i,x_j) < \delta; f(x_i) \leq u \textnormal{ for all }i,j \in [n] \},
\]
and functoriality on $\Delta^{\op}$ given by precomposition.
\end{example}
\begin{notation}
 Given $n \in \mathbb{N}$, the image of $[n]$ under the Yoneda embedding $\Delta \hookrightarrow \Set^{\Delta^{\op}}$, $[k] \mapsto \Delta([k],[n])$, is denoted by $\Delta^n$ and referred to as the $n$-simplex. The functor $[n] \mapsto \Delta^n$ defines a fully faithful embedding $\Delta \hookrightarrow \sSet$, by which we treat $\Delta$ as a subcategory of $\sSet$. 
\end{notation}
\begin{recollection}
	By the Yoneda lemma, the set of $n$-simplices of a simplicial set $X$ is in canonical bijection with the set of simplicial maps $\Delta^n \to X$. From this perspective, the \define{$0$-dimensional faces or vertices} of a simplex $\sigma \colon \Delta^n\to X$ are given by $n+1$ (not necessarily distinct) compositions $x_k \colon \Delta^0 \xrightarrow{i_k} \Delta^n \to X$, with $i_k$ specified by $0 \mapsto k$ for $k \in [n]$. This equips the vertices of a simplex $\sigma$ with a canonical ordering $x_0, \dots, x_n$. When we speak of a $1$-simplex $\sigma$ from $x$ to $y$, we mean a $1$-simplex $\sigma \colon \Delta^1 \to X$ such that the vertices of $\sigma$ are given by $x$ and $y$, in that order.
\end{recollection}
\begin{recollection}
	Simplicial sets admit a topological realization functor. Observe that $\Delta$ embeds into $\sCplx$ by sending $[n]$ to the standard simplex $\Delta^n_c$. The composition $\Delta \to \sCplx \xrightarrow{\real{-}} \Top$ defines a topological realization functor on $\Delta \subset \sSet$, which 
    extends canonically to a colimit-preserving functor $\real{-} \colon \sSet \to \Top$, so that the realization of a simplicial set $X$, $\real{X}$, is glued from realizations of its simplices.
\end{recollection}
One crucial reason why simplicial sets are so useful is that they can be used to define a homotopy theory equivalent to that of topological spaces.
\begin{recollection}\label{rec:equ_with_ss}
A simplicial map $\varphi \colon X \to Y$ in $\sSet$ is called a weak homotopy equivalence if its topological realization $\real{\varphi} \colon \real{X} \to \real{Y}$ is a homotopy equivalence. Denote the wide subcategory of weak homotopy equivalences by $W_{\textnormal{Kan}}$, and the resulting relative category $(\sSet, W_{\textnormal{Kan}})$ by $\sSetK$. It is a fundamental fact of homotopy theory, that the topological realization functor $\real{-} \colon \sSet \to \Top$ then defines a so-called \textit{equivalence of homotopy theories ($\infty$-categories)} $\sSetK \to \iSpaces$ (see \cite{Quillen1967HomotopicalAlgebra}). Conceptually speaking, this means that any homotopy-theoretic construction or argument concerning $\iSpaces$ can equivalently be performed in $\sSetK$.  For our purposes, however, it suffices to observe that the functor $\real{-} \colon \sSet^{\pos} \to \Top^{\pos}$ descends to an equivalence of categories $\real{-} \colon \ho (\sSetK^{\pos} ) \to \ho (\iSpaces^{\pos})$,  compatible with any reparametrization of the indexing poset $\pos$.
\end{recollection}
\begin{recollection}
    In the same way as homotopies induce identifications of maps in the homotopy category $\ho ( \iSpaces)$, morphisms of persistent simplicial sets of the form $X^{\bullet} \times \Delta^1 \to Y^{\bullet}$  (where $X^{\bullet} \times \Delta^1$ denotes the indexwise product) -- so-called \textit{elementary homotopies} --  induce identifications of persistent simplicial maps in $\ho ( \sSetK^{\pos})$.  
\end{recollection}
\begin{recollection}\label{rec:equv_model_simp}
We frequently want to treat simplicial complexes as simplicial sets. There is a canonical fully faithful embedding $\sNerve \colon \sCplx \hookrightarrow \sSet$, mapping a simplicial complex, $K$, to the simplicial set given by $\sNerve(K)_n = \sCplx (\Delta_c^n, K)$, functorial in the obvious way in $n$ and $K$ (see \cref{rem:adjoint_to_NS}). For example, the simplicial set obtained by applying $\sNerve$ to a function-Rips complex is precisely the one described in \cref{ex:function_rips_sset}.  
We will usually omit $\sNerve$ from the notation. Observe that from a homotopy-theoretic perspective this is justified by the fact that, given $K \in \sCplx$, there is a canonical homotopy equivalence $\real{\sNerve{(K)}} \xrightarrow{\simeq}\real{K}$ (see \cite{OtterMagnitudePersistence2022,CamarenaSsetsFromComplexes}, \cref{rem:equ_of_nerve_w_complex}) even though the equality $\real{K} \cong \real{\sNerve(K)}$ does not hold on the homeomorphism level.
Hence, for our purposes, we can freely identify the two realizations. \end{recollection}
\begin{remark}
    There are several reasons why simplicial sets have largely replaced simplicial complexes for the purpose of homotopy theory. It is not the goal of this article to argue this point in detail. Let us, however, point out three explicit points in which the shift to simplicial sets becomes relevant in this article, which would have required a lot of additional technical effort to express in the language of simplicial complexes:
    \begin{enumerate}
        \item In \cref{section:shrinking_for_rips}, we will make use of the last vertex map, a natural transformation $\sd X \to X$, where $\sd X$ denotes the barycentric subdivision of a simplicial set (see \cref{ex:barycentric_subdiv}). To procure such a map for simplicial complexes, one needs to choose an ordering of the vertices. However, given such choices, the last vertex map will generally not be natural with respect to simplicial maps that are not order-preserving.
        \item Simplicial sets form a presheaf category. This makes it generally easy to define colimit-preserving functors on $\sSet$ through the techniques of left Kan extension. In \cref{sec:local_stability_function_rips}, these techniques are used to define an alternative subdivision functor, which we use in the proof of the perturbative stability theorem for the function-Rips complex.
        \item Simplicial sets admit a so-called model structure, which allows for explicit computations involving the homotopy theory $\sSetK$. The extremely well-developed techniques of model categories (see, for example, \cite{Hirschhorn2003,LurieHTT}) allow for accessible control over the associated homotopy theory defined by the weak equivalences. 
    \end{enumerate}
\end{remark}
\subsection{Interleavings in the homotopy category}
\label{sec:interleavings_homotopy_cat}
One of the core advantages of persistent settings is that they allow for approximate notions of equivalence, so-called interleavings (see \cite{deSilvaMunchStefanou2018} for a more general setting). These approximate notions of equivalence allow for the \define{quantitative treatment} that is necessary to use algebraic or homotopical notions in a data analysis context.
In the following, $\mathcal{C}$ will denote some homotopy theory. To define interleavings, we need the following notation.
\begin{notation}
	For the remainder of this subsection, we fix natural numbers $N,N' \geq 0$.
By $\RNPlus$, we denote the set of vectors in $\RN$ with non-negative entries.
From here on out, $\inPos \subset \mathbb{R}^N$ and $\inPos' \subset \mathbb{R}^{N'}$ will always denote upsets. Recall that an upset $\inPos \subset \RN$ is a subset fulfilling $x \in \inPos, x \leq y \implies y \in \inPos$. Equivalently, these are the subsets of Euclidean space that are closed under addition with vectors in $\RNPlus$. 
\end{notation}
\begin{notation}
	Suppose we are given a map of posets $S \colon \inPos \to\inPos'$ and a persistent object $F \colon \inPos' \to \mathcal{C}$. We write $F^{S(\bullet)}$ to denote the persistent object obtained by precomposing $F$ with $S$. In the special case where $S$ is an inclusion of posets, we use the notation $F^{\bullet}|_{\inPos}$ to denote the restriction of $F$ to $\inPos$. Suppose we are given another such map $S' \colon\inPos \to\inPos'$ such that $S(x) \leq S'(x)$, for all $x \in\inPos$. In this case, we will use the notation $s \colon F^{S(\bullet)} \to F^{S'(\bullet)}$ to denote the natural transformation induced by the relations $S(x) \leq S'(x)$. The most relevant example of this will be the case where we are given a vector $\vecepsilon \in \RNPlus$ and write $- + \vecepsilon \colon \inPos \to \inPos$ for the map of posets given by $x \mapsto  x + \vecepsilon$.
 When $\varepsilon$ is just a non-negative scalar, we will abuse notation insofar as we write $- + \varepsilon$ for the map $x \mapsto x + (\varepsilon, \dots, \varepsilon)$.
\end{notation}
\begin{remark}
    Much of what we describe below could also be proven and investigated in the context of a poset with a flow (see \cite{deSilvaMunchStefanou2018}). We will, however, restrict to the setting of subsets of $\RN$ here, primarily to limit additional technicalities arising from non-associative flows. 
\end{remark}
\begin{definition}\label{rec:interleaving_dist} Let $\vecepsilon \in \RNPlus$.
An \textit{$\vecepsilon$-interleaving in the homotopy category} between $F^{\bullet}, G^{\bullet} \colon \mathbb{U} \to \mathcal{C}$ consists of morphisms
$\varphi \colon F^{\bullet} \to G^{\bullet \flow {\vecepsilon}} \quad \text{ and } \quad \psi \colon G^{\bullet} \to F^{\bullet \flow {\vecepsilon}}$ in $\ho( \mathcal{C}^{\inPos})$
such that the diagrams 
\begin{diagram}
	{F^{\bullet}} && {F^{\bullet \flow 2{\vecepsilon}}} & {G^{\bullet}} && {G^{\bullet \flow 2{\vecepsilon}}} \\
	& {G^{\bullet \flow {\vecepsilon}}} &&& {F^{\bullet \flow {\vecepsilon}}}
	\arrow["s", from=1-1, to=1-3]
	\arrow["\varphi"', from=1-1, to=2-2]
	\arrow["s", from=1-4, to=1-6]
	\arrow["\psi"', from=1-4, to=2-5]
	\arrow["{\psi^{\flow {\vecepsilon} }}"', from=2-2, to=1-3]
	\arrow["{\varphi^{\flow {\vecepsilon} }}"', from=2-5, to=1-6]
\end{diagram}
in $\ho( \mathcal{C}^{\inPos})$ commute. We will denote such interleavings in the form $\varphi \colon F \simeq_{\vecepsilon} G \colon \psi.$
\end{definition}
We are really primarily concerned with two special cases of this vectorial definition, obtained as follows: 
For the purpose of our investigation, we are in the situation where $N' = N +1$, and $\inPos' = \prodPos$. In this case, we will group the interleaving parameters as follows.
\begin{definition}
Let $\varepsilon \geq 0$. By an $\varepsilon$-interleaving in the homotopy category between $F^{\bullet}, G^{\bullet} \colon \inPos \to \mathcal{C}$, we mean an $\vecepsilon$-interleaving in the homotopy category, where $\vecepsilon = (\varepsilon, \dots, \varepsilon)$. We use notation analogous to the vectorial case.
\end{definition}
\begin{definition}
	Assume that $\inPos' = \prodPos$ and let $\varepsilon_1, \varepsilon_2 \geq 0$. By a $(\varepsilon_1, \varepsilon_2)$-interleaving in the homotopy category between $F^{\bullet}, G^{\bullet} \colon \inPos' \to \mathcal{C}$, we mean $\vecepsilon$-interleaving in the homotopy category, where $\vecepsilon = (\varepsilon_1, \varepsilon_2, \dots, \varepsilon_2)$. We use the notation $\varphi \colon F \simeq_{\varepsilon_1, \varepsilon_2} G \colon \psi$ to refer to such an interleaving.
\end{definition}
\begin{notation}
	In order not to overcrowd notation, we will often omit the prefix $\varepsilon$ (or $(\varepsilon_1, \varepsilon_2)$) from the word $\varepsilon$-interleaving when we write interleavings in the form $\varphi \colon F \simeq_{\varepsilon} G \colon \psi$. The interleaving parameters will always be clear from the subscript.
\end{notation}
\begin{remark}
It can be useful to allow for formal interleavings of degree $\varepsilon = \infty$ (not to be confused with the interleaving distances below being infinite), which correspond to equivalences of the (infinity categorical) colimit $\varinjlim F^{\bullet} \simeq \varinjlim G^{\bullet}$. We will not make use of this here, though.
\end{remark}
\begin{remark}\label{rem:properties_of_int}
	Two interleavings $\varphi \colon F^{\bullet} \simeq_{\vecepsilon} G^{\bullet} \colon \psi$ and $\varphi' \colon G^{\bullet} \simeq_{\vecepsilon\,'} J^{\bullet}\colon \psi'$ compose to an interleaving $(\varphi')^{+\vecepsilon}\circ \varphi \colon F^{\bullet} \simeq_{\vecepsilon+\vecepsilon \, '} J^{\bullet} \colon \psi^{+\vecepsilon \,'} \circ \psi'$ (see, for example, \cite{deSilvaMunchStefanou2018}). Observe also that any interleaving $\varphi \colon F^{\bullet} \simeq_{\vecepsilon} G^{\bullet} \colon \psi$ gives rise to an interleaving $s \circ \varphi \colon F^{\bullet} \simeq_{\vecepsilon \,''} G^{\bullet} \colon s \circ \psi$ for any $\vecepsilon \,'' \geq \vecepsilon$. Analogous statements hold for the coupled parameter versions defined above.
\end{remark}
\begin{example}
	 When $\inPos = \mathbb{R}^0 = \{0\}$, we can identify $\mathcal{C}^{\inPos} = \mathcal{C}$ and $(-)+\varepsilon \colon \inPos \to \inPos$ is the identity. Then, an $\varepsilon$-interleaving in $\ho( \mathcal{C})$ is the same as an isomorphism in $\ho( \mathcal{C})$. 
\end{example}
\begin{example}
    It will be helpful to decode the definition of a $\vecepsilon$-interleaving for the special case of the homotopy theory $\mathcal{C} = \iSpaces$. Let $X^{\bullet}$ and $Y^{\bullet}$ be $\inPos$-persistent spaces. Let us, furthermore, assume that both are filtered cell complexes. By \cref{prop:cof_replacement}, this can always be achieved, up to replacing $X^{\bullet}$ and $Y^{\bullet}$ by weakly equivalent persistent spaces. Then it follows from \cref{prop:compute_mapping_spaces} that an interleaving in the homotopy category between $X^{\bullet}$ and $Y^{\bullet}$ is given by the data of the persistent homotopy classes of persistent maps $\varphi \colon X^{\bullet} \to Y^{\bullet \flow {\vecepsilon}}$ and $\psi \colon Y^{\bullet} \to X^{\bullet \flow \vecepsilon}$ in $\Top^{\inPos}$ such that the diagrams 
	\begin{diagram}
	{X^{\bullet}} && {X^{\bullet \flow 2{\vecepsilon}}} & {Y^{\bullet}} && {Y^{\bullet \flow 2{\vecepsilon}}} \\
	& {Y^{\bullet \flow {\vecepsilon}}} &&& {X^{\bullet \flow {\vecepsilon}}}
	\arrow["s", from=1-1, to=1-3]
	\arrow["\varphi"', from=1-1, to=2-2]
	\arrow["s", from=1-4, to=1-6]
	\arrow["\psi"', from=1-4, to=2-5]
	\arrow["{\psi^{\flow {\vecepsilon} }}"', from=2-2, to=1-3]
	\arrow["{\varphi^{\flow {\vecepsilon} }}"', from=2-5, to=1-6]
\end{diagram}
	commute up to persistent homotopy.
\end{example}
\begin{construction}\label{con:inho_distance}
    It follows from \cref{rem:properties_of_int} that interleavings in the homotopy category give rise to a distance 
    \begin{align*}
              \dIntHo \colon  \iSpaces^{\inPos} \times \iSpaces^{\inPos} &\to [0, \infty] \\
              (F^{\bullet},G^{\bullet}) &\mapsto \inf\{ \varepsilon \geq 0 \mid F^{\bullet} \simeq_{\varepsilon} G^{\bullet} \}
    \end{align*}
    that fulfills the triangle inequality and takes the value $0$ on weakly equivalent objects. It is referred to as the \define{interleaving distance in the homotopy category} (see \cite{LanariScoccola2023Rectification}).
\end{construction}
\begin{remark}
	There are two alternative notions of homotopical interleaving and distances for $\iSpaces^\inPos$ that have been considered in the literature (see \cite{BlumbergLesnick2023HomotopyInterleaving,LanariScoccola2023Rectification}). The first is given by interleavings in $(\ho \iSpaces)^{\inPos}$. As it is defined on $(\ho \iSpaces)^{\inPos}$ rather than $\ho(\iSpaces^{\inPos})$, it misses crucial features of persistent homotopy types whenever $\inPos$ is not a subset of $\mathbb{R}^1$, i.e., in any multiparameter setting. It therefore does not seem to be a suitable candidate to be considered for a good notion of interleaving for persistent homotopy types (see also \cite{LanariScoccola2023Rectification}). The second notion, referred to as \define{homotopy interleavings} in \cite{BlumbergLesnick2023HomotopyInterleaving}, is more subtle and relevant. Roughly speaking, whereas interleavings in the homotopy category require the commutativity of the triangle up to some persistent homotopy, homotopy interleavings require that the relevant persistent homotopies also fulfill an infinite tower of higher coherence relations (see \cite{BlumbergLesnick2023HomotopyInterleaving} for more details). Thus, the resulting distance is generally larger and, in fact, is the largest distance bounded by the interleaving distance that is invariant under weak equivalences. The precise relationship between the two distances is still the topic of ongoing research. However, it is known that in the case of $1$-parameter persistence, the two distances are equivalent (see \cite{LanariScoccola2023Rectification}). Whether every result we obtain for interleavings in the homotopy category also holds for homotopy interleavings is an interesting question, which we do not address in this article.
\end{remark}
\section{The persistent Hausmann Theorem}
\label{sec:Hausmann}
In this section, we prove the following persistent version of Hausmann's Theorem.
\begin{recollection}
    Recall that a metric space $M$ is called \define{proper}, if every bounded closed subset in $M$ is compact. Also recall that every proper metric space is complete.
\end{recollection}
\begin{notation}
For the remainder of this section, let $\kappa \in \mathbb{R}$, and let $M$ be a proper $\cabK$-space. Furthermore we fix a Lipschitz function $f \colon M \to \RN$, with Lipschitz constant $\Lipf$, with respect to the $\infty$-norm on $\RN$.
Recall also the constant $\catRad$ defined in \cref{not:catRad}, as well as the Jung constant $\conShrM[\rho]$, defined in \cref{not:conJung}, depending on a parameter $\rho$ (in this section $\delta$).
\end{notation}
\begin{notation}
	 In this section, whenever we write $\real{\Rips(M)}$ we will mean the topological realization of the simplicial complex definition of the Rips complex.
\end{notation}
\begin{theorem}
\label{thm:persistent_hausmann}
	Let $\delta \leq \catRad$. Then there is an interleaving 
	\[
	\real{\Rips[\delta](\mbull)} \simeq_{\conShrM[\delta] \Lipf \delta} M^{\bullet}
	\]
	in the persistent homotopy category.
\end{theorem}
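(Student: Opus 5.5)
The plan is to pass through the metric thickening $\mathcal{V}^{\delta}(M)$, which the introduction mentions as the key tool. This is the space of finitely supported probability measures on $M$ whose support has diameter $<\delta$, topologized by an optimal transport (Wasserstein) metric. I will filter it persistently by setting $\mathcal{V}^{\delta}(M)^u := \{\mu \mid \supp(\mu) \subset M^u\}$. By the results of \cite{metric_reconstruction_via_optimal_transport,Gillespie2024VietorisThickenings}, the natural bijection $\real{\Rips[\delta](M)} \to \mathcal{V}^{\delta}(M)$, sending $\sum_i t_i x_i \mapsto \sum_i t_i \delta_{x_i}$, is a continuous weak homotopy equivalence. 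This holds in particular when it is applied to each subspace $M^u$, since $\Rips[\delta](M^u)$ is exactly the full subcomplex on $M^u$. Because of this naturality, the bijection defines a levelwise weak equivalence $\real{\Rips[\delta](\mbull)} \to \mathcal{V}^{\delta}(\mbull)$, i.e.\ an isomorphism in $\ho(\iSpaces^{\RN})$. It therefore suffices to build an interleaving $\mathcal{V}^{\delta}(\mbull) \simeq_{\conShrM[\delta]\Lipf\delta} \mbull$, and I would construct it from honest continuous persistent maps.

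For the first map I take the Dirac inclusion $\iota\colon M^u \to \mathcal{V}^{\delta}(M)^u$, $x \mapsto \delta_x$. It is continuous and filtration-preserving with shift $0$, so composing with $s$ gives a morphism into the $\varepsilon$-shift, where $\varepsilon := \conShrM[\delta]\Lipf\delta$. For the second map I take the circumcenter $\pi(\mu) := \circCent(\supp \mu)$, defined by \cref{theo:ex_of_circumcenter} because $\diam(\supp\mu) < \delta \le \catRad$. The filtration shift is controlled as follows. If $\supp\mu \subset M^u$, then for every $a \in \supp\mu$ we have $d(\pi(\mu),a) \le \circRad(\supp\mu) \le \conShrM[\delta]\diam(\supp\mu) \le \conShrM[\delta]\delta$. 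Hence $f(\pi(\mu)) \le u + \Lipf\conShrM[\delta]\delta$, so $\pi$ lands in $M^{u+\varepsilon}$. When $\supp\mu$ is a single point, the circumradius is $0$ and the bound holds trivially.

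Next I would verify the two interleaving triangles up to persistent homotopy. The first composite is $\pi \circ \iota = \mathrm{id}$, because $\circCent(\{x\}) = x$, so that triangle commutes strictly. For the other triangle I use the straight-line homotopy in the thickening, $H_t(\mu) := (1-t)\mu + t\,\delta_{\pi(\mu)}$. Its support $\supp\mu \cup \{\pi(\mu)\}$ has diameter $<\delta$ after an appropriate adjustment, since every point of $\supp\mu$ is within $\circRad(\supp\mu) < \delta$ of $\pi(\mu)$. The remaining diameter issue is handled below. This support lies in $M^{u+\varepsilon}$, so $H$ is a persistent homotopy from $s$ to $\iota\circ\pi$ inside $\mathcal{V}^{\delta}(\mbull)^{\bullet+2\varepsilon}$, and indeed already inside the $\varepsilon$-shift.

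I expect the main obstacle to be continuity of $\pi$ on $\mathcal{V}^{\delta}(M)$. The support of a measure jumps when weights tend to zero, and at such points the circumcenter of the support can jump as well. Two fixes seem possible. The first is to replace $\circCent(\supp\mu)$ by a continuously weighted barycenter, for instance the unique minimizer of $x \mapsto \int d(x,y)^2\,d\mu$, which is well defined and continuous at scale below $\catRad$ in $\catK$ geometry. With this choice, however, the shift bound must be re-proved through convexity. The second fix is to avoid the thickening for $\pi$ altogether: work simplicially, map the barycentric subdivision $\sd\Rips[\delta](M^u)$ to $M^{u+\varepsilon}$ by sending the vertex $\sigma$ to $\circCent(\sigma)$, and extend over simplices by geodesic coning. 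By \cref{lem:distance_bounds_circum}, a chain $\sigma_0\subset\dots\subset\sigma_k$ has all its circumcenters inside $\closedball[\conShrM[\delta]\delta](\circCent(\sigma_k))$, a convex $\catK$ ball, and every point there has $f$-value at most $u+\varepsilon$ by the same estimate. This convexity also provides the homotopy to the last-vertex map, and it resolves the diameter issue above. I would try the simplicial version first, since it stays within tools the paper has already set up.
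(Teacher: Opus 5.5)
\textbf{Verdict: there is a genuine gap.} Your framework matches the paper's: filter the metric thickening $\MRips[\delta](\mbull)$ by supports, use the Dirac inclusion $\iota$ in one direction and a barycenter-type retraction in the other, and connect them by a straight-line homotopy of measures. You also correctly see that $\mu\mapsto\circCent(\supp(\mu))$ is discontinuous, so your second and third paragraphs do not stand as written. The trouble is that the repair you say you would try first does not work, and the other repair is left unfinished. Write $S:=\conShrM[\delta]$ and $\varepsilon:=\Lipf S\delta$.

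\textbf{Why the simplicial repair fails.} It only moves the obstacle elsewhere.
\begin{itemize}
\item For non-discrete $M$, the vertex map $M\to\real{\Rips[\delta](M)}$ is not continuous, because the $0$-skeleton of the realization is discrete. That is exactly why $\iota$ must land in the thickening.
\item If $\pi$ is only defined on $\real{\sd\Rips[\delta](M^u)}$, the $M$-side triangle asks for the following. The composite $M^{\bullet}\xrightarrow{\iota}\MRips[\delta](\mbull)\cong\real{\Rips[\delta](\mbull)}\cong\real{\sd\Rips[\delta](\mbull)}\xrightarrow{\pi}M^{\bullet+\varepsilon}$, formed in $\ho(\iSpaces^{\RN})$ by inverting weak equivalences, must equal $s$.
\item The identity $\circCent(\{x\})=x$ concerns a set-theoretic composite through a discontinuous inverse. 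It says nothing about that morphism.
\item To conclude, you would need the induced set map $\MRips[\delta](M)\to M$ to be continuous in the Wasserstein topology. That is the same continuity problem you wanted to avoid, now also involving support points that merge.
\end{itemize}
Separately, your filtration estimate uses the wrong ball. A point of $\overline{B}_{S\delta}(\circCent(\sigma_k))$ is only guaranteed to lie within $S\delta+\circRad(\sigma_k)$, which can be close to $2S\delta$, of the points of $\sigma_k$. The correct ball is centred at a point $a\in\sigma_0$. Since $a\in\sigma_j$ for every $j$, you get $d(\circCent(\sigma_j),a)\le\circRad(\sigma_j)<S\delta$. Hence the geodesic simplex lies in a convex ball of radius $<S\delta$ about $a$, and so in $M^{u+\varepsilon}$.

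\textbf{The Karcher-mean repair is the paper's proof.} The step you leave open (``re-proved through convexity'') has a short averaging proof that uses no convexity.
\begin{itemize}
\item Let $\mu=\sum_i\lambda_i x_i$ and $E_\mu(y)=\sum_i\lambda_i d(x_i,y)^2$.
\item The circumcenter $c$ of $\supp(\mu)$ satisfies $d(c,x_i)\le\circRad(\supp(\mu))<S\delta$ for all $i$.
\item Since $K(\mu)$ minimizes $E_\mu$, we get $E_\mu(K(\mu))\le E_\mu(c)<(S\delta)^2$.
\item So some $x\in\supp(\mu)$ has $d(x,K(\mu))<S\delta$, which gives $f(K(\mu))\le u+\varepsilon$.
\end{itemize}
One such support point suffices. $K\circ\iota=\mathrm{id}$ makes the $M$-side triangle strict, and the homotopy $(1-t)\mu+t\,\delta_{K(\mu)}$ only adds the atom $K(\mu)$ to the support.

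Two further facts complete the argument.
\begin{itemize}
\item The support-diameter condition for this homotopy comes from Yokota's theorem. It places $K(\mu)$ in every closed ball of radius $<\catRad$ containing $\supp(\mu)$, hence within $\diam(\supp(\mu))<\delta$ of each support point.
\item Continuity of $K$ is where the paper uses that $M$ is proper. It combines compactness of closed balls with the $2$-Wasserstein description of $E_\mu$.
\end{itemize}
With these facts, your second and third paragraphs go through unchanged once $\circCent(\supp(\mu))$ is replaced by $K(\mu)$.
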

\begin{remark}
    We will only use properness at one point in this section, namely in \cref{lem:cont_of_karcher} to prove the continuity of Karcher means. In fact, one can prove a version of \cref{thm:persistent_hausmann} just under the assumption of completeness, at the cost of passing to $\delta < \catRad$, by using \cite[Prop. 24]{Yokota2016} in the proof of \cref{lem:cont_of_karcher} (for $\delta < \catRad$) instead. 
\end{remark}
\subsection{Metric thickenings}
The proof relies on the technique of metric thickenings, introduced in \cite{metric_reconstruction_via_optimal_transport}. We now recall the most relevant notions and results from \cite{metric_reconstruction_via_optimal_transport,Gillespie2024VietorisThickenings}.
\begin{recollection}\label{recol:metric_vietoris_rips} Let $\delta \geq 0$. The \textit{metric Vietoris-Rips thickening} of $M$, $\MRips(M)$, is the topological space defined as follows. The underlying set of $\MRips(M)$ is the same as the one of $\real{\Rips(M)}$. However, instead of equipping it with the weak topology coming from the geometric realization, the topology arises from a Wasserstein metric, defined as follows. We identify a point $\sum_{x \in M} \lambda_x x$ in the geometric realization of $\Rips(M)$ with a finite probability Borel measure on $M$, given by $\mu(S) = \sum_{x \in S} \lambda_x$. We then use the $1$-Wasserstein metric to define a topology on the space of such finite measures (see \cite{metric_reconstruction_via_optimal_transport}). \\
The identity on the level of sets defines a continuous bijection $\real{\Rips(M)} \to \MRips(M)$. Now, given a filtration function $f \colon M \to \RN$, we can turn $\MRips(M)$ into a persistent (filtered even) space, by filtering it by the metric thickenings $\MRips(M^{u})$, for $u \in \RN$. We denote the resulting persistent space by $\MRips(M^{\bullet})$. The continuous bijection $\real{\Rips(M)}\to  \MRips(M) $ then defines a comparison map of $\RN$-persistent homotopy types $\real{\Rips(\mbull)}\to \MRips(\mbull) $.
\end{recollection}
As an immediate corollary of \cite[Thm. 1]{Gillespie2024VietorisThickenings}, we obtain the following result.
\begin{proposition}\label{prop:metric_vietoris_rips_equ_rips} Let $\delta \geq 0$. 
	The canonical morphism $ \real{\Rips(\mbull)}\to \MRips(\mbull)$ is a weak equivalence in $\iSpaces^{\RN}$.
\end{proposition}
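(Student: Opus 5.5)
The plan is to reduce the statement to a pointwise assertion and then apply \cite[Thm.~1]{Gillespie2024VietorisThickenings} at each index $u \in \RN$.

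\textbf{Step 1: reduce to a pointwise statement.} By definition, the weak equivalences of $\iSpaces^{\RN}$ are the natural transformations that are weak homotopy equivalences at every index. So I first check that the comparison map $\real{\Rips(\mbull)} \to \MRips(\mbull)$ really is a morphism in $\Top^{\RN}$. At each $u$ it is the set-theoretic identity $\real{\Rips(M^u)} \to \MRips(M^u)$. The structure maps on both sides, for $u \leq u'$, are induced by the inclusion $M^u \subset M^{u'}$, which sends a finitely supported measure to the same measure. Hence the identity maps commute with the structure maps, and naturality is immediate. Continuity at each index is the continuous bijection recalled in \cref{recol:metric_vietoris_rips}.

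\textbf{Step 2: identify the two possible topologies on $\MRips(M^u)$.} There are two readings of $\MRips(M^u)$: the metric thickening of the metric space $M^u$ (with the restricted metric), or the subspace $\MRips(M^u) \subset \MRips(M)$. I will show these agree. The $1$-Wasserstein distance between two finitely supported measures $\mu, \nu$ on $M^u$ is an infimum over couplings. Any coupling of $\mu$ and $\nu$ is supported on $\supp(\mu) \times \supp(\nu) \subset M^u \times M^u$, so its cost is the same whether distances are measured in $M^u$ or in $M$. Therefore the two Wasserstein metrics coincide, and so do the topologies. This is the only point needing care, and it is a short check.

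\textbf{Step 3: apply the cited theorem indexwise.} For each $u$, the map $\real{\Rips(M^u)} \to \MRips(M^u)$ is the canonical comparison for the metric space $M^u$. Here $\Rips$ is the open Rips complex, and $M^u$ may be any subset, possibly empty or non-closed. \cite[Thm.~1]{Gillespie2024VietorisThickenings} states that this comparison is a weak homotopy equivalence for an arbitrary metric space, in the open Vietoris--Rips convention. The degenerate cases are trivial: if $M^u=\emptyset$ both sides are empty, and if $\delta = 0$ both sides are empty since the open complex has no simplices. Since the map is a weak homotopy equivalence at every $u$, it is a weak equivalence in $\iSpaces^{\RN}$.

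The main thing to get right is that the cited theorem is applied with hypotheses matching the present setting: open versus closed Rips, and no completeness or compactness assumption on the subspaces $M^u$. If \cite{Gillespie2024VietorisThickenings} were stated only for the closed convention, I would instead write the open complex as the union over $\delta' < \delta$ of closed complexes on both sides and pass to the colimit. This uses compactness of spheres and disks to see homotopy groups commute with this increasing union.
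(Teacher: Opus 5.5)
Your proposal is correct and follows the paper's approach: the paper obtains the statement as an immediate corollary of \cite[Thm.~1]{Gillespie2024VietorisThickenings} applied at each index $u$, and your checks of naturality and of the agreement of the intrinsic and subspace Wasserstein topologies on $\MRips(M^u)$ are exactly the details it leaves implicit. One caveat about your closed-convention fallback, which you do not need since the paper also applies the cited result directly to the open Rips complex: as stated it fails on the thickening side, because a compact subset of $\MRips(M^u)$ need not lie in any closed thickening at a scale $\delta'<\delta$ (for example $(1-\tfrac1n)x+\tfrac1n y_n \to x$ with $d(x,y_n)\uparrow\delta$), so compactness of spheres alone does not make homotopy groups commute with that union.
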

\begin{recollection}\label{rec:inclusion_of_space_metric_rips}
	The advantage of $\MRips(\mbull)$ over $\real{\Rips(\mbull)}$ is that the former comes with a canonical subspace inclusion $\iota \colon \mbull \hookrightarrow \MRips(\mbull)$, mapping $x \in M$ to the Dirac measure $\delta_x = 1 x$, which we will identify with $x$.
\end{recollection}
\subsection{A remark on the relevance of the choice of Wasserstein metric}
    We have stated that the topology of $\MRips(M)$ is induced by the $1$-Wasserstein distance above. It is useful to note, however, that the choice of $p=1$ has no impact on the resulting topology.
    \begin{notation}
    Given a measure $\mu \in \MRips(M)$, we write $\supp(\mu)$ for the support of $\mu$. In other words, if $\mu$ is given by a convex combination $\sum_{i=0}^{n} \lambda_i x_i$, with $\lambda_i >0$, then $\supp(\mu) = \{x_0, \dots,x_n\}$.
    \end{notation}
    \begin{notation}
        Given a metric space $X$ and $p \geq 1$, we denote by $d_p$ the $p$-Wasserstein distance on $\MRips(X)$.
    \end{notation}
    \begin{lemma}\label{lem:indep_of_dist}
        Given any metric space $X$, $\delta \geq 0$, and $p \geq 1$, the topology on $\MRips(X)$ induced by $d_p$ is independent of the choice of $p \geq 1$. 
    \end{lemma}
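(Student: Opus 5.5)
The plan is to compare $d_p$ with $d_1$ on $\MRips(X)$ and show the two metrics generate the same convergent sequences (equivalently, the same open balls up to nesting). The key structural fact is that every element of $\MRips(X)$ is a finitely supported probability measure whose support has diameter less than $\delta$. One inequality holds on any space of probability measures: by Hölder (or Jensen) applied to an arbitrary coupling $\pi$ of $\mu,\nu$,
\[
\int d\, d\pi \leq \Bigl(\int d^p \, d\pi\Bigr)^{1/p},
\]
so $d_1(\mu,\nu) \leq d_p(\mu,\nu)$. Hence the $d_p$-topology is finer than the $d_1$-topology. This step is routine.

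For the converse, fix $\mu \in \MRips(X)$ and a sequence or ball around it. I will bound $d_p(\mu,\nu)$ in terms of $d_1(\mu,\nu)$ for $\nu$ close to $\mu$. The bound $d^p \leq D^{p-1} d$ (for distances $d \le D$) gives $d_p \leq D^{(p-1)/p} d_1^{1/p}$ whenever an optimal $d_1$-coupling only moves mass over distances at most $D$. The obstacle is that $X$ may be unbounded, so I cannot bound transport distances globally. To handle this, I will use that $\supp(\mu)$ and $\supp(\nu)$ each have diameter less than $\delta$.

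Suppose $d_1(\mu,\nu) = \eta$ is small. Then there exist $x \in \supp(\mu)$ and $y \in \supp(\nu)$ with $d(x,y) \le \eta$: otherwise every pair would be farther than $\eta$ apart, forcing $d_1 > \eta$. Consequently all points of $\supp(\mu)\cup\supp(\nu)$ lie within $2\delta + \eta$ of one another, so every coupling of $\mu,\nu$ moves mass only over distances at most $D := 2\delta + \eta$. This yields
\[
d_p(\mu,\nu) \leq (2\delta + d_1(\mu,\nu))^{(p-1)/p}\, d_1(\mu,\nu)^{1/p}.
\]
The right-hand side tends to $0$ as $d_1(\mu,\nu) \to 0$. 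Hence every $d_p$-ball around $\mu$ contains a $d_1$-ball around $\mu$, which makes the $d_1$-topology finer than the $d_p$-topology.

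Combining the two directions, $d_p$ and $d_1$ induce the same topology. The only care needed is in the support-diameter argument and in the case $\delta = 0$, where $\MRips(X)$ is empty or trivial and there is nothing to prove.
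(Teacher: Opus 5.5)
Your proof is correct, but it is organized differently from the paper's. The paper compares an arbitrary pair of exponents $1\le p<q$ and proves $d_q \le \diam(X)^{1-p/q}\, d_p^{p/q}$ only for measures in a bounded region. It then reduces to that case sequentially: if $\mu_n\to\mu$ in $d_p$, a contradiction argument shows that every $x\in\supp(\mu)$ is approached by points of $\supp(\mu_n)$. That argument uses the mass $\mu(\{x\})>0$ to get $d_p(\mu_n,\mu)\ge \mu(\{x\})^{1/p}\varepsilon$. Combined with the bound $\delta$ on the diameter of supports, this puts all $\mu_n$ eventually into a common bounded set.

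You instead compare each $d_p$ with $d_1$, which suffices by transitivity. You replace the sequential reduction by the elementary estimate $d_1(\mu,\nu)\ge \min\{d(x,y) : x\in\supp(\mu),\ y\in\supp(\nu)\}$. This yields the global bound
\[
d_p(\mu,\nu)\le \bigl(2\delta+d_1(\mu,\nu)\bigr)^{(p-1)/p}\, d_1(\mu,\nu)^{1/p}
\]
for all $\mu,\nu\in\MRips(X)$.

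What your route buys is a stronger conclusion with less work. The identity $(\MRips(X),d_1)\to(\MRips(X),d_p)$ is uniformly continuous, with an explicit modulus depending only on $\delta$ and $p$, not on $\mu$ or on the masses of its atoms. So the metrics are uniformly equivalent, whereas the paper's mass-dependent sequential argument only yields topological equivalence. What the paper's route buys is a direct comparison of any two exponents, without passing through $d_1$.

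One small point: you do not need an optimal $d_1$-coupling. Your bound $\int d^p\,d\pi\le D^{p-1}\int d\,d\pi$ holds for every coupling $\pi$ of $\mu$ and $\nu$, and taking the infimum over $\pi$ gives the estimate directly, as your later paragraph in effect does.
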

    \begin{proof}
    Let $1 \leq p<q$, as otherwise the statement is evident. 
    To see this, recall first the basic fact about Wasserstein distances that 
    \[
    d_p(\mu,\nu) \leq d_{q}(\mu,\nu)
    \]
    for $p \leq q$ for any probability measures on a metric space $M$. This shows that the $q$-topology is finer than the $p$-topology. Now, conversely, assume that $\mu_n \to \mu$ is a convergent sequence in $\MRips(X)$ in the $p$-Wasserstein topology. Suppose first that $\mu_n$ and $\mu$ are contained in a bounded subspace of $X$. Then a straightforward computation shows that
    \[
    d_{q}(\mu, \nu) \leq \diam(X)^{1-\frac{p}{q}} d_p(\mu, \nu)^{\frac{p}{q}},
    \]
    for any two probability measures $\mu$ and $\nu$ on $X$. Consequently, convergence $d_p(\mu_n, \mu) \to 0$ also implies $d_{q}(\mu_n, \mu) \to 0$.\\
    It remains to show that we can reduce to the bounded case. In fact, we show the following:
    For any $x \in \supp(\mu)$, there necessarily exists a sequence $x_n \in M$, with $x_n \in \supp(\mu_n)$, such that $d(x_n,x) \to 0$. 
    Note that, since we have assumed a global bound of $\delta$ on the diameter of the support of $\mu_n$, this implies the claim by the triangle inequality.
    Now, let us prove the remaining claim. Suppose, to the contrary and by restricting to an appropriate subsequence, that $d(y_n,x) \geq \varepsilon$, for all $n \geq 0$, and all $y_n \in \supp(\mu_n)$. For $n \geq 0$, fix a probability measure $\Theta_n$ on $X \times X$ with marginals $\mu_n$ and $\mu$. Then 
    \begin{align*}
        \int d(-,-)^p d \Theta_n &\geq  \int \mathbb{1}_{X \times \{x\}}d(-,-)^p d \Theta_n \\&
        = \int d(-,x)^p d \Theta_n(- \times \{x\}) \\ &\geq \int \varepsilon^p d \Theta_n(- \times \{x\}) \\
        &= \Theta_n(X \times \{x\}) \varepsilon^{p} \\ &= \mu(\{x\}) \varepsilon^p,
    \end{align*}
    and thus
    \[
    d_p(\mu_n,\mu) \geq \mu(\{x\})^{\frac{1}{p}} \varepsilon >0,
    \]
    in contradiction to the convergence assumption. 
       \end{proof}
\subsection{Karcher means and the proof of the persistent Hausmann Theorem}
In \cite{metric_reconstruction_via_optimal_transport}, a homotopy inverse to the inclusion $\iota \colon M \hookrightarrow \MRips[\delta](M)$ is constructed for the case of Riemannian manifolds through the technique of Karcher means, also referred to as centers of mass.
\begin{recollection}\cite{Yokota2016}
Recall that \define{the center of mass} (also referred to as Karcher mean or Fr\'echet mean) of a finite measure $\mu = \sum_{i = 0}^n \lambda_i x_i $ is defined as a global minimizer of the function $E_{\mu} \colon M \to \mathbb{R}_{\geq 0}$, $y \mapsto \int d(x,y)^2 d\mu(x) = \sum_{i=0}^n \lambda_i d(x_i,y)^2$.
\end{recollection}
\begin{notation}
	 Given a measure $\mu \in \MRips(M)$, suppose that the center of mass of $\mu$ exists and is unique. Then we denote it by $K(\mu)$.
\end{notation}
\begin{proposition}[{\cite[Thm B]{Yokota2016}}]\label{prop:inverse_hausmann}
   Let $0 \leq \delta \leq \catRad$. Then, given any $\mu \in \MRips(M)$, the center of mass $K(\mu)$ exists and is unique. Furthermore, given $x \in M$ and $0< r < \catRad$ such that $\supp(\mu) \subset \closedball[r](x)$, it holds that $K(\mu) \in \closedball[r](x).$
\end{proposition}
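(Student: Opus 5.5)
The plan is to reduce everything to a single complete $\catK$ ball, invoke the $\catK$ barycenter theory of \cite{Yokota2016} there, and then globalize, following the same pattern as the proof of \cref{theo:ex_of_circumcenter}.

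\emph{Step 1 (local existence and uniqueness).} Write $\mu=\sum_i \lambda_i x_i$ with $\lambda_i>0$. Since $\Rips(M)$ is the open Rips complex, $A:=\supp(\mu)$ is finite with $r_0:=\diam(A)<\delta\le\catRad$. Fix $x_0\in A$ and put $B:=\closedball[r_0](x_0)$. Because $r_0<\catRad$, $B$ is a complete $\catK$ space of radius $<\frac{\varpi_\kappa}{2}$, and it is convex. On such a ball, $E_\mu$ is strictly convex along geodesics, which is the content of the $\catK$ case of \cite[Thm.~B]{Yokota2016}. Together with properness, so that minimizers exist on the compact set $B$, this yields a unique minimizer $K_B(\mu)$ of $E_\mu|_B$.

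\emph{Step 2 (convex-hull property, which gives the second claim and independence of $B$).} Let $K$ be the intersection of all closed convex sets containing $A$, and $K'$ the analogous hull of $A\cup\{K_B(\mu)\}$. By \cref{lem:intersection_trick}, $\diam(K')\le r_0<\frac{\varpi_\kappa}{2}$. The nearest-point projection $\pi\colon K'\to K$ of \cite[Cor.~2.10]{KimuraSasaki2023PerturbationsResolvent} does not increase distances to points of $K$, so $E_\mu(\pi(K_B(\mu)))\le E_\mu(K_B(\mu))$. Uniqueness in $B$ then forces $K_B(\mu)\in K$. In particular, if $A\subset\closedball[r](x)$ with $r<\catRad$, this closed ball is convex, so $K_B(\mu)\in\closedball[r](x)$. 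The same argument applied to $B\cap B'$ shows that $K_B(\mu)$ does not depend on the choice of the ball $B$.

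\emph{Step 3 (global minimality and uniqueness).} This is the main obstacle. Unlike circumradii, a point $y$ with $E_\mu(y)\le E_\mu(K_B(\mu))$ is not automatically confined to a ball around $A$ that we already control. I would first rule out far-away points by a crude estimate. For every $a\in A$ one has $d(y,a)\ge d(y,x_0)-r_0$, so $E_\mu(y)\ge (d(y,x_0)-r_0)^2$. On the other hand, $E_\mu(K_B(\mu))\le E_\mu(z)\le (\conShrM[\delta]\,r_0)^2$, where $z=\circCent(A)$ and the last bound follows from \cref{lem:distance_bounds_circum}. Hence every competitor satisfies $d(y,x_0)\le(1+\conShrM[\delta])\,r_0$, i.e.\ lies within $\circRad(A)+r_0$ of the circumcenter. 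For such $y$, I would apply the projection argument of Step 2 inside a $\catK$ ball around $z$ or around $y$ containing both $A$ and $y$, then use uniqueness in that ball to conclude $y=K_B(\mu)$.

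If the radius bookkeeping in Step 3 fails near the extremal case $\delta=\catRad$, the fallback is to cite \cite[Thm.~B]{Yokota2016} directly in its formulation for spaces of curvature bounded above, whose hypothesis is exactly a support-diameter bound of this type. The argument above then only serves to verify that this hypothesis holds under $\delta\le\catRad$.
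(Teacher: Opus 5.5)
Your Steps 1 and 2 follow the paper's route: you apply Yokota's theorem on a single complete $\catK$ ball, then use the circumcenter-style hull/projection argument of \cref{theo:ex_of_circumcenter} to get independence of the ball and the second claim. The genuine gap is Step 3, which is exactly the globalization this proposition is about. To use local uniqueness you need one closed ball of radius $<\catRad$ that contains both $\supp(\mu)$ and the competitor $y$. The balls you propose are centered at $x_0$ (radius $(1+\conShrM[\delta])r_0$), at $z=\circCent(A)$ (radius about $\circRad(A)+r_0$), or at $y$ (radius up to $(2+\conShrM[\delta])r_0$). None of these radii is below $\catRad$ in general. The first already reaches $\catRad$ once $r_0\ge \catRad/(1+\conShrM[\delta])$, which $\delta\le\catRad$ permits. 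So as written, Step 3 only handles $\delta$ well below $\catRad$. The fallback does not close this either: as the paper notes, \cite[Thm B]{Yokota2016} is a statement about $\catK$ spaces. Passing from a $\catK$ ball to global minimization over the merely locally $\catK$ space $M$ is precisely what has to be shown here.

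The missing idea is that $E_\mu$ is a $\mu$-weighted average of squared distances. A small value of $E_\mu(y)$ therefore forces $y$ to be close to \emph{some} support point, and the ball should be centered there. Concretely, $E_\mu(y)\le E_\mu(K_B(\mu))\le E_\mu(x_0)\le r_0^2$ yields an index $i$ with $d(y,x_i)\le r_0$. Hence $\supp(\mu)\cup\{y\}\subset\closedball[r_0](x_i)$, which is a complete $\catK$ ball because $r_0<\catRad$. By the independence from your Step 2, $K_B(\mu)$ is the unique minimizer of $E_\mu$ on that ball, so $y=K_B(\mu)$. This is the paper's argument: it uses $E_\mu(m)<\delta^2$ to find $x_i$ with $d(y,x_i)<\delta$, then works in a ball of radius $<\delta$ around $x_i$. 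Your own estimate $E_\mu(y)\le(\conShrM[\delta]r_0)^2$ already contained this; you only needed to center at a support point near $y$ rather than at $x_0$, $z$, or $y$.

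A smaller point in Step 2: \cref{lem:intersection_trick} gives $\diam(K')\le r_0$ only if $d(K_B(\mu),a)\le r_0$ for every $a\in A$. For a circumcenter this follows from $\mathfrak{r}(c_B(A))\le\mathfrak{r}(a)\le\diam(A)$. For a barycenter you only know it for $a=x_0$, so the diameter bookkeeping in the projection step needs more care. The paper is equally terse at this point.
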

\begin{proof}
    We write $\mu = \sum_{i=0}^n \lambda_i x_i$, with $\lambda_i >0$, $\sum_{i=0}^n \lambda_i = 1$ and $x_i \in M$. 
    \cite{Yokota2016} states the result for $\catK$ spaces. Now, given any $x \in M$ and $0< r < \catRad$, the closed ball $\closedball[r](x)$ is a $\catK$ space (\cref{rec:facts_about_balls_in_bounded_curvature}). Hence, for any $x\in M$, we can apply \cite[Thm B]{Yokota2016} to obtain the existence and uniqueness of the center of mass on $\closedball[r](x)$, for measures supported in $\closedball[r](x)$. Now let $x \in \supp(\mu)$. By assumption $\supp(\mu) \subset \closedball[r](x)$, for any $r$ with $\diam(\supp(\mu)) < r < \delta$. Denote the resulting center of mass on $\closedball[r](x)$ of $\mu$ by $m$. An argument analogous to the one in the case of circumcenters in \cref{theo:ex_of_circumcenter} shows that this minimizer $m$ is also independent of the choice of $x = x_i$ and $r$.
	Observe that $E_{\mu}(m) <\delta^2$, since $E_{\mu}(x_i) < \delta^2$ for all $i \in \{0, \dots, n\}$.
	We now need to show that the center of mass computed on $\closedball[r](x)$ is, in fact, a global minimizer of $E_{\mu}$ on $M$, and uniquely so. So, let $y \in M$ with $E_{\mu}(y) \leq E_{\mu}(m)$. 
	Note that $E_{\mu}(y) < \delta^2$, since $E_{\mu}(m) = \sum_{i=0}^n \lambda_i d(x_i,m)^2 < \delta^2$. Hence, there exists an $i \in \{0, \dots, n\}$ such that $d(y,x_i) <\delta$. In particular, $y$ and $\supp(\mu)$ are contained in a closed ball of radius $\diam(\supp(\mu)) < \delta$ around $x_i$. Hence $y = m$, by the uniqueness on such balls.
\end{proof}
\begin{corollary}\label{cor:inverse_hausmann} 
	Let $\delta \leq \catRad$. Then the map $K \colon \MRips(M) \to M$, $\mu \mapsto K(\mu)$, is well-defined.
\end{corollary}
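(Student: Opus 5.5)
The claim is that for $\delta \leq \catRad$ the assignment $\mu \mapsto K(\mu)$ gives a well-defined map of sets $\MRips(M) \to M$. I will deduce this directly from \cref{prop:inverse_hausmann}, so the plan is short. Continuity is not asserted here; it is handled separately (cf.\ \cref{lem:cont_of_karcher}).

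First I fix an arbitrary point of $\MRips(M)$. By \cref{recol:metric_vietoris_rips} it is a finite convex combination $\mu = \sum_{i=0}^n \lambda_i x_i$ with $\lambda_i > 0$ and $\sum_i \lambda_i = 1$. Its support $\{x_0, \dots, x_n\}$ spans a simplex of the open Rips complex $\Rips(M)$, so $d(x_i,x_j) < \delta$ for all $i,j$. Hence $\diam(\supp(\mu)) < \delta \leq \catRad$, and $\mu$ is a finitely supported probability Borel measure whose support has diameter below $\delta$. These are exactly the hypotheses of \cref{prop:inverse_hausmann}.

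Next I apply \cref{prop:inverse_hausmann}. It gives that the center of mass of $\mu$, namely a global minimizer of $E_\mu$ on $M$, exists and is unique. So $K(\mu)$ is a uniquely determined point of $M$. This defines the map on all of $\MRips(M)$, since no point of $\MRips(M)$ was excluded.

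The only subtlety is that the identification of points of $\MRips(M)$ with measures must be unambiguous. Formal sums with repeated points or zero coefficients must give the same measure, and therefore the same function $E_\mu$. This holds because $E_\mu$ depends only on the measure $\mu$, not on the chosen representation. There is no genuine obstacle: the statement is just a pointwise repackaging of \cref{prop:inverse_hausmann}.
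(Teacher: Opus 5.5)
Your proposal is correct and is the paper's approach. The paper gives no separate proof of this corollary; it is read off pointwise from \cref{prop:inverse_hausmann}, which asserts existence and uniqueness of the center of mass for every $\mu \in \MRips(M)$ when $\delta \leq \catRad$, exactly as you argue.
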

\begin{corollary}\label{lem:distances_to_karcher}
    Let $0 < \delta \leq \catRad$ and let $\mu \in \MRips(M)$. Then, for all $x \in \supp (\mu)$, it holds that $d(x, K(\mu)) <\delta$. Furthermore, there exists an $x \in \supp(\mu)$ such that $d(x,K(\mu)) < \conShrM[\delta] \delta$. 
\end{corollary}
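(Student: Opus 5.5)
Both claims come from comparing the Karcher mean $K(\mu)$ with the circumcenter of the support. Write $\mu = \sum_{i=0}^n \lambda_i x_i$ with $\lambda_i>0$, and set $A := \supp(\mu)$. Since $\mu \in \MRips(M)$, we have $\diam(A) < \delta \leq \catRad$.

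For the first claim, fix $x \in A$. Because $\diam(A)$ is strictly less than $\delta$, we can choose $r$ with $\diam(A) < r < \delta$. Then $A \subset \closedball[r](x)$ and $r < \catRad$. By \cref{prop:inverse_hausmann}, $K(\mu) \in \closedball[r](x)$, so $d(x,K(\mu)) \leq r < \delta$. In the degenerate case $\diam(A)=0$, the measure is a Dirac mass at $x$ and $K(\mu)=x$, so the claim is trivial.

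For the second claim, we may assume $\diam(A)>0$, since otherwise $K(\mu)\in A$ and the distance is $0$. The set $A$ is finite, and its diameter lies in $(0,\delta)$ with $\delta \le \catRad$. So \cref{theo:ex_of_circumcenter} gives the circumcenter $c=\circCent(A)$, and \cref{not:conJung} gives $\circRad(A) \leq \conShrM[\delta]\,\diam(A) < \conShrM[\delta]\,\delta$. Set $R := \circRad(A)$, so that $A \subset \closedball[R](c)$.

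Next we want to apply \cref{prop:inverse_hausmann} to the ball $\closedball[R](c)$, which requires $0<R<\catRad$. We have $R \geq \diam(A)/2 > 0$, and $R < \delta \leq \catRad$ because $\conShrM[\delta]\leq 1$. The proposition then gives $d(c,K(\mu)) \leq R$. This alone is not enough: it bounds the distance from $K(\mu)$ to $c$, not to a point of $A$.

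The main step is to bound $\min_{x\in A} d(x,K(\mu))$ by $R$. The intended argument uses the minimizing property of $K(\mu)$ for $E_\mu$ together with convexity in the $\catK$ ball. Suppose for contradiction that $d(x_i, K(\mu)) \geq R$ for all $i$, and put $m := K(\mu)$. Then $E_\mu(m) \geq R^2$.

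On the other hand, consider the geodesic from $m$ to $c$. All points involved lie in a $\catK$ ball of radius below $\varpi_\kappa/2$. There, the function $d(x_i,\cdot)$ is convex along geodesics, and $d(x_i,c)\le R \le d(x_i,m)$. If some inequality $d(x_i,c) < R$ is strict, moving slightly from $m$ towards $c$ strictly decreases $E_\mu$, which contradicts minimality.

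If instead $d(x_i,c)=R$ for all $i$, then $c$ and $m$ both realise $\max_i d(x_i,\cdot)\le R$. By uniqueness of the circumcenter, $m=c$, and hence $d(x_i,m)=R$ for every $i$. We still get $R<\conShrM[\delta]\delta$ from the strict inequality $\diam(A)<\delta$.

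Some care is needed in this last part. Strict decrease requires $m\neq c$ together with strict convexity, or a first-variation argument along the geodesic. So I would compare $E_\mu$ at $m$ and at points $\gamma(t)$ on the geodesic towards $c$, using the $\catK$ convexity of $d(x_i,\cdot)$. Either way, some $x\in A$ satisfies $d(x,K(\mu)) \leq R < \conShrM[\delta]\delta$.
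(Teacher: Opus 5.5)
Your first part is essentially the paper's argument; the paper takes $r=\diam(\supp(\mu))$ in \cref{prop:inverse_hausmann}, and your choice of $r$ strictly between $\diam(A)$ and $\delta$ is slightly cleaner. In the second part you gather the same ingredients as the paper: the circumcenter $c$, the bound $R=\circRad(A)\le\conShrM[\delta]\diam(A)<\conShrM[\delta]\delta$, and minimality of $E_\mu$ at $m=K(\mu)$. What you miss is the observation that finishes the proof in one line. Since $E_\mu(m)=\sum_i\lambda_i\,d(x_i,m)^2$ is a $\lambda$-weighted average,
\[
\min_i d(x_i,m)^2\;\le\;E_\mu(m)\;\le\;E_\mu(c)\;\le\;R^2\;<\;(\conShrM[\delta]\delta)^2 .
\]
This is exactly the paper's proof.

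The rest of your main step is a detour, and one step in it is unjustified. The bound $d(c,K(\mu))\le R$ is never used. In your first case, $E_\mu(c)<R^2\le E_\mu(m)$ contradicts minimality directly (this is your geodesic at $t=1$). So you need neither the geodesic from $m$ to $c$ nor convexity of $d(x_i,\cdot)$, and no strict convexity or first-variation argument is required.

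In your second case, the claim that $m$ ``realises $\max_i d(x_i,\cdot)\le R$'' does not follow from what you wrote. It needs $E_\mu(m)\le E_\mu(c)=R^2$, which together with $d(x_i,m)\ge R$ for all $i$ forces $d(x_i,m)=R$. That is the averaging argument itself, and once you have it, the case split and the appeal to uniqueness of the circumcenter are unnecessary. So your proposal can be completed, but only by inserting the inequality that makes the rest of the main step superfluous.

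One small caveat applies to both your argument and the paper's. Your reduction to $\diam(A)>0$ tacitly needs $\conShrM[\delta]>0$. For a Dirac mass the distance is $0$, and $0<\conShrM[\delta]\delta$ fails when $M$ is discrete at scale $\delta$ (cf.\ \cref{rem:discrete_case}).
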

\begin{proof}
	The first statement is immediate from \cref{prop:inverse_hausmann} by taking $r= \diam(\supp(\mu)) < \delta$. For the second statement, use \cref{theo:ex_of_circumcenter} and let $c$ be the circumcenter of $\supp(\mu)$. By assumption, we have $d(c,x) < \conShrM[\delta] \delta$, for all $x \in \supp(\mu)$. In particular, $E_{\mu}(c) < (\conShrM[\delta] \delta)^2$. As $K(\mu)$ is a minimizer of $E_{\mu}$, we also have $E_{\mu}(K(\mu)) < (\conShrM[\delta] \delta)^2$. Hence, there exists an $x \in \supp(\mu)$ such that $d(x,K(\mu)) < \conShrM[\delta] \delta$.
\end{proof}
We furthermore obtain the following relevant continuity claim.
\begin{corollary}\label{lem:cont_of_karcher}
Let $\delta \leq \catRad$. Then the map $K \colon \MRips(M) \to M$, $\mu \mapsto K(\mu)$, is continuous.
\end{corollary}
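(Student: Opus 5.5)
We prove sequential continuity of $K$: given $\mu_n \to \mu$ in $\MRips(M)$ with respect to the $1$-Wasserstein metric (equivalently, by \cref{lem:indep_of_dist}, the $2$-Wasserstein metric), we show $K(\mu_n) \to K(\mu)$. Since $\MRips(M)$ is metrizable, this suffices.

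\emph{Step 1: localization to a compact set.} Write $\mu = \sum_i \lambda_i x_i$ and fix $x_0 \in \supp(\mu)$. The argument in the proof of \cref{lem:indep_of_dist} shows that for every $x \in \supp(\mu)$ there are points $y_n \in \supp(\mu_n)$ with $y_n \to x$. Since $\diam(\supp(\mu_n)) < \delta$, for $n$ large all of $\supp(\mu_n)$ lies in $\closedball[2\delta](x_0)$. By \cref{lem:distances_to_karcher}, $d(K(\mu_n), y) < \delta$ for every $y \in \supp(\mu_n)$, so $K(\mu_n) \in \closedball[3\delta](x_0)$. This ball is compact because $M$ is proper; this is the one place properness is used.

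\emph{Step 2: subsequence argument.} It suffices to show that every convergent subsequence $K(\mu_{n_k}) \to m$ has $m = K(\mu)$. We show $m$ minimizes $E_\mu$, and then uniqueness from \cref{prop:inverse_hausmann} gives $m = K(\mu)$. Two ingredients are needed.

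\begin{itemize}
\item \emph{Weak convergence.} For $\nu,\nu'$ supported in a bounded set $B$ of diameter $D$, the function $d(\cdot,y)^2$ is $2D'$-Lipschitz on $B$, where $D'$ bounds the distance from $y$ to $B$. Kantorovich--Rubinstein duality then gives $|E_\nu(y) - E_{\nu'}(y)| \le 2D' d_1(\nu,\nu')$, locally uniformly in $y$.
\item \emph{Continuity of $E$ in $y$.} For fixed $\nu$, $E_\nu$ is locally Lipschitz in $y$.
\end{itemize}

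Combining these, $E_{\mu_{n_k}}(K(\mu_{n_k})) \to E_\mu(m)$ and $E_{\mu_{n_k}}(y) \to E_\mu(y)$ for every $y$. Passing to the limit in $E_{\mu_{n_k}}(K(\mu_{n_k})) \le E_{\mu_{n_k}}(y)$ yields $E_\mu(m) \le E_\mu(y)$ for all $y \in M$. Hence $m$ is a global minimizer of $E_\mu$, so $m = K(\mu)$.

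By compactness of $\closedball[3\delta](x_0)$, every subsequence of $K(\mu_n)$ has a further subsequence converging, necessarily to $K(\mu)$. Therefore $K(\mu_n) \to K(\mu)$.

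\emph{Main obstacle.} The delicate point is Step 1: making sure all the $K(\mu_n)$ eventually lie in a fixed bounded region. Without this, compactness is unavailable and the Lipschitz estimates in Step 2 are not uniform. The support-approximation argument from \cref{lem:indep_of_dist} together with the uniform diameter bound $\delta$ resolves this. The remaining steps are routine estimates.
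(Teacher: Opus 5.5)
Your proof is correct and follows the same strategy as the paper's: use properness to trap the $K(\mu_n)$ in a compact ball, extract convergent subsequences, and identify every subsequential limit as the unique minimizer of $E_\mu$ from \cref{prop:inverse_hausmann}. The difference is only in bookkeeping. The paper works with $d_2$ and the identity $E_\mu(y)=d_2(\mu,\delta_y)^2$, so both the localization and the convergence of energies follow from the triangle inequality for $d_2$. You instead work with $d_1$, localize via the support-approximation argument of \cref{lem:indep_of_dist} together with \cref{lem:distances_to_karcher}, and get the energy convergence from a Lipschitz bound on $d(\cdot,y)^2$ over a bounded set.
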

\begin{proof}
    By \cref{lem:indep_of_dist}, it follows that the topology on $\MRips(M)$ is induced by the $2$-Wasserstein distance. Observe that a center of mass of $\mu$ is precisely a minimizer of the $2$-Wasserstein distance to $M \subset \MRips(M)$ (using the canonical inclusion). It follows that, for any sequence $\mu_n$ in $\MRips(M)$ converging to $\mu \in \MRips(M)$, we also have 
    \[E_{\mu_n}(K(\mu_n)) =d_2( \mu_n, K(\mu_n))^2 =d_{2}(\mu_n,M)^2  \to d_2(\mu, M)^2 = d_2(\mu, K(\mu))^2 = E_{\mu}(K(\mu)) < \delta^2,\] for $n \to \infty$. Consequently, arguing as we did in the proof of \cref{prop:inverse_hausmann}, we obtain that $K(\mu_n)$ is contained in $\closedball[r](x)$, for some $x \in \supp(\mu)$, $r< \delta$ and $n$ sufficiently large. We now finish the proof using the assumption that $M$ is proper, i.e., that every bounded closed set is compact. It follows by compactness of $\closedball[r](x)$ that every subsequence of $K(\mu_n)$ has a convergent subsequence, denoted the same by abuse of notation. Let $x \in M$ be the limit of such a sequence.
    Again, expressing $E_{\mu}$ in terms of the $2$-Wasserstein distance, we obtain
    \[
    E_{\mu}(x) = \lim_{n \to \infty} E_{\mu_n} (K(\mu_n))  =E_{\mu} (K(\mu)).
    \]
    Hence, $x$ is also a minimizer of $E_{\mu}$ and thus agrees with $K(\mu)$, by \cref{prop:inverse_hausmann}. To summarize, every subsequence of $K(\mu_n)$ has, in turn, a subsequence converging to $K(\mu)$. Thus $K(\mu_n)$ converges to $K(\mu)$, as $n \to \infty$.
\end{proof}
We may now prove the following proposition, which is a generalization of \cite[Thm 4.2]{metric_reconstruction_via_optimal_transport} to the filtered setting. The proof is essentially analogous to the one in \cite{metric_reconstruction_via_optimal_transport}.
\begin{proposition} 
Let $0 < \delta \leq \catRad$. Then the morphism of persistent homotopy types \mbox{$\iota \colon \mbull \hookrightarrow \MRips(\mbull)$} defines part of a $\conShrM[\delta] \Lipf \delta$-interleaving in the homotopy category.
\end{proposition}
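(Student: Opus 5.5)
The plan is to construct the other half of the interleaving explicitly from the Karcher mean map, working directly with persistent spaces and filtration-respecting homotopies. Set $\varepsilon := \conShrM[\delta]\Lipf\delta$. The candidate map $\psi \colon \MRips(\mbull) \to \mbull[\bullet+\varepsilon]$ is the restriction of $K$ from \cref{cor:inverse_hausmann}, which is continuous by \cref{lem:cont_of_karcher}. First we check that it is filtration-compatible. Let $\mu \in \MRips(M^u)$. By \cref{lem:distances_to_karcher} there is some $x \in \supp(\mu) \subset M^u$ with $d(x,K(\mu)) < \conShrM[\delta]\delta$. The Lipschitz property then gives $f(K(\mu)) \leq f(x) + \Lipf\conShrM[\delta]\delta \leq u + \varepsilon$ componentwise in the $\infty$-norm. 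Hence $K$ restricts to maps $\MRips(M^u) \to M^{u+\varepsilon}$, which are natural in $u$. For $\iota$ we compose with the structure map and obtain $s\circ\iota \colon \mbull \to \MRips(\mbull[\bullet+\varepsilon])$.

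Next we check the two triangle identities. The composite $K \circ \iota \colon M^u \to M^{u+2\varepsilon}$ is exactly the inclusion, because the center of mass of a Dirac measure $\delta_x$ is $x$. So that triangle commutes strictly.

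For the other triangle we must show that $\iota\circ K \colon \MRips(M^u) \to \MRips(M^{u+2\varepsilon})$ is persistently homotopic to the structure inclusion. Following \cite{metric_reconstruction_via_optimal_transport}, we use the linear homotopy
\[
H(\mu,t) = (1-t)\mu + t\,\delta_{K(\mu)}.
\]
Three things need checking. The first is well-definedness, meaning that $\supp(H(\mu,t)) = \supp(\mu)\cup\{K(\mu)\}$ has diameter $<\delta$. This follows from the first part of \cref{lem:distances_to_karcher}, since $d(x,K(\mu))<\delta$ for all $x \in \supp(\mu)$. The second is filtration compatibility: the support lies in $M^{u+\varepsilon} \subset M^{u+2\varepsilon}$ by the computation above. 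The third is joint continuity of $H$ in the Wasserstein topology. This follows from continuity of $K$ together with the estimate $d_1((1-t)\mu+t\nu,(1-t')\mu'+t'\nu') \leq (1-t)d_1(\mu,\mu') + t\,d_1(\nu,\nu') + |t-t'|\,d_1(\mu',\nu')$ and the bound $d_1(\delta_y,\delta_{y'}) = d(y,y')$.

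Finally we pass to the homotopy category. Persistent homotopies are identified in $\ho(\iSpaces^{\RN})$, so both triangles commute there, and this yields the claimed $\varepsilon$-interleaving with $\iota$ as one half.

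The main obstacle is likely the continuity of $H$ on the metric thickening, in particular the handling of the convex-combination structure in the Wasserstein topology. I expect the estimate above to settle it routinely. A smaller point of care is the boundary case $\delta = \catRad$, where one needs \cref{prop:inverse_hausmann} exactly as stated. That proposition already covers the case, so no extra argument is needed there.
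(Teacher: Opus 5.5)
Your proof is correct and follows the paper's argument. The paper likewise uses the Karcher mean map $K$ as the other interleaving morphism, gets filtration compatibility from the second part of \cref{lem:distances_to_karcher}, observes that $K\circ\iota$ is the structure map on the nose, and uses the straight-line homotopy $(1-t)\mu + t\,\delta_{K(\mu)}$ in $\MRips(M)$. The differences are minor: the paper first builds an asymmetric $(0,\conShrM[\delta]\Lipf\delta)$-interleaving and then post-composes $\iota$ with $s$, which is exactly your $s\circ\iota$, and your explicit Wasserstein estimate for the joint continuity of the homotopy fills in a point the paper leaves implicit.
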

\begin{proof}
 We denote $L := \conShrM[\delta] \Lipf$ for brevity.
 In fact, we provide a so-called $(0,L\delta)$-interleaving, i.e., an interleaving of the form $\iota \colon \mbull \to \MRips(\mbull)$, $\MRips(\mbull) \to \mbull[{\bullet +L\delta}]$, from which a $L\delta$-interleaving is obtained by composing $\iota \colon \mbull \to \MRips(\mbull)$ with $s \colon \MRips(\mbull) \to \MRips(\mbull[{\bullet+L\delta}])$.
 Observe that the persistent spaces $\mbull$ and $\MRips(\mbull)$ are simply given by filtrations of the underlying spaces $M$ and $\MRips(M)$. Making use of this, we construct a homotopy inverse on the level of the underlying spaces and show that it (together with the relevant homotopies) is compatible with these filtrations. Consider the map $K \colon  \MRips(M) \to M$, $\mu \mapsto K(\mu)$ of \cref{prop:inverse_hausmann}. Observe that since $d(K(\mu),x) < \conShrM\delta$ (\cref{lem:distances_to_karcher}), for some $x$ in the support of $\mu$, it follows by the Lipschitz continuity of $f$ that $f(K(\mu)) < f(x) + \Lipf \conShrM \delta = f(x) + L\delta$. In particular, it follows that $K$ defines a morphism of persistent (filtered) spaces $\MRips(\mbull) \to \mbull[{\bullet + L\delta}]$, which we also denote by $K$. Evidently, $K \circ \iota = s \colon \mbull \to \mbull[{\bullet + L\delta}]$.
 To obtain that $\iota \circ K =s \colon \MRips (\mbull) \to \MRips (\mbull[{\bullet + L\delta}])$ in $\ho( \iSpaces^{\RN})$, we expose a persistent (filtered) homotopy $s \simeq \iota \circ K$, $H \colon \MRips (\mbull) \times [0,1] \to \MRips (\mbull[{\bullet + L\delta}]) $.
 $H$ is given by mapping $(\mu,t) \in \MRips (\mbull) \times [0,1]$ to the convex combination $tK(\mu) + (1-t)\mu$ in the convex space $\MRips[\delta]{\fmet}$. \cref{lem:distances_to_karcher} guarantees that $H(\mu,t) \in \MRips[\delta](M)$.
That $H(\mu,t) \in \MRips(\mbull[{u + L\delta}])$ whenever $\mu \in \MRips(\mbull[u])$ follows from the convexity of $\MRips(\mbull[{u + L\delta}]) \subset \MRips(M)$ and the fact that $K$ maps $\MRips(\mbull[u])$ into $\mbull[{u + L\delta}]$, for all $u \in \RN$, as already shown above. 
\end{proof}
Combining this result with \cref{prop:metric_vietoris_rips_equ_rips}, we obtain an interleaving in the homotopy category
$\mbull \simeq_{L\delta} \MRips[\delta](\mbull) \simeq \Rips (\mbull).$
This finishes the proof of \cref{thm:persistent_hausmann}. 
\section{
Stability of the bivariate function-Rips persistent homotopy type}\label{section:bivariate_stabiity}
To study the stability properties of the assignment $ \fmetapprx \mapsto \Rips[\delta](\mapprxbull)$, it is useful to consider the latter as a bivariate construction by also varying $\delta$.  \begin{notation}
 Given a persistent object of the form $F \colon \inPos \times  \mathbb{R}_{\geq 0} \to \cat[C]$, we will use the notation $F^{\bullet, \bullet}$ to indicate the fact that it is a functor in two variables. 
\end{notation}
\subsection{Correspondences of metric pairs}\label{subsec:correspondences}
We make use of the following parametrized version of correspondences, first introduced in \cite{CarlssonMemoli2010Multiparameter} in the $1$-parameter case.
	\begin{definition}[See \cite{CarlssonMemoli2010Multiparameter} for the $1$-parameter case]\label{def:filtered_correspondence}
	Let $\fmetapprx[0]$ and $\fmetapprx[1]$ be metric pairs over $\RN$ and $\epsmet,\epsfun \geq 0$. A \textit{$(\epsmet,\epsfun)$-correspondence} between $\fmetapprx[0]$ and $\fmetapprx[1]$ is a subset $\mathfrak{C} \subset \mathbb{M}_0 \times \mathbb{M}_1$ such that $ \pi_{\mathbb{M}_0}(\mathfrak{C}) = \mathbb{M}_0$ and $\pi_{\mathbb{M}_1}(\mathfrak{C}) = \mathbb{M}_1$, and such that for all $(x,y), (x',y') \in \mathfrak{C}$ it holds that 
	\begin{align*}
		|d_{\mathbb{M}_0}(x,x') - d_{\mathbb{M}_1}(y,y')| \leq \epsmet,
	\end{align*}
	and furthermore such that, for all $(x,y) \in \mathfrak{C}$, it holds that
	\begin{align*}
		 |\mathbb{f}_0(x) - \mathbb{f}_1(y)|_{\infty} \leq \epsfun.
	\end{align*}
	We will denote $(\epsmet,\epsfun)$-correspondences in the form $\mathfrak{C} \colon \fmetapprx[0] \approx_{\epsmet,\epsfun} \fmetapprx[1]$.
\end{definition}
\begin{remark}
    Observe that a $(0,0)$-correspondence of metric pairs $\fmetapprx[0]$ and $\fmetapprx[1]$ specifies the same data as an isometry $\metapprx_0 \cong \metapprx_1$ that commutes with $\fapprx_0$ and $\fapprx_1$. Hence, for all intents and purposes, metric pairs that admit a $(0,0)$-correspondence are the same.
\end{remark}
For the purpose of topological data analysis, stability results are typically stated in terms of a distance induced by certain comparison structures such as correspondences and interleavings. We note that expressing results in this manner is often a lossy process, as it omits \textit{how things are nearby} and only retains \textit{that they are nearby}. Furthermore, in the case of correspondences of function pairs, this requires a choice of how to combine the two parameters $\epsmet$ and $\epsfun$ into a single one, which loses additional information. In this article, we will usually stick to keeping the two parameters separate, as this provides finer information on how the metric and functional aspects influence the interleaving. Nevertheless, the following definition will be useful.
 \begin{construction}\label{con:corresp_distance}
     We denote by $\MetRN$ the class of all metric pairs $\fmetapprx$ over $\RN$. Fix any norm $\|\cdot \|_{\alpha}$ on $\mathbb{R}^2$.
	 Then the assignment
    \begin{align*}
              \dCor \colon \MetRN \times \MetRN &\to [0, \infty] \\
               (\fmetapprx[0],\fmetapprx[1]) &\mapsto \inf\{ \|(\epsmet,\epsfun)\|_{\alpha} \mid \epsmet, \epsfun \geq 0, \fmetapprx[0] \approx_{\epsmet,\epsfun} \fmetapprx[1] \}
    \end{align*}
      fulfills the triangle inequality and takes the value $0$ on pairs that are isometric through an isometry that commutes with the functions. We call the thus defined distance the \define{correspondence distance on metric pairs over $\RN$} (depending on $\|\cdot \|_{\alpha}$).
\end{construction}
\begin{remark}\label{rem:scale_normalization}
	Evidently, all possible choices of norm lead to equivalent distances.
	While the prominent choice in the literature is $\|\cdot \|_{\alpha} = \|\cdot \|_{\infty}$ (see \cite{CarlssonMemoli2010Multiparameter}),
	we also think that other choices of norm can be insightful. For instance, when one is studying a metric pair $\fmet$ with $f$ having a large Lipschitz constant $K>0$, it can be more sensible to use the norm $\|(x,y)\| = \| (x, \frac{1}{K}y) \|_{\infty}$ in order for the metric and functional aspects to renormalize to similar scales.
\end{remark}
\begin{notation}\label{not:reparam_dist}
	Given $K>0$, we write $\dCor[{\alpha_K}]$ for the correspondence distance on metric pairs over $\RN$ defined using the norm $\|(x,y)\|_{\alpha_K} := \| (x, \frac{1}{K}y) \|_{\infty}$.
\end{notation}
\begin{remark}
 When $N=0$, we can identify objects of $\MetRN$ with their underlying metric space. Then correspondences are just correspondences of metric spaces in the classical sense and $d_{\textnormal{Cor},\infty}$ is twice the Gromov-Hausdorff distance (see \cite[Ch. 7]{BBI2001}).
\end{remark}
\subsection{Statement of the bivariate stability result}
The additional flexibility of varying $\delta$ in the bivariate function-Rips complex will allow us to prove the following stability result. We remark that it is not conceptually new, and homological variants are found in several places (\cite{chazal2011scalar,blumberg2024stability}).
\begin{proposition}\label{prop:bivariate_interleaving}
	Let $\epsmet,\epsfun \geq 0$ and let $\fmetapprx[0]$ and $\fmetapprx[1]$ be metric pairs over $\RN$.
	Any $(\epsmet,\epsfun)$-correspondence $\mathfrak{C} \colon \fmetapprx[0] \approx_{\epsmet,\epsfun} \fmetapprx[1]$ induces a canonical $(\epsmet,\epsfun)$-interleaving 
	\[ \Rips[\bullet] (\mapprxbull_0) \simeq_{\epsmet,\epsfun} \Rips[\bullet] (\mapprxbull_1) \] in $\ho ( \iSpaces^{\prodPosR})$. 
\end{proposition}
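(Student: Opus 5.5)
The plan is the standard one: pass to simplicial sets, where the correspondence induces simplicial maps directly, and show that the two interleaving triangles commute up to a contiguity-type homotopy.

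\textbf{Step 1: the maps.} We model both bivariate function-Rips complexes as persistent simplicial sets, as in \cref{ex:function_rips_sset} via \cref{rec:equv_model_simp}. By \cref{rec:equ_with_ss} it then suffices to work in $\ho(\sSetK^{\prodPosR})$. Define
\[\varphi \colon \Rips[\bullet](\mapprxbull_0) \to \Rips[\bullet+\epsmet](\mapprxbull_1)\]
as follows. It sends an $n$-simplex $(x_0,\dots,x_n)$ of $\Rips[\delta](\metapprx_0^{u})$ to the $n$-simplex of ordered $n$-tuples $(y_0,\dots,y_n)$ with $(x_i,y_i)\in\mathfrak{C}$.

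A map of simplicial sets cannot be defined by "all choices", so we first fix a function $\sigma_0\colon \metapprx_0 \to \metapprx_1$ with $(x,\sigma_0(x))\in \mathfrak{C}$, using the axiom of choice. We then set $\varphi(x_0,\dots,x_n) = (\sigma_0(x_0),\dots,\sigma_0(x_n))$. This is compatible with the simplicial operators, since they act by precomposition on tuples.

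\textbf{Step 2: filtration bounds.} From $d(x_i,x_j)<\delta$ we get $d(\sigma_0x_i,\sigma_0x_j)<\delta+\epsmet$. From $\fapprx_0(x_i)\le u$ we get $\fapprx_1(\sigma_0 x_i)\le u+\epsfun$ componentwise, by the $\infty$-norm bound. Hence $\varphi$ lands in $\Rips[\delta+\epsmet](\metapprx_1^{u+\epsfun})$ and is natural in $(\delta,u)$. Symmetrically, using $\sigma_1\colon\metapprx_1\to\metapprx_0$, we get $\psi$.

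\textbf{Step 3: the triangles commute.} The composite $\psi^{+}\circ\varphi$ sends $(x_0,\dots,x_n)$ to $(x'_0,\dots,x'_n)$ with $x'_i=\sigma_1\sigma_0(x_i)$. We must compare this with the structure map $s$ into $\Rips[\delta+2\epsmet](\metapprx_0^{u+2\epsfun})$.

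We exhibit an elementary homotopy $H\colon \Rips[\bullet](\mapprxbull_0)\times\Delta^1 \to \Rips[\bullet+2\epsmet](\mapprxbull_0[\bullet+2\epsfun])$. An $n$-simplex of the source is a pair of a tuple $(x_i)$ and a monotone map $\alpha\colon[n]\to[1]$. We send it to the tuple whose $i$-th entry is $x_i$ if $\alpha(i)=0$ and $x'_i$ if $\alpha(i)=1$. This is clearly compatible with precomposition.

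The hard part is checking that all entries are pairwise close. Since $(x_i,\sigma_0x_i)$ and $(x'_j,\sigma_0x_j)\in\mathfrak{C}$ (the latter after applying the symmetric correspondence), we get
\[d(x_i,x'_j)\le d(\sigma_0x_i,\sigma_0x_j)+\epsmet < \delta+2\epsmet.\]
Similarly $d(x'_i,x'_j)<\delta+2\epsmet$. The function values satisfy $\fapprx_0(x'_i)\le u+2\epsfun$.

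So $H$ is well defined and natural in $(\delta,u)$, giving $s\simeq \psi^{+}\circ\varphi$ in the homotopy category. The other triangle is symmetric.

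\textbf{Main obstacle.} The one step needing care is that the homotopy mixes original and twice-transported vertices within a single simplex. The bound $\delta+2\epsmet$ works precisely because of the strict inequality in the Rips condition and the fact that the correspondence controls all mixed pairs. Canonicity is only up to the choice of $\sigma_0$ and $\sigma_1$. By the same argument, any two choices are related by such a homotopy with shift $\epsmet$, so the class of the interleaving in $\ho(\iSpaces^{\prodPosR})$ is independent of the choice.
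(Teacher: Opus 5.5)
Your proof is correct and follows the paper's argument. The paper also defines $\varphi,\psi$ by choosing a correspondence partner for each point. It then obtains the maps, the homotopy $s\simeq\psi^{+\varepsilon}\circ\varphi$, and the independence of choices from the universal property of the Rips simplicial set. That universal property reduces everything to the same pairwise distance and function-value bounds that you verify directly on tuples.
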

In terms of the interleaving distance in the homotopy category and the $\dCor[\infty]$ distance, one obtains the following stability result as an immediate corollary.
\begin{corollary}
The assignment 
\begin{align*}
	\MetRN &\to \iSpaces^{\prodPosR} \\
	\fmetapprx &\mapsto \real{\Rips[\bullet](\mapprxbull)}
\end{align*} is $1$-Lipschitz with respect to $\dCor[\infty]$ and the interleaving distance in the homotopy category.
\end{corollary}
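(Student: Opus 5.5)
The corollary should follow formally from \cref{prop:bivariate_interleaving} and the definitions of $\dCor[\infty]$ (\cref{con:corresp_distance}) and $\dIntHo$ (\cref{con:inho_distance}). Fix two metric pairs $\fmetapprx[0]$ and $\fmetapprx[1]$ over $\RN$. If $\dCor[\infty](\fmetapprx[0],\fmetapprx[1]) = \infty$ there is nothing to prove. Otherwise, I fix any $\eta > \dCor[\infty](\fmetapprx[0],\fmetapprx[1])$. By definition of the infimum, there are $\epsmet,\epsfun \geq 0$ with $\max\{\epsmet,\epsfun\} < \eta$ and a correspondence $\mathfrak{C} \colon \fmetapprx[0] \approx_{\epsmet,\epsfun} \fmetapprx[1]$.

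I then apply \cref{prop:bivariate_interleaving} to $\mathfrak{C}$. This gives an $(\epsmet,\epsfun)$-interleaving $\Rips[\bullet](\mapprxbull_0) \simeq_{\epsmet,\epsfun} \Rips[\bullet](\mapprxbull_1)$ in $\ho(\iSpaces^{\prodPosR})$. Here I use the identification of persistent simplicial sets or complexes with their realizations from \cref{rec:equ_with_ss,rec:equv_model_simp}, which is compatible with reparametrizations, so the interleaving transfers to the realizations $\real{\Rips[\bullet](\mapprxbull_i)}$. This is a vectorial interleaving with parameter $\vecepsilon = (\epsmet,\epsfun,\dots,\epsfun)$. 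Since $\vecepsilon \leq (\eta,\dots,\eta)$ componentwise, \cref{rem:properties_of_int} lets me postcompose with the structure maps $s$ to obtain an $\eta$-interleaving in the scalar sense. Hence $\dIntHo \leq \eta$.

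Letting $\eta$ decrease to $\dCor[\infty](\fmetapprx[0],\fmetapprx[1])$ gives
\[
\dIntHo\bigl(\real{\Rips[\bullet](\mapprxbull_0)},\real{\Rips[\bullet](\mapprxbull_1)}\bigr) \leq \dCor[\infty](\fmetapprx[0],\fmetapprx[1]),
\]
which is the $1$-Lipschitz claim.

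The only step needing care is the passage from the mixed parameter $(\epsmet,\epsfun)$ to the scalar $\max$. This means checking that the enlargement in \cref{rem:properties_of_int} applies to the product poset $\prodPosR$, where $\R_{\geq 0}\times\RN$ is an upset in $\mathbb{R}^{N+1}$, with the shift $+\vecepsilon$ acting coordinatewise. I expect this to be routine, since the structure maps $s$ are natural and compose with the triangles of \cref{rec:interleaving_dist}. The substantive work lies entirely in \cref{prop:bivariate_interleaving}, which the statement lets me assume.
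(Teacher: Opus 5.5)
Your proof is correct and follows the paper's route: the paper states the result as an immediate consequence of the bivariate interleaving proposition, and your argument (pick a correspondence with $\max\{\epsmet,\epsfun\}$ close to $\dCor[\infty]$, enlarge the $(\epsmet,\epsfun)$-interleaving to a scalar one via the structure maps $s$, then take the infimum) just spells that out. The realization-transfer step is unnecessary, because the proposition is already stated in $\ho(\iSpaces^{\prodPosR})$, but including it does no harm.
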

\subsection{The universal property of the bivariate Rips complex}
To prove \cref{prop:bivariate_interleaving}, we use \cref{rec:equv_model_simp,rec:equ_with_ss} and construct the interleaving using the simplicial set model for the bivariate Rips complex.
We will leverage the universal property of the Rips simplicial set here, which we state for the case of filtered simplicial sets $X^{\bullet,\bullet} \in \sSet^{\prodPosR}$, i.e., where all structure morphisms are inclusions (see \cref{obs:universal_property_rips_filtered_sset} below). It allows one to construct morphisms $X^{\bullet,\bullet} \to \Rips[\bullet](\mapprxbull)$ by defining a map with target $\mathbb{M}$ on the vertices of $X^{\bullet,\bullet}$ (see \cite{ChazalDeSilvaOudot2014} which uses the simplicial complex analog). 
We write $X^{\infty} = \bigcup_{(\delta,u) \in \prodPosR  } X^{\delta,u}$.
\begin{construction}\label{con:requirements_for_universal_prop}
	Let $X \colon \prodPosR \to \sSet$ be a filtered simplicial set and $\fmetapprx$ be a metric pair over $\RN$.
We denote by $S(X, \fmetapprx) \subset \cat[Set](X^{\infty}_0, \mathbb{M})$ the set of such maps $\varphi \colon X^{\infty}_0 \to \mathbb{M}$ that fulfill the condition that, for all $(\delta,u) \in \prodPosR$, it holds that: 
	\begin{enumerate}
		\item for all $x \in X^{\delta,u}_0$, we have $\fapprx(\varphi(x)) \leq u$;
		\item for all $1$-simplices $\sigma \in X^{\delta,u}_1$, with vertices $x_0$ and $x_1$, we have $d(\varphi(x_0), \varphi(x_1)) < \delta$.
	\end{enumerate}
    Associating to a morphism $X^{\bullet,\bullet} \to \Rips[\bullet](\mapprxbull)$ the induced simplicial map  $X^{\infty} \to  \Rips[\infty]\fmetapprx =\Rips[\infty](\mathbb{M})$ and evaluating on vertices to obtain a map $X^{\infty}_0 \to  \Rips[\infty](\mathbb{M})_0 = \mathbb{M}$, defines a map
    \[
    \eta \colon \sSet^{\prodPosR}(X^{\bullet,\bullet}, \Rips[\bullet](\mapprxbull)) \to S(X, \fmetapprx).
    \]
	\end{construction}
\begin{proposition}[Appendix~\ref{app:simplicial_cplx_vs_set}.]\label{obs:universal_property_rips_filtered_sset}
	The map $\eta$ of \cref{con:requirements_for_universal_prop} defines a natural bijection.
\end{proposition}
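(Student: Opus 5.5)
We have to show that $\eta$ is injective and surjective, and that it is natural in $X^{\bullet,\bullet}$ and $\fmetapprx$. The key structural fact is that the Rips simplicial set is \emph{coskeletal}. By \cref{ex:function_rips_sset}, an $n$-simplex of $\Rips[\delta](\mapprxbull[u])$ is an $(n+1)$-tuple $(x_0,\dots,x_n)$ of points of $\mathbb{M}$ with $\fapprx(x_i)\le u$ and $d(x_i,x_j)<\delta$ for all $i,j$. So a simplex is determined by its vertices, and a tuple of vertices is a simplex exactly when each vertex lies in the sublevel set and each pair of vertices spans a $1$-simplex. Here we use the convention that the degenerate pairs $(x_i,x_i)$ are allowed, since $d=0<\delta$ for $\delta>0$. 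The case $\delta=0$ is empty on both sides and needs no separate treatment.

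\emph{Injectivity.} A morphism $\Phi\colon X^{\bullet,\bullet}\to\Rips[\bullet](\mapprxbull)$ of filtered simplicial sets has components that are all restrictions of the single map $\Phi^\infty\colon X^\infty\to\Rips[\infty](\mathbb{M})$, because structure maps on both sides are inclusions. Any simplicial map into $\Rips[\infty](\mathbb{M})$ is determined by its values on vertices. Indeed, for $\sigma\in X^\infty_n$ with vertices $x_0,\dots,x_n$, naturality with respect to the vertex inclusions $[0]\to[n]$ forces
\[
\Phi^\infty(\sigma)=(\Phi^\infty(x_0),\dots,\Phi^\infty(x_n)).
\]
Hence $\eta(\Phi)$ determines $\Phi$.

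\emph{Surjectivity.} Given $\varphi\in S(X,\fmetapprx)$, define $\Phi(\sigma):=(\varphi(x_0),\dots,\varphi(x_n))$ for $\sigma$ with ordered vertices $x_0,\dots,x_n$. This is compatible with all simplicial operators, because for $\theta\colon[m]\to[n]$ the vertices of $\theta^*\sigma$ are $x_{\theta(0)},\dots,x_{\theta(m)}$, and the Rips simplicial set acts by precomposition. It remains to check that $\Phi$ respects the filtration, i.e.\ that $\sigma\in X^{\delta,u}_n$ lands in $\Rips[\delta](\mapprxbull[u])_n$; this is the main step. Each vertex $x_i$ of $\sigma$ lies in $X^{\delta,u}_0$, since $X^{\delta,u}$ is a simplicial subset, so condition (1) of \cref{con:requirements_for_universal_prop} gives $\fapprx(\varphi(x_i))\le u$. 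For $i\neq j$, the edge of $\sigma$ given by $[1]\to[n]$, $0\mapsto i$, $1\mapsto j$, is a $1$-simplex of $X^{\delta,u}$ from $x_i$ to $x_j$, so condition (2) yields $d(\varphi(x_i),\varphi(x_j))<\delta$. When $i>j$ we either use the edge from $x_j$ to $x_i$ or simply the symmetry of $d$. For $i=j$ the bound is trivial. Thus $\Phi$ is a morphism in $\sSet^{\prodPosR}$ with $\eta(\Phi)=\varphi$.

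\emph{Naturality.} In $X^{\bullet,\bullet}$, precomposition with a filtered map $g$ corresponds to precomposing $\varphi$ with $g_0^\infty$. In $\fmetapprx$, postcomposition with a map that is $1$-Lipschitz (strictly distance-nonincreasing suffices) and nonincreasing on function values corresponds to postcomposing $\varphi$. Both identities are immediate from the vertexwise formula for $\Phi$.

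The only point requiring care is the edge argument in the surjectivity step: we must make sure that every pair of vertices of a simplex in $X^{\delta,u}$, including its degenerate and reversed pairs, is accounted for by $1$-simplices lying in the same filtration level $X^{\delta,u}$.
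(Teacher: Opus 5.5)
Your proof is correct, but it argues directly inside simplicial sets, whereas the paper takes a detour through simplicial complexes.

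\textbf{The paper's route.} It uses the adjunction $\mathcal{L}\dashv\sNerve$ of \cref{rem:adjoint_to_NS} to identify morphisms $X^{\bullet,\bullet}\to\sNerve\Rips[\bullet](\mapprxbull)$ with filtered simplicial maps $\mathcal{L}(X^{\bullet,\bullet})\to\Rips[\bullet](\mapprxbull)$ of complexes. It notes that $\mathcal{L}(X^{\bullet,\bullet})$ is again filtered, and then cites the ``classically known'' fact that simplicial maps into a filtered flag complex are exactly the vertex maps satisfying the sublevel and edge inequalities.

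\textbf{Your route.} You use directly that the nerve of the Rips complex is determined by vertices and edges: a tuple is a simplex iff each vertex and each pair is. You then build and recognize maps by the vertexwise formula.

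\textbf{Trade-offs.} The paper's route gives naturality in $X^{\bullet,\bullet}$ for free and delegates the combinatorics to a standard statement about complexes. The price is that it needs the existence and explicit description of $\mathcal{L}$, and it leaves the ``easily verifiable'' bookkeeping implicit. Your argument is self-contained and makes that bookkeeping explicit: only $i<j$ gives a morphism $[1]\to[n]$ in $\Delta$, reversed pairs follow by symmetry, and degenerate pairs are handled separately.

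\textbf{Two small fixes.} First, you treat $i=j$ as trivial for $\delta>0$ and say $\delta=0$ is ``empty on both sides''. That is not literally accurate: if $X^{0,u}=\emptyset$, the condition at that level is vacuous rather than empty. It is cleaner to apply condition (2) of \cref{con:requirements_for_universal_prop} to the degenerate $1$-simplex $s_0x_i\in X^{\delta,u}_1$. This gives $d(\varphi(x_i),\varphi(x_i))<\delta$ uniformly, and also shows that $S(X,\fmetapprx)=\emptyset$ as soon as some $X^{0,u}$ is nonempty. Second, for naturality in the metric pair, the relevant maps are the $1$-Lipschitz maps $h\colon\mathbb{M}\to\mathbb{M}'$ with $\fapprx'\circ h\le\fapprx$. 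Your parenthetical about ``strictly distance-nonincreasing'' maps is garbled, though harmless.
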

\begin{remark}
    Conceptually speaking, the injectivity part of \cref{obs:universal_property_rips_filtered_sset} means that a morphism $X^{\bullet,\bullet} \to \Rips[\bullet](\mapprxbull)$ is uniquely determined by its values on vertices (the values can be identified with elements of $\mathbb{M}$). The surjectivity and well-definedness state that a  map $X^{\infty}_0 \to \mathbb{M}$ extends if and only if the criteria of \cref{con:requirements_for_universal_prop} are verified. 
\end{remark}
\begin{remark}\label{rem:uni_rips_mod_version}
	We will often need to use a modified version of \cref{obs:universal_property_rips_filtered_sset}, where instead of $\Rips[\bullet](\mapprxbull)$, we consider a reparametrized version in which $\Rips[\bullet](\mapprxbull) \colon \prodPosR \to \sSet$ is replaced by a composition $\prodPosR \xrightarrow{(\delta',u')} \prodPosR \xrightarrow{\Rips[\bullet](\mapprxbull)} \sSet$ for poset maps $\delta',u'$. Then, the analogous statement holds, replacing in \cref{con:requirements_for_universal_prop} $u$ with $u'(\delta,u)$ and $\delta$ with $\delta'(\delta,u)$ in the defining inequalities for $S(X^{\bullet,\bullet}, \fmetapprx)$. Furthermore, we will often encounter the modified case where we work with filtered simplicial sets $X^{\bullet,\bullet}$ over $ \mathbb{I} \times \RN$, with $\mathbb{I} = [0,\rho]$ or $\mathbb{I}=\{\rho\}$, and we restrict $\Rips[\bullet](\mapprxbull)$ to $\mathbb{I} \times \RN$. In this case, again the essentially same statement holds, with the only change being the replacement of $X^{\infty}$ by $X^{\rho,\infty} = \bigcup_{u \in \RN} X^{\rho,u}$ and the defining condition for $S(X^{\bullet,\bullet}, \fmetapprx)$ needing to be verified only for pairs $(\delta, u) \in \mathbb{I} \times \RN$. 
 \end{remark}
 \subsection{Proof of \texorpdfstring{\cref{prop:bivariate_interleaving}}{the bivariate interleaving result}}
Next, let us construct the simplicial maps that define the interleaving of \cref{prop:bivariate_interleaving}: 
\begin{construction}\label{cons:filtered_interleaving}
	Given such a $(\epsmet,\epsfun)$-correspondence $\mathfrak{C}$, we define maps 
	\begin{align*}
	\varphi \colon \Rips[\bullet](\mapprxbull_0) \to \Rips[\bullet+\epsmet](\mapprxbull[\bullet + \epsfun]_1) \quad{\textnormal{ and }}\quad \psi \colon \Rips[\bullet](\mapprxbull_1) \to \Rips[\bullet+\epsmet](\mapprxbull[\bullet + \epsfun]_0)
	\end{align*}
	by applying \cref{obs:universal_property_rips_filtered_sset} as follows. Observe that for $X^{\bullet,\bullet}=\Rips[\bullet](\mapprxbull_0)$, we have $X^{\infty}_0= \mathbb{M}_0$. Then $\varphi$ is defined under \cref{obs:universal_property_rips_filtered_sset} by choosing for each $x \in \mathbb{M}_0$ an element $y \in \mathbb{M}_1$ with $(x,y) \in \mathfrak{C}$ and defining $\varphi(x) := y$. The map $\psi$ is defined analogously. A priori, these maps depend on the choices of elements in the fibers of the correspondence. However, up to homotopy these choices are irrelevant.
\end{construction}
\begin{proof}[Proof of \cref{prop:bivariate_interleaving}]
	The conditions on $\mathfrak{C}$ ensure exactly that the (shifted) requirements of \cref{obs:universal_property_rips_filtered_sset,rem:uni_rips_mod_version} are fulfilled.
	We write $\varepsilon:= (\epsmet,\epsfun)$ for brevity.
	It remains to see that $\varphi$ and $\psi$ define an $(\epsmet,\epsfun)$-interleaving in the homotopy category. We show that $\psi^{+ (\epsmet,\epsfun)} \circ \varphi \simeq s$; the other equality being shown analogously. To this end, we need to construct a homotopy
	$
	H \colon \Rips[\bullet](\mapprxbull_0) \times \Delta^1 \to \Rips[\bullet + 2\epsmet](\mapprxbull[\bullet + 2\epsfun]_0)$
	between $\psi^{+\varepsilon} \circ \varphi$ and $s$. We again use \cref{obs:universal_property_rips_filtered_sset}. In this case, it implies that the persistent simplicial maps $\psi^{+\varepsilon} \circ \varphi$ and $s$ are elementarily homotopic, if for every pair $x,x' \in \mathbb{M}_0$ it holds that 
	$ d(\psi^{+\varepsilon}(\varphi(x)), s(x')) \leq d(x,x') + 2\epsmet.$ Indeed, the inequality
	\[
	d(\psi^{+\varepsilon}(\varphi(x)), s(x')) = d(\psi^{+\varepsilon}(\varphi(x)), x') \leq d(\varphi(x), \varphi(x')) + \epsmet \leq d(x,x') + 2\epsmet,
	\]
     holds by definition of $\varphi$ and $\psi$ in terms of $\mathfrak{C}$. Finally, let us show independence of the choices made in the construction of $\varphi$ (or $\psi$) up to persistent homotopy. Observe that for any two choices of $\varphi$, say $\varphi_0$ and $\varphi_1$, we can again apply \cref{obs:universal_property_rips_filtered_sset} to construct a homotopy, and it suffices to show that 
     \[
     d(\varphi_0(x), \varphi_1(x')) \leq d(x,x') + \epsmet,
     \] 
     for all $x,x' \in \metapprx_0$.
     By construction we have $(x, \varphi_0(x)), (x', \varphi_1(x')) \in \mathfrak{C}$. Hence, the latter inequality holds by the definition of $(\epsmet,\epsfun)$-correspondences.
\end{proof}
\section{The shrinking trick}\label{section:shrinking_trick}
Throughout this section, we fix some homotopy theory $\mathcal{C}$. By an interleaving, we will always mean interleaving in the homotopy category. 
\begin{notation}
    When fixing either parameter of a persistent object $F^{\bullet,\bullet} \colon \prodPos \to \mathcal{C}$, we will use the notation $F^{\delta,\bullet}$ or $F^{\bullet,u}$, to denote the resulting persistent objects on $\inPos$, or $\mathbb{R}_{\geq 0}$ respectively. At times, we will also use the notation $\bullet_1$ and $\bullet_2$ to indicate the first and second variable, respectively. For example, $F^{\bullet_1, \bullet_2\flow {\bullet_1}}$ indicates the persistent object given by precomposing $F$ with the endofunctor $(\delta,u) \mapsto (\delta,u + \delta)$ of $\prodPos$. 
\end{notation}
\begin{remark}\label{block:fixing_delta_not_stable}
    Suppose now we are given $\delta \geq 0$, as well as two persistent objects $F^{\bullet,\bullet},G^{\bullet,\bullet} \colon \prodPos \to \mathcal{C}$, together with an $(\varepsilon_1, \varepsilon_2)$-interleaving $\varphi \colon  F^{\bullet,\bullet} \simeq_{\varepsilon_1, \varepsilon_2} G^{\bullet,\bullet}\colon \psi$, for some $\varepsilon_1,\varepsilon_2 \geq 0$. We want to deduce an interleaving
$
F^{\delta,\bullet} \simeq_{\varepsilon'} G^{\delta,\bullet},$
for some $\varepsilon'$ depending on $\varepsilon_1, \varepsilon_2$ and $\delta$. The issue at hand is, of course, that the interleaving morphisms $\varphi$ and $\psi$ only procure morphisms $
F^{\delta,\bullet} \to G^{\delta + \varepsilon_1,\bullet \flow {\varepsilon_2}} \quad \text{ and } \quad G^{\delta,\bullet} \to F^{\delta + \varepsilon_1,\bullet \flow {\varepsilon_2}}.$
\end{remark}
\subsection{Shrinking transformations}
To amend the difficulty described in \cref{block:fixing_delta_not_stable}, we introduce an additional structure on $G$, which we call a shrinking transformation. This transformation allows us to decrease the $\delta$-part of the persistence parameter of $G$, at the cost of increasing the $u$-part. 
\begin{notation}\label{notation:C_S}
For the remainder of this section, fix real constants $0 \leq \conShr \leq 1$, $\conCos \geq 0$ and $\rho > 0$.
\end{notation}
\begin{definition}\label{def:back-prop}
    A \textit{shrinking transformation} for $G \colon \prodPos \to \mathcal{C}$ 
	is a morphism 
    \[ \tau \colon G^{\bullet_1, \bullet_2}|_{\prodPosrho} \to G^{\conShr\bullet_1, \bullet_2 \flow \conCos \bullet_1}|_{\prodPosrho} \]
     in $\ho(\mathcal{C}^{\prodPosrho})$ such that the following diagram in $\ho(\mathcal{C}^{\prodPosrho})$ commutes:  
\begin{diagram}\label{diag:back-prop}
	{}& {G^{\bullet_1,\bullet_2+\conCos \bullet_1}|_{\prodPosrho}} \\
	{G^{\conShr\bullet_1,\bullet_2 + \conCos \bullet_1}|_{\prodPosrho}} \\
	& {G^{\bullet_1,\bullet_2}|_{\prodPosrho}}
	\arrow["s", from=2-1, to=1-2]
	\arrow["s"', from=3-2, to=1-2]
	\arrow["\tau", crossing over, from=3-2, to=2-1]
\end{diagram}
\end{definition}
\begin{definition}
Now let $\delta \in [0, \rho]$, $0 \leq \varepsilon_1$ be such that $\varepsilon_1 \leq \min \{\rho- \delta,\frac{1- \conShr}{1+\conShr} \delta\}$, and $\varepsilon_2 \geq 0$.
As we assumed that $\delta + \varepsilon_1 \leq \rho$, the morphism $\tau$ restricts to a well-defined morphism $\tau \colon G^{\delta + \varepsilon_1,\bullet \flow {\varepsilon_2}} \to G^{\conShr(\delta + \varepsilon_1),\bullet \flow {\varepsilon_2} + \conCos(\delta + {\varepsilon_1})}$, denoted by the same name by abuse of notation. We say that an $(\varepsilon_1,\varepsilon_2)$-interleaving $\varphi \colon F^{\bullet,\bullet} \simeq_{\varepsilon_1,\varepsilon_2} G^{\bullet,\bullet} \colon \psi$ is \textit{$\delta$-compatible} with $\tau$ if the following diagram in $\ho(\mathcal{C}^{\inPos})$ commutes:
\begin{diagram}[column sep = tiny]\label{diag:compat}
	{}& {F^{\conShr{(\delta + \varepsilon_1)} +\varepsilon_1, \bullet\flow 2\varepsilon_2+\conCos(\delta+ \varepsilon_1)}} & {F^{\delta,\bullet\flow 2 \varepsilon_2 + \conCos(\delta + \varepsilon_1) }} \\
	{G^{\conShr(\delta + \varepsilon_1),\bullet + \varepsilon_2 + \conCos(\delta + \varepsilon_1)}} \\
	&&& {G^{\delta + \varepsilon_1,\bullet+ \varepsilon_2}} \\
	&& {F^{\delta,\bullet}}
	\arrow["s", from=1-2, to=1-3]
	\arrow["\psi"{description}, from=2-1, to=1-2]
	\arrow["\tau"{description}, from=3-4, to=2-1]
	\arrow["s"{description, pos=0.6}, from=4-3, to=1-3, crossing over]
	\arrow["\varphi"', from=4-3, to=3-4]
\end{diagram}
\end{definition}
\subsection{Verticalized interleavings from shrinking transformations}
The following result then allows for the transformation of bivariate into univariate interleavings, through the use of a shrinking transformation.
    \begin{theorem}\label{lem:munchkin_lemma}
    Let $G \colon \prodPos \to \mathcal{C}$ be a persistent object in $\mathcal{C}$, equipped with a shrinking transformation $\tau$ on $G$. Let $\varepsilon_1, \delta \geq 0$ be such that $\varepsilon_1 \leq \min \{\rho- \delta,\frac{1- \conShr}{1+\conShr} \delta\}$ and let $\varepsilon_2 \geq 0$.  \\
	Let $F \colon \prodPos \to \mathcal{C}$ be another persistent object, together with an $(\varepsilon_1,\varepsilon_2)$-interleaving 
	$\varphi \colon F \simeq_{\varepsilon_1,\varepsilon_2} G \colon \psi$ that is $\delta$-compatible with $\tau$. 
	Then there is an interleaving
\[
\varphi' \colon F^{\delta, \bullet} \simeq_{\varepsilon_2 + \conCos \varepsilon_1 + \conCos\delta} G^{\delta,\bullet} \colon \psi',\]
explicitly given by the compositions
\begin{align*}
	&\varphi'\colon 
	F^{\delta,\bullet} 
	\xrightarrow{\varphi} 
	G^{\delta + \varepsilon_1,\,\bullet \flow {\varepsilon_2}} 
	\xrightarrow{\tau} 
	G^{\conShr(\delta + \varepsilon_1),\,\bullet \flow \varepsilon_2 + \conCos \varepsilon_1 + \conCos\delta} 
	\xrightarrow{s} 
	G^{\delta,\,\bullet \flow \varepsilon_2 + \conCos \varepsilon_1 + \conCos\delta}, \\
	&\psi' \colon 
	G^{\delta,\bullet} 
	\xrightarrow{\tau} 
	G^{\conShr\delta,\,\bullet \flow \conCos\delta} 
	\xrightarrow{\psi} 
	F^{\conShr\delta + \varepsilon_1,\,\bullet \flow \conCos\delta + \varepsilon_2} 
	\xrightarrow{s} 
	F^{\delta,\,\bullet \flow \varepsilon_2 + \conCos \varepsilon_1 + \conCos\delta}.
\end{align*}
\end{theorem}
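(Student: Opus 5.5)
The plan is to verify two things: that all arrows in $\varphi'$ and $\psi'$ are defined, and that the two interleaving triangles commute in $\ho(\mathcal{C}^{\inPos})$. Set $\varepsilon' := \varepsilon_2 + \conCos\varepsilon_1 + \conCos\delta$. The only tools I intend to use are the interleaving identities for $(\varphi,\psi)$, naturality of $\varphi,\psi,\tau$ with respect to structure maps $s$, the shrinking triangle \eqref{diag:back-prop}, and the compatibility square \eqref{diag:compat}.

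\textbf{Well-definedness.} This is pure inequality bookkeeping.
\begin{enumerate}
\item $\tau$ can be evaluated at $\delta$ and at $\delta+\varepsilon_1$ because $\delta + \varepsilon_1 \leq \rho$.
\item The arrow $s\colon G^{\conShr(\delta+\varepsilon_1),\cdot}\to G^{\delta,\cdot}$ needs $\conShr(\delta+\varepsilon_1)\le\delta$. This is equivalent to $\varepsilon_1\le\frac{1-\conShr}{\conShr}\delta$, which follows from the hypothesis since $\frac{1-\conShr}{1+\conShr}\le \frac{1-\conShr}{\conShr}$.
\item The arrow $s\colon F^{\conShr\delta+\varepsilon_1,\cdot}\to F^{\delta,\cdot}$ needs $\varepsilon_1\le(1-\conShr)\delta$, which again follows from the hypothesis.
\item In the second coordinate, $\conCos\delta+\varepsilon_2\le\varepsilon'$.
\end{enumerate}
I will also record the stronger consequence $\conShr(\delta+\varepsilon_1)+\varepsilon_1\le\delta$, which is exactly equivalent to the hypothesis $\varepsilon_1\le\frac{1-\conShr}{1+\conShr}\delta$. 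This inequality is what makes the compatibility square \eqref{diag:compat} meaningful, and I expect it to be the only place where the sharp bound enters.

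\textbf{The triangle on the $G$ side.} I expect $\varphi'^{+\varepsilon'}\circ\psi'=s$ to follow formally. Expanding gives
\[
G^{\delta}\xrightarrow{\tau} G^{\conShr\delta}\xrightarrow{\psi} F^{\conShr\delta+\varepsilon_1}\xrightarrow{s} F^{\delta}\xrightarrow{\varphi} G^{\delta+\varepsilon_1}\xrightarrow{\tau} G^{\conShr(\delta+\varepsilon_1)}\xrightarrow{s} G^{\delta},
\]
with second coordinates shifted appropriately. The steps are as follows.
\begin{enumerate}
\item By naturality of $\varphi$, move the $s$ past it, so that $\varphi\circ s = s\circ\varphi$.
\item Apply the interleaving identity $\varphi\circ\psi=s$ to obtain $\tau_{\delta+\varepsilon_1}\circ s\circ\tau_\delta$ followed by $s$.
\item Factor the structure map $G^{\conShr\delta}\to G^{\delta+\varepsilon_1}$ through $G^{\delta}$. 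The shrinking triangle gives $s\circ\tau_\delta=s$ at level $\delta$.
\item What remains is $s\circ\tau_{\delta+\varepsilon_1}\circ s_{(\delta,\cdot)\to(\delta+\varepsilon_1,\cdot)}$. Naturality of $\tau$ turns this into $s\circ s\circ\tau_\delta$, and the shrinking triangle collapses it to $s$.
\end{enumerate}

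\textbf{The triangle on the $F$ side, the main obstacle.} Here $\psi'^{+\varepsilon'}\circ\varphi'$ contains two applications of $\tau$, at levels $\delta+\varepsilon_1$ and $\delta$, separated by the forgetful arrow $s\colon G^{\conShr(\delta+\varepsilon_1)}\to G^{\delta}$. The shrinking triangle alone does not let one cancel $\tau\circ s$, because $\tau$ lowers the first coordinate.

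My first attempt will be to eliminate the inner pair $\tau_\delta\circ s$ by naturality of $\tau$, rewriting it as $s\circ\tau_{\conShr(\delta+\varepsilon_1)}$. Then I will use naturality of $\psi$ to push all forgetful maps to the end. The aim is to reach the shape $s\circ\psi\circ\tau\circ\varphi$, which the $\delta$-compatibility square \eqref{diag:compat} identifies with $s\colon F^{\delta,\bullet}\to F^{\delta,\bullet+2\varepsilon'}$.

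If the double $\tau$ resists this reduction, the fallback is to absorb the second $\tau$ by the shrinking triangle applied at level $\conShr(\delta+\varepsilon_1)$. This is where the inequality $\conShr(\delta+\varepsilon_1)+\varepsilon_1\le\delta$ should be used: the composite can be routed through $F^{\conShr(\delta+\varepsilon_1)+\varepsilon_1,\cdot}$, as in the top row of \eqref{diag:compat}, after which compatibility finishes the argument.

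I anticipate that choosing the right order of these naturality moves, so that exactly one $\tau$ survives in front of $\varphi$, is the only non-routine step. The remaining second-coordinate shifts only need to stay below $2\varepsilon'$, which they do since $\conCos\conShr(\delta+\varepsilon_1)\le \conCos(\delta+\varepsilon_1)$.
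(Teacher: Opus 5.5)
Your proposal is correct and follows the paper's proof. On the $G$ side you use the same ingredients as the paper's second diagram (interleaving identity, naturality of $\tau$ and of $s$, and the shrinking triangle), differing only in the order of the naturality and triangle moves. On the $F$ side, your ``first attempt'' and ``fallback'' are not alternatives but consecutive steps of the paper's first diagram chase:
\begin{enumerate}
\item naturality of $\tau$ turns $\tau_\delta\circ s$ into $s\circ\tau_{\conShr(\delta+\varepsilon_1)}$;
\item naturality of $\psi$, together with $\conShr(\delta+\varepsilon_1)+\varepsilon_1\le\delta$, routes the composite through $F^{\conShr(\delta+\varepsilon_1)+\varepsilon_1,\cdot}$;
\item the shrinking triangle at level $\conShr(\delta+\varepsilon_1)$ removes the second $\tau$;
\item the compatibility square finishes.
\end{enumerate}
The double $\tau$ never cancels by naturality alone, so the fallback step is genuinely needed rather than optional.
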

\begin{proof}
	The fact that all morphisms $s$ and $\tau$ in the definition of $\varphi'$ and $\psi'$ are well-defined follows from the inequalities $
		\delta + \varepsilon_1 \leq \rho$ and $
		\conShr(\delta+ \varepsilon_1) \leq \conShr\delta + \varepsilon_1 \leq \conShr(\delta + \varepsilon_1) +\varepsilon_1 \leq \delta$, 
	which hold by the assumption on $\varepsilon_1$ and $\delta$.
	We now need to verify the defining commutativity conditions of an interleaving. To simplify notation, we define $\varepsilon' := \varepsilon_2 + \conCos \varepsilon_1 + \conCos\delta$ and $\bullet_{\gamma} := \bullet_2 + \varepsilon_2 + \conCos(\delta + \varepsilon_1) + \conCos\conShr(\delta + \varepsilon_1).$
	To simplify notation even further, we will omit $F$ and $G$ from the notation and only spell out the superscript. Whether $F$ or $G$ is meant will be uniquely determined by the specified morphisms. We also omit any shifting notation from the interleaving morphisms.  Now, consider the first composition $\psi' \circ \varphi'$. To verify that $\psi' \circ \varphi' = s$ it suffices to verify the commutativity of the following diagram.
{
\begin{adjustbox}{scale = 0.75,center}
\begin{tikzcd}
	&& {\delta, \bullet \flow 2\varepsilon'} \\
	& {\conShr\delta + \varepsilon_1, \bullet \flow \varepsilon' + \conCos\delta + \varepsilon_2} & {\conShr(\delta + \varepsilon_1) + \varepsilon_1, \bullet_{\gamma} \flow \varepsilon_2} \\
	& {\conShr\delta, \bullet \flow \varepsilon' + \conCos\delta} & {\conShr(\delta + \varepsilon_1), \bullet_{\gamma}} \\
	{\delta, \bullet \flow \varepsilon'} & {\conShr^2(\delta + \varepsilon_1), \bullet_{\gamma}} && {\delta, \bullet \flow 2\varepsilon_2 + \conCos(\delta + \varepsilon_1)} \\
	& {\conShr(\delta + \varepsilon_1), \bullet + \varepsilon_2 + \conCos(\delta + \varepsilon_1)} & {\conShr(\delta + \varepsilon_1) + \varepsilon_1, \bullet + 2\varepsilon_2 + \conCos(\delta + \varepsilon_1)} \\
	& {\delta + \varepsilon_1, \bullet \flow \varepsilon_2} \\
	&& {\delta, \bullet}
	\arrow["s"{description}, from=2-2, to=1-3]
	\arrow["s"{description}, from=2-3, to=1-3]
	\arrow["\psi"{description}, from=3-2, to=2-2]
	\arrow["s"{description}, from=4-2, to=3-2]
	\arrow["\psi"{description}, from=3-3, to=2-3]
	\arrow["{\psi'}", curve={height=-90pt}, from=4-1, to=1-3]
	\arrow["\tau"{description}, from=4-1, to=3-2]
	\arrow["s"{description}, from=4-2, to=3-3]
    \arrow["s"', curve={height=12pt}, from=4-4, to=1-3]
	\arrow[""{name=0, anchor=center, inner sep=0}, "s"{description}, curve={height=6pt}, from=5-2, to=3-3]
	\arrow["s"{description}, from=5-2, to=4-1]
	\arrow[""{name=1, anchor=center, inner sep=0}, "\tau"{description}, from=5-2, to=4-2]
	\arrow[""{name=2, anchor=center, inner sep=0}, "\psi"{description}, from=5-2, to=5-3]
	\arrow["s"{description}, from=5-3, to=4-4]
	\arrow["\tau"{description}, from=6-2, to=5-2]
	\arrow["{\varphi'}", curve={height=-70pt}, from=7-3, to=4-1]
	 \arrow["s"{description}, curve={height=12pt}, from=7-3, to=4-4]
	\arrow[""{name=3, anchor=center, inner sep=0}, "\varphi"{description}, from=7-3, to=6-2]
	\arrow["{({\textnormal{\ref{diag:back-prop}}})}"{description, pos=0.7}, shift left=2, draw=none, from=1, to=0]
	\arrow["{({\textnormal{\ref{diag:compat}}})}"{description}, draw=none, from=3, to=2]
\end{tikzcd}
\end{adjustbox}
}
Observe that for every $s$ arrow to be well-defined, we require the inequality $\conShr( \delta + \varepsilon_1)  + \varepsilon_1\leq \delta$, which we have already seen above. Observe that the canonical shift morphisms $s$ commute with essentially every other morphism in sight (in the appropriate contextual sense).
The commutativities of the cells in the diagram follow from the universal commutativity of $s$ or from \cref{diag:back-prop,diag:compat}. This proves the first interleaving equality. \\
The second interleaving equality follows by chasing the following diagram.\\
{
\begin{adjustbox}{scale = 0.7, center}
\begin{tikzcd}
	&& {\delta, \bullet + 2\varepsilon'} \\
	&& {\conShr(\delta + \varepsilon_1), \bullet + 2\varepsilon' } \\
	& {\delta + \varepsilon_1, \bullet + \varepsilon' + \varepsilon_2} \\
	{\delta, \bullet + \varepsilon'} && {\conShr(\conShr\delta + 2\varepsilon_1), \bullet + \conCos\delta + 2\varepsilon_2 + \conCos(\conShr\delta + 2\varepsilon_1)} & {\conShr^2\delta, \bullet + \conCos\delta + \conCos\conShr\delta} & {\conShr\delta, \bullet + \conCos\delta + \conCos\conShr\delta} \\
	& {\conShr\delta+ \varepsilon_1, \bullet + \conCos\delta + \varepsilon_2} & {\conShr\delta + 2\varepsilon_1, \bullet + \conCos\delta + 2\varepsilon_2} && 
    \\
	&& 
    {\conShr\delta, \bullet + \conCos\delta} 
    \\
	&& {\delta, \bullet}
	\arrow["s"{description}, from=2-3, to=1-3]
	\arrow["\tau"{description}, crossing over, from=3-2, to=2-3]
	\arrow["{\varphi'}", curve={height=-30pt}, from=4-1, to=1-3]
	\arrow["\varphi"{description}, from=4-1, to=3-2]
	\arrow["s"{description}, from=4-3, to=2-3]
	\arrow["s"{description}, from=4-4, to=4-3]
	\arrow["s"{description}, from=4-4, to=4-5]
	\arrow["s"{description}, curve={height=12pt}, from=4-5, to=1-3]
	\arrow["s"{description}, from=5-2, to=4-1]
	\arrow["\varphi"{description}, from=5-2, to=5-3]
	\arrow["s"{description}, curve={height=-18pt}, from=5-3, to=3-2]
	\arrow["\tau"{description}, from=5-3, to=4-3]
	\arrow["\tau"{description}, from=6-3, to=4-4]
	\arrow["s"{description}, from=6-3, to=4-5]
	\arrow["\psi"{description}, from=6-3, to=5-2]
	\arrow["s"{description}, from=6-3, to=5-3]
	\arrow["s"{description}, from=6-3, to=4-5]
	\arrow["{\psi'}", curve={height=-40pt}, from=7-3, to=4-1]
	\arrow["s \circ \tau"{description}, curve={height=12pt}, from=7-3, to=4-5]
	\arrow["\tau"{description}, from=7-3, to=6-3]
\end{tikzcd}
\end{adjustbox}
}
Note that none of the cells in this diagram require the $\delta$-compatibility condition.
Instead, one only uses the naturality properties of $s$, one of the two interleaving equalities, and the defining property of the shrinking transformation.
\end{proof}
\section{From pseudo-barycenters to shrinking transformations}\label{section:shrinking_for_rips}
We now want to apply \cref{lem:munchkin_lemma} together with \cref{prop:bivariate_interleaving} to obtain inference results on function-Rips persistent homotopy types. To this end, we need to establish the existence of a shrinking transformation for the bivariate function-Rips complex.
\subsection{Illustration of the technique}
Let us first explain the conceptual idea behind the construction of shrinking transformations for the bivariate function-Rips complex.
	For the sake of simplicity, let us first discuss the case $\RN = \mathbb{R}^0$, i.e., the case when $f$ defines a trivial filtration, $\prodPosRhoRN = [0, \rho]$, and the cost parameter $C$ is irrelevant. Again, let $M$ be a complete $\cabK$ space.
    We are looking to construct a morphism 
    \[
    \Rips[\bullet](\mbull[])|_{\rhoPos} \to \Rips[\conShr\bullet](\mbull[])|_{\rhoPos}
    \]
    in $\ho (\sSetK^{\rhoPos}) \simeq \ho (\iSpaces^{\rhoPos})$. 
	It is a general paradigm in simplicial approaches to homotopy theory - which appeared in its first incarnation in the classical simplicial approximation theorem - that a homotopy class between two simplicial objects can be presented by a simplicial map by subdividing the source object sufficiently often. 
	\begin{recollection}\label{ex:barycentric_subdiv}
		Recall that the category of simplicial sets admits an endofunctor $\sd_b \colon \sSet \to \sSet$ called the barycentric subdivision functor (see \cite{Kan1957}). For the purpose of this article, it will suffice to recall the following key properties of $\sd_b$:
	\begin{itemize}
		\item The functor $\sd_b$ preserves colimits and inclusions;
		\item Given a simplicial set $X$, the vertices of $\sd_b X$ correspond one-to-one to the non-degenerate simplices of $X$. In particular, the vertices of $\sd_b \Rips[\delta](M)$ correspond to sequences $(x_0, \dots, x_n)$ of elements in $M$ (without consecutive repetitions) of pairwise distance smaller than $\delta$;
		\item Given two vertices $x,y \in \sd_b X$, there is a $1$-simplex from $x$ to $y$ if and only if the corresponding non-degenerate simplex of $X$ corresponding to $x$ is a face of the non-degenerate simplex of $X$ corresponding to $y$. In particular, there is a $1$-simplex from $(x_0, \dots, x_n)$ to $(y_0, \dots, y_m)$ in $\sd_b \Rips[\delta](M)$ if and only if the sequence $(x_0, \dots, x_n)$ is a subsequence of the sequence $(y_0, \dots, y_m)$;
		\item There is a natural weak equivalence $\lambda_b \colon \sd_b \xRightarrow{\simeq} 1_{\sSet}$ called the last vertex map (see \cref{fig:last-vertex_map}). In the special case of $\sd_b \Rips[\delta](M)$, the last vertex map sends a vertex corresponding to a sequence $(x_0, \dots, x_n)$ to the last element $x_n$ of the sequence.
	\end{itemize}
	\end{recollection}
	\begin{figure}[tb]
	\centering
	\includegraphics[width=0.5\textwidth]{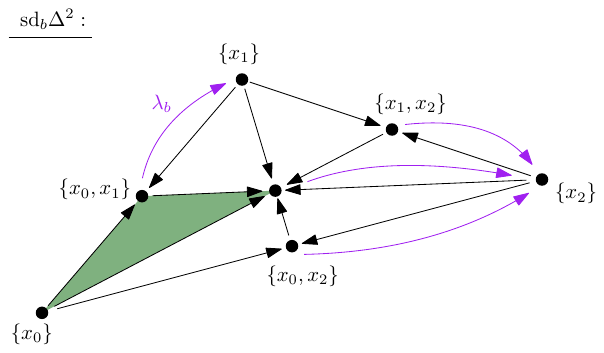}
	\caption{Illustration of the last vertex map $\lambda_b$ on $\sd_b \Delta^2$. The purple arrows indicate where the vertices are mapped by~$\lambda_b$. All but the green simplex
	are collapsed to lower-dimensional simplices.}
	\label{fig:last-vertex_map}
	\end{figure}
	\begin{block}\label{block:description_of_simple_case}
	Using the natural weak equivalence $\lambda \colon \sd_b \xRightarrow{\simeq} 1_{\sSet}$ we obtain an isomorphism
    \(
    \Rips[\bullet](\mbull[])|_{\rhoPos} \simeq \sd_b \Rips[\bullet](\mbull[])|_{\rhoPos},
    \)
	after passing to the persistent homotopy category.
    Hence, to construct a shrinking transformation, it suffices to expose a persistent simplicial map 
	\[
	\sd_b \Rips[\bullet](\mbull[])|_{\rhoPos} \to \Rips[\conShr\bullet](\mbull[])|_{\rhoPos}.
	\]
	We can now apply \cref{obs:universal_property_rips_filtered_sset,rem:uni_rips_mod_version} to see that such a morphism is uniquely determined by a map of sets 
	\[
	\Theta \colon \sd_b \Rips[\rho](M)_0 \to M
	\]
	fulfilling the property that whenever $x,y \in \sd_b \Rips[\delta](M)_0$, with $\delta \leq \rho$ are connected by a $1$-simplex from $x$ to $y$, then we need that 
	\(
	d(\Theta(x), \Theta(y)) < \conShr \delta.
	\)
	In other words, we map the vertices of $\sd_b \Rips[\delta](M)$ to points in $M$ in a manner such that distances between edge-connected vertices are shrunk by a factor of $\conShr$, compared to the distances of edge-connected vertices in $\Rips[\delta](M)$.
    \end{block}
    \begin{block}
	Now, to construct such a map $\Theta$, it helps to first observe that the vertices of $\sd_b \Rips[\delta](M)$ correspond to sequences $(x_0, \dots, x_n)$ of elements in $M$ of pairwise distance smaller than $\delta$. An edge relation corresponds to a subsequence relation $(x_0, \dots, x_n) \subset (y_0, \dots, y_m)$. Hence, to construct $\Theta$, it suffices to assign to each such sequence a point in $M$ in a manner that whenever one sequence is a subsequence of another then the assigned points are at distance smaller than $\conShr \delta$. \\
	Under appropriate geometric assumptions, such a map $\Theta$ can be constructed by assigning to each sequence $(x_0, \dots, x_n)$ the center of a minimal enclosing ball of the sequence - the circumcenter (see \cref{def:chebcenter,fig:pseudo-barycenter_map}).
	Modulo the difference in simplicial complex and simplicial set language, this circumcenter construction was in fact the decisive argument in the proof of Latschev's theorem in \cite{Majhi2024_DCG_DemystifyingLatschev}.
\end{block}
\begin{figure}[tb]
\centering
\includegraphics[width=0.45\textwidth]{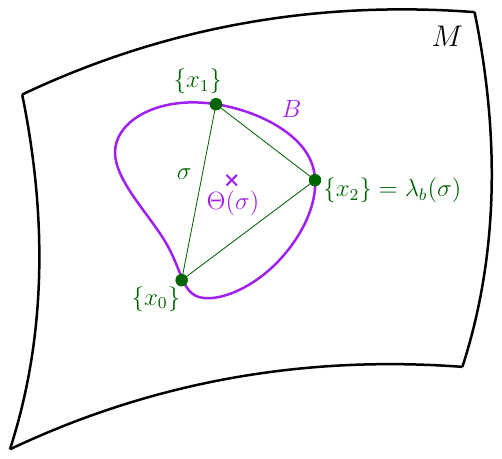}
\caption{Illustration of the pseudo-barycenter map $\Theta$.}
\label{fig:pseudo-barycenter_map}
\end{figure}
\subsection{Shrinking transformations from pseudo-barycenter maps}
	For our purposes, it will turn out to be useful to extract the abstract essence of the argument constructed in the previous subsection. This general approach leads to the notion of a \define{pseudo-barycenter map}, which is the main technical tool for constructing shrinking transformations for the bivariate function-Rips complex. As most of our arguments are not specific to the barycentric subdivision functor, we make the following convention.
    \begin{definition}\label{def:subdivision_functor}
	By a \define{subdivision functor}, we mean a pair $(\sd, \lambda)$ consisting of a functor $\sd \colon \sSet \to \sSet$ that preserves colimits and inclusions, together with a natural weak equivalence $\lambda \colon \sd \xRightarrow{\simeq} 1_{\sSet}$.
    \end{definition}
    We now provide a general version of the circumcenter construction of the previous subsection.
	\begin{notation}
	For the remainder of this section, we fix a metric space $M$, together with a Lipschitz function $f \colon M \to \RN$ with respect to the $\infty$-norm on $\RN$. We denote the associated Lipschitz constant by $\Lipf$. 
	$\fmetapprx$ will denote another metric pair with no assumptions on $\fapprx$.
	$M$ will not be assumed to be $\cabK$ or complete, unless stated otherwise later on.
	\end{notation}
\begin{definition}\label{def:pseudo_bar}

	 By a \textit{pseudo-barycenter map} with respect to a subdivision functor $(\sd, \lambda)$ and parameters $\rhoTheta>0, 0 < \conShrTheta \leq 1$ and $\conCosTheta \geq 0$, we mean a map
	$\Theta \colon \sd \Rips[\rhoTheta](M)_0 \to M$, 
	such that
    for all $0 < \delta \leq \rhoTheta$:
\newlist{pseudoprop}{enumerate}{2}
\setlist[pseudoprop,1]{
label=(\alph*),
 ref=(\alph*)
}
\setlist[pseudoprop,2]{
  label=(\alph{pseudopropi}\textsubscript{\roman*}),
  ref=(\alph{pseudopropi}\textsubscript{\roman*}),
  widest=(m\textsubscript{viii}),
  align=left
}
	\begin{pseudoprop}
		\item \label{property:ThetaXLambdaX}For every $x \in \sd \Rips[\delta](M)_0$, we have $d(\lambda(x),\Theta(x)) \leq \conCosTheta  \delta$; 
		\item \label{property:XY}	For  
        $x,y\in \sd \Rips[\delta](M)_0$, if
        there is a $1$-simplex from $x$ to $y$ in $ \sd \Rips[\delta](M)$, 
        then \begin{pseudoprop}
			\item \label{property:lambdaXThetaY} $d(\lambda(x),\Theta(y)) <	\conShrTheta \delta$, 
			\item \label{property:ThetaXThetaY} $d(\Theta(x), \Theta(y))<\conShrTheta \delta$.
		\end{pseudoprop}
\end{pseudoprop}
\end{definition} 
    The additional conditions in the definition of a pseudo-barycenter map (compared to the above discussion) are needed to ensure that the resulting morphism does indeed define a shrinking transformation. 
Let us now explain how to construct a shrinking transformation for the bivariate Rips persistent homotopy type from a pseudo-barycenter map. 
\begin{notation}
For the remainder of this subsection, we fix a subdivision functor $(\sd,\lambda)$, as well as a pseudo-barycenter map $\Theta \colon \sd \Rips[\rho](M)_0 \to M$.
\end{notation}
\begin{construction}\label{con:shrinking_transform}
We use \cref{rec:equ_with_ss,rec:equv_model_simp} and construct the shrinking transformation in the homotopy theory of persistent simplicial sets. Let $\Theta \colon \sd \Rips[\rhoTheta](M)_0 \to M$ be a pseudo-barycenter map with respect to $\sd$ and $\lambda$, and parameters as above. 
Denote $\rho:= \rhoTheta$, and $\conShr := \conShrTheta$, $\conCos := \Lipf \conCosTheta$.
Note that since $\sd$ preserves colimits, we have $\sd (\Rips[\bullet](\mbull))^{\rho,\infty}_0=\sd(\Rips[\rho](M))_0$. Now, under this identification, we can apply \cref{obs:universal_property_rips_filtered_sset,rem:uni_rips_mod_version} to uniquely extend $\Theta$ to a morphism $\tau' \colon \sd( \Rips[\bullet_1](\mbull[\bullet_2]))|_{\prodPosRhoRN} \to \Rips[\conShr \bullet_1]( \mbull[ \bullet_2 + \conCos \bullet_1])|_{\prodPosRhoRN}$ (see the proof of \cref{thm:back_propagation_rips}). Finally, in the persistent homotopy category $\ho(\sSetK^{\prodPosRhoRN})$, we can invert the weak equivalence $\lambda \colon \sd \Rips[\bullet](\mbull) \simeq \Rips[\bullet](\mbull)$ and define $\tau_{\Theta}$ as the following composition:
\[
\tau_{\Theta} \colon \Rips[\bullet_1](\mbull[\bullet_2])|_{\prodPosRhoRN} \xrightarrow{\lambda^{-1}} \sd(\Rips[\bullet_1](\mbull[\bullet_2])|_{\prodPosRhoRN}) \xrightarrow{\tau'} \Rips[\conShr \bullet_1]( \mbull[ \bullet_2 + \conCos \bullet_1])|_{\prodPosRhoRN}.
\]
\end{construction}
Using this construction, we can state the main result of this section.
\begin{theorem}\label{thm:back_propagation_rips}
	Given a pseudo-barycenter map $\Theta \colon \sd \Rips[\rho](M)_0 \to M$, the induced morphism
	\[
	\tau_{\Theta} \colon \Rips[\bullet_1](\mbull[\bullet_2])|_{\prodPosRhoRN} \to \Rips[\conShr\bullet_1](\mbull[\bullet_2 \flow \conCos\bullet_1])|_{\prodPosRhoRN}\]
	of \cref{con:shrinking_transform} is a shrinking transformation. \\ Furthermore, $\tau_{\Theta}$ is $\delta$-compatible with every interleaving $ \Rips[\bullet](\mapprxbull) \simeq_{\epsmet,\epsfun} \Rips[\bullet] (\mbull)$ arising from a correspondence $\fmetapprx \approx_{\epsmet,\epsfun} \fmet$, where $\delta \leq \rho$ and $\epsmet \leq \min\{\rho -\delta, \frac{1-\conShr}{1+\conShr} \delta\}$.
\end{theorem}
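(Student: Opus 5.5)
The plan has three steps: show that $\tau'$ is well defined, verify the commutativity of \cref{diag:back-prop}, and verify the $\delta$-compatibility \cref{diag:compat}. Each step is a check of vertex-level inequalities, fed into the universal property \cref{obs:universal_property_rips_filtered_sset} and its variant \cref{rem:uni_rips_mod_version}. That property has two uses: it says when a vertex map extends to a persistent simplicial map into a (reparametrized) bivariate Rips complex, and when two such maps are elementarily homotopic. Two maps are elementarily homotopic when the product vertex map on $X^{\bullet,\bullet}\times\Delta^1$ satisfies the same conditions. Concretely, the values at the two endpoints must satisfy the filtration bound, and for every $1$-simplex from $x$ to $y$ the mixed distance $d(g_0(x),g_1(y))$ must be below the Rips bound.

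\textbf{Well-definedness of $\tau'$.} Let $x\in \sd(\Rips[\delta](M^u))_0$ with $\delta\le\rho$. Since $\sd$ preserves inclusions, $x$ lies in $\sd\Rips[\delta](M)_0$ and $\lambda(x)\in M^u$. Property (a) gives $d(\lambda(x),\Theta(x))\le \conCosTheta\delta$, and the Lipschitz bound then gives $f(\Theta(x))\le u+\Lipf\conCosTheta\delta=u+\conCos\delta$. For a $1$-simplex from $x$ to $y$, property (b$_{ii}$) gives $d(\Theta(x),\Theta(y))<\conShr\delta$. So $\Theta$ lies in the set $S$ for the reparametrized target, and $\tau'$ exists.

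\textbf{Diagram \cref{diag:back-prop}.} After precomposing with the weak equivalence $\lambda$, I must show that the two maps $s\circ\tau'$ and $s\circ\lambda$ from $\sd\Rips[\bullet_1](\mbull[\bullet_2])$ to $\Rips[\bullet_1](\mbull[\bullet_2+\conCos\bullet_1])$ agree in the homotopy category. Both are determined by vertex maps, namely $\Theta$ and $\lambda$. I will construct an elementary homotopy via the universal property applied to $\sd\Rips\times\Delta^1$. Its vertices are pairs $(x,i)$, and a $1$-simplex joins $(x,0)$ to $(y,1)$ when $x\to y$ is a $1$-simplex. The filtration conditions hold, since $f(\lambda(x))\le u$ and $f(\Theta(x))\le u+\conCos\delta$. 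For the distance conditions I need the following bounds, all $<\delta$:
\begin{itemize}
\item $d(\Theta(x),\Theta(y))<\conShr\delta\le\delta$;
\item $d(\lambda(x),\lambda(y))<\delta$, because $\lambda$ is a simplicial map into $\Rips[\delta]$;
\item the mixed term $d(\lambda(x),\Theta(y))<\conShr\delta$, which is exactly property (b$_i$).
\end{itemize}
Only one orientation of mixed edge is needed, since edges in $X\times\Delta^1$ go from $(x,0)$ to $(y,1)$ with $x\to y$. I have to fix which endpoint is $\lambda$ so that (b$_i$) applies; I choose $0\mapsto\lambda$ and $1\mapsto\Theta$.

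\textbf{$\delta$-compatibility \cref{diag:compat}.} All maps in the diagram are again represented on $\sd\Rips[\delta](\mapprxbull)$ after applying the natural $\lambda^{-1}$; naturality of $\lambda$ lets me move $\lambda^{-1}$ past $\varphi$. Write $x_i$ for the vertices in $\metapprx$ of a simplex $\sigma$, and $\varphi(x_i)\in M$ for their chosen correspondents. The two composites are given on vertices by $\sigma\mapsto\psi(\Theta(\sd\varphi(\sigma)))$ and $\sigma\mapsto\lambda(\sigma)$, the latter being the last vertex in $\metapprx$. I again build an elementary homotopy into $\Rips[\delta](\mapprxbull[\bullet+2\varepsilon_2+\conCos(\delta+\varepsilon_1)])$. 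Function values are fine, since the correspondence adds $\epsfun$ twice and $\Theta$ costs $\conCos(\delta+\varepsilon_1)$. The key distance estimates are as follows.
\begin{itemize}
\item For $\psi\Theta$ versus $\psi\Theta$: $d\le d(\Theta,\Theta)+\epsmet<\conShr(\delta+\epsmet)+\epsmet\le\delta$, using $\epsmet\le\frac{1-\conShr}{1+\conShr}\delta$.
\item For the mixed term: $d(\lambda(\sigma),\psi\Theta(\sd\varphi\tau))\le d(\varphi\lambda(\sigma),\Theta(\sd\varphi\tau))+\epsmet$. Naturality gives $\varphi\lambda=\lambda\sd\varphi$, and (b$_i$) at scale $\delta+\epsmet\le\rho$ bounds this by $\conShr(\delta+\epsmet)+\epsmet\le\delta$.
\end{itemize}

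The main obstacle is this last step. I need to justify carefully that the composite in the homotopy category really is represented by this vertex map: $\tau$ involves $\lambda^{-1}$, so I must use naturality of $\lambda$ with respect to $\varphi$ and the identity $\lambda\circ\sd s=s\circ\lambda$. I also need to check that strictness of the Rips inequality survives the additive $\epsmet$, which holds because (b$_i$) is strict. The edge orientation must be matched with (b$_i$) here as well.
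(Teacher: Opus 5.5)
Your proposal is correct and follows the paper's proof step for step. It uses the same vertex-level checks for the well-definedness of $\tau'$ (properties (a) and (b$_{ii}$)), and the same elementary homotopy on $\sd\Rips\times\Delta^1$ with $0\mapsto\lambda$, $1\mapsto\Theta$ justified by (b$_i$). It also reduces $\delta$-compatibility, via naturality of $\lambda$, to the same mixed-edge estimate $\epsmet+\conShr(\delta+\epsmet)\le\delta$. The extra same-endpoint edge checks you list are redundant, since they hold because the endpoint maps are already well-defined simplicial maps, but they do no harm.
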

\begin{proof} 
	Throughout the proof, we are going to make use of the modified version of \cref{obs:universal_property_rips_filtered_sset} in \cref{rem:uni_rips_mod_version}.
	To simplify notation, we will write $\Rips[\bullet,\bullet]$ for $\Rips[\bullet](\mbull)|_{\prodPosRhoRN}$. We first show that $\tau'$, as defined in the previous construction, is indeed a well-defined morphism of persistent simplicial sets.
	 Observe that, compared to \cref{obs:universal_property_rips_filtered_sset}, the target persistent simplicial set has been reparametrized under the parameter change
    $\bullet'_1 = \conShr\bullet_1$, $ \bullet'_2 = \bullet_2 \flow \conCos\bullet_1.$
	The conditions that we need to verify for $\tau'$ to define a well-defined morphism (\cref{rem:uni_rips_mod_version}) are explicitly given as follows. Let $\delta \in [0,\rho]$ and $u \in \RN$. We need to verify the following: 
	\begin{enumerate}
		\item Let $\sigma \in \sd \Rips[\delta](M)_1$ be a $1$-simplex with vertices $x,y \in \sd \Rips[\delta](M)_0$. Then the inequality
		$	d(\Theta(x), \Theta(y)) < \conShr \delta$
		holds. This condition is assumed by \cref{property:ThetaXThetaY}.
		\item For $x \in \sd \Rips[u,\delta]_0$, it holds that $f(\Theta(x)) \leq u + \conCos \delta.$ This is a consequence of the inequalities
        \[
f(\lambda(x)) \leq u \quad \textnormal{and} \quad |f(\lambda(x)) - f(\Theta(x))|_{\infty} \leq \Lipf d(\lambda(x), \Theta(x)) \leq \Lipf \conCosTheta \delta = \conCos \delta.
        \]
        The first of these holds as $\lambda(x) \in \Rips[u,\delta]_0$. The second holds
        by \cref{property:ThetaXLambdaX}, which implies that $d(\lambda(x), \Theta(x)) \leq \conCosTheta \delta$, together with the assumption that $f \colon M \to \RN$ is $\Lipf$-Lipschitz.
	\end{enumerate}
	Next, let us verify the shrinking transformation condition. We need to show that the diagram 
\begin{diagram}
	{\Rips[\bullet_1, \bullet_2]} & {\sd \Rips[\bullet_1,\bullet_2]} & {\Rips[\conShr\bullet_1, \bullet_2 \flow \conCos\bullet_1]} \\
	&& {\Rips[\bullet_1, \bullet_2 \flow \conCos\bullet_1]}
	\arrow["{\lambda^{-1}}", from=1-1, to=1-2]
	\arrow["s"', curve={height=12pt}, from=1-1, to=2-3]
	\arrow["{\tau'}", from=1-2, to=1-3]
	\arrow["s", from=1-3, to=2-3]
\end{diagram}
	commutes. By precomposing with $\lambda$, this is equivalent to showing that the diagram 
\begin{diagram}
	{\sd \Rips[\bullet_1,\bullet_2]} & {\Rips[\conShr\bullet_1, \bullet_2 \flow \conCos\bullet_1]} \\
	{\Rips[\bullet_1,\bullet_2]} & {\Rips[\bullet_1, \bullet_2 \flow \conCos\bullet_1]}
	\arrow["{\tau'}", from=1-1, to=1-2]
	\arrow["\lambda"', from=1-1, to=2-1]
	\arrow["s", from=1-2, to=2-2]
	\arrow["s"', from=2-1, to=2-2]
\end{diagram}
	commutes. In fact, it turns out that the two morphisms of persistent simplicial sets $s \circ \tau'$ and $s \circ \lambda$ are elementarily homotopic. To see this, consider the map
	\begin{align*}
	H \colon (\sd \Rips[\bullet_1,\bullet_2] \times \Delta^1)_0 = (\sd \Rips[\bullet_1,\bullet_2])_0 \times \{0,1\} &\to M; \quad 
	(x,i) \mapsto \begin{cases}
	\lambda(x), & i=0,\\
    \Theta(x), & i=1,
	\end{cases}
	\end{align*}
    and again apply \cref{obs:universal_property_rips_filtered_sset,rem:uni_rips_mod_version}.
	As we already know that $H_0$ and $H_1$ fulfill the conditions of \cref{obs:universal_property_rips_filtered_sset}, it only remains to verify the condition on $1$-simplices $\sigma  \in (\sd \Rips[\bullet_1,\bullet_2] \times \Delta^1)_1$ in \cref{obs:universal_property_rips_filtered_sset} in the case where the vertices $x$ and $y$ are in $\sd \Rips[\delta](M)_0 \times \{0\}$ and $\sd \Rips[\delta](M)_0 \times \{1\}$ respectively. Observe that, by the definition of the simplicial product, we can identify $\sigma$ with a simplex $\sigma' \in \sd \Rips[\delta](M)_1$, whose vertices we will also denote by $x$ and $y$ by abuse of notation.
	Then, the condition explicitly states that 
	$d( \lambda(x),\Theta(y)) < \delta$
	which follows from \cref{property:lambdaXThetaY} in the definition of a pseudo-barycenter map. This shows that $s \circ \tau'$ and $s \circ \lambda$ are elementarily homotopic; thus, the shrinking transformation condition holds. It remains to show the compatibility with interleavings arising from a filtered correspondence. To this end, let $\mathfrak{C}$ be an $(\epsmet,\epsfun)$-correspondence between $\fmet$ and $\fmetapprx$, and let $\varphi$ and $\psi$ be the induced $(\epsmet,\epsfun)$-interleaving maps. Furthermore, let $\epsmet \leq \min\{\frac{1-\conShr}{1+\conShr} \delta, \rho - \delta\}$. We write $\mathcal{S}^{\bullet_1,\bullet_2}$ for $\Rips[\bullet_1](\mapprxbull[\bullet_2])|_{\prodPosRhoRN}$ and $\mathcal{S}^{\bullet_1}$ for $\Rips[\bullet_1](\mathbb{M})$. We now need to verify the commutativity of the outer diagram in
	\vspace{0.5cm} \newline  \noindent
\begin{adjustbox}{scale = 0.8,center}
\begin{tikzcd}[column sep = tiny]
	& {\Rips[\delta+\epsmet, \bullet_2 \flow \epsfun]} & {\sd \Rips[\delta+\epsmet, \bullet_2 \flow \epsfun]} & {\Rips[\conShr(\delta+ \epsmet), \bullet_2 \flow \epsfun + \conCos(\delta+ \epsmet)]} \\
	{\mathcal{S}^{\delta,\bullet_2}} && {\sd \mathcal{S}^{ \delta, \bullet_2}} && {\mathcal{S}^{\conShr(\delta+ \epsmet) + \epsmet,\bullet_2 \flow2\epsfun + \conCos(\delta+ \epsmet)}} \\
	&& {\mathcal{S}^{\delta,\bullet_2\flow2\epsfun + \conCos(\delta+ \epsmet)}}
	\arrow["{\lambda^{-1}}", color={rgb,255:red,214;green,92;blue,92}, from=1-2, to=1-3]
	\arrow["\tau'", color={rgb,255:red,214;green,92;blue,92}, from=1-3, to=1-4]
	\arrow["\psi", color={rgb,255:red,214;green,92;blue,92}, from=1-4, to=2-5]
	\arrow["\varphi", color={rgb,255:red,214;green,92;blue,92}, from=2-1, to=1-2]
	\arrow["{\lambda^{-1}}", from=2-1, to=2-3]
	\arrow["s"', color={rgb,255:red,214;green,92;blue,92}, from=2-1, to=3-3]
	\arrow["{\sd \varphi}", from=2-3, to=1-3]
	\arrow["{s \circ \lambda}"{description}, from=2-3, to=3-3]
	\arrow["s", color={rgb,255:red,214;green,92;blue,92}, from=2-5, to=3-3]
\end{tikzcd}
\end{adjustbox}
	which we have marked in red. Observe that the left square commutes by the naturality of $\lambda$, and that the lower left triangle commutes by definition. It thus suffices to show that the remaining cell to the right commutes. To this end, we show that the two morphisms of persistent simplicial sets $s \circ \psi \circ \tau' \circ \sd \varphi$ and $s \circ \lambda$ are elementarily homotopic. To see this, we again use \cref{obs:universal_property_rips_filtered_sset,rem:uni_rips_mod_version}, but this time applied to $\sd \mathcal{S}^{\delta,\bullet_2} \times \Delta^1$. Arguing exactly as above, we define a homotopy by extending the map 
	\begin{align*}
	H' \colon (\sd \mathcal{S}^{\delta,\infty} \times \Delta^1)_0 = (\sd \mathcal{S}^{\delta})_0 \times \{0,1\} &\to \mapprxbull[]; \quad 
	(x,i) \mapsto \begin{cases}
	\lambda(x), & i=0.\\
    (\psi \circ \tau'\circ \sd \varphi)(x), & i=1;
	\end{cases}
	\end{align*}
	By the same arguments as above, we can reduce to proving the following condition on $1$-simplices $\sigma  \in (\sd \mathcal{S}^{\delta,\bullet_2} \times \Delta^1)_1$ in \cref{obs:universal_property_rips_filtered_sset} in the case where the vertices $x$ and $y$ are in $\sd (\mathcal{S}^{\delta,\bullet_2})_0 \times \{0\}$ and $\sd(\mathcal{S}^{\delta,\bullet_2})_0 \times \{1\}$, respectively: Given such a $\sigma$, it holds that 
	\[
	d(\lambda(x),(\psi \circ \tau' \circ \sd \varphi)(y)) < \delta.
	\]
	To see this, we first apply the defining property of a correspondence to obtain 
	\[
	d(\lambda(x),(\psi \circ \tau' \circ \sd \varphi)(y)) \leq \epsmet + d( \varphi(\lambda(x)),(\tau' \circ \sd \varphi)(y)) = \epsmet + d(\varphi(\lambda(x)),\Theta((\sd \varphi)(y))).
	\]
	By naturality of $\lambda$, we have $\varphi(\lambda(x)) = \lambda((\sd \varphi)(x))$. Hence, we only need to show that 
	$
	d(\lambda( (\sd \varphi)(x)),\Theta((\sd \varphi)(y)) ) < \delta - \epsmet.$
	By assumption, there exists a $1$-simplex $\sigma' \in \sd \mathcal{S}^{\delta,\bullet_2}_1$ from $x$ to $y$. Consequently, $(\sd \varphi)(\sigma') \in \sd \mathcal{R}^{\delta+\epsmet, \bullet_2 \flow \epsfun}_1$ is a $1$-simplex from $(\sd \varphi)(x)$ to $(\sd \varphi)(y)$ with $\delta + \epsmet \leq \rho$. By \cref{property:lambdaXThetaY}, we thus have 
	\[
	d(\lambda((\sd \varphi)(x)),\Theta((\sd \varphi)(y))) < \conShr(\delta + \epsmet).
	\]
	Finally, the assumption $\epsmet \leq \frac{1-\conShr}{1+\conShr} \delta$ implies that
	$\conShr(\delta + \epsmet) \leq \delta - \epsmet,$
	which concludes the proof.
\end{proof}
\subsection{The approximation theorem for the function-Rips homotopy type}
We can now combine the main results of the previous sections to prove the following result. We note that it is strictly weaker than the main stability result for function-Rips complexes in \cref{thm:local_stability_function_rips}. We nevertheless state it here, since it provides a conceptual intermediary step, and the techniques used here will be used again in the proof of \cref{thm:local_stability_function_rips}. 
\begin{notation}
For the remainder of this subsection, we fix a complete $\cabK$-space $M$, for some $\kappa \in \mathbb{R}$, together with a Lipschitz function $f \colon M \to \RN$ with Lipschitz constant $\Lipf$. We furthermore fix another metric pair $\fmetapprx$ with no assumptions on $\fapprx$.
\end{notation}
\begin{theorem}\label{thm:weaklocalStabFull} 
Let $0 <\delta \leq \catRad$. Furthermore, let $\epsmet,\epsfun \geq 0$ be such that 
\[\epsmet \leq \catRad - \delta \quad \text{and} \quad \epsmet \leq \frac{1-\conShrM[\delta +\epsmet]}{1+\conShrM[\delta +\epsmet]}\delta. \]
Then any correspondence $\fmetapprx \approx_{\epsmet,\epsfun} \fmet$ induces an interleaving \[\Rips[\delta](\mapprxbull) \simeq _{\Lipf \conShrM(\delta + \epsmet) + \epsfun} \Rips[\delta](\mbull) \] in the persistent homotopy category.
\end{theorem}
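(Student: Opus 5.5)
The proof will combine \cref{prop:bivariate_interleaving}, \cref{thm:back_propagation_rips} and \cref{lem:munchkin_lemma}, with the barycentric subdivision $(\sd_b,\lambda_b)$ and the circumcenter construction as the pseudo-barycenter map. Set $\rho := \delta + \epsmet \leq \catRad$ and $\conShr := \conShrM[\rho]$.

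\textbf{Step 1: the pseudo-barycenter map.} Define
\[
\Theta \colon \sd_b \Rips[\rho](M)_0 \to M
\]
by sending a vertex, i.e.\ a sequence $(x_0,\dots,x_n)$ of pairwise distance $<\rho$, to the circumcenter $\circCent(\{x_0,\dots,x_n\})$. This exists by \cref{theo:ex_of_circumcenter}, since the diameter is $<\rho\leq\catRad$.

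\textbf{Step 2: checking the axioms of \cref{def:pseudo_bar}.} Take parameters $\conShrTheta=\conShr$ and $\conCosTheta=\conShr$, and let $0<\delta'\leq\rho$. For a vertex $x$ of $\sd_b\Rips[\delta'](M)$ with underlying set $A$, we have $\diam(A)<\delta'$. The last vertex $\lambda(x)$ lies in $A$, so
\[
d(\lambda(x),\Theta(x))\leq\circRad(A)\leq \conShr\diam(A)<\conShr\delta',
\]
which is property \ref{property:ThetaXLambdaX}. (This uses the Jung constant at scale $\rho\geq\delta'$; the case $\diam A=0$ is trivial.) For an edge from $x$ to $y$, the set $A$ of $x$ is contained in the set $A'$ of $y$, and $\diam(A')<\delta'$.
\begin{itemize}
\item By \cref{lem:distance_bounds_circum}, $d(\circCent(A),\circCent(A'))\leq\conShr\diam(A')<\conShr\delta'$. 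This gives \ref{property:ThetaXThetaY}, except in the case $\diam(A')=0$, where both sides are the same point and the bound is trivially strict.
\item Since $\lambda(x)\in A\subset A'$, we get $d(\lambda(x),\circCent(A'))\leq\circRad(A')<\conShr\delta'$. This gives \ref{property:lambdaXThetaY}.
\end{itemize}
One subtlety is that strictness relies on $\diam<\delta'$ strictly together with the supremum bound. I also need $\conShr>0$. In the degenerate discrete case $\conShrM=0$, every set of diameter below $\rho$ is a singleton, and all statements hold trivially; I would treat this case separately as in the paper's remark.

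\textbf{Step 3: assembling the interleaving.} \cref{thm:back_propagation_rips} now gives a shrinking transformation $\tau_\Theta$ on $G=\Rips[\bullet](\mbull)$ with $\conShr$ and $\conCos=\Lipf\conShr$. It also gives $\delta$-compatibility with the $(\epsmet,\epsfun)$-interleaving from \cref{prop:bivariate_interleaving} induced by the correspondence. Compatibility requires
\[
\epsmet\leq\min\Bigl\{\rho-\delta,\ \tfrac{1-\conShr}{1+\conShr}\delta\Bigr\}.
\]
The first bound holds with equality by the choice of $\rho$, and the second is exactly the hypothesis. \cref{lem:munchkin_lemma} then yields an interleaving with parameter
\[
\epsfun+\conCos\epsmet+\conCos\delta=\Lipf\conShrM[\delta+\epsmet](\delta+\epsmet)+\epsfun.
\]
Passing from simplicial sets to spaces via \cref{rec:equ_with_ss,rec:equv_model_simp} finishes the proof.

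\textbf{Main obstacle.} I expect the main difficulty to be Step 2: getting the strict inequalities right and justifying that the Jung constant at scale $\rho$ controls sets of diameter below $\delta'\leq\rho$. Everything else is formal assembly of earlier results.
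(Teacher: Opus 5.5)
Your proposal is correct and is essentially the paper's proof: set $\rho=\delta+\epsmet$, take the circumcenter pseudo-barycenter map for $(\sd_b,\lambda_b)$, and conclude with \cref{thm:back_propagation_rips} and \cref{lem:munchkin_lemma}. The only difference is that you verify the axioms directly, whereas the paper cites \cref{prop:existence_of_pseudo_bar_classical}, which derives the other two axioms from the bound on $d(\Theta(x),\Theta(y))$ via degenerate and singleton-face edges.

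One small correction: in the discrete case $\conShrM=0$ nothing holds ``trivially''. With shrinking constant $0$ the strict inequalities cannot be satisfied, and the statement actually fails for $\epsmet=\delta$: take $M$ a point and $\metapprx$ two points at distance $\delta$, so $\Rips[\delta](\metapprx)$ is two isolated points while the claimed interleaving has parameter $\epsfun$. So, exactly as in the paper's remark, this case must be excluded, or handled separately under the extra hypothesis $\epsmet<\delta$.
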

\begin{remark}
We will additionally assume that $0<\conShrM$, i.e., that $M$ is non-discrete (see \cref{rem:discrete_case}), and that $\conShrM$ can serve as the shrinking constant of a pseudo-barycenter map. We note that the the discrete case is much simpler and is addressed in \cref{subsec:proof_of_main_result}. We furthermore assume that $\conShrM < 1$, which is the case whenever $\rho < \catRad$ (see \cref{rem:explicit_numbers}). Note that when $\conShrM =1$, then $\epsmet = 0$, and this case is also much simpler (see also \cref{subsec:proof_of_main_result}).
\end{remark}
To prove this result let us construct a pseudo-barycenter map for $M$ with respect to the barycentric subdivision functor $\sd_b$ and the last vertex map $\lambda_b$. 
\begin{example}\label{ex:existence_of_pseudo_bar_classical}
Let $\rho \leq \catRad$. We can then consider the map $\Theta$ that sends a vertex $\sigma=(x_0, \dots, x_n) \in \sd_b (\Rips[\rho](M))$ to the circumcenter of $\{x_0, \dots, x_n\} \subset M$, as illustrated in \cref{fig:pseudo-barycenter_map}. 
\end{example}
\begin{proposition}\label{prop:existence_of_pseudo_bar_classical}
	Let $\rhoTheta = \rho  \leq \catRad$ and $\conCosTheta=\conShrTheta=\conShrM$ where we assume that $0<\conShrM<1$. Then the map $\Theta$ from \cref{ex:existence_of_pseudo_bar_classical} is a pseudo-barycenter map with respect to the barycentric subdivision functor $\sd_b$, the last vertex map $\lambda_b$ and parameters as above.
\end{proposition}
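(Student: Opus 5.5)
We verify properties (a), (b_i), (b_ii) of \cref{def:pseudo_bar} directly from \cref{lem:distance_bounds_circum}. Throughout, fix $0<\delta\leq\rho$. Recall the two relevant facts about $\sd_b$ from \cref{ex:barycentric_subdiv}. First, a vertex $x\in\sd_b\Rips[\delta](M)_0$ corresponds to a sequence $(x_0,\dots,x_n)$ of points with pairwise distances $<\delta$. Second, $\lambda_b(x)=x_n$. Write $A_x=\{x_0,\dots,x_n\}$, so that $\Theta(x)=\circCent(A_x)$; this exists by \cref{theo:ex_of_circumcenter}, since $\diam(A_x)<\delta\leq\rho\leq\catRad$. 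A $1$-simplex from $x$ to $y$ means the sequence of $x$ is a subsequence of that of $y$, hence $A_x\subset A_y$ with $\diam(A_y)<\delta$.

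For property (a), note that $\lambda_b(x)\in A_x$. Hence $d(\lambda_b(x),\Theta(x))\leq\circRad(A_x)$. If $\diam(A_x)>0$, the definition of the Jung constant gives $\circRad(A_x)\leq\conShrM\diam(A_x)<\conShrM\delta$. If $\diam(A_x)=0$, then $A_x$ is a single point, $\Theta(x)=\lambda_b(x)$, and the distance is $0$. Either way the distance is at most $\conCosTheta\delta$.

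For property (b_ii), apply \cref{lem:distance_bounds_circum} with $A=A_x\subset A'=A_y$. This gives $d(\Theta(x),\Theta(y))\leq\conShrM\diam(A_y)$. The strict inequality $<\conShrM\delta$ follows because $\diam(A_y)<\delta$ and $\conShrM>0$.

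For property (b_i), we have $\lambda_b(x)\in A_x\subset A_y$. Since $A_y\subset\closedball[\circRad(A_y)](\Theta(y))$, we get $d(\lambda_b(x),\Theta(y))\leq\circRad(A_y)$. If $A_y$ has positive diameter, then $\circRad(A_y)\leq\conShrM\diam(A_y)<\conShrM\delta$. If $A_y$ is a singleton, the distance is $0<\conShrM\delta$.

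The main subtlety is the use of the Jung constant. It is defined as a supremum over finite sets with $0<\diam<\rho$, so the degenerate singleton case has to be treated separately, as done above. We also need $\diam(A)<\rho$, which holds because $\delta\leq\rho$. Finally, strictness of the inequalities comes from the strict Rips condition $\diam(A_y)<\delta$, combined with $\conShrM>0$.
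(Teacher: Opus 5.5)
Your proof is correct and uses the same key input as the paper, namely the Jung-constant bound $\circRad(A)\leq\conShrM\diam(A)$ via \cref{lem:distance_bounds_circum}. The paper only organizes it more economically: it checks only (b$_{\mathrm{ii}}$), gets (b$_{\mathrm{i}}$) by applying (b$_{\mathrm{ii}}$) to the singleton vertex $(x_m)$ (whose $\Theta$-value is $x_m=\lambda_b(x)$ and which is a face of $y$), and gets (a) from (b$_{\mathrm{i}}$) via the degenerate $1$-simplex since $\conCosTheta=\conShrTheta$; you instead verify each property directly from $d(a,\circCent(A))\leq\circRad(A)$ for $a\in A$, with the singleton case handled explicitly.
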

\begin{proof}
	Note, first, that when $\conCosTheta=\conShrTheta$, \cref{property:ThetaXLambdaX} is just a special case of \cref{property:lambdaXThetaY}, using the degenerate $1$-simplex. Hence, it suffices to verify the second condition in \cref{property:XY}.
	It is immediate from \cref{lem:distance_bounds_circum} that $\Theta$ is well-defined and has the property that whenever $x,y \in \sd_b \Rips[\delta](M)_0$ are connected by a $1$-simplex, then $d(\Theta(x), \Theta(y)) < \conShrM \delta$, which ensures \cref{property:ThetaXThetaY}. Note, furthermore, that for $(y_0, \dots, y_n) \in \sd_b \Rips[\delta](M)_0$ we have $\lambda_b(y_0, \dots, y_n) = y_n = \Theta((y_n))$. Hence, \cref{property:lambdaXThetaY} follows from \cref{property:ThetaXThetaY} by setting $x = (y_n)$.
\end{proof}
\begin{remark}\label{rem:better_subdivision}
	The proof of \cref{prop:existence_of_pseudo_bar_classical} indicates that, in this special case, the definition of a pseudo-barycenter map is highly redundant. However, the full strength of the definition is needed to construct pseudo-barycenter maps that enable us to get the constant $\conCos$ arbitrarily close to $0$ at the cost of increasing $\conShr$ towards $1$. Doing so allows us to procure stability results for function-Rips complexes in \cref{sec:local_stability_function_rips}.
\end{remark}
Given this result, we can now provide the proof of \cref{thm:weaklocalStabFull}.
\begin{proof}[Proof of \cref{thm:weaklocalStabFull}]
Set $\rho = \delta + \epsmet$.
By \cref{prop:bivariate_interleaving}, the correspondence $\fmetapprx \approx_{\epsmet,\epsfun} \fmet$ gives rise to an interleaving $\varphi \colon \Rips[\bullet](\mapprxbull) \simeq_{\epsmet,\epsfun} \Rips[\bullet] (\mbull) \colon \psi$ in the homotopy category. By \cref{prop:existence_of_pseudo_bar_classical}, $\fmet$ admits a pseudo-barycenter map $\Theta$ with respect to $\rho \leq \catRad$ and $\conCosTheta= \conShrTheta = \conShrM$. 
By \cref{thm:back_propagation_rips}, $\Theta$  induces a shrinking transformation $\tau$ compatible with $\varphi$ and $\psi$, with parameter $\rho = \rhoTheta$, $\conCos= \Lipf \conCosTheta$ and $\conShr = \conShrTheta$. Applying \cref{lem:munchkin_lemma}, we obtain an interleaving $\Rips[\delta](\mapprxbull) \simeq_{ \epsfun +\conCos(\epsmet + \delta)} \Rips[\delta] (\mbull)$.
\end{proof}
\section{Perturbative stability at spaces of bounded curvature}\label{sec:local_stability_function_rips}
Note that \cref{thm:weaklocalStabFull} guarantees a bounded (homotopical) interleaving distance between $\Rips[\delta](\mapprxbull)$ and $\Rips[\delta](\mbull)$. It does, however, not guarantee that this interleaving distance goes to $0$ when $\epsmet$ and $\epsfun$ go to $0$; in this case, the remaining $\delta$-term dominates the interleaving distance.
In this sense, \cref{thm:weaklocalStabFull} does not provide a pointwise continuity or perturbative stability result for the assignment $\fmetapprx \mapsto \Rips[\delta](\mapprxbull)$. Note first that, globally, no such result can ever be expected. Already in the case $N=0$, the persistent homotopy category $\ho (\iSpaces^{\RN}) = \ho(\iSpaces)$ is discrete in the sense that any two distinct homotopy types have infinite interleaving distance, where $\MetRN$ is the (large) space of all metric spaces equipped with the topology induced by the Gromov-Hausdorff distance. Hence, for the assignment to be globally continuous, it would have to be constant on the path-connected component given by bounded metric spaces (see \cite{IvanovNikolaevaTuzhilin2016GHStrictlyIntrinsic}), which it is evidently not. However, one can nevertheless ask the question of \textit{where the assignment is continuous or stable}. For many intents and purposes, for example, when trying to infer information about a geometrically well-behaved space $\fmet$ from an approximation $\fmetapprx$, this suffices (compare \cite{andre2025estimating}).
Let us now discuss how to improve \cref{thm:weaklocalStabFull} to a stability-at-a-point result. In this section, we prove the following.
\begin{notation}
For the remainder of this section we fix a complete $\cabK$-space $M$, for some $\kappa \in \mathbb{R}$, together with a Lipschitz function $f \colon M \to \RN$, with Lipschitz constant $\Lipf$ with respect to the $\infty$-norm on $\RN$. We furthermore fix another metric pair $\fmetapprx$ over $\RN$, with no further assumptions on $\fapprx$ and $\metapprx$. 
\end{notation} 
\begin{notation}
	Given $\rho \leq \catRad$, we denote $\lipConstRho:= \frac{2\conShrM}{1-\conShrM}$. When $\conShrM =1$, we treat the expression $\frac{2\conShrM}{1- \conShrM}$ as $0$ and formally specify $\lipConstRho 0 = 0$.
\end{notation}
\begin{remark}\label{rem:discrete_later}
     We will furthermore assume that $\conShrM >0$, i.e., that $M$ is not discrete (see \cref{rem:discrete_case}). We note, however, that the statements below remain true even in this case if one additionally assumes that $\epsmet < \delta$, which is automatic when $\conShrM > 0$. Unsurprisingly, this discrete case is much simpler and a separate proof is discussed in \cref{appendix:discrete_case}.
\end{remark}
\begin{theorem}\label{thm:local_stability_function_rips}
	Let $0 < \delta \leq \catRad$. 
     Fix any $\rho$ with $\delta \leq\rho \leq \catRad$. Let $\epsmet \geq 0$ be such that
	 \[
	 \epsmet \leq \rho - \delta \quad \text{and} \quad \epsmet \leq \frac{1 - \conShrM}{1+ \conShrM} \delta,
	\]
	and let $\epsfun \geq 0$.
	Then any correspondence $\fmetapprx \approx_{\epsmet, \epsfun} \fmet$ induces an interleaving  \[
	\Rips[\delta](\mapprxbull) \simeq _{\Lipf \lipConstRho \epsmet + \epsfun} \Rips[\delta](\mbull) \] in the persistent homotopy category.
\end{theorem}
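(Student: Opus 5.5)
The plan is to repeat the proof of \cref{thm:weaklocalStabFull} with a better pseudo-barycenter map. The map of \cref{prop:existence_of_pseudo_bar_classical} has $\conCosTheta=\conShrTheta=\conShrM$, and this is what produces the $\delta$-dependent error term. \cref{lem:munchkin_lemma} gives an error of $\epsfun+\conCos(\epsmet+\delta)$ and only needs $\epsmet\le\frac{1-\conShr}{1+\conShr}\delta$. So I would give up some shrinking, pushing $\conShr$ towards $1$ until this bound is tight, in exchange for a cost $\conCosTheta$ of order $\epsmet/\delta$. Concretely, write $J:=\conShrM$ and pick $t\in[0,1]$ with
\[
t(1-J)=\frac{2\epsmet}{\delta+\epsmet},\qquad \conShrTheta:=1-t(1-J)=\frac{\delta-\epsmet}{\delta+\epsmet},\qquad \conCosTheta:=tJ .
\]
This $t$ lies in $[0,1]$ precisely because $\epsmet\le\frac{1-J}{1+J}\delta$. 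Then $\epsmet=\frac{1-\conShrTheta}{1+\conShrTheta}\delta$ holds with equality, and
\[
\conCosTheta(\delta+\epsmet)=\frac{2J}{1-J}\epsmet=\lipConstRho\epsmet .
\]
Hence \cref{prop:bivariate_interleaving}, \cref{thm:back_propagation_rips} (at scale $\rho$, using $\epsmet\le\rho-\delta$) and \cref{lem:munchkin_lemma} together give exactly the claimed error $\Lipf\lipConstRho\epsmet+\epsfun$.

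The candidate map is a geodesic interpolation. For a vertex $x$ of the subdivided Rips complex, let $c(x)$ be the circumcenter of its underlying point set (\cref{theo:ex_of_circumcenter}). Define $\Theta(x)$ as the point at parameter $t$ on the unique geodesic from $\lambda(x)$ to $c(x)$; uniqueness holds since we work below $\catRad$. I would check the three properties as follows.
\begin{itemize}
\item Property \ref{property:ThetaXLambdaX}: $d(\lambda x,\Theta x)=t\,d(\lambda x,c(x))\le tJ\delta$.
\item Property \ref{property:lambdaXThetaY}: for an edge $x\to y$, the point $\lambda(x)$ lies in the simplex of $y$. Hence $d(\lambda x,\lambda y)<\delta$ and $d(\lambda x,c(y))\le J\delta$. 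Convexity of $d(\lambda x,-)$ along the geodesic from $\lambda y$ to $c(y)$ (\cref{prop:distance_to_point_conv}) would then give $d(\lambda x,\Theta y)<(1-t)\delta+tJ\delta=\conShrTheta\delta$.
\item Property \ref{property:ThetaXThetaY}: the same estimate, applied to the two geodesics $[\lambda x,c(x)]$ and $[\lambda y,c(y)]$. This uses joint convexity of the distance between them (\cref{cor:pseudo_convexity_of_dist}) together with $d(c(x),c(y))\le J\delta$ from \cref{lem:distance_bounds_circum}.
\end{itemize}
The degenerate cases ($\conShrM=0$, or $\conShrM=1$ which forces $\epsmet=0$) are handled separately, as announced.

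The main obstacle is that convexity of distance functions holds exactly only for $\kappa\le0$. In a $\catK$ space with $\kappa>0$ one only has a pseudo-convexity with curvature-dependent error. That error becomes negligible relative to scale only when the points involved are much closer together than $\delta$. This is why the paper builds an alternative subdivision functor (\cref{thm:properties_alt_subdiv}). I would choose its vertices so that along every edge the relevant points (last vertices and interpolated centers) move only a small amount. Then the convexity inequalities hold up to a factor $1+\eta$ with $\eta\to0$ as the subdivision is refined, giving pseudo-barycenter maps with constants $(\conShrTheta+\eta,\conCosTheta+\eta)$. Finally, the strict inequality $\diam<\delta$ leaves slack, and an interleaving at every parameter $\varepsilon'>\Lipf\lipConstRho\epsmet+\epsfun$ is not yet the exact bound. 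I would recover the exact constant by running the argument with a slightly smaller $t$ together with the strict-inequality slack. If that fails, I would fall back to a limiting argument for the given finite simplices, which works since each simplex has diameter strictly below $\delta$.
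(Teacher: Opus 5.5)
Your top-level reduction is the paper's own. You use the same $t$ with $t(1-J)=2\epsmet/(\delta+\epsmet)$, $J=\conShrM$, and pass it through \cref{prop:bivariate_interleaving}, \cref{thm:back_propagation_rips} and \cref{lem:munchkin_lemma}. Your checks of \cref{property:ThetaXLambdaX} and \cref{property:lambdaXThetaY} are fine.

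The gap is in \cref{property:ThetaXThetaY}. \cref{cor:pseudo_convexity_of_dist} is not a joint (Busemann) convexity statement. With $x_0=\lambda(x)$, $x_1=c(x)$, $y_0=\lambda(y)$, $y_1=c(y)$, it needs all three of $d(x_1,y_1)$, $d(x_0,y_1)$, $d(x_1,y_0)$ to be $<J\delta$. On the barycentric subdivision, $\lambda(y)$ is the last point of the larger simplex and need not lie in the simplex of $x$. So you only have $d(c(x),\lambda(y))<\delta$. Inserted into \cref{lem:geodes_dist}, this gives $d(\Theta x,\Theta y)^2<\delta^2\bigl[(1-t)^2+t^2J^2+t(1-t)(1+J^2)\bigr]$. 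That is weaker than the required $((1-t)+tJ)^2\delta^2$, because $1+J^2>2J$. This is exactly the failure recorded in \cref{block:failure_sd_b}. For $\kappa\le 0$ your $\sd_b$ argument could be saved by the CAT(0) inequality $d(\gamma(t),\eta(t))\le(1-t)d(x_0,y_0)+t\,d(x_1,y_1)$. For $\kappa>0$ that inequality is not available; it already fails on the round sphere at every scale.

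Your diagnosis of the obstacle is also off, and so is the remedy built on it. Below $\catRad$, both \cref{prop:distance_to_point_conv} and \cref{cor:pseudo_convexity_of_dist} hold for every $\kappa$ with no curvature error. What is missing is the cross bound $d(c(x),\lambda(y))<J\delta$, which is a combinatorial condition. Refining the subdivision cannot make last vertices move only slightly along every edge. By naturality of $\lambda$ and connectedness of $\sd\Delta^1$, any subdivision of an edge $a_0a_1$ of $\Rips[\delta](M)$ contains an edge on which $\lambda$ takes the values $a_0$ and $a_1$, and $d(a_0,a_1)$ may be close to $\delta$. The $\eta$-loss and the limiting argument you sketch for recovering the sharp constant are also left unspecified.

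The missing idea is the paper's $\sd_r$ (\cref{rem:description_of_alt_simplices}):
\begin{itemize}
\item Vertices are pairs $(\sigma,x)$ with $x$ a vertex of $\sigma$, and $\lambda_r(\sigma,x)=x$.
\item An edge $(\sigma_a,x_a)\to(\sigma_b,x_b)$ forces $\sigma_a$ to be a face of $\sigma_b$ and $x_b$ to be a vertex of $\sigma_a$.
\item Define $\Theta_t(\sigma,x)$ as the point at parameter $t$ on the geodesic from $x$ to $c(\sigma)$.
\end{itemize}
The condition $x_b\in\sigma_a$ yields $d(c(\sigma_a),x_b)<J\delta$ (\cref{lem:simple_dist_bound}). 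Then \cref{cor:pseudo_convexity_of_dist} applies verbatim and gives exact constants $S_t=(1-t)+tJ$ and $C_t=tJ$ (\cref{thm:pseudo_bar_for_alt_subdiv}). You must also show that $\sd_r$ preserves colimits and monomorphisms and that $\lambda_r$ is a weak equivalence (\cref{thm:properties_alt_subdiv}). With this in place, your choice of $t$ gives $\Lipf\lipConstRho\epsmet+\epsfun$ exactly, with no limiting argument.
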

Using \cref{thm:local_stability_function_rips}, we can now provide the following persistent Latschev theorem, by combining \cref{thm:persistent_hausmann} and \cref{thm:local_stability_function_rips}.
\begin{theorem}\label{thm:strengthened_persistent_latschev}
Let $0 <\delta \leq \catRad$. Fix any $\rho$ with $\delta \leq\rho \leq \catRad$. Let $\epsmet \geq 0$ be such that
	 \[
	 \epsmet \leq \rho - \delta \quad \text{and} \quad \epsmet \leq \frac{1 - \conShrM}{1+ \conShrM} \delta,
	\]
	and let $\epsfun \geq 0$.
    Then any correspondence $\fmetapprx \approx_{\epsmet,\epsfun} \fmet$ induces an interleaving 
     \[\Rips[\delta](\mapprxbull) \simeq _{\Lipf(\conShrM[\delta] \delta + \lipConstRho\epsmet) + \epsfun} \mbull\]  
     in the persistent homotopy category.
\end{theorem}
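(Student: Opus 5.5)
The plan is to obtain the statement purely formally by composing two interleavings already available in the paper, using the composition rule of \cref{rem:properties_of_int}. The first ingredient is \cref{thm:local_stability_function_rips}. Its hypotheses on $\delta$, $\rho$, $\epsmet$ and $\epsfun$ are verbatim those of the present statement. So the given correspondence $\fmetapprx \approx_{\epsmet,\epsfun} \fmet$ yields an interleaving
\[
\Rips[\delta](\mapprxbull) \simeq_{\Lipf \lipConstRho \epsmet + \epsfun} \Rips[\delta](\mbull)
\]
in $\ho(\iSpaces^{\RN})$. Here we pass from simplicial sets to spaces via \cref{rec:equ_with_ss,rec:equv_model_simp}.

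The second ingredient is the persistent Hausmann theorem, \cref{thm:persistent_hausmann}, applied at scale $\delta \leq \catRad$. It gives $\real{\Rips[\delta](\mbull)} \simeq_{\conShrM[\delta]\Lipf\delta} \mbull$. Two points need checking here.

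\begin{itemize}
\item \textbf{Properness.} \cref{thm:persistent_hausmann} needs $M$ proper, whereas \cref{thm:local_stability_function_rips} only needs completeness. I would include properness among the standing hypotheses of this theorem, as in the introductory version \cref{intThm:pers_lat_quant_version}. Alternatively, one can invoke the remark following \cref{thm:persistent_hausmann}, which allows completeness at the price of $\delta<\catRad$.
\item \textbf{Models.} Both interleavings must be read in the same homotopy category. The simplicial-set model and the simplicial-complex realization of $\Rips[\delta](\mbull)$ are identified in $\ho(\iSpaces^{\RN})$ by the natural weak equivalence $\real{\sNerve(K)}\to\real{K}$ of \cref{rec:equv_model_simp}, applied indexwise. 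Composing with a weak equivalence does not change interleaving parameters.
\end{itemize}

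Composing the two interleavings then gives an interleaving of parameter $\Lipf \lipConstRho \epsmet + \epsfun + \Lipf\conShrM[\delta]\delta = \Lipf(\conShrM[\delta]\delta + \lipConstRho\epsmet)+\epsfun$, as claimed. The degenerate cases need no separate treatment: when $\conShrM=1$ we have $\epsmet=0$ and use the convention $\lipConstRho\cdot 0 = 0$, and the discrete case is inherited from \cref{thm:local_stability_function_rips} (see \cref{rem:discrete_later}).

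There is no real obstacle in this argument. All the work sits in \cref{thm:local_stability_function_rips}, which is assumed. The only step needing care is the bookkeeping above: making sure the models are compatible and that the hypotheses of \cref{thm:persistent_hausmann} (properness, $\delta\leq\catRad$) are actually available.
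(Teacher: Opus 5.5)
Your proposal is correct and is exactly the paper's argument: the theorem is obtained by composing the interleaving of \cref{thm:local_stability_function_rips} with the persistent Hausmann interleaving of \cref{thm:persistent_hausmann} via \cref{rem:properties_of_int}, and the two parameters add up to the stated one. Your properness remark is also well taken. That section's standing assumption is only completeness, so applying \cref{thm:persistent_hausmann} does require either properness, as in \cref{intThm:pers_lat_quant_version}, or the completeness variant with $\delta<\catRad$.
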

\subsection{A Lipschitz-at-a-point interpretation of \cref{thm:local_stability_function_rips}}\label{subsec:lipschitz_at_a_point}
So far, we have started with a fixed $\cabK$-space $M$ and asked for which parameters $\delta$ the function-Rips complex construction is well-behaved at $M$. It is insightful to turn this question on its head. Namely, we can fix a $\delta$ and ask what the continuity properties of the assignment $\fmetapprx \mapsto \Rips[\delta](\mapprxbull)$ are. As already discussed in \cref{sec:local_stability_function_rips}, the attempt to express \cref{thm:local_stability_function_rips} merely as a Lipschitz continuity statement is necessarily lossy. Amongst other reasons, this is due to the necessary coupling of distance and functional distortion. To retain a certain degree of flexibility, we will use the rescaled distances of \cref{not:reparam_dist} that at least allows for the two distortions to be treated at different scales.
\begin{notation}
For the remainder of this subsection, let $K >0$.
We equip the class of metric spaces over $\RN$, $\MetRN$, with the rescaled correspondence distance, $\dCor[\alpha_K]$(see \cref{not:reparam_dist}) induced by the norm $(x,y) \mapsto \|(x,\frac{1}{K}y)\|_{\infty}$. Furthermore, recall the interleaving in the homotopy category distance, $\dIntHo$, on $\iSpaces^{\RN}$ (\cref{con:inho_distance}).
\end{notation}
As already discussed, the assignment 
\begin{align*}
\MetRN &\to \iSpaces^{\RN}\\
\fmetapprx &\mapsto \Rips(\mapprxbull)
\end{align*} 
is generally not continuous with respect to these distances. The following definition captures the idea of a map being Lipschitz only at certain points.\footnote{There does not seem to be a standard source or nomenclature for this notion, even though it appears at several points in the literature under mild variations (\cite{Cheeger1999,DurandCartagenaJaramillo2010}). Compare also \cite{MaderWaas2024} for related results for persistent stratified homotopy types.}
\begin{definition}
	Let $L>0$.
    Let $\varphi \colon X \to Y$ be a map between metric spaces and let $L >0$. Fix $x \in X$ and a neighborhood $U$ of $x$. We say that $\varphi$ is \define{$L$-Lipschitz at $x$ on $U$} if for every $x' \in U$, we have \[ d(\varphi(x), \varphi(x')) \leq L d(x,x').\] 
	There is also the following infinitesimal version of this condition.
	Given $x \in X$, the map $\varphi$ is called \define{$L$-Lipschitz at $x$} if one of the following two equivalent conditions holds:
\begin{itemize}
	\item The inequality \[\limsup_{x_n \to x} \frac{d(\varphi(x), \varphi(x_n))}{d_X(x,x_n)} \leq L\] holds for every sequence $(x_n)_{n \in \mathbb{N}}$, $x_n \neq x$, converging to $x$.
	\item For every $L'>L$, there exists a neighborhood $U$ of $x$ such that $\varphi$ is $L'$-Lipschitz at $x$ on $U$.
\end{itemize}
	In other words, at a point $x$, convergence behavior of $\varphi(x')$ to $\varphi(x)$ is no worse than $L$ times the convergence behavior of $x'$ to $x$. 
\end{definition}
\begin{notation}
For the remainder of this section, fix $K>0$ and $L >0$.
Let us denote by $\cat[CBA](\kappa,L)_{\delta} \subset \MetRN$ the subspace of those $\fmet$ which are such that $f$ is $L$-Lipschitz and that $M$ is a complete $\cabK$ space, for some $\kappa \in \mathbb{R}$, with $\delta < \catRad$. If $\delta < \frac{\varpi_{\kappa}}{2}$, then $\cat[CBA](\kappa,L)_{\delta}$ contains all pairs $\fmet$ for which $M$ is a complete $\catK$ space and $f$ is $L$-Lipschitz. More than that, it even suffices that $M$ is a compact $\cabK$ space with convexity radius larger than $\delta$.
\end{notation}
With this language, one obtains the following immediate consequence of \cref{thm:local_stability_function_rips}.
\begin{notation}
	Given $\rho$ with $0 \leq \rho \leq \catRad$, we denote  $r^{\rho}_M := \min\{\rho - \delta, \frac{1- \conShrM}{1+ \conShrM} \delta\}$. Furthermore, we denote 
	\[
	\flexLipConstM[\rho] := L\frac{2 \conShrM}{1- \conShrM} + K.
	\]
	In the special case where $K = L$, this simplifies to 
	\[
	\flexLipConstM[\rho] := L\frac{2 \conShrM}{1- \conShrM} + L = L \frac{1+\conShrM}{1-\conShrM}.
	\]
\end{notation}
\begin{theorem}\label{thm:pointwise_lipsch_continuity}
 Let $0 <\delta < \catRad$. Let $\fmet \in \cat[CBA](\kappa,L)_{\delta}$. Fix any $\rho$ with $\delta < \rho \leq \catRad$ and assume $\rho < \frac{\varpi_\kappa}{2}$. 
 Then the assignment
\begin{align*}
	 \MetRN &\to \iSpaces^{\RN}\\
\fmetapprx &\mapsto \Rips(\mapprxbull)
\end{align*}
is $\flexLipConstM[\rho]$-Lipschitz at $\fmet$ on the open ball of radius $r^{\rho}_M$ around $\fmet$.
\end{theorem}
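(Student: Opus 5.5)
The plan is to deduce the claim directly from \cref{thm:local_stability_function_rips}, by unpacking the definition of $\dCor[\alpha_K]$ and passing to an infimum. Fix $\fmetapprx$ with $d := \dCor[\alpha_K](\fmetapprx, \fmet) < r^{\rho}_M$. The goal is
\[
\dIntHo\bigl(\Rips(\mapprxbull), \Rips(\mbull)\bigr) \leq \flexLipConstM[\rho] \, d .
\]

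First I check that the constants are finite. Since $\rho < \frac{\varpi_\kappa}{2}$ and $\rho \leq \catRad$, \cref{rem:explicit_numbers} and the Lang--Schroeder bound give $\conShrM \leq \conShrK < 1$. Hence $\lipConstRho = \frac{2\conShrM}{1-\conShrM}$ is finite. Since $\delta < \rho$, we also have $r^{\rho}_M > 0$, so the open ball is non-empty.

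Next, let $\eta > 0$ be small enough that $d + \eta < r^{\rho}_M$. By definition of $\dCor[\alpha_K]$ there is an $(\epsmet, \epsfun)$-correspondence $\fmetapprx \approx_{\epsmet,\epsfun} \fmet$ with
\[
\max\{\epsmet, \tfrac{1}{K}\epsfun\} \leq d + \eta .
\]
This gives $\epsmet \leq d+\eta < r^{\rho}_M$. Therefore $\epsmet \leq \rho - \delta$ and $\epsmet \leq \frac{1-\conShrM}{1+\conShrM}\delta$, both strictly. We also get $\epsfun \leq K(d+\eta)$.

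The hypotheses of \cref{thm:local_stability_function_rips} now hold. The space $M$ is complete and $\cabK$, and $0 < \delta \leq \rho \leq \catRad$. In the discrete case $\conShrM = 0$ (\cref{rem:discrete_later}), the extra requirement $\epsmet < \delta$ holds because the inequality $\epsmet < r^{\rho}_M \leq \delta$ is strict; this is where openness of the ball is used. The theorem yields an interleaving $\Rips[\delta](\mapprxbull) \simeq_{\varepsilon} \Rips[\delta](\mbull)$ with
\[
\varepsilon = \Lipf \lipConstRho \epsmet + \epsfun \leq L \frac{2\conShrM}{1-\conShrM}(d+\eta) + K(d+\eta) = \flexLipConstM[\rho](d+\eta),
\]
where $\Lipf \leq L$ because $f$ is $L$-Lipschitz. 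Thus $\dIntHo \leq \flexLipConstM[\rho](d+\eta)$ for all small $\eta > 0$, and letting $\eta \to 0$ gives the claim.

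There is no real obstacle here, since the content is entirely in \cref{thm:local_stability_function_rips}. The only points needing care are the strictness of the inequalities: taking $\eta$ small so the approximating correspondence stays inside the radius $r^{\rho}_M$, and using $\rho < \frac{\varpi_\kappa}{2}$ to guarantee $\conShrM < 1$, so that $\flexLipConstM[\rho]$ is finite.
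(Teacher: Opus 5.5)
Your proposal is correct and follows the paper's route: the paper presents this result as an immediate consequence of \cref{thm:local_stability_function_rips} without writing out details. Your argument spells out that deduction, namely approximating $\dCor[\alpha_K]$ by an actual $(\epsmet,\epsfun)$-correspondence strictly inside the open ball, checking $\conShrM<1$ and the discrete case, applying the theorem with $\Lipf\le L$ and $\epsfun\le K(d+\eta)$, and letting $\eta\to 0$.
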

Using the infinitesimal version of being Lipschitz at a point, we can furthermore provide the following interpretation of \cref{thm:local_stability_function_rips}.
\begin{notation}
    Given $\kappa \in \mathbb{R}$ and $\delta \in [0, \frac{\varpi_{\kappa}}{2})$ recall the model Jung's constant $\conShrK[\delta] \in [\frac{1}{\sqrt{2}}, 1)$ from \cref{not:conJungGlob}. By \cref{cor:global_bound}, we have $\conShrM[\delta] \leq \conShrK[\delta] < 1$. 
	Similarly to the case of a fixed $M$, we denote
    \[\flexLipConstkappa[\delta] := L\frac{2 \conShrK[\delta]}{1- \conShrK[\delta]} + K. \]
\end{notation}

\begin{corollary}
	Let $\kappa \in \mathbb{R}$ and $0 \leq \delta < \frac{\varpi_{\kappa}}{2}$. Then the assignment
	\begin{align*}
	 \MetRN &\to \iSpaces^{\RN}\\
	\fmetapprx &\mapsto \Rips(\mapprxbull)
	\end{align*}
is $\flexLipConstkappa[\delta]$-Lipschitz at every $\fmet \in \cat[CBA](\kappa,L)_{\delta}$.
\end{corollary}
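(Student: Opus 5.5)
The idea is to derive the infinitesimal statement from \cref{thm:pointwise_lipsch_continuity}, applied with a scale $\rho$ slightly larger than $\delta$, and then let $\rho \downarrow \delta$. By the second characterization of being Lipschitz at a point, it suffices to show the following. Fix $\fmet \in \cat[CBA](\kappa,L)_{\delta}$ and any $L' > \flexLipConstkappa[\delta]$. We must find a neighborhood of $\fmet$ on which the assignment $\fmetapprx \mapsto \Rips(\mapprxbull)$ is $L'$-Lipschitz at $\fmet$.

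The degenerate case $\delta = 0$ is handled separately. Since we use the open Rips complex, $\Rips[0](\mapprxbull)$ is empty for every pair. Hence all images coincide and the interleaving distance is $0$, so the claim is trivial. From now on assume $\delta > 0$.

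First I choose $\rho$. By membership in $\cat[CBA](\kappa,L)_{\delta}$ we have $\delta < \catRad \leq \frac{\varpi_\kappa}{2}$. The function $\rho \mapsto \conShrK[\rho]$ from \cref{not:conJungGlob} is continuous on $[0,\frac{\varpi_\kappa}{2}]$: it is constant for $\kappa \leq 0$, and for $\kappa > 0$ it is given by the explicit formula. Moreover $\conShrK[\delta] < 1$. Since $x \mapsto L\frac{2x}{1-x} + K$ is continuous and increasing on $[0,1)$, the map $\rho \mapsto \flexLipConstkappa[\rho]$ is continuous at $\delta$. Hence I can pick $\rho$ with
\[
\delta < \rho \leq \catRad, \qquad \rho < \frac{\varpi_\kappa}{2}, \qquad \flexLipConstkappa[\rho] < L'.
\]

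Second, I compare the constant for $M$ with the model constant. The theorem of Lang and Schroeder gives $\conShrM \leq \conShrK[\rho] < 1$. By monotonicity of $x \mapsto L\frac{2x}{1-x}+K$ this yields $\flexLipConstM[\rho] \leq \flexLipConstkappa[\rho] < L'$. (If $M$ is discrete at scale $\rho$, then $\conShrM = 0$ and the bound holds a fortiori.)

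Third, I apply \cref{thm:pointwise_lipsch_continuity} with this $\rho$. It shows that the assignment is $\flexLipConstM[\rho]$-Lipschitz at $\fmet$ on the open ball $U$ of radius $r^{\rho}_M = \min\{\rho - \delta, \frac{1-\conShrM}{1+\conShrM}\delta\}$. This radius is positive because $\rho > \delta$, $\delta > 0$ and $\conShrM < 1$, so $U$ is a genuine neighborhood of $\fmet$. Since $\flexLipConstM[\rho] < L'$, the assignment is also $L'$-Lipschitz at $\fmet$ on $U$. As $L'$ was arbitrary, this proves the corollary.

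The only delicate point is the choice of $\rho$. It must lie strictly above $\delta$, so that the neighborhood radius is positive. It must stay below both $\catRad$ and $\frac{\varpi_\kappa}{2}$, which is possible because $\delta < \catRad$ is strict. Finally, it must be close enough to $\delta$ for the constant to fall below $L'$, which uses the continuity of the model Jung constant together with the Lang–Schroeder comparison $\conShrM \leq \conShrK[\rho]$.
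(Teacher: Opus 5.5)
Your proposal is correct and follows essentially the route the paper intends: apply \cref{thm:pointwise_lipsch_continuity} at a scale $\rho$ slightly above $\delta$, bound $\conShrM \leq \conShrK[\rho]$ via Lang--Schroeder, and use continuity of $\rho \mapsto \flexLipConstkappa[\rho]$ at $\delta$ (where $\conShrK[\delta] < 1$) to let $\rho \downarrow \delta$ in the neighborhood characterization of being Lipschitz at a point. Your separate treatment of $\delta = 0$, where $\Rips[0](\mapprxbull)$ is empty for every pair, is also correct and covers a case that \cref{thm:pointwise_lipsch_continuity} itself excludes.
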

\begin{example}
	When $K = L =1$ and $\delta \leq \frac{\varpi_{\kappa}}{4}$, we have $\conShrK[\delta] < \frac{3}{4}$, and thus 
	\[
	\flexLipConstkappa[\delta] < 1 \cdot \frac{2 \cdot \frac{3}{4}}{1 - \frac{3}{4}} + 1 = 7
	\]
	independently of $\kappa$. 
\end{example}
\medskip
To summarize, while the assignment $\fmetapprx \mapsto \Rips(\mapprxbull)$ is evidently not globally Lipschitz, it is nevertheless Lipschitz continuous at every pair in $\cat[CBA](\kappa)_{\delta}$, with a global constant depending only on $\kappa$ and $\delta$.
\subsection{Idea behind the proof of \cref{thm:local_stability_function_rips}}
Let us now discuss the main idea of the proof of \cref{thm:local_stability_function_rips}.
Observe that \cref{lem:munchkin_lemma} gives us an interleaving \[\Rips[\delta](\mapprxbull) \simeq_{\Lipf C\epsmet + \Lipf C\delta + \epsfun} \Rips[\delta](\mbull),\] for $C = \conShrM$. If we do not want the term $\Lipf C\delta$ to dominate for small $\epsmet$ and $\epsfun$, we need to be able to make $C$ arbitrarily small. Now, in the definition of a pseudo-barycenter map, the constant $C$ controls the distance between $\lambda(x)$ and $\Theta(x)$. Hence, to make $C$ small, we need to construct pseudo-barycenter maps $\Theta$ that are \textit{close to the last vertex map $\lambda$}. 
\begin{remark}\label{block:small_scale_Theta_t}
At sufficiently small scale, there is an evident way of achieving this systematically. Suppose that $C =S < 1$ and that $\delta$ is lesser than or equal to the convexity radius of $M$. Then we can modify a pseudo-barycenter map $\Theta$ as follows. Given $x \in \sd \Rips[\delta](M)_0$, consider the constant speed geodesic $\gamma_x \colon [0,1] \to M$ from $\lambda(x)$ to $\Theta(x)$, and define $\Theta_{t}(x) := \gamma_x(t)$ for $t \in [0,1]$. Then, by construction, we have \[
d(\lambda(x), \Theta_t(x)) \leq tC \delta.\] The difficulty here is to verify that \cref{property:XY} still holds for appropriately modified $S_t$. 
\end{remark}
\subsection{Convexity of the distance function}
Let us first focus on \cref{property:lambdaXThetaY}. In this case, we need to understand how the distance between $\lambda(x)$ and $\Theta_t(y)$ behaves as a function of $t$ and the distances $d(\lambda(x), \lambda(y))<\delta$ and $d(\lambda(x), \Theta(y))< S\delta$. 
In fact, we will prove the following statement.
\begin{lemma}\label{lem:dist_lambdaX_ThetaTY} Let $x,y_0,y_1 \in M$ with $d(x,y_0) < \delta \leq \catRad$ and $d(x,y_1) < S\delta$, and let $\gamma \colon [0,1] \to M$ be a constant speed geodesic from $y_0$ to $y_1$. Then, for every $t \in [0,1]$, it holds that
\[
d(x, \gamma(t)) < (1-t)\delta + t S \delta = ((1-t) + t\conShr)\delta.
\]
\end{lemma}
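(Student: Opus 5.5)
The plan is to use convexity of the distance function to a point in a $\catK$ space. The distance function $y \mapsto d(x,y)$ restricted to a geodesic $\gamma$ is convex, provided the relevant geodesic triangle lies in a $\catK$ region of diameter below $\frac{\varpi_\kappa}{2}$. The paper flags this as \cref{prop:distance_to_point_conv}; I will either prove it here or cite \cite[Ch. II, Prop. 2.2]{BridsonHaefliger1999} together with its $\kappa>0$ analogue.

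Granting convexity, the estimate is immediate:
\[
d(x,\gamma(t)) \leq (1-t)\,d(x,y_0) + t\,d(x,y_1) < (1-t)\delta + t S\delta.
\]
The inequality is strict for every $t\in[0,1]$, since both $d(x,y_0)<\delta$ and $d(x,y_1)<S\delta$ are strict and the weights $(1-t)$ and $t$ are nonnegative and sum to one.

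The main work is checking the hypotheses.

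First, localize. Since $S \leq 1$, both $y_0$ and $y_1$ lie in the closed ball $B=\closedball[r](x)$ for some $r<\delta \leq \catRad$. I can choose $r$ with $\max\{d(x,y_0),d(x,y_1)\} \leq r < \delta$. By the recollections after \cref{not:catRad}, this ball is a $\catK$ space of radius below $\frac{\varpi_\kappa}{2}$, hence convex. Therefore the geodesic between $y_0$ and $y_1$ is unique, lies in $B$, and coincides with $\gamma$. Any constant speed geodesic of length $d(y_0,y_1)<2r<\varpi_\kappa$ joining them must be this one, by uniqueness in the convex set.

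Second, the triangle $T(x,y_0,y_1)$ lies in $B$ and has perimeter below $4r<2\varpi_\kappa$. So the $\catK$ comparison applies, and it suffices to prove convexity of $t\mapsto d(\bar x,\bar\gamma(t))$ in the model space $M_\kappa$. The comparison map is $1$-Lipschitz and sends $\bar\gamma(t)\mapsto\gamma(t)$ and $\bar x\mapsto x$, so
\[
d(x,\gamma(t)) \leq d(\bar x,\bar\gamma(t)).
\]
It therefore suffices to bound the model distance by the linear interpolation.

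The main obstacle is this model-space convexity for $\kappa>0$. There the distance to a point is convex along geodesics only inside the open ball of radius $\frac{\varpi_\kappa}{2}$ around $\bar x$. The comparison triangle has $d(\bar x,\bar y_i)\leq r<\frac{\varpi_\kappa}{2}$, and in $M_\kappa$ that ball is convex, so $\bar\gamma$ stays inside it.

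To get convexity there, I would first try the standard fact that the Hessian of $d_{\bar x}$ on $M_\kappa$ is $\sqrt{\kappa}\cot(\sqrt{\kappa}\,d_{\bar x})$ times the metric orthogonal to the radial direction. This is nonnegative for $d_{\bar x}<\frac{\pi}{2\sqrt\kappa}=\frac{\varpi_\kappa}{2}$. For $\kappa\le 0$, convexity holds globally, since the cotangent is replaced by $1/d$ or $\sqrt{-\kappa}\coth$.

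Combining the comparison inequality with model-space convexity yields the claim.
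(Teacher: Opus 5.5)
Your proof is correct and follows the paper's route. The paper deduces the lemma immediately from convexity of $d_x$ on balls of radius below $\catRad$ (\cref{prop:distance_to_point_conv}), and proves that convexity in the appendix by reducing to the model space and checking it there. The differences are minor: you reduce via the single comparison triangle $T(x,y_0,y_1)$ rather than Reshetnyak's majorization theorem, and you cite the Hessian factor $\sqrt{\kappa}\cot(\sqrt{\kappa}\,d)$ (treating $\kappa\le 0$ directly) where the paper computes $\sin(\varphi)\varphi''=(\alpha^2-(\varphi')^2)\cos(\varphi)\geq 0$ on the unit sphere. You also spell out the localization to $\closedball[r](x)$ with $r<\delta$, which the paper leaves implicit.
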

This type of convexity statement is captured by the following notion of convexity in geodesic spaces, i.e., spaces in which every two points are connected by a geodesic.
\begin{recollection}
	Let $X$ be a geodesic space. A function $\varphi \colon X \to \mathbb{R}$ is called convex if for every constant speed geodesic $\gamma \colon [0,1] \to X$ the function $\varphi \circ \gamma \colon [0,1] \to \mathbb{R}$ satisfies the inequality
	\[
	\varphi(\gamma(t)) \leq (1-t)\varphi(\gamma(0)) + t \varphi(\gamma(1)),
	\]
	for every $t \in [0,1]$.
\end{recollection}
In fact, in a $\catK$ space of sufficiently small radius, the distance function to a fixed point is convex. 
\begin{proposition}\cite[Ch. 2 Exercise (1)]{BridsonHaefliger1999}\label{prop:distance_to_point_conv}
	Let $x \in M$ and let $r < \catRad$. Then, the restricted distance function 
\begin{align*}
d_x \colon \closedball[r](x) &\to \mathbb{R};\\
y &\mapsto d(x,y)
\end{align*}
is convex.
\end{proposition}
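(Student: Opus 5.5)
The plan is to reduce convexity of $d_x$ on $\closedball[r](x)$ to the corresponding fact in the model space $M_\kappa$, using the $\catK$ comparison inequality. Since $r < \catRad$, the ball $\closedball[r](x)$ is a $\catK$ space by \cref{rec:facts_about_balls_in_bounded_curvature}, and it is convex, because closed balls of radius less than $\frac{\varpi_\kappa}{2}$ in a $\catK$ space are convex. So every constant speed geodesic $\gamma \colon [0,1] \to M$ between two points $y_0,y_1 \in \closedball[r](x)$ is unique and stays inside the ball. It therefore suffices to show
\[
d(x,\gamma(t)) \leq (1-t)\,d(x,y_0) + t\,d(x,y_1)
\]
for all such geodesics and all $t \in [0,1]$.

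Consider the geodesic triangle $T(x,y_0,y_1)$, whose sides are $\gamma$ and the geodesics from $x$ to $y_0$ and to $y_1$. Its perimeter is at most $4r < 2\varpi_\kappa$, so a comparison triangle $T(\overline{x},\overline{y}_0,\overline{y}_1)$ in $M_\kappa$ exists. The point $\gamma(t)$ corresponds to the point $\overline{\gamma}(t)$ on the side $[\overline{y}_0,\overline{y}_1]$. The $\catK$ condition (1-Lipschitzness of the comparison map) gives $d(x,\gamma(t)) \leq d(\overline{x},\overline{\gamma}(t))$. The distances to the endpoints agree with those in the comparison triangle.

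The remaining step is convexity of the distance to a point along geodesics in $M_\kappa$, restricted to a ball of radius $< \frac{\varpi_\kappa}{2}$. For $\kappa \leq 0$ this is standard: in Euclidean space it is the triangle inequality for norms, and in hyperbolic space it follows from the Busemann/CAT(0) convexity of the metric. For $\kappa>0$ this is the main obstacle, since distance functions on the sphere are convex only within the open hemisphere around $\overline{x}$. I would handle it by a direct computation. Set $h(t) := d(\overline{x},\overline{\gamma}(t))$, so that $h(t) \le r < \frac{\pi}{2\sqrt\kappa}$. The spherical law of cosines, or the Jacobi-type equation $(\cos(\sqrt\kappa h))'' = -\kappa L^2\cos(\sqrt\kappa h)$ with $L$ the length of $\gamma$, yields $h'' = \sqrt{\kappa}\,L^2\cot(\sqrt\kappa h)\sin^2\theta \geq 0$ away from $h=0$, where $\theta$ is the angle between the geodesic and the radial direction. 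Positivity of $\cot$ on $(0,\frac{\pi}{2})$ is exactly where the bound $r<\frac{\varpi_\kappa}{2}$ enters.

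At $h=0$, $h$ is locally a linear absolute value, which is still convex. Hence $h$ is convex on $[0,1]$, and combining this with the comparison inequality gives the claimed inequality.

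Alternatively, one could cite the convexity of balls (sublevel sets of $d_x$) together with the comparison argument. However, quasiconvexity alone does not give convexity, so the model-space computation above is the step that needs care.
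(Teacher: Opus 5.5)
Your proposal is correct and takes essentially the paper's route: compare with the model space, then prove convexity there via $(\cos(\sqrt{\kappa}h))'' = -\kappa L^2\cos(\sqrt{\kappa}h)$, which forces $h''\geq 0$ because $|h'|\leq L$ and $\cot(\sqrt{\kappa}h)>0$ on the relevant hemisphere. The differences are minor: you apply the comparison-triangle inequality to the single triangle $T(x,y_0,y_1)$ rather than Reshetnyak's majorization theorem, you treat $\kappa\leq 0$ directly rather than by the paper's limit $\kappa'\to 0$, and you explicitly handle the kink at $h=0$, which the paper's ``$\varphi$ is smooth'' glosses over.
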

This was stated as \cite[Ch. 2 Exercise (1)]{BridsonHaefliger1999} in slightly different form. For the convenience of the reader, we provide a proof of this statement in \cref{appendix:elem_trig_proof}.
\cref{lem:dist_lambdaX_ThetaTY} now follows immediately from \cref{prop:distance_to_point_conv}.
\subsection{Distances of points on geodesics}
To verify the second condition in \cref{property:XY} for $\Theta_t$, we need to understand how the geodesic based modification of $\Theta$ affects the distances in \cref{property:ThetaXThetaY}. To this end, we first need to recall the following statement about the (local) convexity of the squared distance function in a $\cabK$ space. It was first proven in \cite{Ohta2007Convexities} in a slightly weaker form and then generalized in \cite[Lemma 5]{Yokota2016}. We state the statement for $\cabK$ spaces here, making use of \cref{rec:facts_about_balls_in_bounded_curvature}.
\begin{theorem}\cite{Yokota2016}\label{thm:Ohta2007Convexities}
Let $M$ be a $\cabK$ space and $r < \catRad$. Then, for every $x \in M$, there exists an $\varepsilon >0$, only depending on $r$ and $\kappa$, such that the squared distance function
\begin{align*}
d_x^2 \colon M &\to \mathbb{R};\\
y &\mapsto d(x,y)^2
\end{align*}
has the following property. Let $y_0,y_1 \in \closedball[r](x)$ and let $\gamma \colon [0,1] \to \closedball[r](x)$ be the unique constant speed geodesic from $y_0$ to $y_1$. Then it holds that
\[
d_x^2(\gamma(t)) \leq (1-t) d_x^2(y_0) + t d_x^2(y_1) - t(1-t)\varepsilon d(y_0, y_1)^2.
\]
In particular, $d_x^2$ is convex on $\closedball[r](x)$.
\end{theorem}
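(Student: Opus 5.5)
The plan is to reduce the statement to an explicit computation in the model plane $M_\kappa$ by triangle comparison. First I fix $r'$ with $r < r' < \catRad$. By \cref{rec:facts_about_balls_in_bounded_curvature}, the ball $\closedball[r'](x)$ is a $\catK$ space. Since $r' < \catRad \leq \frac{\varpi_\kappa}{2}$, this ball and the smaller ball $\closedball[r](x)$ are convex. Hence for $y_0,y_1 \in \closedball[r](x)$ the geodesic $\gamma$ is unique and lies in $\closedball[r](x)$.

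The triangle $T(x,y_0,y_1)$ has perimeter at most $4r < 2\varpi_\kappa$, so a comparison triangle $T(\overline{x},\overline{y}_0,\overline{y}_1)$ in $M_\kappa$ exists. The comparison map is $1$-Lipschitz, which gives $d(x,\gamma(t)) \leq d(\overline{x},\overline{\gamma}(t))$, where $\overline{\gamma}$ is the comparison geodesic. Meanwhile the right-hand side of the claimed inequality involves only $d(x,y_0)$, $d(x,y_1)$ and $d(y_0,y_1)$, and these are preserved by the comparison. It therefore suffices to prove the inequality in $M_\kappa$ for a point $\overline{x}$ and a geodesic $\overline{\gamma}$ contained in $\closedball[r](\overline{x})$, with a constant $\varepsilon$ depending only on $r$ and $\kappa$.

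In the model space I treat three cases.
\begin{itemize}
\item For $\kappa = 0$ the Euclidean identity
\[
|\gamma(t)-x|^2 = (1-t)|y_0-x|^2 + t|y_1-x|^2 - t(1-t)|y_0-y_1|^2
\]
gives $\varepsilon = 1$.
\item For $\kappa < 0$ the hyperbolic plane is $\catK[0]$. Comparing once more with the Euclidean plane gives the same inequality with $\varepsilon = 1$.
\item For $\kappa > 0$ I use a Hessian computation on the sphere. Write $\ell = d(y_0,y_1)$ and $h(t) = d_{\overline x}^2(\overline\gamma(t))$. Away from the cut locus, which is excluded since $r < \frac{\varpi_\kappa}{2}$, the Hessian of $\tfrac12 d_{\overline x}^2$ has eigenvalue $1$ in the radial direction and $\sqrt{\kappa}\, s \cot(\sqrt{\kappa}\, s)$ in the orthogonal direction, where $s = d(\overline x,\cdot) \leq r$. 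Both are bounded below by $c := \sqrt{\kappa}\, r\cot(\sqrt{\kappa}\, r) > 0$, because $s \mapsto \sqrt{\kappa}s\cot(\sqrt{\kappa}s)$ is decreasing on $[0,\frac{\pi}{2\sqrt\kappa})$. Hence $h'' \geq 2c\ell^2$ on $[0,1]$. The point $\overline x$ itself causes no trouble, since $d^2_{\overline x}$ is smooth there.
\end{itemize}

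A function with $h'' \geq 2c\ell^2$ satisfies $h(t) \leq (1-t)h(0) + t h(1) - c\,t(1-t)\ell^2$. To see this, apply the maximum principle to $g(t) = h(t) + c\ell^2 t^2$, which is convex, and use $g(t)\leq (1-t)g(0)+tg(1)$. This gives $\varepsilon = c$ for $\kappa > 0$, and $\varepsilon = 1$ otherwise. The final convexity claim follows by dropping the nonpositive term $-t(1-t)\varepsilon\, d(y_0,y_1)^2$.

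I expect the main obstacle to be the spherical Hessian bound, together with making sure it applies along the whole comparison geodesic. If the differential-geometric route seems too heavy, I would instead derive the inequality directly from the spherical law of cosines: parametrize $\overline\gamma$ and expand $\cos(\sqrt\kappa\, d(\overline x,\overline\gamma(t)))$ as a combination $\frac{\sin((1-t)\sqrt\kappa\ell)}{\sin(\sqrt\kappa\ell)}\cos(\sqrt{\kappa}a) + \frac{\sin(t\sqrt\kappa\ell)}{\sin(\sqrt\kappa\ell)}\cos(\sqrt{\kappa}b)$, where $a = d(\overline x,\overline y_0)$ and $b = d(\overline x,\overline y_1)$. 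I would then compare this with the quadratic bound using the monotonicity of the relevant trigonometric quotients on $[0,\sqrt\kappa\, r]$.
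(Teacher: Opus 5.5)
Your argument is correct apart from one sign slip, and it takes a genuinely different route from the paper. The paper does not prove this statement. It imports it from Ohta and from Yokota (Lemma~5 there, stated for $\catK$ spaces), and transfers it to $\cabK$ spaces via the remark that balls of radius below $\catRad$ are $\catK$.

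You instead do the localization explicitly and then verify the model-space inequality by hand:
\begin{itemize}
\item The ball $\closedball[r](x)$ is $\catK$, the comparison map is $1$-Lipschitz, and the right-hand side depends only on the three side lengths, which comparison preserves.
\item For $\kappa\le 0$ you use the Euclidean identity, with one extra comparison $\mathbb{H}\to\mathbb{R}^2$ when $\kappa<0$.
\item For $\kappa>0$ you use the Hessian of $\tfrac12 d_{\overline{x}}^2$ on the sphere. Its smallest eigenvalue on $\closedball[r](\overline{x})$ is $\sqrt{\kappa}\,r\cot(\sqrt{\kappa}\,r)>0$, since $r<\frac{\varpi_\kappa}{2}$.
\end{itemize}
What your route buys is an explicit constant: $\varepsilon=1$ for $\kappa\le 0$ and $\varepsilon=\sqrt{\kappa}\,r\cot(\sqrt{\kappa}\,r)$ for $\kappa>0$. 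This makes the dependence on $r$ and $\kappa$ alone transparent. The citation is shorter but leaves the constant implicit. Note that the paper only ever uses the ``in particular'' convexity statement (in \cref{lem:geodes_dist}). For that, positive semidefiniteness of the model Hessian would already suffice.

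The slip is in the last step. Your auxiliary function $g(t)=h(t)+c\ell^2t^2$ is indeed convex. But its chord inequality $g(t)\le(1-t)g(0)+tg(1)$ rearranges to
\[
h(t)\le(1-t)h(0)+th(1)+c\,t(1-t)\ell^2 ,
\]
which has the correction term with the wrong sign. You need $g(t)=h(t)-c\ell^2t^2$, or equivalently $h(t)+c\ell^2t(1-t)$, since the two differ by a linear function. This $g$ is convex precisely because $h''\ge 2c\ell^2$, and its chord inequality gives exactly the claimed bound with $\varepsilon=c$.
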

We will employ this result to obtain the following lemma.
\begin{lemma}\label{lem:geodes_dist}
Let $M$ be a $\cabK$ space. Let $x_0,x_1,y_0,y_1 \in M$ be points of pairwise distance strictly smaller than $\catRad$, and let $\gamma, \eta \colon [0,1] \to M$ be constant speed geodesics from $x_0$ to $x_1$ and from $y_0$ to $y_1$, respectively. Then, for every $t \in [0,1]$, it holds that
\[
d(\gamma(t),\eta(t))^2 \leq ((1-t)d(x_0,y_0))^2 + (td(x_1,y_1))^2 + t(1-t)((d(x_0,y_1))^2 + (d(x_1,y_0))^2). 
\]
\end{lemma}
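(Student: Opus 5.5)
The inequality is a two-step application of \cref{thm:Ohta2007Convexities}, discarding the non-negative correction term $t(1-t)\varepsilon\, d(y_0,y_1)^2$, which only weakens the bound. Write $a = d(x_0,y_0)$, $b = d(x_1,y_1)$, $c = d(x_0,y_1)$ and $e = d(x_1,y_0)$, and fix $t \in [0,1]$. Set $p := \gamma(t)$. The idea is to move along $\eta$ with base point $p$, and then along $\gamma$ with base points $y_0$ and $y_1$.

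\emph{Step 1 (convexity along $\eta$, base point $p$).} Applying \cref{thm:Ohta2007Convexities} with base point $p$ to the geodesic $\eta$ gives
\[
d(p,\eta(t))^2 \leq (1-t)\, d(p,y_0)^2 + t\, d(p,y_1)^2 .
\]

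\emph{Step 2 (convexity along $\gamma$, base points $y_0$ and $y_1$).} Applying the same theorem with base point $y_0$ to $\gamma$ gives $d(p,y_0)^2 \leq (1-t)a^2 + t e^2$. With base point $y_1$ it gives $d(p,y_1)^2 \leq (1-t)c^2 + t b^2$.

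\emph{Step 3 (combine).} Substituting Step 2 into Step 1 yields
\[
(1-t)^2 a^2 + t(1-t)e^2 + t(1-t)c^2 + t^2 b^2 ,
\]
which is exactly the claimed bound.

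\emph{The main obstacle} is checking the hypotheses of \cref{thm:Ohta2007Convexities} in each application. The geodesic in question must lie in a closed ball $\closedball[r](\cdot)$ around the base point with $r < \catRad$, and it must be the unique geodesic there.

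I would choose $r$ as the maximum of the six pairwise distances among $x_0,x_1,y_0,y_1$. This is a finite maximum, so $r < \catRad$.

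For Step 2 with base point $y_0$: both endpoints $x_0,x_1$ lie in $\closedball[r](y_0)$. This ball is a $\catK$ space (\cref{rec:facts_about_balls_in_bounded_curvature}) and is convex, since $r < \frac{\varpi_\kappa}{2}$. To see that the given $\gamma$ coincides with the geodesic in that ball, note that $\gamma$ has length $d(x_0,x_1) \leq r$, so it stays inside $\closedball[r](x_0)$. That ball is again a $\catK$ space, so geodesics between $x_0$ and $x_1$ are unique there. Both $\gamma$ and the geodesic from the convex ball $\closedball[r](y_0)$ lie in $\closedball[r](x_0)$, since the latter stays within length $r$ of $x_0$, so they agree. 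The same argument handles base point $y_1$, and handles $\eta$ in Step 1.

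For Step 1 one additionally needs $d(p,y_i) < \catRad$. This follows from \cref{prop:distance_to_point_conv} applied on $\closedball[r](y_i)$: it gives $d(\gamma(t),y_0) \leq \max\{a,e\} \leq r$, and similarly $d(\gamma(t),y_1) \leq \max\{c,b\} \leq r$. Hence $\eta$ lies in $\closedball[r](p)$ and Step 1 applies with the same $r$.
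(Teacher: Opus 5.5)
Your proof is correct and follows the paper's argument essentially verbatim. You apply the convexity of $d_{y_0}^2$ and $d_{y_1}^2$ along $\gamma$, then the convexity of $d_{\gamma(t)}^2$ along $\eta$ (the paper's ``set $s=t$''), and substitute; your check that $\gamma$ and $\eta$ are the unique geodesics inside the relevant $\catK$ balls is somewhat more careful than the paper, which simply cites convexity of balls of radius $r<\catRad$.
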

\begin{proof}[Proof of \cref{lem:geodes_dist}]
	For $x \in M$, consider the function
	\begin{align*}
	 d_{x}^2 \colon M &\to \mathbb{R}\\
	 y &\mapsto d(x, y)^2.
	\end{align*}
    Fix any $r < \catRad$ greater than the pairwise distances of the points $x_0,x_1,y_0,y_1$. Now let $x \in \{y_0,y_1\}$. As $x_0,x_1 \in \closedball[r](x)$, it follows from \cref{thm:Ohta2007Convexities} that
    \begin{align}\label{equ:squared_formula_1}
        d(\gamma(t),y_0)^2 &\leq (1-t)d(x_0,y_0)^2 + t d(x_1,y_0)^2,\\\label{equ:squared_formula_2}
	   d(\gamma(t),y_1)^2 &\leq (1-t)d(x_0,y_1)^2 + t d(x_1,y_1)^2,
    \end{align}
    for all $t \in [0,1]$.
    Next, let $x = \gamma(s)$, for some fixed $s \in [0,1]$. Note that $d(\gamma(s),y_0) \leq r$ and $d(\gamma(s),y_1) \leq r$, using the convexity of balls of radius $r <\catRad$. We may thus apply \cref{thm:Ohta2007Convexities} again to obtain
	\begin{equation*}
	    d(\gamma(s),\eta(t))^2 \leq (1-t)d(\gamma(s),y_0)^2 + t d(\gamma(s),y_1)^2.
	\end{equation*}
    Now set $s = t$, to obtain 
    \begin{equation}\label{equ:squared_formula_3}
	    d(\gamma(t),\eta(t))^2 \leq (1-t)d(\gamma(t),y_0)^2 + t d(\gamma(t),y_1)^2,
	\end{equation}
    for any $t \in [0,1]$. 
	Plugging \cref{equ:squared_formula_1,equ:squared_formula_2} into \cref{equ:squared_formula_3}, we obtain
\begin{align*}
	d(\gamma(t),\eta(t))^2 &\leq (1-t)((1-t)d(x_0,y_0)^2 + t d(x_1,y_0)^2) + t((1-t)d(x_0,y_1)^2 + t d(x_1,y_1)^2) \\
	& = ((1-t)d(x_0,y_0))^2 + (td(x_1,y_1))^2 + t(1-t)(d(x_0,y_1)^2 + d(x_1,y_0)^2)
\end{align*}
as claimed.
\end{proof}
\begin{corollary}\label{cor:pseudo_convexity_of_dist}
Let $\delta \leq \catRad$. Let $x_0,x_1,y_0,y_1 \in M$ with $d(x_0,x_1), d(y_0,y_1) < \delta$ and let $\gamma, \eta \colon [0,1] \to M$ be constant speed geodesics from $x_0$ to $x_1$ and from $y_0$ to $y_1$, respectively. Now, suppose that $ S \leq 1$ and that
\begin{align*}
	d(x_0,y_0) &< \delta \textnormal{ and } d(x_1,y_1), d(x_0,y_1), d(x_1,y_0) < S \delta.
\end{align*}
Then, for every $t \in [0,1]$, it holds that
\[
d(\gamma(t),\eta(t)) < (1-t)\delta + t S \delta = ((1-t)1 + t S)\delta.
\]
\end{corollary}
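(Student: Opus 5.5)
We will apply \cref{lem:geodes_dist} directly and then bound its right-hand side. The pairwise distances among $x_0,x_1,y_0,y_1$ are all strictly smaller than $\delta \leq \catRad$. Indeed, $d(x_0,x_1),d(y_0,y_1)<\delta$ by hypothesis, and the remaining four distances are bounded by $\delta$ or $S\delta\leq \delta$. So the hypothesis of \cref{lem:geodes_dist} holds, and for every $t\in[0,1]$
\[
d(\gamma(t),\eta(t))^2 \leq ((1-t)d(x_0,y_0))^2 + (t\,d(x_1,y_1))^2 + t(1-t)\bigl(d(x_0,y_1)^2 + d(x_1,y_0)^2\bigr).
\]

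Next we insert the assumed strict bounds. For $t\in(0,1)$ each of the four terms is multiplied by a positive coefficient, so we obtain the strict inequality
\[
d(\gamma(t),\eta(t))^2 < (1-t)^2\delta^2 + t^2S^2\delta^2 + 2t(1-t)S^2\delta^2 .
\]
We then compare this with the target $((1-t)+tS)^2\delta^2 = (1-t)^2\delta^2 + t^2S^2\delta^2 + 2t(1-t)S\delta^2$. The difference between the two expressions is $2t(1-t)(S^2-S)\delta^2$. This is $\leq 0$ because $0\leq S\leq 1$, where $S\geq 0$ is implicit since $d(x_1,y_1)<S\delta$ forces $S>0$. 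Hence the bound we derived is at most the square of the target, and taking square roots gives the claim for $t\in(0,1)$.

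It remains to handle the endpoints, where one coefficient vanishes. At $t=0$ the claim reads $d(x_0,y_0)<\delta$, and at $t=1$ it reads $d(x_1,y_1)<S\delta$. Both are hypotheses, so we can read them off directly without using \cref{lem:geodes_dist}.

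There is no real obstacle. The only point requiring care is that the cross term from \cref{lem:geodes_dist} carries $S^2$ rather than $S$; this is harmless precisely because $S\leq1$. Strictness of the inequality must also be tracked at the endpoints, which is why we treat them separately.
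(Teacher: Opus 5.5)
Your proof is correct and is the paper's argument: apply \cref{lem:geodes_dist}, insert the assumed bounds, and use $S\le 1$ to replace the cross term $2t(1-t)S^2\delta^2$ by $2t(1-t)S\delta^2$ so that the right-hand side becomes the square $((1-t)+tS)^2\delta^2$. Your separate treatment of the endpoints is harmless but unnecessary. For every $t\in[0,1]$ at least one of $(1-t)^2$ and $t^2$ is positive, so the strict inequality already holds uniformly on $[0,1]$.
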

\begin{proof}
	 Applying \cref{lem:geodes_dist} to the points $x_0,x_1,y_0,y_1$ and the geodesics $\gamma$ and $\eta$, we obtain the inequality 
	\[
	d(\gamma(t),\eta(t))^2 < (1-t)^2\delta^2 + t^2(S\delta)^2 + t(1-t)2(S\delta)^2.
	\]
	Now, using that $S \leq 1$, we obtain 
	\begin{align*}
		d(\gamma(t),\eta(t))^2 &< (1-t)^2\delta^2 + t^2(S\delta)^2 + t(1-t)2(S\delta)^2 \\
		&\leq (1-t)^2(\delta)^2 + t^2(S\delta)^2 + t(1-t)2\delta(S\delta) \\
		& = ((1-t)\delta + t S \delta)^2,
	\end{align*} 
	from which the claim follows.
\end{proof}
\subsection{An alternative subdivision functor}
The evident next step would be to use \cref{cor:pseudo_convexity_of_dist} as indicated in \cref{block:small_scale_Theta_t} to verify \cref{property:ThetaXThetaY} for $\Theta_t$. 
Note, however, that this cannot succeed with the ordinary barycentric subdivision functor $\sd_b$. 
\begin{block}\label{block:failure_sd_b}
Indeed, suppose we use the pseudo-barycenter $\Theta$ as constructed via circumcenters in \cref{ex:existence_of_pseudo_bar_classical} and fix $\rho <\catRad$. We are given vertices $a,b \in \sd_b \Rips[\delta](M)_0$ that are connected by a $1$-simplex from $a$ to $b$. Then, by definition of $\sd_b$, we can write $a = (a_0, \dots, a_n)$ and $b = (b_0, \dots, b_m)$ with $\{a_0, \dots, a_n\} \subset \{b_0, \dots, b_m\}$ and $d(b_i,b_j) < \delta$ for all $i,j$. Write $A = \{a_0, \dots, a_n\}$ and $B = \{b_0, \dots, b_m\}$. Then, by definition of $\Theta$, we have $\Theta(a) = \circCent(A)$ and $\Theta(b) = \circCent(B)$ (the respective circumcenters), as well as $\lambda_b(a) = a_n$ and $\lambda_b(b) = b_m$. Now, set $x_0 = a_n$, $x_1 = \circCent(A)$ and $y_0 = b_m$, $y_1 = \circCent(B)$. 
By assumption, as well as \cref{lem:distance_bounds_circum}, we have bounds 
\begin{align*}
	d(x_0,y_0) &< \delta,\\
	d(x_1,y_1) &< \conShrM \delta,\\
	d(x_0,y_1) &< \conShrM \delta,\\
	d(x_1,y_0) &< \delta.
\end{align*}
Observe that these conditions are not sufficient to apply \cref{cor:pseudo_convexity_of_dist} to the geodesics from $x_0$ to $x_1$ and from $y_0$ to $y_1$. 
\end{block}
\begin{block}\label{block:missing_cond}
To apply \cref{cor:pseudo_convexity_of_dist}, we would need the additional condition that $d(x_1,y_0) < \conShrM \delta$. Note that, by \cref{lem:distance_bounds_circum}, it would suffice that $b_m \in A$, to achieve this condition. 
\end{block}
We now describe an alternative subdivision functor that ensures exactly this additional assumption. Roughly speaking, given a simplicial set $X$, this alternative subdivision is obtained by replacing every vertex $\sigma$ in the ordinary barycentric subdivision by a simplex of the dimension of $\sigma$ considered as a simplex of $X$ (see \cref{fig:alt_subdiv_detailed} below).
\begin{construction}[Alternative subdivision]
We first describe the alternative subdivision of simplices. To this end, recall first that the ordinary barycentric subdivision of a standard simplex $\Delta^n$ is the nerve of the set of non-empty subsets of $[n]$. Explicitly, this means that the $k$-simplices $\sigma \in (\sd_b \Delta^n)_k$ are precisely the sequences $\sigma = (\sigma_0, \dots, \sigma_k)$ with $\emptyset \neq \sigma_0 \subset \dots \subset \sigma_k \subset [n]$.
We define $ \sd_r \Delta^n$ as a subsimplicial set $ \sd_r \Delta^n \subset (\sd_b \Delta^n) \times \Delta^n$. Namely, a simplex $((\sigma_0, \dots, \sigma_k),\tau) \in ((\sd_b \Delta^n) \times \Delta^n)_k$ belongs to $(\sd_r \Delta^n)_k$ if $\tau\subset \sigma_0$, where we informally treat $\tau \colon [k] \to [n]$ as its image. That this definition does indeed define a subsimplicial set, i.e., is compatible with functoriality in $\Delta$, is a direct elementary verification. 
It is not hard to see that, given a morphism $[n] \to [n']$ in $\Delta$, the induced map $(\sd_b \Delta^n) \times \Delta^n \to (\sd_b \Delta^{n'}) \times \Delta^{n'}$ is such that it restricts to a simplicial map $\sd_r \Delta^n \to \sd_r \Delta^{n'}$. In this manner, we have defined a functor $\sd_r \colon \Delta \to \sSet$, mapping $[n] \mapsto \sd_r \Delta^n$. 
A natural transformation $\lambda_r \colon \sd_r(\Delta^n) \to \Delta^n$ is given by the composition
\[
\sd_r(\Delta^n) \hookrightarrow (\sd_b \Delta^n) \times \Delta^n \xrightarrow{\pi_{\Delta^n}} \Delta^n.
\]
The functor and transformation on $\Delta$ are then extended to a functor $\sd_r \colon \sSet \to \sSet$, together with a natural transformation $\lambda_r \colon \sd_r \Rightarrow 1_{\sSet}$ via left Kan extension. I.e., explicitly, we have $\sd_r (X) = \varinjlim_{\Delta^n \to X} \sd_r \Delta^n$, where the colimit is taken over the category of arrows $\Delta_{/X}$ from simplices into $X$.
\end{construction}
\begin{figure}
\begin{center}
\usetikzlibrary{calc}

\begin{tikzpicture}[
  scale=1.0,
  line join=round,
  line cap=round,
  every node/.style={font=\scriptsize}
]
  \coordinate (v0) at (0,0);
  \coordinate (v1) at (6,0);
  \coordinate (v2) at (3,5.196152423); 

  \coordinate (e01_0) at ($(v0)!1/3!(v1)$);
  \coordinate (e01_1) at ($(v0)!2/3!(v1)$);

  \coordinate (e02_0) at ($(v0)!1/3!(v2)$);
  \coordinate (e02_2) at ($(v0)!2/3!(v2)$);

  \coordinate (e12_1) at ($(v1)!1/3!(v2)$);
  \coordinate (e12_2) at ($(v1)!2/3!(v2)$);

  \coordinate (g)  at (3,1.732050808); 
  \coordinate (f0) at ($(g)!0.48!(v0)$);
  \coordinate (f1) at ($(g)!0.48!(v1)$);
  \coordinate (f2) at ($(g)!0.48!(v2)$);

  \coordinate (lf0) at ($(f0)!0.2!(g)$);
  \coordinate (lf1) at ($(f1)!0.2!(g)$);
  \coordinate (lf2) at ($(f2)!0.28!(g)$);

  \foreach \a/\b/\c in {
    v0/e01_0/f0,
    v0/e02_0/f0,
    v1/e01_1/f1,
    v1/e12_1/f1,
    v2/e02_2/f2,
    v2/e12_2/f2,
    e01_0/e01_1/f1,
    e01_0/f0/f1,
    e02_0/e02_2/f2,
    e02_0/f0/f2,
    e12_1/e12_2/f2,
    e12_1/f1/f2,
    f0/f1/f2}{
    \draw[thick] (\a)--(\b)--(\c)--cycle;
  }

  \draw[very thick] (v0)--(v1)--(v2)--cycle;

  \foreach \p in {v0,v1,v2,e01_0,e01_1,e02_0,e02_2,e12_1,e12_2,f0,f1,f2}
    \filldraw[fill=white,draw=black,line width=0.6pt] (\p) circle (2.1pt);

  \node[below left]  at (v0) {$({\{0\}},0)$};
  \node[below right] at (v1) {$({\{1\}},1)$};
  \node[above]       at (v2) {$({\{2\}},2)$};

  \node[below] at (e01_0) {$({\{0,1\}},0)$};
  \node[below] at (e01_1) {$({\{0,1\}},1)$};
  \node[left]  at (e02_0) {$({\{0,2\}},0)$};
  \node[left]  at (e02_2) {$({\{0,2\}},2)$};
  \node[right] at (e12_1) {$({\{1,2\}},1)$};
  \node[right] at (e12_2) {$({\{1,2\}},2)$};

  \node[above right] at (lf0) {$({\sigma},0)$};
  \node[above left]  at (lf1) {$({\sigma},1)$};
  \node[below]       at (lf2) {$({\sigma},2)$};
\end{tikzpicture}
\end{center}
\caption{An illustration of $\sd_r \Delta^2$, where $\sigma = \{0,1,2\}$.}
\label{fig:alt_subdiv_detailed}
\end{figure}
\begin{block}\label{rem:description_of_alt_simplices}
    For our purposes here, an explicit description of the simplicial sets $\sd_r X$ defined via left Kan extension will not be important. Instead, it suffices to make the following observations. By construction, $\sd_r X$ comes with a natural transformation \[\pi_b \colon \sd_r X \to \sd_b X,\] obtained on simplices via the composition \[\sd_r \Delta^n \subset \sd_b \Delta^n \times \Delta^n \xrightarrow{\pi_{\sd_b \Delta^n}} \sd_b \Delta^n.\] Together with the map $\lambda_r$, we obtain two maps on the level of vertices 
    \[
    (\pi_b)_0 \colon (\sd_r X)_0 \to (\sd_b X)_0 \textnormal{ and } (\lambda_r)_0 \colon (\sd_r X)_0 \to X_0.
    \]
    Together, these maps associate to a $0$-simplex $a \in (\sd_r X)_0$ a pair $((\pi_b)_0(a), (\lambda_r)_0(a))$, which we denote by $(\sigma_a, x_a)$.
    Recall that each vertex in $\sd_b(X)$ is equivalently a non-degenerate simplex of $X$, and we will treat $\sigma_a$ as such.
    By construction of $\sd_r X$, any such pair $(\sigma_a,x_a)$ has the property that $x_a$ is a vertex of $\sigma_a$. Furthermore, whenever there is a $1$-simplex from a vertex $a$ to a vertex $b$ in $\sd_r X$, then the associated pairs $(\sigma_a, x_a)$ and $(\sigma_b, x_b)$ are such that $x_b$ is also a vertex of $\sigma_a$, and such that $\sigma_a$ is a face of $\sigma_b$. 
\end{block}
\begin{block}
The requirement that $x_b$ is a vertex of $\sigma_a$ is the key additional compatibility condition that we were lacking in \cref{block:missing_cond} for the ordinary barycentric subdivision. 
\end{block}
To finish this section, let us verify that the functor $\sd_r$ together with the natural transformation $\lambda_r$ is indeed a subdivision functor in the sense of \cref{def:subdivision_functor}, i.e., that $\sd_r$ preserves colimits and monomorphisms, and that $\lambda_r$ is a weak equivalence. 
\begin{theorem}\label{thm:properties_alt_subdiv}
	The functor $\sd_r \colon \sSet \to \sSet$ preserves colimits and monomorphisms. Furthermore, the natural transformation $\lambda_r \colon \sd_r \Rightarrow 1_{\sSet}$ is a weak equivalence.
\end{theorem}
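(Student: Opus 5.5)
Colimit preservation holds by construction: $\sd_r$ is defined as the left Kan extension of $\sd_r \colon \Delta \to \sSet$ along the Yoneda embedding. Such a functor is the left adjoint of the ``singular'' functor $Y \mapsto \sSet(\sd_r \Delta^{\bullet}, Y)$, so it preserves all colimits. For $\lambda_r$ on a general $X$, I take the colimit over $\Delta_{/X}$ of the maps $\lambda_r \colon \sd_r \Delta^n \to \Delta^n$. This is well-defined because $\lambda_r$ is natural on $\Delta$: it is the restriction of the projection $\sd_b\Delta^n \times \Delta^n \to \Delta^n$, which is natural in $[n]$.

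For monomorphisms I follow the standard skeletal argument used for $\sd_b$. Every monomorphism $A \hookrightarrow X$ is a transfinite composite of pushouts of boundary inclusions $\partial\Delta^n \hookrightarrow \Delta^n$. Since $\sd_r$ preserves colimits, it suffices to check two things.
\begin{enumerate}
\item $\sd_r(\partial \Delta^n) \to \sd_r\Delta^n$ is a monomorphism.
\item Monomorphisms in $\sSet$ are stable under pushout and transfinite composition.
\end{enumerate}
The second point is standard. For the first, note that $\partial\Delta^n$ is the coequalizer of its faces, and $\sd_r$ of it is the corresponding colimit of the $\sd_r\Delta^{n-1} \subset \sd_b\Delta^{n-1}\times\Delta^{n-1}$. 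I will identify this colimit with the subsimplicial set of $\sd_r\Delta^n$ consisting of simplices $((\sigma_0,\dots,\sigma_k),\tau)$ with $\sigma_k \neq [n]$. This works because both $\sd_b$ and $\Delta^{\bullet}$ preserve the relevant intersections of faces. The key observation is that a simplex of $\sd_r \Delta^n$ lies in the face indexed by $F \subset [n]$ exactly when $\sigma_k \subset F$, since then $\tau \subset \sigma_0 \subset F$ as well. Hence the map is an inclusion.

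For the weak equivalence I argue by a cube/skeletal induction. Both $\sd_r$ and the identity preserve colimits and monomorphisms. The pushouts along boundary inclusions are therefore homotopy pushouts, and filtered colimits along monomorphisms are homotopy colimits. By the usual induction on skeleta and transfinite composition (gluing lemma), it suffices to show that $\lambda_r \colon \sd_r\Delta^n \to \Delta^n$ is a weak equivalence for every $n$, i.e.\ that $\sd_r\Delta^n$ is weakly contractible.

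This is the step I expect to be the main obstacle. My plan is to identify $\sd_r\Delta^n$ with the nerve of a poset and apply Quillen's Theorem A, or to exhibit a contraction directly.
\begin{itemize}
\item The vertices are pairs $(\sigma,i)$ with $i \in \sigma \subset [n]$.
\item A $k$-simplex is a chain $\sigma_0 \subset \dots \subset \sigma_k$ together with a monotone $\tau \colon [k] \to \sigma_0$.
\end{itemize}
These are exactly the $k$-simplices of the nerve of the poset $P$ of pairs $(\sigma,i)$, ordered by $(\sigma,i) \le (\sigma',i')$ iff $\sigma \subset \sigma'$, $i \le i'$ and $i' \in \sigma$. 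I will check transitivity and that the nerve matches the subset condition. The matching holds because a chain in $P$ is determined by its $\sigma$'s and monotone $i$'s with every $i_j \in \sigma_0$.

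To prove contractibility, I consider the poset map $p \colon P \to \mathcal{P}([n])\setminus\{\emptyset\}$, $(\sigma,i)\mapsto\sigma$. Its target has the maximum $[n]$, so it is contractible. I then show that each fiber $p/\sigma$ (or $\sigma/p$) is contractible.
\begin{itemize}
\item In the first attempt I use the upper fiber $\{(\sigma',i) : \sigma \subset \sigma'\}$. Restricted to elements with $i \in \sigma$, it has a maximal element $(\sigma',\max\sigma)$ only when $\sigma'$ is fixed.
\item If this does not close, I fall back on a zig-zag of monotone maps $(\sigma,i) \le (\sigma,\max\sigma) \ge (\{\max\sigma\}\cup\dots)$. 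I will look for a chain of poset homotopies connecting the identity of $P$ to a constant map, for instance through $(\sigma,i)\mapsto(\sigma,\max\sigma)$ followed by a comparison with $(\sigma \cup\{n\}, \dots)$. Here I will take care that each intermediate map is order-preserving, in particular with respect to the condition $i' \in \sigma$.
\end{itemize}

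Once contractibility of $\sd_r\Delta^n$ is established, $\lambda_r$ is a weak equivalence on simplices, and the induction gives the claim for all $X$.
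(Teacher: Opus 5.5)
You prove colimit preservation through the adjunction. You prove monomorphism preservation by showing the face images satisfy $U_F\cap U_{F'}=U_{F\cap F'}$. You then use the skeletal/gluing-lemma reduction to the maps $\lambda_r\colon\sd_r\Delta^n\to\Delta^n$. All three steps are sound and match the paper; the paper packages the last reduction in its comparison lemma via Reedy cofibrancy. \textbf{The gap is the step you flag yourself: you never prove that $\sd_r\Delta^n$ is weakly contractible, and the framework you propose for it does not work.} Your relation on $P$ is not transitive. For example, $(\{0\},0)\le(\{0,1\},0)$ and $(\{0,1\},0)\le(\{0,1\},1)$, but $(\{0\},0)\not\le(\{0,1\},1)$ because $1\notin\{0\}$. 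Concretely, $\sd_r\Delta^1$ is a path of three edges $(\{0\},0)\to(\{0,1\},0)\to(\{0,1\},1)\leftarrow(\{1\},1)$ with no $2$-simplex. So $\sd_r\Delta^n$ is not the nerve of any poset, and neither Quillen's Theorem A nor poset homotopies on $P$ apply. Your fallback map $(\sigma,i)\mapsto(\sigma,\max\sigma)$ is not even simplicial on $\sd_r\Delta^n$. It sends the edge $((\{0\},\{0,1\}),(0,0))$ to $((\{0\},\{0,1\}),(0,1))$, which would require $1\in\sigma_0=\{0\}$.

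The missing idea is to work inside the genuine poset $\sd([n])\times[n]$, whose nerve contains $\sd_r\Delta^n$. There you enlarge $\sigma$ to $[n]$ while keeping the vertex label fixed, rather than moving the label inside a fixed $\sigma$. Define the order-preserving map $H\colon(\sd([n])\times[n])\times[1]\to\sd([n])\times[n]$ by $((\sigma,i),0)\mapsto(\sigma,i)$ and $((\sigma,i),1)\mapsto([n],i)$. It is a homotopy from the identity to $\iota\circ\lambda_r$, where $\iota(i)=([n],i)$; note that $\iota$ lands in $\sd_r\Delta^n$. On a simplex $((\sigma_0,\dots,\sigma_m),\tau)$ of $\sd_r\Delta^n$ paired with a $1$-simplex pattern $(0,\dots,0,1,\dots,1)$, the nerve $N(H)$ replaces a terminal segment of the chain by $[n]$ and leaves $\tau$ untouched. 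Hence $\sigma_0$ is either unchanged or becomes $[n]$, and the defining condition $\tau\subset\sigma_0$ persists. So $N(H)$ restricts to $\sd_r\Delta^n\times\Delta^1\to\sd_r\Delta^n$. This exhibits $\lambda_r$ as a simplicial strong deformation retraction with section $\iota$, which is the weak equivalence you needed.
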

Proving this result will take up the remainder of this subsection. To do so, we will need the following well-known fact in simplicial homotopy theory. 
\begin{notation}\label{notation:partial_delta}
    Denote by $\partial \Delta^n$ the boundary of $\Delta^n$, given by the subsimplicial set of $\Delta^n$, whose $k$-simplices are the non-surjective maps $[k] \to [n]$, and denote by $i_n \colon \partial \Delta^n \to \Delta^n$ the canonical inclusion. Given a subset $S \subset [n]$, we will write $\Delta^{S}$ for the subsimplicial set of $\Delta^n $ given by the nerve of $S$. Then $\partial \Delta^n$ can alternatively be described as the colimit $\varinjlim_{S \subset [n], S \neq [n]} \Delta^S$, over the category of all proper subsets of $[n]$.
\end{notation}
\begin{lemma}\label{lem:comparison_lemma}
    Let $F \colon \Delta \to \sSet$ be a functor, and let $LF \colon \sSet \to \sSet$ be its left Kan extension. Then $LF$ preserves all colimits. Furthermore, $LF$ preserves monomorphisms of simplicial sets if and only if, for every $n \geq 0$, the induced morphism 
    \[
    LF(i_n) \colon LF( \partial \Delta^n) \to LF(\Delta^n)
    \]
    is a monomorphism. \\
    Suppose we are given another such functor $G \colon \Delta \to \sSet$, together with a natural transformation $\eta \colon F \Rightarrow G$. Suppose, furthermore, that the left Kan extensions $LG, LF \colon \sSet \to \sSet$ both preserve monomorphisms. 
    Then the induced natural transformation $L\eta \colon LF \Rightarrow LG$ is a weak homotopy equivalence at each $X \in \sSet$ if and only if $\eta \colon F(\Delta^n) \to G(\Delta^n)$ is a weak homotopy equivalence for each $n \geq 0$. 
\end{lemma}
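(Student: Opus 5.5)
The plan is to use the standard skeletal-induction techniques for left Kan extensions along the Yoneda embedding $\Delta \hookrightarrow \sSet$.

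\emph{Colimits.} $LF$ is a left adjoint. Its right adjoint sends $Y$ to the simplicial set $[n] \mapsto \sSet(F(\Delta^n), Y)$; this is the usual nerve--realization adjunction, which follows from the formula $LF(X) = \varinjlim_{\Delta^n \to X} F(\Delta^n)$ together with the Yoneda lemma. Hence $LF$ preserves all colimits. Since $\Delta \hookrightarrow \sSet$ is fully faithful, we also have $LF(\Delta^n) \cong F(\Delta^n)$ naturally, and likewise $L\eta_{\Delta^n} = \eta_{\Delta^n}$.

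\emph{Monomorphisms.} Necessity is immediate, since $i_n$ is a monomorphism. For sufficiency, recall that every monomorphism $A \hookrightarrow B$ is a transfinite (indeed sequential) composite of pushouts of coproducts of boundary inclusions $i_n$. Concretely, $A \cup \mathrm{sk}_n B$ is obtained from $A \cup \mathrm{sk}_{n-1} B$ by attaching the non-degenerate $n$-simplices of $B$ not in $A$ along $\bigsqcup \partial\Delta^n \to \bigsqcup \Delta^n$. The functor $LF$ preserves coproducts, pushouts and sequential colimits. Monomorphisms in $\sSet$ are stable under coproducts, cobase change and transfinite composition, because they are computed levelwise in $\Set$. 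It follows that $LF(A) \to LF(B)$ is a monomorphism.

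\emph{Weak equivalences.} Necessity follows from $L\eta_{\Delta^n} = \eta_{\Delta^n}$. For sufficiency, I would prove by induction on $n$ that $L\eta_X$ is a weak equivalence for every $X$ of dimension $\le n$.
\begin{itemize}
\item For $n = 0$, $X$ is a coproduct of copies of $\Delta^0$. Coproducts preserve weak equivalences, so the claim holds.
\item For the inductive step, write $X$ as the pushout of $\mathrm{sk}_{n-1}X \leftarrow \bigsqcup \partial\Delta^n \to \bigsqcup \Delta^n$ and apply $LF$ and $LG$. By hypothesis both functors preserve monomorphisms, so the images of the right leg are monomorphisms, that is, cofibrations in the Kan--Quillen model structure. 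Since that structure is left proper, these pushouts are homotopy pushouts.
\item The map $L\eta$ is a weak equivalence on $\mathrm{sk}_{n-1}X$ and on $\partial\Delta^n$ by induction, since both have dimension $\le n-1$. It is a weak equivalence on $\Delta^n$ by assumption. The gluing lemma (the cube lemma) then gives that $L\eta_X$ is a weak equivalence.
\end{itemize}

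For general $X$, we have $X = \varinjlim_n \mathrm{sk}_n X$, a sequential colimit along monomorphisms. Applying $LF$ and $LG$ yields sequential colimits along monomorphisms, since both functors preserve colimits and monomorphisms. Such colimits preserve levelwise weak equivalences, for instance because homotopy groups of realizations commute with these colimits by compactness of spheres, or because they are homotopy colimits. Hence $L\eta_X$ is a weak equivalence.

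The main point requiring care is the inductive step: the monomorphism hypothesis on $LF$ and $LG$ is exactly what makes the image pushout squares homotopy pushouts, so that the gluing lemma applies. Everything else is formal.
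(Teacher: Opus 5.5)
Your proof is correct. For the first two claims it matches the paper: $LF$ is a left adjoint, and monomorphisms are generated from boundary inclusions under pushouts and transfinite composition. You make the latter explicit via relative skeleta, which the paper only cites.

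For the weak-equivalence claim you take a different route.

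\textbf{The paper's route.} It writes $L\eta_X$ as the colimit over the category of simplices $\Delta_{/X}$ of the pulled-back transformation $\eta\circ\pi$. It then observes that the monomorphism hypothesis makes $F\circ\pi$ and $G\circ\pi$ Reedy cofibrant. Indeed, the latching map at a simplex $\Delta^n\to X$ is exactly $LF(\partial\Delta^n)\to F(\Delta^n)$. Since $\Delta_{/X}$ has fibrant constants, $\varinjlim$ is left Quillen. It therefore preserves weak equivalences between cofibrant diagrams, which gives the claim.

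\textbf{Your route.} You do a skeletal induction. The attaching pushouts are homotopy pushouts because the images of $i_n$ are cofibrations in a left proper model category, so the cube lemma handles each dimension. A compactness (telescope) argument then passes to $X=\varinjlim_n \mathrm{sk}_n X$.

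\textbf{Comparison.} Your argument is essentially the Reedy argument unrolled cell by cell. It needs only the gluing lemma and the fact that sequential colimits along cofibrations preserve weak equivalences, and it shows exactly where the monomorphism hypothesis enters. The paper's version is shorter and more conceptual: it identifies $LF(X)$ as the colimit of a cofibrant diagram, hence a homotopy colimit. However, it relies on the Reedy model structure on $\sSet^{\Delta_{/X}}$ and Hirschhorn's fibrant-constants criterion.
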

\begin{proof}
	This is just a summary of several facts from the language of model categories. A sketch of a proof is provided in \cref{proof:comparison_lemma}.
\end{proof}
We can now give the proof of \cref{thm:properties_alt_subdiv}.
\begin{proof}[Proof of \cref{thm:properties_alt_subdiv}]
    We apply \cref{lem:comparison_lemma}. Observe that the identity functor $1_{\sSet}$ is the left Kan extension of the inclusion $\Delta \hookrightarrow \sSet$ and evidently preserves inclusions. Hence, it remains to show the following two statements, for any $n \geq 0$:
    \begin{enumerate}
        \item The simplicial map $\sd_r(i_n) \colon \sd_r (\partial \Delta^n) \to \sd_r(\Delta^n)$ is an inclusion. 
        \item The simplicial map $\lambda_r \colon \sd_r ( \Delta^n) \to \Delta^n$ is a weak homotopy equivalence.
    \end{enumerate}
    Let us prove the first assertion. 
    Let us first show that, for any $S \subset [n]$ the induced simplicial map $\sd_r \Delta^S \to \sd_r \Delta^{n}$ is an inclusion. Let $k'+1$ be the cardinality of $S$.
    Under the canonical isomorphism $\Delta^S \cong \Delta^{k'}$, we obtain an identification of $\sd_r \Delta^S$ with $\sd_r \Delta^{k'}$.
    This identification fits into a commutative diagram
\begin{diagram}
	\sd_r \Delta^S \\
	\\
	& {\sd_r \Delta^{k'}} & {\sd_r \Delta^{n}} \\
	& {(\sd_b \Delta^{k'})\times \Delta^{k'}} & {(\sd_b \Delta^{n}) \times \Delta^{n}}
	\arrow["\cong", from=1-1, to=3-2]
	\arrow["{\sd_r ( \Delta^S \hookrightarrow \Delta^n)}", curve={height=-24pt}, from=1-1, to=3-3]
	\arrow[curve={height=18pt}, hook, from=1-1, to=4-2]
	\arrow["{\sd_r ( \Delta^{k'}\hookrightarrow \Delta^n)}", from=3-2, to=3-3]
	\arrow[hook, from=3-2, to=4-2]
	\arrow[hook, from=3-3, to=4-3]
	\arrow[from=4-2, to=4-3]
\end{diagram}
    for an appropriate inclusion $\Delta^{k'} \hookrightarrow \Delta^n$.
    As both $\sd_b$ and taking products preserve inclusions, it follows that the lower horizontal is an inclusion. Hence, by commutativity, so is the upper horizontal and thus the map $\sd_r ( \Delta^S \hookrightarrow \Delta^n)$.
    Explicitly, the image of the map $\sd_r \Delta^S \hookrightarrow \sd_r \Delta^{[n]}$ is given by the pairs 
    $((\sigma_0, \dots, \sigma_k), \tau) \in \sd_r (\Delta^n)_k$ that fulfill $\sigma_0, \dots, \sigma_k, \tau \subset S$.
    We will identify $\sd_r \Delta^S$ with this subset in the following. 
    As monomorphisms in a presheaf category are precisely the indexwise injective maps, to show that $\sd_r (i_n)$ is an inclusion, we just need to show that 
    \[
    \sd_r(\partial \Delta^n)_k \to \sd_r (\Delta^n)_k \subset \sd_b ( \Delta^n)_k \times \Delta^n_k
    \]
    is injective for each $k \geq 0$. Given $S \subset [n]$, we write $U_S$ for $(\sd_r \Delta^{S})_k \subset (\sd_r \Delta^n)_k$. 
    It follows by \cref{notation:partial_delta} that $\sd_r(\partial \Delta^n)_k$ is given by the quotient of $\bigsqcup_{S \subset [n], S \neq [n]} U_S$, where the equivalence relation on $\bigsqcup_{S \subset [n], S \neq [n]} U_S$ is generated by identifying $x \in U_{S}$ with $x \in U_{S'}$, whenever $S' \subset S$. To prove injectivity, it thus suffices to show that for any pair $S,S' \subset [n]$ and any $x \in U_S \cap U_{S'}$, there exists $S'' \subset S \cap S'$ such that $x \in U_{S''}$. Indeed, by the description of the sets $U_{S}$ above, it holds that $U_S \cap U_{S'} = U_{S \cap S'}$. Hence, taking $S'': = S' \cap S$, the claim follows.\\
    Next, let us prove the second assertion. In fact, we prove that the map $\lambda_r \colon \sd_r \Delta^n \to \Delta^n$ is part of a strong deformation retraction with section $\Delta^n \hookrightarrow \sd_r \Delta^n$.
    Note that $\sd_b (\Delta^n) \times \Delta^n$ is the nerve of the product poset $\sd({[n]}) \times [n]$, where $\sd([n])$ is the set of non-empty subsets of $[n]$ ordered by inclusion. This poset admits a canonical inclusion $ \iota\colon [n] \to \sd([n]) \times [n]$, given by $k \mapsto ([n],k)$. This inclusion admits a retraction given by the projection $\pi \colon \sd([n]) \times [n] \to [n]$. The composition $\iota \circ \pi$ is homotopic as a map of posets to the identity on $\sd([n]) \times [n]$, in the sense that the map 
    \begin{align*}
        H \colon (\sd([n]) \times [n]) \times [1] &\to \sd([n]) \times [n] \\
        (x,i) &\mapsto \begin{cases}
            x & i = 0, \\
            \iota \circ \pi(x) & i=1,
        \end{cases}
    \end{align*}
    is an order-preserving map. Applying the nerve functor $N$ to $\iota, \pi$ exposes $\Delta^n = N([n])$ as a strong deformation retract of $N(\sd([n]) \times [n]) = \sd_b (\Delta^n) \times \Delta^n$, with the simplicial homotopy $N(\iota) \circ N(\pi) \simeq 1$ given by 
    \[
    N(H) \colon N(\sd([n]) \times [n] \times [1]) =  \sd_b (\Delta^n) \times \Delta^n \times \Delta^1 \to \sd_b (\Delta^n) \times \Delta^n =  N(\sd([n]) \times [n]).
    \]
    Next, observe that the inclusion $N(\iota) \colon \Delta^n \to \sd_b (\Delta^n) \times \Delta^n$ factors through $\sd_r \Delta^n$. Hence, restricting $N(\pi)$ to $\sd_r \Delta^n$, we expose $\Delta^n$ as a retract of $\sd_r \Delta^n$. Observe, furthermore, that this restriction of $N(\pi)$ is exactly $\lambda_r$. It turns out that $N(\iota)$ and $\lambda_r$ form a strong deformation retract. To see this, we show that $N(H)$ maps simplices of $\sd_r \Delta^n \times \Delta^1$ into $\sd_r \Delta^n$.
    As the restrictions of $N(H)$ at $0$ and $1$ are given the identity and $N(\iota \circ \pi)$, respectively, both of which map $\sd_r \Delta^n$ into itself, it suffices to verify that simplices that are not in $\sd_r \Delta^n \times \partial \Delta^1 = \sd_r \Delta^n \times (\{0\} \sqcup \{1\})$ map into $\sd_r \Delta^n$. 
    These are simplices of the form
    \[
    x = ((\sigma_0, \dots, \sigma_k, \sigma_{k+1} \dots \sigma_m), \tau, (0, \dots, 0,1,\dots, 1)).
    \]
    where the change from $0$ to $1$ happens at the $(k+1)$-th entry, for some $k\leq m$.
    Such a simplex is then mapped to
    \[
((\sigma_0, \dots, \sigma_k, [n], \dots, [n]), \tau).
    \]
    Clearly, the condition $\tau \subset \sigma_0$ is unaffected by this change. 
    Hence, it follows that $N(H)(x) \in \sd_r( \Delta^n)_m$ whenever $x \in ((\sd_r \Delta^n )\times \Delta^1)_m$, as was to be shown. To summarize, we have proven that $\lambda_r$ is even a simplicial homotopy equivalence, i.e., a homotopy equivalence with respect to elementary simplicial homotopies. In particular, it is a weak homotopy equivalence.  
\end{proof}
\subsection{Pseudo-barycenters for the alternative subdivision}
We now perform the construction alluded to in \cref{block:small_scale_Theta_t} for the alternative subdivision $\sd_r$.
\begin{construction}\label{con:Theta_t}
Let $M$ be a complete $\cabK$ space and let $\rho \leq \catRad$. Suppose we are given a pseudo-barycenter map $\Theta \colon \sd_b \Rips[\rho](M)_0 \to M$ at scale $\rho$. 
Given $t \in [0,1]$, we define a map $\Theta_t \colon \sd_r \Rips[\rho](M)_0 \to M$ as follows.
For a vertex $a \in (\sd_r \Rips(M))_0$, we write $(\sigma_a, x_a)$ for the associated pair as in \cref{rem:description_of_alt_simplices}. We then set
\[\Theta_t(a) := \gamma_{x_a, \Theta(\sigma_a)}(t),\]
where $\gamma_{x_a, \Theta(\sigma_a)} \colon [0,1] \to M$ is a constant speed geodesic from $x_a$ to $\Theta(\sigma_a)$. 
\end{construction}
Let us first verify that $\Theta_t$ is well-defined, i.e., that the geodesic $\gamma_{x_a, \Theta(\sigma_a)}$ exists, and uniquely so. To this end, we verify that $d(x_a, \Theta(\sigma_a)) < \delta \leq \catRad$. To see this, the following lemma will be of use.
\begin{lemma}\label{lem:simple_dist_bound}
In the situation of \cref{con:Theta_t}, let $\sigma \in \sd_b \Rips(M)_0$ be given by a sequence $(x_0, \dots, x_n)$ of elements in $M$. Then, for any $i \in \{0, \dots, n\}$, we have $d(x_i, \Theta(\sigma)) < \conShr \delta$.
\end{lemma}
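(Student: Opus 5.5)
The plan is to obtain the bound directly from the pseudo-barycenter axioms of $\Theta$, using the specific structure of the barycentric subdivision $\sd_b$. Here $\delta \leq \rho$ is such that $\sigma \in \sd_b \Rips(M)_0$, i.e.\ $(x_0,\dots,x_n)$ has pairwise distances $<\delta$, and $\conShr = \conShrTheta$ is the shrinking constant of $\Theta$.

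Fix $i \in \{0,\dots,n\}$ and consider the one-element sequence $(x_i)$. This is a vertex of $\sd_b \Rips(M)$. Since $(x_i)$ is a subsequence of $(x_0,\dots,x_n)$, the recollection in \cref{ex:barycentric_subdiv} gives a $1$-simplex from $(x_i)$ to $\sigma$ in $\sd_b \Rips(M)$. Moreover, the last vertex map satisfies $\lambda_b((x_i)) = x_i$.

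We then apply \cref{property:lambdaXThetaY} of \cref{def:pseudo_bar} with $x = (x_i)$ and $y = \sigma$, at the scale $\delta$ (which is admissible since $\delta \leq \rho = \rhoTheta$). This yields
\[
d(x_i,\Theta(\sigma)) = d(\lambda_b((x_i)),\Theta(\sigma)) < \conShr\delta,
\]
which is the claim.

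The only point requiring care is to check that the $1$-simplex really lies in $\sd_b \Rips[\delta](M)$ and not merely at the larger scale $\rho$. This holds because $\sd_b$ preserves inclusions and both vertices are simplices of $\Rips[\delta](M)$. Beyond this, I expect no obstacle. As a remark, in the circumcenter example \cref{ex:existence_of_pseudo_bar_classical} the statement can also be seen directly from \cref{lem:distance_bounds_circum}.
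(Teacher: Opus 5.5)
Your proof is correct and matches the paper's argument: the paper likewise takes the vertex of $\sd_b \Rips[\delta](M)$ given by the singleton $x_i$, observes that $\lambda_b$ sends it to $x_i$ and that it is joined to $\sigma$ by a $1$-simplex in $\sd_b \Rips[\delta](M)$, and then applies \cref{property:lambdaXThetaY}. Your additional check that this $1$-simplex exists at scale $\delta$ rather than only at scale $\rho$ is a sound justification of a point the paper states without comment.
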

\begin{proof}
Let $x \in M$, which we consider as a map $\Delta^0 \xrightarrow{x} \Rips[\delta](M)$, for any $\delta >0$. Then $x$ gives rise to a vertex of $\sd_b \Rips[\delta](M)$ given by the composition $\Delta^0 = \sd_b \Delta^0  \xrightarrow{ \sd(x)} \sd_b \Rips[\delta](M)$. We denote this vertex by $x'$. Observe that $\lambda_b(x') = x$. Now, note that when $x = x_i$, then there is a $1$-simplex from $x'$ to $\sigma$ in $\sd_b \Rips[\delta](M)$. Hence, \cref{property:lambdaXThetaY} implies that $d(x_i, \Theta(\sigma)) = d(\lambda_b(x'), \Theta(\sigma)) < \conShr \delta$, as was to be shown.
\end{proof}
\begin{theorem}\label{thm:pseudo_bar_for_alt_subdiv}
Suppose that $\Theta$ is a pseudo-barycenter map at scale $\rho$ with parameter $0 < \conShr \leq 1$. Then, for any $t \in [0,1]$, the map $\Theta_t$ is a pseudo-barycenter map at scale $\rho$ with parameters $\conCos_t = t \conShr$ and $\conShr_t = (1-t)1 + t\conShr$.
\end{theorem}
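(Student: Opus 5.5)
We have to check the three defining conditions of \cref{def:pseudo_bar} for $\Theta_t$ with respect to $(\sd_r,\lambda_r)$. We fix $0<\delta\le\rho$, and for vertices $a,b$ of $\sd_r\Rips[\delta](M)$ we use the pairs $(\sigma_a,x_a)$ and $(\sigma_b,x_b)$ from \cref{rem:description_of_alt_simplices}. Recall that $\lambda_r(a)=x_a$, and that $x_a$ is a vertex of $\sigma_a$.

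\textbf{Well-definedness and property \ref{property:ThetaXLambdaX}.} By \cref{lem:simple_dist_bound} we have $d(x_a,\Theta(\sigma_a))<\conShr\delta\le\delta\le\catRad$, so the geodesic $\gamma_{x_a,\Theta(\sigma_a)}$ exists and is unique. Since it has constant speed,
\[
d(\lambda_r(a),\Theta_t(a))=t\,d(x_a,\Theta(\sigma_a))\le t\conShr\delta=\conCos_t\delta.
\]

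\textbf{Property \ref{property:lambdaXThetaY}.} Suppose there is a $1$-simplex from $a$ to $b$. Then $\sigma_a$ is a face of $\sigma_b$, so $x_a$ is a vertex of $\sigma_b$. Hence $d(x_a,x_b)<\delta$, and by \cref{lem:simple_dist_bound} also $d(x_a,\Theta(\sigma_b))<\conShr\delta$. We apply \cref{lem:dist_lambdaX_ThetaTY} with $x=x_a$, $y_0=x_b$, $y_1=\Theta(\sigma_b)$ and $\gamma=\gamma_{x_b,\Theta(\sigma_b)}$. This gives
\[
d(x_a,\Theta_t(b))<((1-t)+t\conShr)\delta=\conShr_t\delta.
\]

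\textbf{Property \ref{property:ThetaXThetaY}.} This is the main step. We apply \cref{cor:pseudo_convexity_of_dist} with
\[
x_0=x_a,\quad x_1=\Theta(\sigma_a),\quad y_0=x_b,\quad y_1=\Theta(\sigma_b),
\]
and $S=\conShr$. The required bounds are checked as follows.
\begin{enumerate}
\item $d(x_0,x_1)<\conShr\delta\le\delta$ and $d(y_0,y_1)<\conShr\delta\le\delta$, by \cref{lem:simple_dist_bound}.
\item $d(x_0,y_0)<\delta$, because $x_a$ and $x_b$ are both vertices of $\sigma_b$, a simplex of $\Rips[\delta](M)$.
\item $d(x_1,y_1)<\conShr\delta$. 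The edge $a\to b$ projects under $\pi_b$ to an edge $\sigma_a\to\sigma_b$ in $\sd_b\Rips[\delta](M)$, and property \ref{property:ThetaXThetaY} for $\Theta$ applies.
\item $d(x_0,y_1)<\conShr\delta$, since $x_a$ is a vertex of $\sigma_b$ and \cref{lem:simple_dist_bound} applies.
\item $d(x_1,y_0)<\conShr\delta$. This is where $\sd_r$ is needed, exactly as in \cref{block:missing_cond}: $x_b$ is a vertex of $\sigma_a$, so \cref{lem:simple_dist_bound} applied to $\sigma_a$ gives $d(x_b,\Theta(\sigma_a))<\conShr\delta$.
\end{enumerate}
All four points lie pairwise within distance $\delta\le\catRad$. \cref{cor:pseudo_convexity_of_dist} then yields $d(\Theta_t(a),\Theta_t(b))<\conShr_t\delta$.

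The one delicate point is that \cref{lem:geodes_dist} asks for pairwise distances strictly smaller than $\catRad$. All the inequalities above are strict and $\delta\le\catRad$, so every pairwise distance is strictly less than $\delta\le\catRad$ and the hypothesis holds. Finally, we note that $\conShr_t\in(0,1]$ and $\conCos_t\ge0$, so the parameters are admissible.
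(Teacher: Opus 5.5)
Your proof is correct and follows the paper's argument essentially step for step. You establish well-definedness via \cref{lem:simple_dist_bound}, use convexity of the distance to $x_a$ (through \cref{lem:dist_lambdaX_ThetaTY}, i.e.\ \cref{prop:distance_to_point_conv}) for \cref{property:lambdaXThetaY}, and apply \cref{cor:pseudo_convexity_of_dist} with the same choice $x_0=x_a$, $x_1=\Theta(\sigma_a)$, $y_0=x_b$, $y_1=\Theta(\sigma_b)$ for \cref{property:ThetaXThetaY}. Your non-strict inequality in \cref{property:ThetaXLambdaX} is the right one, since the paper's strict version fails at $t=0$, and your check that all pairwise distances stay below $\delta\le\catRad$ is a useful bit of extra care.
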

\begin{proof}
Next, let us verify \cref{property:ThetaXLambdaX,property:XY} for $\Theta_t$.
Let $a \in \sd_r (\Rips[\delta](M))_0$. Using \cref{rem:description_of_alt_simplices}, we write $(\sigma_a, x_a)$ with $x_a$ a vertex of $\sigma_a$. By \cref{lem:simple_dist_bound}, we have $d(x_a, \Theta(\sigma_a)) < \conShr \delta$. Hence, the geodesic $\gamma_{x_a, \Theta(\sigma_a)}$ is well-defined and unique, and, by construction, we have 
\[
d(\lambda_r(a), \Theta_t(a)) = d(x_a, \gamma_{x_a, \Theta(\sigma_a)}(t)) < t \conShr \delta = \conCos_t \delta.
\]
Next, let us prove \cref{property:XY}.
To this end, let $a$ and $b$ be the vertices of a $1$-simplex in $\sd_r(\Rips[\delta](M))$, from $a$ to $b$, where $\delta \leq \rho$. By \cref{rem:description_of_alt_simplices} we have that $\sigma_a$ is a face of $\sigma_b$ and that $x_a,x_b$ are vertices of $\sigma_a$. It thus follows that $x_a$ and $x_b$ are both elements of the underlying set of $\sigma_a$ in $M$, and thus that 
\[d(x_a,x_b) <\delta.\] Again, using \cref{lem:simple_dist_bound}, we furthermore obtain that
	\begin{align*}
		d(x_b, \Theta(\sigma_a)), d(x_a, \Theta(\sigma_b))   < \conShr \delta.
	\end{align*}
In addition to this, the face relation between $\sigma_a$ and $\sigma_b$, considered as simplices of $\Rips[\delta](M)$, implies the existence of a $1$-simplex from $\sigma_a$ to $\sigma_b$, when these are considered as vertices of $\sd \Rips[\delta](M)$. Thus, \cref{property:ThetaXThetaY} of $\Theta$ implies that
	\[ 
	d(\Theta(\sigma_a), \Theta(\sigma_b))  <\conShr \delta.
\]
We can now apply \cref{prop:distance_to_point_conv} to the setting 
\begin{equation*}
		x = x_a; \quad y_0 = x_b; \quad y_1 = \Theta(\sigma_b),
\end{equation*}
to obtain 
\[
d(\lambda_r(a), \Theta_t(b)) = d(x_a, \gamma_{x_b, \Theta(\sigma_b)}(t)) < ((1-t) + t S) \delta = \conShr_t \delta,\]
as required.
Furthermore, we can apply \cref{cor:pseudo_convexity_of_dist} to the setting
\begin{equation*}
	x_0 = x_a; \quad x_1 = \Theta(\sigma_a); \quad y_0 = x_b; \quad y_1 = \Theta(\sigma_b).
\end{equation*}
This yields
\[
d(\Theta_t(a), \Theta_t(b)) = d(\gamma_{x_a, \Theta(\sigma_a)}(t), \gamma_{x_b, \Theta(\sigma_b)}(t)) < ((1-t) + t S) \delta = \conShr_t \delta.\]
which finishes the proof.
\end{proof}
\subsection{Proof of the perturbative stability theorem}\label{subsec:proof_of_main_result}
We are now in position to give the proof of \cref{thm:local_stability_function_rips}. Before we begin with the actual proof, there are two degenerate cases that need to be addressed separately, but which are much easier to handle. First of, the case $\conShrM =1$, which can only happen when $\rho = \frac{\varpi_{\kappa}}{2}$. 
\begin{proof}[Proof of \cref{thm:local_stability_function_rips} in the case $\conShrM =1$]
	In this case, we have $\frac{1-\conShrM}{1 + \conShrM} = 0$ and thus $\epsmet = 0$. Hence, any correspondence $\fmetapprx \approx_{\epsmet,\epsfun} \fmet$ defines an isometry $\metapprx \cong M$. We can thus identify $\metapprx$ with $M$ under this isometry. Under this identification, the functional distortion $\epsfun$ then provides an upper bound on $\| \fapprx - f \|_{\infty}$. Hence, it immediately follows that the associated Rips simplicial sets $\Rips(\mapprxbull)$ and $\Rips(\mbull)$ are $\epsfun$-interleaved even through inclusions of simplicial sets, not just in the persistent homotopy category.
\end{proof}
The second degenerate case to be addressed is the one where $\conShrM = 0$, i.e., when $M$ is discrete (see \cref{rem:discrete_case,rem:discrete_later}). As our definition of a pseudo-barycenter map requires $\conShr >0$, and \cref{ex:existence_of_pseudo_bar_classical} does not apply.\footnote{Generally speaking, as $\Rips[0](M) = \emptyset$, we can never expect to procure shrinking transformations with $S=0$. This is ultimately a consequence of our choice of using open as opposed to closed boundary conditions for the Rips complex. These kind of boundary pathologies are usually best handled by passing to the observable setting (see \cite{ChazalCrawleyBoeveyDeSilva2016Observable}), but we do not want to add an additional layer of technicality to the discussion here.} Recall that in this case we assume that $\epsmet < \delta$. The special case is then straightforward, albeit a bit lengthy. We treat it in \cref{appendix:discrete_case}. We think that while one coherent proof covering both cases would be conceptually more pleasing, the discrete case serves as a valuable sanity check for the general case.
\begin{proof}[Proof of \cref{thm:local_stability_function_rips}]
By the previous two cases, we may now assume that $0 < \conShrM <1$. Let $\Theta \colon \sd_b \Rips(M)_0 \to M$ be the pseudo-barycenter map guaranteed by \cref{ex:existence_of_pseudo_bar_classical}. Set $\conCos = \conShr = \conShrM$. Applying, \cref{thm:pseudo_bar_for_alt_subdiv}, we obtain a family of pseudo-barycenter maps $\Theta_t \colon \sd_r \Rips(M)_0 \to M$, for $t \in [0,1]$, with parameters $\conCos_t = t\conShr$ and $\conShr_t = ((1-t) + t\conShr)$. 
Fix $t \in [0,1]$ as
\[
t = \frac{2 \epsmet}{(\delta+\epsmet)(1-\conShr)}. 
\]
Note that the assumption $\epsmet \leq \frac{1-\conShr}{1+\conShr} \delta$ is equivalent to 
\[
2\epsmet \leq  (\delta+ \epsmet) (1-\conShr)
\]
and thus 
\[
t = \frac{2 \epsmet}{(1-\conShr)(\delta+\epsmet)} \leq  1,
\]
making $t$ well-defined. Note, furthermore, that the definition of $t$ is equivalent to requiring that $t$ is such that
\[
\epsmet = \frac{1-S_t}{1+S_t} \delta.
\]
Now, we may proceed exactly as in the proof of \cref{thm:weaklocalStabFull} (using \cref{thm:back_propagation_rips} and \cref{lem:munchkin_lemma}). 
It follows that, any correspondence $\fmetapprx\approx_{\epsmet,\epsfun} \fmet $ (as in the statement of the theorem) gives rise to an interleaving in the persistent homotopy category 
\[
\Rips(\mapprxbull) \simeq_{\alpha} \Rips (\mbull)
\]
with $\alpha = \epsfun + \Lipf t\conShr(\epsmet + \delta)$. Using the specific choice of $t$, we obtain 
\begin{align*}
	\alpha  &=  \Lipf t\conShr(\epsmet + \delta) +\epsfun \\
	&= \Lipf \frac{2 \epsmet}{(1-\conShr)(\delta+\epsmet)} \conShr(\epsmet + \delta) +\epsfun \\
	& =  \Lipf \frac{2 \conShr}{1-\conShr} \epsmet +\epsfun \\
	& =  \Lipf \lipConstRho \epsmet +\epsfun,
\end{align*}
as was to be shown. 
\end{proof}

\section{The case of subspaces of Euclidean space}\label{sec:Euclidean}
From a data-analytical perspective, it is natural to be interested in persistent homotopy inference results for metric pairs $\fmet$, where $M$ is a subspace of Euclidean space~$\R^n$. Indeed, any subset $M \subset \R^n$ comes equipped with a canonical $1$-Lipschitz map $M \hookrightarrow \R^n$ and thus gives rise to a metric pair $\fmet$ by composition with any Lipschitz map $\R^n\to\RN$.
\\
So far, however, we have only discussed persistent homotopy inference results for the case of complete $\cabK$ spaces, which are, at least locally, always assumed to be geodesic. Note that this assumption fundamentally restricts us to the convex setting when working with submanifolds of Euclidean space. This is a consequence of the classical Tietze-Nakajima theorem.
\begin{theorem}\cite{Tietze1928Konvexheit,Nakajima1928KonvexeKurven}
	A closed path-connected subset $A\subset \R^n$ is convex if and only if it is locally convex.
\end{theorem}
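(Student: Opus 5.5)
The forward direction is immediate from our strong notion of convexity (\cref{rec:convex}): if $A$ is convex, then for every $x \in A$ and every $r>0$ the set $A \cap B_r(x)$ is an intersection of convex sets, hence convex. So $A$ is locally convex. The substance lies in the converse. Here I would use the classical Tietze argument, which works with the induced length (intrinsic) metric on $A$ and shows that it agrees with the Euclidean metric.

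\textbf{Step 1 (local segments).} Local convexity gives, for each $x \in A$, a radius $r_x>0$ such that any two points of $A \cap B_{r_x}(x)$ are joined by the straight segment, and that segment lies in $A$.

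\textbf{Step 2 (length-minimizing paths exist).} Since $A$ is closed and path-connected, it is also locally path-connected by Step 1. From this I would show that any two points $p,q\in A$ are joined by a rectifiable path in $A$. To do so, fix a path in $A$ from $p$ to $q$. Cover its compact image by finitely many balls $B_{r_x/2}(x)$, and replace the path by a polygonal chain whose consecutive vertices lie in a common ball. The chain stays in $A$ because each of its segments lies in one of the convex pieces from Step 1.

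Next, I would minimize length among polygonal paths in $A$ from $p$ to $q$. Every path of length at most $L$ stays in the compact set $A\cap \overline{B}_L(p)$. By Arzelà--Ascoli and the closedness of $A$, together with lower semicontinuity of length, a shortest path $\gamma$ exists.

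\textbf{Step 3 (the shortest path is a segment).} This is the main obstacle. I would show that $\gamma$ is locally straight. Around each point $\gamma(s)$, a short sub-arc lies inside a convex neighbourhood $A\cap B_{r}(\gamma(s))$. Inside that neighbourhood the chord between the sub-arc's endpoints lies in $A$ and is strictly shorter than any non-straight arc. Hence minimality forces every short sub-arc to be a straight segment.

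A curve in $\mathbb{R}^n$ that is locally a segment is globally a single segment: the direction vector is locally constant on the connected interval $[0,1]$, so it is constant. Therefore $\gamma$ is the segment $[p,q]$, and this segment lies in $A$.

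\textbf{Step 4 (uniqueness).} Straight segments in Euclidean space are the unique constant-speed geodesics. Hence $A$ is convex in the sense of \cref{rec:convex}.

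The delicate point is Step 3: the local convex neighbourhoods must be chosen uniformly along the compact image of $\gamma$. I would handle this with a Lebesgue-number argument applied to the cover $\{B_{r_x/2}(x)\}$ of that image.
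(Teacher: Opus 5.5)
The paper does not prove this statement. It only cites Tietze and Nakajima, to justify the remark that in the Euclidean setting the $\cabK$ results cover only disjoint unions of closed convex sets, so there is no argument in the paper to compare yours with. Your proof is correct and is the standard length-metric argument:
\begin{itemize}
\item a closed subset of $\mathbb{R}^n$ is proper, so Arzel\`a--Ascoli yields a shortest path in $A$ from $p$ to $q$;
\item local convexity forces every shortest path to be locally a straight segment;
\item a locally straight curve in $\mathbb{R}^n$ is a single segment, so the shortest path is $[p,q]\subset A$.
\end{itemize}
Incidentally, this shows that the induced length metric on $A$ equals the Euclidean metric.

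Two points should be tidied.

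First, in Step 2 take the infimum over all rectifiable paths in $A$ from $p$ to $q$, not over polygonal ones. This class is nonempty by your polygonal construction. The reason is that the Arzel\`a--Ascoli limit need not be polygonal. Moreover, the competitor you build in Step 3, by replacing a sub-arc with its chord, must lie in the class over which $\gamma$ is minimal.

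Second, the Lebesgue-number uniformity you flag as delicate in Step 3 is not needed. Parametrize $\gamma$ with constant speed. For each $s$, continuity alone places a short sub-arc around $\gamma(s)$ inside the convex set $A\cap B_{r_{\gamma(s)}}(\gamma(s))$. A constant-speed sub-arc whose length equals that of its chord is an affine parametrization of the chord. Hence the velocity of $\gamma$ is locally constant on $[0,1]$, and therefore constant.

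Your argument also buys some generality. Its only global input is that local geodesics of the ambient space are (unique) geodesics. So the same proof works verbatim for closed, path-connected, locally convex subsets of a proper $\catK[0]$ space.
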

\begin{remark}
	If $M\subset \R^n$ is equipped with the inherited metric is locally geodesic, then it is locally convex. If it is complete, then it is closed. Hence, in the Euclidean subspace case, our previous results only apply to disjoint unions of closed convex subsets of $\R^n$. 
\end{remark}
\begin{notation}
	For the remainder of this section, we let $n \geq 0$ and $M \subset \mathbb{R}^n$ be a closed subspace. We equip $M$ with the metric inherited from the Euclidean metric on $\mathbb{R}^n$, and let $f \colon M \to \mathbb{R}^N$ be a Lipschitz function with target a (possibly different) Euclidean space~$\RN$ and Lipschitz constant $\Lipf$. We can then consider the metric pair $\fmet[M]$ given by $M$ together with the map $f$.
\end{notation}
\medskip
We will now explain how our techniques translate to the case of a subspace of Euclidean space. This is primarily to illustrate their wide applicability, and we expect that better quantitative versions can be obtained in this setting.
\subsection{Reach and nearest point projections}
Following the general techniques of this article, the evident approach is to procure pseudo-barycenter maps for the Rips complex $\Rips[\bullet](M)$. To this end, let us recall the notion of reach, which is a classical regularity condition for subsets of Euclidean space (see, for example, \cite{Federer1959CurvatureMeasures}). 
\begin{definition}
Let $Y \subset \mathbb{R}^n$ be a closed subspace. The reach of $Y$ is defined as the supremum over all $\tau \geq 0$ such that for every $x \in \mathbb{R}^n$ with $d(x,Y) < \tau$, the function $ Y \to \mathbb{R}, y \mapsto d(x,y)$ admits a unique minimizer. We denote the reach of $Y$ by $\tau_Y$.
\end{definition}
\begin{notation}
Given a subset $Y \subset \mathbb{R}^n$ and $\tau \in (0, \infty]$, we write $U_{\tau}(Y)$ for the open $\tau$-neighborhood of $Y$, i.e., the set of all points in $\mathbb{R}^n$ that have distance less than $\tau$ to $Y$. We furthermore write $\overline{U}_{\tau}(Y)$ for the closed $\tau$-neighborhood of $Y$, i.e., the set of all points in $\mathbb{R}^n$ that have distance less than or equal to $\tau$ to $Y$.
\end{notation}
\begin{recollection}
On an open $\tau_Y$-neighborhood of $Y$, there is a well-defined nearest point projection $\pi_Y \colon U_{\tau_Y}(Y) \to Y$, which maps a point $x$ to the unique minimizer of the distance function $Y \to \mathbb{R}, y \mapsto d(x,y)$. 
\end{recollection}
\begin{example}
	Compact $C^2$-submanifolds $M$ of $\mathbb{R}^n$ have positive reach. In this case, the subset $\{x \in \mathbb{R}^n \mid d(x,M) < \tau_M\}$ is a tubular neighborhood of $M$ and the projection map $\pi_M$ provides its associated tubular neighborhood projection.
\end{example}
We will need the following classical result concerning the Lipschitz constant of the nearest point projection.
\begin{theorem}\cite[Thm 4.8]{Federer1959CurvatureMeasures}\label{thm:nearest_point_proj}
	Let $\delta < \tau_Y$. Then the nearest point projection \[\pi_Y|_{ \overline{U}_{\delta}(Y)} \colon \overline{U}_{\delta}(Y) \to Y\] is Lipschitz with Lipschitz constant 
	\[
	\frac{\tau_Y}{\tau_Y - \delta} = \left(1- \frac{\delta}{\tau_Y}\right)^{-1}.
	\]
\end{theorem}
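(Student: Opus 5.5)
The plan is the classical argument of Federer. First establish a one-sided "normal cone" inequality at each projected point, then add two copies of it to obtain the Lipschitz bound. Throughout, write $\tau := \tau_Y$ and $\pi := \pi_Y$. Points $x,x' \in \overline{U}_{\delta}(Y)$ have projections $p := \pi(x)$, $p' := \pi(x')$, and $|x-p|, |x'-p'| \leq \delta < \tau$. I argue in the Euclidean inner product on $\mathbb{R}^n$.

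\textbf{Step 1 (extension along the normal ray).} Let $x \in U_{\tau}(Y)\setminus Y$, $p = \pi(x)$ and $v = (x-p)/|x-p|$. The claim is that $\pi(p + tv) = p$ for every $0 \leq t < \tau$. Let $I$ be the set of such $t$.
\begin{enumerate}
\item $I$ contains $[0,|x-p|]$. Indeed, if some $y \in Y$ were no farther from $p+tv$ than $p$, the triangle inequality would give $|x-y| \leq |x-p|$; it is strict unless $y$ lies on the ray, which forces $y=p$. This contradicts uniqueness.
\item $I$ is an interval containing $0$, by the same argument.
\item $I$ is closed in $[0,\tau)$. This follows from continuity of $\pi$ on $U_\tau(Y)$, which in turn follows from uniqueness of minimizers plus local compactness of the closed set $Y$, by a subsequence argument.
\end{enumerate}
The remaining point is that $T := \sup I$ cannot be smaller than $\tau$. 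Suppose it were. Take $t_k \downarrow T$ with $q_k := \pi(p+t_kv) \neq p$; by continuity, $q_k \to p$. From this one constructs points near $p + Tv$, still at distance less than $\tau$ from $Y$, that have two nearest points (comparing the competing candidates $p$ and $q_k$ along segments). This contradicts the definition of reach. This is Federer's \cite[Thm 4.8(6)]{Federer1959CurvatureMeasures}, and I would follow his argument or simply invoke it.

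\textbf{Step 2 (normal inequality).} Fix $t<\tau$. By Step 1, no $q \in Y$ lies in the open ball of radius $t$ around $p+tv$, so $|q-p-tv|^2 \geq t^2$ for all $q \in Y$. Expanding gives $2t\langle v, q-p\rangle \leq |q-p|^2$. Letting $t \uparrow \tau$ and multiplying by $|x-p|$ yields
\[
\langle x-p,\, q-p\rangle \leq \frac{|x-p|}{2\tau}\,|q-p|^2 \qquad \text{for all } q\in Y .
\]
This also holds trivially when $x \in Y$.

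\textbf{Step 3 (Lipschitz bound).} Apply Step 2 twice: at $x$ with $q=p'$, and at $x'$ with $q=p$. Adding the two inequalities and using $|x-p|,|x'-p'|\leq\delta$ gives
\[
\langle (x-p)-(x'-p'),\, p'-p\rangle \leq \frac{\delta}{\tau}|p-p'|^2 .
\]
Rearranging, $|p-p'|^2 - \langle x-x', p-p'\rangle \leq \frac{\delta}{\tau}|p-p'|^2$. Hence
\[
(1-\tfrac{\delta}{\tau})|p-p'|^2 \leq \langle x-x',p-p'\rangle \leq |x-x'|\,|p-p'| ,
\]
and dividing by $|p-p'|$ (the case $p=p'$ being trivial) gives the constant $\frac{\tau}{\tau-\delta}$.

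The main obstacle is Step 1, specifically ruling out $T<\tau$. Steps 2 and 3 are routine algebra once it is available.
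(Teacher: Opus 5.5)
Your route is Federer's own: the normal-ray property (his 4.8(6)) gives the one-sided inequality $\langle x-p,q-p\rangle\le \frac{|x-p|}{2\tau}|q-p|^2$ (his 4.8(7)), and adding two copies gives the Lipschitz bound (his 4.8(8)). The paper proves nothing here and only cites Federer's Theorem 4.8, so invoking 4.8(6) is legitimate. Items 1--3 of Step 1 and Steps 2--3 are correct. In Step 2, divide by $2t$ before letting $t\uparrow\tau$; for $\tau=\infty$ this gives $\langle v,q-p\rangle\le 0$ and constant $1$.

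What should not stand is your own sketch for excluding $T<\tau$. Along $s\mapsto p+sv$, $s\in[T,t_k]$, the difference $|p+sv-p|^2-|p+sv-q_k|^2$ is affine in $s$, so you do find a point equidistant from $p$ and $q_k$. Nothing makes $p$ and $q_k$ its nearest points, though. The projection $\pi(p+sv)$ moves continuously from $p$ to $q_k$ and can be a third point of $Y$ strictly closer than both. Comparing two fixed candidates never produces a tie of nearest points.

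The clean way is to contradict the maximality of $T$ with a fixed-point argument. Put $y=p+Tv$, so $\pi(y)=p$ and $d(y,Y)=T>0$. Then $n(z):=(z-\pi(z))/|z-\pi(z)|$ is continuous near $y$ with $n(y)=v$. For small $\varepsilon>0$ one has $|n(z)-v|\le\frac12$ on $\overline{B}(y+\varepsilon v,\varepsilon/2)$, so $z\mapsto y+\varepsilon n(z)$ maps this ball into itself. Brouwer's theorem then gives $z$ with $y=z-\varepsilon n(z)=\pi(z)+(d(z,Y)-\varepsilon)\,n(z)$, where $d(z,Y)>\varepsilon$ and $z\in U_\tau(Y)$. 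By your item 1 applied at $z$, $\pi(y)=\pi(z)$, hence $\pi(z)=p$. Comparing $y-p=(d(z,Y)-\varepsilon)\,n(z)$ with $y-p=Tv$ forces $n(z)=v$ and $d(z,Y)=T+\varepsilon$. So $z=p+(T+\varepsilon)v$ with $\pi(z)=p$, i.e.\ $T+\varepsilon\in I$, contradicting $T=\sup I$. With this argument, or simply a citation of Federer's 4.8(6), in place of your sketch, the proof is complete.
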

Let us now begin by providing the Euclidean subspace version of the inference and stability results of this article. Our strategy here will be the same for all results: Transfer the techniques from the special case of the bounded curvature setting of $\mathbb{R}^n$ to $M$ by using the nearest point projection map $\pi_M$.
\begin{notation}
	For the remainder of this section, we write $\conShrN := \sqrt{\frac{n}{2(n+1)}} < 1$ for the Jung constant of $\mathbb{R}^n$, which is independent of scale. We also assume $n>0$, in order to not have to deal with the case of a single point separately. The latter is already covered by \cref{thm:local_stability_function_rips}.
\end{notation}
We will make use of the following correction term.
\begin{notation}\label{not:proj_error}
Given a subspace $M \subset \mathbb{R}^n$, with positive reach $\tau_M$ and $r>0$ with $r < \frac{\tau_M}{\conShrN}$, we denote 
\[
\projError[r] := \left(1- \conShrN \frac{r}{\tau_M}\right)^{-1}
\]
for the Lipschitz constant of the nearest point projection $\pi_M$ at scale $\conShrN r$ as in \cref{thm:nearest_point_proj}. If $\tau_M = \infty$ or $r =0$, $\projError[r]$ is to be read as $1$.
\end{notation}
\begin{remark}
Note that when $r$ is small relative to $\tau_M$, then $\projError[r]$ is close to $1$. It will turn out that $\projError[r]$ takes the role of a correction term, quantifying the deviation of our sub-Euclidean results compared to the case of $\cabK$ spaces.
\end{remark}
\subsection{Lipschitz stability at Euclidean subspaces with positive reach}
We begin with proving the Euclidean subspace version of \cref{thm:local_stability_function_rips}. 
\begin{notation}
We denote \[
\eucLipConst := \frac{2\conShrN}{1-\conShrN}.
\]
Observe that this is (part of) the Lipschitz factor in front of $\epsmet$ in the stability bound of \cref{thm:local_stability_function_rips} for the case of a (fully dimensional) convex subset of $\R^n$, independent of scale $\rho$. To reduce notational complexity, we will also use the short notation 
\[
\rho_n:= \frac{\rho}{1- \conShrN}.
\]
Recall furthermore the notation for the projection errors $\projError[r]$ from \cref{not:proj_error}. 
\end{notation}
\begin{notation}
The role of the Jung's constant of $M$ at a scale $\delta$ will now be taken by the product 
\[
\projError[\delta] \conShrN 
\]
Note that this constant is well-defined and finite for $\delta < \frac{\tau_M}{\conShrN}$, and that it is strictly less than $1$ for $\delta < \frac{1-\conShrN}{\conShrN}\tau_M$. Note also that for $\delta$ small relative to $\tau_M$, the expression $\projError[\delta] \conShrN $ is close to $\conShrN$. The Lipschitz factor $\Lipf\lipConstRho$ measuring the influence of metric distortion will also be modified by an error term. It is now given by 
\[
\projError[\rho_n]\Lipf\eucLipConst, 
\]
which approaches $\Lipf \eucLipConst$ as $\rho$ converges to $0$.
\end{notation} 
\begin{theorem}\label{thm:local_stability_Euclidean}
	Let $0 \leq \delta < \frac{1- \conShrN}{\conShrN}\tau_M$. Fix $\rho$ such that  $\delta \leq \rho < \frac{1- \conShrN}{\conShrN}\tau_M$.
	Let $\epsmet \geq 0$ be such that
	\[\epsmet \leq \rho - \delta \quad \text{and} \quad \epsmet \leq \frac{1-\projError[\rho]\conShrN}{1+\projError[\rho]\conShrN} \delta \]
	and let $\epsfun \geq 0$.
	Then any correspondence $\fmetapprx\approx_{\epsmet, \epsfun} \fmet[M]$ induces an interleaving
	\[
	\Rips(\mapprxbull) \simeq_{\projError[\rho_n]\Lipf\eucLipConst \epsmet + \epsfun} \Rips (\mbull)
	\]
	 in the persistent homotopy category.
\end{theorem}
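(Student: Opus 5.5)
The plan is to rerun the proof of \cref{thm:local_stability_function_rips} unchanged at the formal level, replacing only its geometric input. The formal chain is: \cref{prop:bivariate_interleaving} gives the bivariate interleaving; \cref{thm:back_propagation_rips} turns a pseudo-barycenter map into a compatible shrinking transformation; \cref{lem:munchkin_lemma} produces the univariate interleaving. The only new work is to build, for every $t\in[0,1]$, a pseudo-barycenter map on $\sd_r\Rips[\rho](M)_0$ with cost constant $\conCos_t$ linear in $t$ and shrinking constant $\conShr_t$ satisfying $1-\conShr_t \gtrsim t(1-\projError[\rho]\conShrN)$. As in \cref{subsec:proof_of_main_result}, I would then choose $t$ so that $\epsmet=\frac{1-\conShr_t}{1+\conShr_t}\delta$. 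Geodesics in $M$ are unavailable, so every construction is carried out in the ambient $\R^n$ and pushed back to $M$ with $\pi_M$. The degenerate case $\epsmet=0$ (isometry) is handled exactly as before.

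\textbf{Step 1 (Euclidean pseudo-barycenter).} For $\sigma=(x_0,\dots,x_k)\in\sd_b\Rips[\delta](M)_0$, let $c(\sigma)\in\R^n$ be the Euclidean circumcenter of $\{x_i\}$. By Jung's theorem in $\R^n$ (a $\catK[0]$ space, so \cref{lem:distance_bounds_circum} applies with constant $\conShrN$), we have $d(x_i,c(\sigma))<\conShrN\delta$. In particular $c(\sigma)\in U_{\conShrN\delta}(M)$, and since $\conShrN\delta<\tau_M$ the map $\Theta:=\pi_M\circ c$ is well defined. \cref{thm:nearest_point_proj} then shows that $\Theta$ is a pseudo-barycenter map with $\conShr=\conCos=\projError[\delta]\conShrN$. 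Concretely, for $y\in M$ we have $\pi_M(y)=y$, so $d(y,\Theta(\sigma))\le \projError[\delta]\,d(y,c(\sigma))$, and the same Lipschitz bound controls $d(\Theta(\sigma),\Theta(\sigma'))$.

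\textbf{Step 2 (interpolated maps on $\sd_r$).} For a vertex $a$ with associated pair $(\sigma_a,x_a)$ as in \cref{rem:description_of_alt_simplices}, set
\[p_t(a):=(1-t)x_a+t\,c(\sigma_a)\in\R^n,\qquad \Theta_t(a):=\pi_M(p_t(a)).\]
Since Euclidean distance is convex, \cref{lem:dist_lambdaX_ThetaTY,cor:pseudo_convexity_of_dist} hold verbatim in $\R^n$, with all the needed pairwise inputs supplied exactly as in the proof of \cref{thm:pseudo_bar_for_alt_subdiv}. This yields
\[d(p_t(a),p_t(b))<\bigl((1-t)+t\conShrN\bigr)\delta,\qquad d(x_a,p_t(b))<\bigl((1-t)+t\conShrN\bigr)\delta,\qquad d(x_a,p_t(a))<t\conShrN\delta.\]
The last inequality says that $p_t(a)$ lies within $t\conShrN\delta$ of $M$. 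Projecting costs at most a factor $(1-t\conShrN\delta/\tau_M)^{-1}$, giving $\conCos_t\le \projError[\delta]\,t\conShrN$ for the cost constant.

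\textbf{Step 3 (main obstacle).} Projecting the two shrinking inequalities naively multiplies $(1-t)+t\conShrN$ by a Lipschitz factor. Because the projection only acts at depth $t\conShrN\delta$, this factor is $1+O(t\delta/\tau_M)$, so the result is still of the form $1-t\,(1-\conShrN-O(\delta/\tau_M))$. The work is to organize this so that the shrinking defect is at least $t(1-\projError[\rho]\conShrN)$. That is exactly where the hypothesis $\rho<\frac{1-\conShrN}{\conShrN}\tau_M$ enters, since it guarantees $\projError[\rho]\conShrN<1$. The bound must also be uniform over all scales up to $\rho$, and the rescaled scale $\rho_n=\rho/(1-\conShrN)$ appears because the chosen $t$ is of order $\epsmet/((\delta+\epsmet)(1-\conShrN))$. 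I would first try the elementary estimate $d(\pi_M p,\pi_M q)\le(1-h/\tau_M)^{-1}d(p,q)$, where $h$ is the depth, and expand it to first order in $t$.

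\textbf{Step 4 (conclusion).} With the choice $t=\frac{2\epsmet}{(\delta+\epsmet)(1-\conShrN)}$, which is admissible by the hypothesis on $\epsmet$, the cost term $\Lipf\conCos_t(\delta+\epsmet)$ from \cref{lem:munchkin_lemma} collapses to $\projError[\rho_n]\Lipf\eucLipConst\,\epsmet$. Adding the contribution $\epsfun$ gives the stated interleaving.
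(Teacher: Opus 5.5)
Your map $\Theta_t=\pi_M\circ p_t$ is the right one: it is the Euclidean $\Theta_t$ of \cref{thm:pseudo_bar_for_alt_subdiv} pushed into $M$ by $\pi_M$, as in \cref{lem:shift_to_euclid}. However, Steps 3 and 4 contain a genuine gap.

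The pseudo-barycenter inequalities must hold with fixed constants at every scale $\delta'\le\rho$. So the relevant projection factor is the Lipschitz constant of $\pi_M$ on $\overline{U}_{t\conShrN\rho}(M)$, and your estimates give $\conCos_t=\projError[t\rho]\,t\conShrN$ and $\conShr_t=\projError[t\rho]\bigl((1-t)+t\conShrN\bigr)$. With $q:=\conShrN\rho/\tau_M$ this means $1-\conShr_t=\frac{t(1-\conShrN-q)}{1-tq}$. This is a convex function of $t$ that vanishes at $0$ and equals $1-\projError[\rho]\conShrN$ at $1$. Hence $1-\conShr_t\le t(1-\projError[\rho]\conShrN)$, strictly for $0<t<1$ when $\tau_M<\infty$. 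That is the opposite of what Step 3 sets out to prove, so Step 3 cannot be completed with the Lipschitz estimate you propose.

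More importantly, your choice $t=\frac{2\epsmet}{(\delta+\epsmet)(1-\conShrN)}$ is exactly the value for which $\frac{1-S}{1+S}\delta=\epsmet$ with the \emph{unprojected} constant $S=(1-t)+t\conShrN$. For $t>0$ and $\tau_M<\infty$ the actual constant satisfies $\conShr_t>(1-t)+t\conShrN$. Therefore $\frac{1-\conShr_t}{1+\conShr_t}\delta<\epsmet$ whenever $\epsmet>0$. This violates the hypothesis $\epsmet\le\frac{1-\conShr}{1+\conShr}\delta$ required by \cref{thm:back_propagation_rips} and \cref{lem:munchkin_lemma}, so the shrinking trick does not apply at your $t$. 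Even granting Step 3, admissibility would need $1-\conShr_t\ge t(1-\conShrN)$, which your constants can never satisfy. The asserted collapse to $\projError[\rho_n]$ is also unsupported, since nothing in your computation produces $\rho_n$.

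The missing idea is to define $t$ implicitly. The map $t\mapsto\conShr_t$ is continuous, with $\conShr_0=1$ and $\conShr_1=\projError[\rho]\conShrN$. The hypothesis on $\epsmet$ and the intermediate value theorem then give $t\in[0,1]$ with $\epsmet=\frac{1-\conShr_t}{1+\conShr_t}\delta$, i.e.\ $(1-\conShr_t)(\delta+\epsmet)=2\epsmet$. Solving the definition of $\conShr_t$ for $t$ gives $t=\frac{1-\conShr_t}{1-\conShrN-\conShr_t q}$, whence
\[
\Lipf\conCos_t(\delta+\epsmet)=\Lipf\eucLipConst\,\epsmet\cdot\projError[t\rho]\cdot\projError[\conShr_t\rho_n].
\]
Substituting this $t$ into $\projError[t\rho]$ shows $\projError[t\rho]\,\projError[\conShr_t\rho_n]=\projError[\rho_n]$. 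This exact cancellation, not a first-order expansion, is what yields the constant $\projError[\rho_n]\Lipf\eucLipConst\epsmet+\epsfun$. The rest of your formal chain (bivariate interleaving, shrinking transformation, verticalized interleaving) then goes through as you describe.
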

\medskip
To prove \cref{thm:local_stability_Euclidean}, we will need the following lemma, which allows us to transfer pseudo-barycenter maps to a subspace of $\mathbb{R}^n$ by using the nearest point projection.
\begin{lemma}\label{lem:shift_to_euclid}
Let $M \subset \mathbb{R}^n$ be a closed subspace with reach $\tau_M > 0$. 
Let $0 < \conShr < 1$, $\conCos > 0$ and let $\rho$ be such that $\conCos\rho < \tau_M$ and such that $\rho < \frac{1 - \conShr}{\conCos}\tau_M$. 
Let $(\sd,\lambda)$ be a subdivision functor on $\sSet$ and let $\Theta \colon \sd \Rips[\rho](\mathbb{R}^n)_0 \to \mathbb{R}^n$ be a pseudo-barycenter map at scale $\rho$ with parameters $\conCos, \conShr$. Then the map
\begin{align*}
\Theta' \colon \sd \Rips[\rho](M)_0 &\to M \\
a &\mapsto \pi_M(\Theta(a))
\end{align*}
is well-defined and is a pseudo-barycenter map at scale $\rho$ with parameters 
\[\conCos' = (1 - \frac{\conCos\rho}{\tau_M})^{-1} \conCos \quad \text{and} \quad \conShr' = (1 - \frac{\conCos\rho}{\tau_M})^{-1}\conShr <1.\]
\end{lemma}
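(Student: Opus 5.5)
The plan is to reduce everything to the observation that $\Rips[\delta](M)$ is a simplicial subset of $\Rips[\delta](\mathbb{R}^n)$, because $M$ carries the restricted Euclidean metric. Since $\sd$ preserves inclusions, $\sd \Rips[\delta](M) \subset \sd \Rips[\delta](\mathbb{R}^n)$ for all $\delta \leq \rho$. Naturality of $\lambda$ with respect to this inclusion shows that $\lambda$ on $\sd \Rips[\delta](M)_0$ is the restriction of $\lambda$ on $\sd \Rips[\delta](\mathbb{R}^n)_0$. In particular, $\lambda(a) \in M$ for every vertex $a$ of $\sd\Rips[\delta](M)$. Consequently, $\Theta$ restricts to a map on $\sd\Rips[\rho](M)_0$ satisfying properties \ref{property:ThetaXLambdaX}, \ref{property:lambdaXThetaY} and \ref{property:ThetaXThetaY} of \cref{def:pseudo_bar}, with the distances computed in $\mathbb{R}^n$.

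The first step is well-definedness. For $a \in \sd\Rips[\delta](M)_0$, property \ref{property:ThetaXLambdaX} gives $d(\Theta(a), M) \leq d(\Theta(a), \lambda(a)) \leq \conCos\delta \leq \conCos\rho < \tau_M$. Hence $\Theta(a) \in \overline{U}_{\conCos\rho}(M) \subset U_{\tau_M}(M)$, and $\pi_M(\Theta(a))$ is defined and lies in $M$.

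The second step is the metric estimates. By \cref{thm:nearest_point_proj}, with $\delta' = \conCos\rho < \tau_M$, the projection $\pi_M$ restricted to $\overline{U}_{\conCos\rho}(M)$ is $L$-Lipschitz with $L = (1 - \conCos\rho/\tau_M)^{-1}$. Since $\pi_M$ fixes points of $M$ and $\lambda(a) \in M$, we get
\[ d(\lambda(a), \Theta'(a)) = d(\pi_M\lambda(a), \pi_M\Theta(a)) \leq L\, d(\lambda(a), \Theta(a)) \leq L\conCos\delta = \conCos'\delta. \]
This is \ref{property:ThetaXLambdaX}. For an edge from $a$ to $b$ in $\sd\Rips[\delta](M)$, the same argument gives $d(\lambda(a), \Theta'(b)) \leq L\, d(\lambda(a), \Theta(b)) < L\conShr\delta$, and likewise $d(\Theta'(a), \Theta'(b)) \leq L\, d(\Theta(a), \Theta(b)) < L\conShr\delta$. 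The strict inequalities persist because $L$ is finite and positive. This yields \ref{property:XY} with $\conShr' = L\conShr$.

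The last check is $\conShr' < 1$. This is equivalent to $\conShr < 1 - \conCos\rho/\tau_M$, that is, $\rho < \frac{1-\conShr}{\conCos}\tau_M$, which is exactly the hypothesis. One also needs $\conShr' > 0$, which holds since $\conShr > 0$.

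I do not expect a serious obstacle. The only subtle point is making sure every point fed into $\pi_M$ lies in the closed $\conCos\rho$-neighborhood where the Lipschitz bound is available. This is handled uniformly by \ref{property:ThetaXLambdaX} together with $\delta \leq \rho$, and by using the fixed neighborhood $\overline{U}_{\conCos\rho}(M)$ for all scales $\delta$.
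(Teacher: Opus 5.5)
Your proof is correct and follows the same route as the paper's. Naturality of $\lambda$ gives $\lambda(a)\in M$, and property (a) then places $\Theta(a)$ in $\overline{U}_{C\rho}(M)\subset U_{\tau_M}(M)$, where $\pi_M$ is defined and $(1-C\rho/\tau_M)^{-1}$-Lipschitz; the three estimates pass through $\pi_M$ using $\pi_M(\lambda(a))=\lambda(a)$, and you make explicit the check $S'<1$ that the paper leaves implicit.
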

\begin{proof}
	To see that the map is well-defined, observe that, by naturality of $\lambda$, we have $\lambda(a) \in M$ for every vertex $a$ of $\sd \Rips[\rho](M) \subset \sd \Rips[\rho](\mathbb{R}^n)$. Hence, it follows from \cref{property:ThetaXLambdaX} that $\Theta(a) \in \overline{U}_{\conCos\delta}(M)$, for every vertex $a$ of $\sd \Rips[\delta](M)$, with $\delta \leq \rho$. By assumption, we have $\conCos\rho <  \tau_M$, and thus $\Theta(a) \in U_{\tau_M}(M)$, for every vertex $a$ of $\sd \Rips[\rho](M)$. Hence, the nearest point projection $\pi_M$ is well-defined on the image of $\sd \Rips[\rho](M)_0$ under $\Theta$, and thus $\Theta'$ is well-defined.
	\cref{property:ThetaXLambdaX,property:XY} for $\Theta'$ follow immediately from the corresponding properties for $\Theta$ and the Lipschitz constant of $\pi_M$ on $\overline{U}_{\conCos\rho}(M)$.
\end{proof}
In particular, we can apply this lemma to the pseudo-barycenter maps $\Theta_t$ from \cref{thm:pseudo_bar_for_alt_subdiv} arising from the pseudo-barycenter map of \cref{ex:existence_of_pseudo_bar_classical} on $\mathbb{R}^n$. Recall that in this case the shrinking constant $\conShr$ of $\Theta_t$ is $(1-t) + t \conShrN$ and the cost constant $\conCos$ is given by $t \conShrN$.

\begin{corollary}\label{lem:pseudo_bar_for_Euclidean_subspace}
Let $M \subset \mathbb{R}^n$ be a closed subspace with reach $\tau_M > 0$. 
Fix $0 < \rho < \frac{1- \conShrN}{\conShrN}\tau_M$  and let $t \in [0,1]$. Then $M$ admits a pseudo-barycenter map with respect to the alternative subdivision $\sd_r$ at scale $\rho$ with parameters 
\begin{align*}
\conCos_t &= \projError[t\rho] t \conShrN, \\
\conShr_t &= \projError[t\rho]((1-t) + t \conShrN).
\end{align*}
\end{corollary}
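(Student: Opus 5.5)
The plan is to combine three earlier results in sequence: \cref{prop:existence_of_pseudo_bar_classical} applied to $\mathbb{R}^n$, \cref{thm:pseudo_bar_for_alt_subdiv}, and \cref{lem:shift_to_euclid}.

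First, $\mathbb{R}^n$ is a complete $\catK[0]$ space. Hence $\rho^{0}_{\mathbb{R}^n} = \infty$, and the Jung constant equals $\conShrN \in (0,1)$ at every scale, since $n>0$. So \cref{prop:existence_of_pseudo_bar_classical} provides the circumcenter pseudo-barycenter map $\Theta \colon \sd_b \Rips[\rho](\mathbb{R}^n)_0 \to \mathbb{R}^n$ with parameters $\conCos = \conShr = \conShrN$. Applying \cref{thm:pseudo_bar_for_alt_subdiv} (with $\kappa = 0$, and geodesics being straight segments) then gives, for each $t \in [0,1]$, a pseudo-barycenter map $\Theta_t \colon \sd_r \Rips[\rho](\mathbb{R}^n)_0 \to \mathbb{R}^n$ at scale $\rho$ with parameters $\conCos = t\conShrN$ and $\conShr = (1-t) + t\conShrN$.

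Next, I would feed $\Theta_t$ into \cref{lem:shift_to_euclid} with $(\sd,\lambda) = (\sd_r,\lambda_r)$. That lemma yields the stated parameters, since
\[
\Bigl(1 - \tfrac{t\conShrN \rho}{\tau_M}\Bigr)^{-1} = \projError[t\rho]
\]
by \cref{not:proj_error}, which is defined because $t\rho \le \rho < \tau_M/\conShrN$. So what remains is to check the hypotheses of \cref{lem:shift_to_euclid}:
\begin{itemize}
\item $\conCos\rho = t\conShrN\rho < \tau_M$. This holds because $\rho < \frac{1-\conShrN}{\conShrN}\tau_M \le \tau_M/\conShrN$.
\item $\rho < \frac{1-\conShr}{\conCos}\tau_M$. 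Here $1-\conShr = t(1-\conShrN)$ and $\conCos = t\conShrN$, so the condition reads $\rho < \frac{1-\conShrN}{\conShrN}\tau_M$, which is exactly the standing assumption.
\end{itemize}

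The main obstacle is the boundary cases of \cref{lem:shift_to_euclid}, which assumes $0<\conShr<1$ and $\conCos>0$.
\begin{itemize}
\item For $t=0$ we have $\conCos = 0$ and $\conShr = 1$. I would handle this directly: $\Theta_0 = \lambda_r$ takes values in $M$, and it is trivially a pseudo-barycenter map on $\sd_r \Rips[\rho](M)$ with parameters $(0,1)$. Property (a) is an equality, and the distances in (b) are $<\delta$, because $x_a$ and $x_b$ are both vertices of $\sigma_a$. Moreover $\projError[0] = 1$ by convention, so the stated parameters match.
\item For $t>0$, the hypothesis $\conShr<1$ holds automatically. The derived $\conShr'<1$ is guaranteed by the second hypothesis above.
\end{itemize}
I would also re-check two points in the proof of \cref{lem:shift_to_euclid}. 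First, restricting $\Theta_t$ to $\sd_r \Rips[\rho](M)_0 \subset \sd_r \Rips[\rho](\mathbb{R}^n)_0$ is legitimate, since $\sd_r$ preserves inclusions (\cref{thm:properties_alt_subdiv}). Second, \cref{thm:nearest_point_proj} applies on $\overline{U}_{\conCos\delta}(M) \subset \overline{U}_{\conCos\rho}(M)$, because $\conCos\rho < \tau_M$.
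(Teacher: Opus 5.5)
Your proposal is correct and follows the paper's own route: apply \cref{thm:pseudo_bar_for_alt_subdiv} to the circumcenter map of \cref{ex:existence_of_pseudo_bar_classical} on $\mathbb{R}^n$, push forward along $\pi_M$ via \cref{lem:shift_to_euclid}, and treat $t=0$ separately using the last vertex map $\lambda_r$, exactly as the paper's subsequent remark does. Your check that the condition $\rho < \frac{1-\conShr}{\conCos}\tau_M$ reduces to the standing assumption $\rho<\frac{1-\conShrN}{\conShrN}\tau_M$ is a verification the paper leaves implicit.
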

\begin{remark}
    Note that in the special case where $t = 0$, \cref{lem:pseudo_bar_for_Euclidean_subspace} is not a corollary of \cref{lem:shift_to_euclid}. Indeed, then $\conShr =(1-t) + t\conShrN = 1$. However, in this case the statement of \cref{lem:pseudo_bar_for_Euclidean_subspace} simply requires a (degenerate) pseudo-barycenter map with constants $\conShr_t =1$ and $\conCos_t = 0$. Such a map is always given by the last vertex map. 
\end{remark}
\medskip
The proof of \cref{thm:local_stability_Euclidean} is now analogous to that of \cref{thm:local_stability_function_rips}, making use of \cref{lem:pseudo_bar_for_Euclidean_subspace}.
We only provide the computation of the interleaving parameter.
\begin{proof} 
	Using notation as in \cref{lem:pseudo_bar_for_Euclidean_subspace}, we again set $t \in [0,1]$ such that 
	\[
	\epsmet = \frac{1-\conShr_t}{1+\conShr_t} \delta.
	\]
	That such a $t$ exists follows directly from the intermediate value theorem and the assumptions on $\rho,\epsmet$ and $\delta$. 
	Solving the defining equation of $\conShr_t$ for $t$ yields
\[
t = \frac{1-\conShr_t}{1-\conShrN - \conShr_t \frac{\conShrN \rho}{\tau_M}}.
\]
Furthermore, the defining equation of $t$ yields,
\[
1-\conShr_t = \frac{2\epsmet}{\delta + \epsmet}.
\]
In particular, we obtain $(1-\conShr_t)(\delta +\epsmet) = 2\epsmet$.
After subtracting $\epsfun$, the homotopy interleaving parameter thus computes to
\begin{align*}
	\Lipf\conCos_t (\delta+ \epsmet) &= \Lipf \projError[t\rho] t \conShrN (\delta+ \epsmet) \\
	&= \Lipf \projError[t \rho] \frac{\conShrN(1-\conShr_t)(\delta +\epsmet) }{1-\conShrN-  \conShr_t\frac{\conShrN \rho}{\tau_M} }  \\
    &= \Lipf \projError[t \rho] \frac{2\conShrN\epsmet }{1-\conShrN-   \conShr_t\frac{\conShrN \rho}{\tau_M}}.  \\
    &= (\Lipf \frac{2\conShrN}{1-\conShrN} \epsmet) \projError[t\rho]  (1 - \conShr_t \frac{\conShrN\rho_n}{\tau_M} )^{-1} \\
    &= (\Lipf \eucLipConst \epsmet)\projError[t\rho] \cdot \projError[\conShr_t \rho_n ].
\end{align*}
Next, let us show that 
\[
\projError[t\rho] \cdot \projError[\conShr_t \rho_n ] = \projError[\rho_n].
\]
Substituting $t$ in $\projError[t\rho]$ we indeed obtain 
\begin{align*}
\projError[t\rho] &= \huge (1 - \frac{(1-\conShr_t) \frac{\conShrN \rho}{\tau_M}}{1-\conShrN - \conShr_t \frac{\conShrN \rho}{ \tau_M}} \huge )^{-1} 
 = \huge( \frac{ 1- \conShrN - \conShr_t \frac{\conShrN\rho}{\tau_M} - (1-\conShr_t) \frac{\conShrN \rho}{\tau_M}}{1-\conShrN - \conShr_t \frac{\conShrN \rho}{ \tau_M}} \huge )^{-1} \\
&= \huge ( \frac{ 1- \conShrN - \frac{\conShrN \rho}{ \tau_M}}{1-\conShrN - \conShr_t \frac{\conShrN \rho}{ \tau_M}} \huge)^{-1} 
= \huge ( \frac{1 - \frac{\conShrN \rho}{(1-\conShrN)\tau_M}}{1 - \conShr_t\frac{\conShrN \rho}{(1-\conShrN)\tau_M}} \huge)^{-1} 
= \frac{\huge(1 - \frac{\conShrN \rho_n}{\tau_M})^{-1}}{ \huge(1 - \conShr_t\frac{\conShrN \rho_n}{\tau_M} \huge)^{-1}} \\
&= \frac{\projError[\rho_n]}{\projError[\conShr_t\rho_n]}.
\end{align*}
Thus, we obtain the interleaving parameter
\begin{align*}
    \Lipf\conCos_t (\delta+ \epsmet) +\epsfun =(\Lipf \eucLipConst \epsmet) \projError[t\rho] \cdot \projError[\conShr_t \rho_n ] + \epsfun= \projError[\rho_n] \Lipf \eucLipConst \epsmet + \epsfun.
\end{align*}

\end{proof}
\subsection{Persistent Hausmann's theorem for subspaces of Euclidean space} Next, let us prove the Euclidean subspace version of the persistent version of Hausmann's theorem (\cref{thm:persistent_hausmann}).
\begin{theorem}\label{thm:hausmann_Euclidean}
	Let $0 < \delta \leq \frac{1- \conShrN}{(1+ \conShrN)\conShrN} \tau_M$. Then there is an interleaving
	\[
	\mbull  \simeq_{\projError[\delta] \Lipf\conShrN \delta} \Rips(\mbull)
	\]
	 in the persistent homotopy category.
\end{theorem}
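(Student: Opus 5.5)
I would transfer the metric-thickening argument of \cref{sec:Hausmann}, replacing the Karcher mean $K$ by the composite of the Euclidean barycenter with the nearest point projection $\pi_M$. Concretely, I would work with $\MRips[\delta](\mbull)$ and use \cref{prop:metric_vietoris_rips_equ_rips}, which holds for any metric space, to identify it with $\real{\Rips[\delta](\mbull)}$ in the persistent homotopy category. For $\mu = \sum_i \lambda_i x_i \in \MRips[\delta](M)$ let $b(\mu) = \sum_i \lambda_i x_i \in \R^n$ be the linear barycenter. This map is continuous for the $1$-Wasserstein topology, since it is $1$-Lipschitz with respect to $d_1$. Define $P(\mu) := \pi_M(b(\mu))$.

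The first step is well-definedness and the filtration shift. Since $b(\mu)$ is the minimizer of $E_\mu$ in $\R^n$, the argument of \cref{lem:distances_to_karcher} in $\R^n$, using the Euclidean circumcenter and $\conShrN$, gives some $x \in \supp(\mu)$ with $|x - b(\mu)| < \conShrN \delta$. Hence $d(b(\mu), M) < \conShrN\delta < \tau_M$, so $\pi_M$ is defined at $b(\mu)$, and $P$ is continuous as the composite of continuous maps. Because $x\in M$, the Lipschitz bound of \cref{thm:nearest_point_proj} on $\overline{U}_{\conShrN\delta}(M)$ gives
\[
|P(\mu) - x| = |\pi_M(b(\mu)) - \pi_M(x)| \le \projError[\delta]\conShrN\delta .
\]
Lipschitzness of $f$ then shows that $P$ defines a morphism $\MRips[\delta](\mbull) \to \mbull[\bullet + L\delta]$ with $L = \projError[\delta]\Lipf\conShrN$. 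We also have $P\circ\iota = s$, since $b(\delta_x) = x$ and $\pi_M(x) = x$.

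The second step is the homotopy $\iota\circ P \simeq s$, which I would take to be the straight-line homotopy $H(\mu,t) = tP(\mu) + (1-t)\mu$ in $\MRips(M)$, exactly as in \cref{sec:Hausmann}. This is the main obstacle. The measure $H(\mu,t)$ has support $\supp(\mu)\cup\{P(\mu)\}$, so it lies in $\MRips[\delta](M)$ only if $|P(\mu) - x_i| < \delta$ for \emph{every} $x_i \in \supp(\mu)$, not merely for the single $x$ found above. This is where the hypothesis $\delta \le \frac{1-\conShrN}{(1+\conShrN)\conShrN}\tau_M$ must enter.

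To get this bound, I would estimate
\[
|P(\mu) - x_i| \le |P(\mu) - x| + |x - x_i| ,
\]
but this is too crude, since $|x - x_i| < \delta$ alone gives only a $2\delta$-type bound. Instead I would compare $P(\mu)$ with the Euclidean circumcenter $c$ of $\supp(\mu)$, which satisfies $|c - x_i| \le \conShrN\delta$ for all $i$. Writing
\[
|\pi_M(b) - x_i| \le |\pi_M(b) - b| + |b - x_i| ,
\]
and using $|\pi_M(b) - b| \le |b - x|$ together with the convex-hull bound $|b - x_i| \le \conShrN\delta$ (the barycenter lies in the circumball), the case analysis should yield something like $(1 + \projError[\delta])\conShrN\delta$ or better. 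The constant in the hypothesis, $\projError[\delta]\conShrN \le \frac{1}{1+\conShrN}$ or a similar rearrangement, should be exactly what makes the final bound fall below $\delta$. If this first estimate is not sharp enough, I would instead apply \cref{thm:nearest_point_proj} to $b$ and $x_i$ directly, noting that both lie in $\overline{U}_{\conShrN\delta}(M)$, and combine the result with $|b - x_i|\le\conShrN\delta$.

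Once the support bound holds, the filtration compatibility follows from convexity of $\MRips(\mbull[u+L\delta])$, just as before. This gives a $(0,L\delta)$-interleaving, hence an $L\delta$-interleaving with $L\delta = \projError[\delta]\Lipf\conShrN\delta$, as claimed.
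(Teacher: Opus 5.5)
Your first step (well-definedness and continuity of $P=\pi_M\circ b$, the bound $|P(\mu)-x|\le\projError[\delta]\conShrN\delta$ for one support point $x$, and $P\circ\iota=s$) matches the paper. The gap is in the second step. Nothing you propose shows that the straight-line homotopy $tP(\mu)+(1-t)\mu$ stays in $\MRips[\delta](M)$, and the paper avoids this homotopy for exactly that reason.

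Your key inequality $|b(\mu)-x_i|\le\conShrN\delta$ is false. Lying in the circumball only bounds the distance to the circumcenter. For $\mu=(1-\eta)x_0+\eta x_1$ one has $|b(\mu)-x_1|=(1-\eta)|x_0-x_1|$, which can be arbitrarily close to $\delta$; the correct bound is $|b(\mu)-x_i|\le\diam(\supp\mu)<\delta$. Even granting your inequality, a bound of the form $(1+\projError[\delta])\conShrN\delta$ is never below $\delta$, since $(1+\projError[\delta])\conShrN\ge 2\conShrN\ge 1$ for $n\ge 1$. Your fallback, applying \cref{thm:nearest_point_proj} to $b(\mu)$ and $x_i$, gives only $|P(\mu)-x_i|\le\projError[\delta]|b(\mu)-x_i|<\projError[\delta]\delta$, and $\projError[\delta]>1$ whenever $\tau_M<\infty$. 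At scale $\delta$ the expansiveness of $\pi_M$ is not compensated, and the hypothesis on $\delta$ does not help there.

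The missing idea is to run the straight-line homotopy only on the smaller thickening $\MRips[S\delta](\mbull)$, with $S:=\projError[\delta]\conShrN$. There your fallback estimate does close: $|P(\mu)-x_i|\le\projError[S\delta]|b(\mu)-x_i|<\projError[S\delta]S\delta$. The condition $\projError[S\delta]S\le 1$ is exactly equivalent to $\delta\le\frac{1-\conShrN}{(1+\conShrN)\conShrN}\tau_M$; this is where the hypothesis enters.

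You then transfer the resulting identity back to $\MRips[\delta](\mbull)$ by an approximate-epimorphism argument. The inclusion $r\colon\MRips[S\delta](\mbull)\hookrightarrow\MRips[\delta](\mbull)$ admits an $\varepsilon$-section with $\varepsilon=\Lipf S\delta$. This section comes from the shrinking transformation built from the projected circumcenter pseudo-barycenter map (\cref{lem:pseudo_bar_for_Euclidean_subspace} with $t=1$, \cref{thm:back_propagation_rips}, \cref{prop:sec_for_inc_rips}), transported along \cref{prop:metric_vietoris_rips_equ_rips}. \cref{lem:approximate_epis} then turns $(\iota\circ P)\circ r=s\circ r$ into the desired equality, but only after a further shift by $\varepsilon$. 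Because of that extra shift, this route does not produce the asymmetric $(0,L\delta)$-interleaving you aim for. You need the symmetric version, in which the inclusion $\mbull\to\MRips(\mbull[\bullet+\varepsilon])$ also carries an $\varepsilon$-shift. That still yields the claimed $\projError[\delta]\Lipf\conShrN\delta$-interleaving.
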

\begin{remark}
    The strategy of proof is the same as the one for \cref{thm:local_stability_Euclidean}: We use the constructions leveraged in the proof of the persistent Hausmann Theorem \cref{thm:persistent_hausmann}, and modify them through the nearest point projection to $M$. There is, however, a technical difficulty we need to contend with. In the proof of \cref{thm:persistent_hausmann}, we make use of a straight line homotopy between a measure and its center of mass. For this straight line homotopy to remain within the metric Rips at scale $\delta$, we need the distance of the center of mass to any point in the support to be strictly bounded by $\delta$ (see \cref{lem:distances_to_karcher}). However, after projecting onto $M$, this may not necessarily be the case anymore. We can nevertheless replicate the proof through the following trick. 
\end{remark}
\begin{definition}
    Let $\mathcal{C}$ be a homotopy theory and $r \colon X^{\bullet} \to Y^{\bullet}$ a morphism in $\ho(\mathcal{C}^{\RN})$. Let $\varepsilon \geq 0$. We say that another morphism $i \colon Y^{\bullet} \to X^{\bullet + \varepsilon}$ is an $\varepsilon$-section of $r$ if the diagram 
\begin{diagram}
	{}& {X^{\bullet + \varepsilon}} & \\
	{Y^{\bullet}} && {Y^{\bullet + \varepsilon}}
	\arrow["{r^{+ \varepsilon}}", from=1-2, to=2-3]
	\arrow["i", from=2-1, to=1-2]
	\arrow["s"', from=2-1, to=2-3]
\end{diagram}
    in $\ho(\mathcal{C}^{\RN})$ commutes.
\end{definition}
\medskip
 We make use of the following simple observation.
 \begin{lemma}\label{lem:approximate_epis}
     Let $\mathcal{C}$ be a homotopy theory and $a,b \colon X^{\bullet} \to Y^{\bullet}$ two morphisms in $\ho(\mathcal{C}^{\RN})$. Suppose that $r \colon Z^{\bullet} \to X^{\bullet}$ is such that 
     \[
     a \circ r = b \circ r.
     \]
     Suppose, furthermore, that $r$ admits an $\varepsilon$-section. Then the following diagram commutes:
\begin{diagram}
	{}& {X^{\bullet + \varepsilon}} & \\
	{X^{\bullet}} && {Y^{\bullet + \varepsilon}} \\
	& {X^{\bullet + \varepsilon}}
	\arrow["{a^{+\varepsilon}}", curve={height=-12pt}, from=1-2, to=2-3]
	\arrow["s", curve={height=-12pt}, from=2-1, to=1-2]
	\arrow["s"', curve={height=12pt}, from=2-1, to=3-2]
	\arrow["{b^{+\varepsilon}}"', curve={height=12pt}, from=3-2, to=2-3]
\end{diagram}
  \end{lemma}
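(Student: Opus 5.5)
The idea is to use the $\varepsilon$-section $i$ of $r$ to cancel $r$ up to a shift. Write $i \colon X^{\bullet} \to Z^{\bullet+\varepsilon}$ for the given $\varepsilon$-section of $r \colon Z^{\bullet} \to X^{\bullet}$. By definition it satisfies
\[
r^{+\varepsilon}\circ i = s \colon X^{\bullet} \to X^{\bullet+\varepsilon}
\]
in $\ho(\mathcal{C}^{\RN})$. I will show that both composites in the diagram equal the single morphism $a^{+\varepsilon}\circ r^{+\varepsilon}\circ i$.

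First I need the relation $a\circ r = b\circ r$ to hold after shifting. Precomposition with the poset map $-+\varepsilon \colon \RN \to \RN$ is a functor $\mathcal{C}^{\RN} \to \mathcal{C}^{\RN}$ that preserves levelwise weak equivalences. It therefore descends to a functor on $\ho(\mathcal{C}^{\RN})$, and this descended functor is the one implicitly used to define $(-)^{+\varepsilon}$ on morphisms of the homotopy category. Applying it to $a\circ r = b\circ r$ gives
\[
a^{+\varepsilon}\circ r^{+\varepsilon} = b^{+\varepsilon}\circ r^{+\varepsilon}.
\]

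With this in hand, the argument is a chain of equalities:
\[
a^{+\varepsilon}\circ s = a^{+\varepsilon}\circ r^{+\varepsilon}\circ i = b^{+\varepsilon}\circ r^{+\varepsilon}\circ i = b^{+\varepsilon}\circ s .
\]
The first and third equalities are the section property. The middle equality is the shifted relation from the previous step. Read off, this says exactly that the diagram commutes.

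I expect no real obstacle here. The only point needing care is that shifting is well defined on $\ho(\mathcal{C}^{\RN})$ and compatible with composition. That holds because reindexing along a poset map preserves weak equivalences and hence factors through the localization, by the universal property of the localization recalled earlier.
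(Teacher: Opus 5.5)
Your argument is correct. The paper records this lemma as a ``simple observation'' without writing out a proof. Your chain $a^{+\varepsilon}\circ s = a^{+\varepsilon}\circ r^{+\varepsilon}\circ i = b^{+\varepsilon}\circ r^{+\varepsilon}\circ i = b^{+\varepsilon}\circ s$ is precisely the intended verification, and you rightly justify it by noting that reindexing along $-+\varepsilon$ descends to a functor on the localized category.
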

  Conceptually speaking, \cref{lem:approximate_epis} states that morphisms that admit $\varepsilon$-sections act as a kind of approximate epimorphism: The identity $a \circ r = b \circ r$ guarantees that $a = b$ up to an error of $\varepsilon$.
  \footnote{This conceptual idea can be made more rigorous by using the quantitative categorical language of locally persistent categories of \cite{Scoccola2020LocallyPersistent}.}
    For the context of function-Rips complexes, the existence of shrinking transformations implies the following.
 \begin{proposition}\label{prop:sec_for_inc_rips}
     Let $0 < \delta < \frac{1- \conShrN}{\conShrN}\tau_M$. Denote $\conShr := \projError[\delta] \conShrN$ and $\varepsilon = \Lipf \conShr \delta$. Then the inclusion of function-Rips complexes
     \[
     s \colon \Rips[\conShr\delta](\mbull) \hookrightarrow \Rips[\delta](\mbull)
     \]
     admits an $\varepsilon$-section in the persistent homotopy category $\ho(\iSpaces^{\RN})$. 
 \end{proposition}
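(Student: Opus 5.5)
The section is obtained directly from a shrinking transformation for the Euclidean subspace $M$. I would use the pseudo-barycenter map of \cref{ex:existence_of_pseudo_bar_classical} on $\mathbb{R}^n$. This is the circumcenter map with respect to $(\sd_b,\lambda_b)$, with $\conShrTheta=\conCosTheta=\conShrN$. Its scale can be taken arbitrarily large, since $\mathbb{R}^n$ is $\catK[0]$.

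\textbf{Step 1: a pseudo-barycenter map on $M$.} Push this map to $M$ with the nearest point projection, via \cref{lem:shift_to_euclid}, at scale $\rho=\delta$. The hypotheses of that lemma need $\conShrN\delta<\tau_M$ and $\delta<\frac{1-\conShrN}{\conShrN}\tau_M$, and both follow from the assumption. We obtain a pseudo-barycenter map $\Theta'$ on $M$ at scale $\delta$ with
\[
\conShr'=\conCos'=\projError[\delta]\conShrN=\conShr<1 .
\]

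\textbf{Step 2: the shrinking transformation.} By \cref{thm:back_propagation_rips}, $\Theta'$ yields a shrinking transformation
\[
\tau\colon \Rips[\bullet_1](\mbull[\bullet_2])|_{[0,\delta]\times\RN}\to \Rips[\conShr\bullet_1](\mbull[\bullet_2+\Lipf\conShr\bullet_1])|_{[0,\delta]\times\RN}.
\]

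\textbf{Step 3: define the section.} Restrict $\tau$ to the first parameter equal to $\delta$. This gives a morphism
\[
i:=\tau|_{\delta}\colon \Rips[\delta](\mbull)\to \Rips[\conShr\delta](\mbull[\bullet+\varepsilon]),
\qquad \varepsilon=\Lipf\conShr\delta .
\]

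\textbf{Step 4: verify the section property.} We need $s^{+\varepsilon}\circ i=s$ as maps $\Rips[\delta](\mbull)\to\Rips[\delta](\mbull[\bullet+\varepsilon])$. This is exactly diagram~\eqref{diag:back-prop} evaluated at $\bullet_1=\delta$: the composite of $\tau$ with the structure map $s$ in the first variable equals the structure map in the second variable. Restricting a commutative diagram in $\ho(\sSetK^{[0,\delta]\times\RN})$ along $\RN\to[0,\delta]\times\RN$, $u\mapsto(\delta,u)$, preserves commutativity. So the section property follows.

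\textbf{Step 5: transport to spaces.} Pass from persistent simplicial sets to $\ho(\iSpaces^{\RN})$ via \cref{rec:equ_with_ss,rec:equv_model_simp}.

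\textbf{Main obstacle.} I expect the only delicate point to be the bookkeeping in Step 1. The hypotheses of \cref{lem:shift_to_euclid} must hold at scale $\delta$, and the resulting constants must be $\conShr'=\conCos'=\projError[\delta]\conShrN$. Here the projection error factor is evaluated at $\conShrN\delta$, consistent with \cref{not:proj_error}, which gives the Lipschitz constant of $\pi_M$ on $\overline{U}_{\conShrN\delta}(M)$. If the lemma's constant matches only up to this convention, I would verify properties (a) and (b) for $\pi_M\circ\Theta$ directly. For this I use \cref{thm:nearest_point_proj}, together with the fact that $\lambda_b(a)\in M$ is fixed by $\pi_M$. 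Then $d(\lambda_b(a),\pi_M\Theta(a))\le \projError[\delta]\, d(\lambda_b(a),\Theta(a))$, and the analogous bounds hold for the strict inequalities in (b).
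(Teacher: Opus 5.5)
Your proposal is correct and is essentially the paper's proof. The paper also obtains a pseudo-barycenter map on $M$ by pushing a Euclidean circumcenter-based one forward along $\pi_M$ (it cites \cref{lem:pseudo_bar_for_Euclidean_subspace} at $t=1$, with $\rho=\delta$). It then feeds this map into \cref{thm:back_propagation_rips} and reads off the $\varepsilon$-section from the defining diagram of a shrinking transformation at first parameter $\delta$. The only cosmetic difference is that the paper routes this through the alternative subdivision $\sd_r$ and $\Theta_1$. At $t=1$ that is just the circumcenter map precomposed with $\pi_b\colon \sd_r\to\sd_b$, so your direct use of $(\sd_b,\lambda_b)$ with \cref{lem:shift_to_euclid} yields the same constants $\conShr'=\conCos'=\projError[\delta]\conShrN$.
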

 \begin{proof}
     Combining \cref{lem:pseudo_bar_for_Euclidean_subspace} (for $t=1$) and \cref{thm:back_propagation_rips}, and setting $\conCos = \Lipf \conShr$
     we obtain a shrinking transformation
     \[
        \Rips[\bullet_1](\mbull[\bullet_2])|_{\prodPosRhoRN} \to \Rips[\conShr\bullet_1](\mbull[{\bullet_2 + \conCos \bullet_1}])|_{\prodPosRhoRN},
     \]
     for any $\delta \leq \rho < \frac{1- \conShrN}{\conShrN}\tau_M$. Note that $\conCos \delta = \varepsilon$. Evaluating at $\delta$, we obtain a morphism
     \[
     \Rips[\delta](\mbull[\bullet]) \to \Rips[\conShr\delta](\mbull[{\bullet + \varepsilon}]).
     \]
     That this morphism is an $\varepsilon$-section of $s$ (in the persistent homotopy category) is part of the definition of shrinking transformations.
 \end{proof}
 Given this result, we can now prove \cref{thm:hausmann_Euclidean}.
\begin{proof}[Proof of \cref{thm:hausmann_Euclidean}]
	In large parts, the proof proceeds analogously to the one in \cref{sec:Hausmann}, except that one replaces the center of mass retraction on $M$ with the composition of the center of mass retraction $K$ on $\mathbb{R}^n$ and the nearest point projection $\pi_M \colon U_{\tau_M}(M) \to M$. Again, we use the equivalence between metric and ordinary Rips thickenings to instead construct an interleaving at the level of metric Rips complexes (see \cref{prop:metric_vietoris_rips_equ_rips}). The two interleaving morphisms are defined as follows. Denote $\varepsilon :=\projError[\delta] \Lipf\conShrN \delta$. The first interleaving morphism is induced by 
    \begin{align*}
        \MRips(M) &\to M \\
        \mu &\mapsto K(\mu) \mapsto \pi_M(K(\mu)).
    \end{align*}
    Here $K$ is the center of mass map of \cref{lem:cont_of_karcher}, which in the Euclidean setting is just the ordinary mean, and $\pi_M \colon U_{\tau_M}(M) \to M$ is the nearest point projection map on the open $\tau_M$ neighborhood of $M$. Observe that this composition is indeed well-defined. 
    By \cref{lem:distances_to_karcher}, for any $\mu \in \MRips(M)$ there exists an $x \in \supp(\mu)$ such that 
    \begin{equation}\label{inequ:distance_to_mean_euclid_case}
        d(x, K(\mu)) \leq \conShrN \diam(\supp(\mu)) <\conShrN \delta.
    \end{equation}
     By the assumption on $\delta$, we have
    \[
    \conShrN \delta \leq (1 + \conShrN) \conShrN \delta \leq (1-\conShrN) \tau_M \leq \tau_M.
    \]
    Hence, it follows that for a measure $\mu$ supported on $M$ whose support has diameter smaller than $\delta$, the mean value in $\mathbb{R}^n$ lies in $U_{\tau_M}(M)$. Furthermore, \cref{inequ:distance_to_mean_euclid_case} also guarantees that $K(\mu)$ is contained in the closed $(\conShrN \delta)$-neighborhood of $M$.
	Applying the nearest point projection, we obtain 
	\[
	d(x, \pi_M(K(\mu))) = d(\pi_M(x), \pi_M(K(\mu))) \leq \projError[\delta] d(x, K(\mu)) \leq \projError[\delta] \conShrN \diam(\supp(\mu)).
	\]
    By the Lipschitz continuity of $f$, it thus follows that
    \[
    d( f(x), f(\pi_M(K(\mu)))) < \Lipf \projError[\delta] \conShrN \delta = \varepsilon. 
    \]
    Hence, on the level of filtered spaces, we obtain a morphism
    \[
    \varphi \colon \MRips(\mbull) \to \mbull[\bullet + \varepsilon],
    \]
    as required\footnote{Note that, to procure this morphism, it would have sufficed to have $\delta < \frac{1-\conShrN}{\conShrN} \tau_M$.}. 
    For the opposite interleaving direction, we use the morphism
    \[
    \psi \colon \mbull \to \MRips(\mbull[ \bullet + \varepsilon])
    \]
    induced by the inclusion $M \hookrightarrow \MRips$ (\cref{rec:inclusion_of_space_metric_rips})\footnote{Note that in the proof of \cref{thm:persistent_hausmann}, we could have constructed an asymmetric interleaving, using different constants in both directions, and used the interleaving parameter $0$ in this direction. The reason we do not do so will become apparent at the end of the proof.}.
    Evidently, we still have that 
    \[
    \varphi^{+ \varepsilon} \circ \psi = (\mbull \xrightarrow{s} \mbull[\bullet + 2 \varepsilon])
    \]
    even on the level of filtered spaces. For the converse composition, it will be useful to have a notational way of distinguishing between the structure morphisms $s \colon \MRips(\mbull) \to \MRips(\mbull[\bullet + \alpha])$ and $s \colon \mbull \to \mbull[\bullet + \alpha]$, for different degrees $\alpha \geq 0$. Hence, we denote the latter in the form $s_{\alpha}$. We are now looking to show that  
    \[
    \psi^{+ \varepsilon} \circ \varphi = s_{2 \varepsilon}
    \]
    in the persistent homotopy category. Note that we cannot expect to derive this from a straight line homotopy as in \cref{thm:persistent_hausmann}. Essentially, due to the expansive nature of the projection map $\pi_M$, we have no guarantee that the support of the measures $t \mu + (1-t) \pi_M K(\mu)$ is still of diameter smaller than $\delta$.
    Instead, we note that $\psi$ factors as 
\begin{diagram}
	{}& {\mbull[\bullet+ \varepsilon]} & \\
	\mbull && {\MRips( \mbull[ \bullet + \varepsilon])}
	\arrow["{\psi_0^{+\varepsilon}}", from=1-2, to=2-3]
	\arrow["s_{\varepsilon}", from=2-1, to=1-2]
	\arrow["\psi"', from=2-1, to=2-3]
\end{diagram}
    with $\psi_0 \colon \mbull \to \MRips(\mbull[ \bullet ])$ also induced by the inclusion $M \hookrightarrow \MRips(M)$.
    We can now rewrite
    \[
    \psi^{+\varepsilon} \circ \varphi = (\psi_0^{+ \varepsilon} \circ s_{\varepsilon})^{+ \varepsilon} \circ \varphi = \psi_0^{+2 \varepsilon} \circ s_{\varepsilon}^{+\varepsilon} \circ \varphi = (\psi_0^{+ \varepsilon} \circ \varphi)^{+ \varepsilon} \circ s_{\varepsilon}
    \]
    We can thus apply \cref{lem:approximate_epis} with $a = \psi_0^{+ \varepsilon} \circ \varphi$ and $b= s_{\varepsilon}$ and it follows that to see that 
    \[
    \psi^{+ \varepsilon} \circ \varphi  = s_{2 \varepsilon} ( = s_{\varepsilon}^{+\varepsilon} \circ s_{\varepsilon})
    \]
    it  suffices to show that 
    \[
    (\psi_0^{+\varepsilon} \circ \varphi) \circ r = s_{\varepsilon} \circ r,
    \]
    for some morphism $r \colon Z^{\bullet} \to \MRips(\mbull)$ that admits an $\varepsilon$-section in the persistent homotopy category. We take $Z^{\bullet} = \MRips[ \conShr\delta](\mbull)$ with $\conShr := \projError[\delta] \conShrN$ and $r$ to be given by the inclusion $\MRips[\conShr\delta](\mbull) \hookrightarrow \MRips[\delta](\mbull)$. By \cref{prop:metric_vietoris_rips_equ_rips}, $r$ is isomorphic (in the persistent homotopy category) to $\Rips[S \delta] (\mbull) \to \Rips( \mbull)$. By \cref{prop:sec_for_inc_rips}, the latter admits an $\varepsilon$-section. Hence $r$ admits an $\varepsilon$-section. It remains to prove that 
    \[
    (\MRips[\conShr\delta](\mbull) \xrightarrow{(\psi_0^{+\varepsilon} \circ \varphi) \circ r} \MRips[\delta](\mbull[\bullet + \varepsilon]) ) = (\MRips[\conShr\delta](\mbull) \xrightarrow{s_{\varepsilon} \circ r} \MRips[\delta] (\mbull[\bullet + \varepsilon]))
    \]
    in the persistent homotopy category. We again apply the straight line argument as in the proof of \cref{thm:persistent_hausmann}. To this end, we need to verify that for $t \in [0,1]$ and $\mu \in \MRips[\conShr\delta](\mbull)$ the diameter of $\supp(t \mu + (1-t) \pi_M K(\mu) )$ is bounded by $\delta$\footnote{Be aware that any such affine combination of measures is to be understood in the convex space $\MRips(M)$.}. Arguing as before, we obtain that $K(\mu) \in \overline{U}_{\conShrN S \delta}$. Thus, given $x \in \supp(\mu) \subset M$, we have  
    \[
    d(x, \pi_M(K(\mu))) =d( \pi_M(x), \pi_M(K(\mu))) \leq \projError[S\delta] d(x, K(\mu)) <  \projError[S\delta] \conShr \delta,
    \]
    where in the final inequality we again use that the maximal deviation of $x$ from the mean $K(\mu)$ is bounded by the diameter of $\mu$ (see \cref{lem:distances_to_karcher} for the general case). It remains to see that $\projError[S\delta] S \leq 1$. Unraveling the notation and simplifying the expression, this is equivalent to requiring that 
    \[
    \delta \leq \frac{1- \conShrN}{(1+ \conShrN)\conShrN } \tau_M.
    \]
\end{proof}
\subsection{The sub-Euclidean persistent Latschev theorem}
As a consequence, using \cref{thm:hausmann_Euclidean}, we also obtain the following persistent version of Latschev's theorem for subspaces of Euclidean space.
\begin{theorem}\label{thm:latschev_Euclidean}
	Let $0 < \delta \leq  \frac{1- \conShrN}{(1+ \conShrN) \conShrN}\tau_M$. Fix $\rho$ such that  $\delta \leq \rho < \frac{1- \conShrN}{\conShrN}\tau_M$.
	Let $\epsmet \geq 0$ be such that
	\[\epsmet \leq \rho - \delta \quad \text{and} \quad \epsmet \leq \frac{1-\projError[\rho]\conShrN}{1+ \projError[\rho]\conShrN}\delta,\] 
	and let $\epsfun \geq 0$.
	Then any correspondence $\fmetapprx\approx_{\epsmet,\epsfun} \fmet[M]$ induces an interleaving
	\[
	\Rips(\mapprxbull) \simeq_{\projError[\delta]\Lipf  \conShrN\delta + \projError[\rho_n] \Lipf \eucLipConst \epsmet + \epsfun} \mbull
	\]
	 in the persistent homotopy category.
\end{theorem}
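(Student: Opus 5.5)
The plan is to obtain \cref{thm:latschev_Euclidean} by composing two interleavings, exactly as \cref{thm:strengthened_persistent_latschev} follows from \cref{thm:persistent_hausmann} and \cref{thm:local_stability_function_rips}. The first is the sub-Euclidean perturbative stability result \cref{thm:local_stability_Euclidean}. It gives an interleaving
\[
\Rips(\mapprxbull) \simeq_{\projError[\rho_n]\Lipf\eucLipConst \epsmet + \epsfun} \Rips (\mbull)
\]
from the given correspondence $\fmetapprx\approx_{\epsmet,\epsfun}\fmet[M]$. The second is the sub-Euclidean persistent Hausmann theorem \cref{thm:hausmann_Euclidean}, which gives
\[
\mbull \simeq_{\projError[\delta]\Lipf\conShrN\delta} \Rips(\mbull).
\]
Composing these via \cref{rem:properties_of_int} yields an interleaving with parameter equal to the sum of the two, which is precisely the stated constant $\projError[\delta]\Lipf\conShrN\delta + \projError[\rho_n]\Lipf\eucLipConst\epsmet + \epsfun$.

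The only real work is checking that the hypotheses of the latschev statement imply those of both ingredients.

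For \cref{thm:local_stability_Euclidean}, we need $0\le\delta\le\rho<\frac{1-\conShrN}{\conShrN}\tau_M$, together with $\epsmet\le\rho-\delta$ and $\epsmet\le\frac{1-\projError[\rho]\conShrN}{1+\projError[\rho]\conShrN}\delta$. These are assumed verbatim. The bound $\delta \le \rho$ is also assumed, so $\delta<\frac{1-\conShrN}{\conShrN}\tau_M$ holds as well.

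For \cref{thm:hausmann_Euclidean}, we need $0<\delta\le\frac{1-\conShrN}{(1+\conShrN)\conShrN}\tau_M$, which is again exactly the assumption on $\delta$. Note that this is stronger than $\delta<\frac{1-\conShrN}{\conShrN}\tau_M$, since $1+\conShrN>1$ and $\tau_M>0$. Hence $\projError[\delta]$ is well-defined and finite.

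One bookkeeping point is direction: \cref{thm:hausmann_Euclidean} is phrased as $\mbull\simeq\Rips(\mbull)$. Interleavings are symmetric, so we can reverse it before composing $\Rips(\mapprxbull)\simeq\Rips(\mbull)\simeq\mbull$.

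I expect no genuine obstacle, since all the geometric content is contained in the two cited results. The only step needing care is confirming that the three parameter ranges are mutually consistent. In particular, the interval $[\delta, \frac{1-\conShrN}{\conShrN}\tau_M)$ for $\rho$ is nonempty because of the stronger upper bound on $\delta$. Also, both interleavings live in the same homotopy category $\ho(\iSpaces^{\RN})$ after identifying simplicial complexes with their realizations via \cref{rec:equv_model_simp,rec:equ_with_ss}, so composition is legitimate.
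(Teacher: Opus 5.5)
Your proposal is correct and follows the paper's own route. The paper obtains \cref{thm:latschev_Euclidean} by composing the sub-Euclidean Hausmann interleaving of \cref{thm:hausmann_Euclidean} with the sub-Euclidean stability interleaving of \cref{thm:local_stability_Euclidean}, whose hypotheses are exactly the ones assumed in the statement, and your bookkeeping (including that $\delta \leq \frac{1-\conShrN}{(1+\conShrN)\conShrN}\tau_M$ forces $\delta < \frac{1-\conShrN}{\conShrN}\tau_M$) is accurate.
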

\begin{remark}
	It is worthwhile to observe that when $\rho$ is small relative to $\tau_M$, then $\projError[\delta]$, $\projError[\rho]$ and $\projError[\rho_n]$ are close to $1$. Thus, the required bounds on $\epsmet$ and the Lipschitz constants of \cref{thm:hausmann_Euclidean,thm:local_stability_Euclidean,thm:latschev_Euclidean} converge to those of \cref{thm:persistent_hausmann,thm:local_stability_function_rips,thm:strengthened_persistent_latschev} in the case of a convex subset of $\R^n$ (of full dimension), as the scale $\rho$ (and thus $\delta$) goes to zero.
\end{remark}
\begin{remark}\label{rem:Rips-reconstruct_Eucl} 
Taking a constant map $f\colon M\to\R^0$ in the above theorem yields a result on the inference of the homotopy type of a subset~$M\subseteq\R^n$ of positive reach from the Rips complex of a Gromov-Hausdorff approximation~$\metapprx$. This recovers known results on homotopic reconstruction from a single Rips complex in Euclidean spaces~\cite{attali2011vietoris,HomotopyReconstructionChazal2020} (under possibly different bounds), using a very different proof technique. While these results apply more generally to subsets of positive \emph{$\mu$-reach}, it is plausible that the reduction trick to the positive reach case proposed in~\cite{HomotopyReconstructionChazal2020} may apply in our case as well. The comparison of the bounds on the Rips parameter~$\delta$ given by these results requires further investigation, and it is of interest insofar as the bounds given in~\cite{HomotopyReconstructionChazal2020} are worst-case tight, and we have no reason to suspect optimality for our bounds. 
\end{remark}
\section{Acknowledgements}
This work started when the two authors attended the \emph{TDA week} at Kyoto University in June 2025. It then continued as the two authors participated in the thematic program \emph{Topological Data Analysis, Persistence And Representation Theory Intertwined} (TP25TD) at the Okinawa Institute of Science and Technology, from June to August 2025. Lukas Waas acknowledges support from the Deutsche Forschungsgemeinschaft (DFG, German Research Foundation) under Germany’s Excellence Strategy EXC 2181/1 - 390900948 (the Heidelberg STRUCTURES Excellence Cluster). Lukas Waas is a member of the Oxford/Max Planck collaboration, and this research was funded in part by the EPSRC international center to center collaboration grant EP/Z531224/1. We acknowledge the use of ChatGPT 5.4 for the purpose of literature research, editing, proofreading and the creation of \cref{fig:alt_subdiv_detailed,fig:mergetrees}.
\printbibliography
\appendix
\section{Remaining Proofs}
\subsection{Remarks on relating simplicial sets and simplicial complexes}\label{app:simplicial_cplx_vs_set}
Let us make a few remarks on the interaction of simplicial complexes and simplicial sets, which are well-known but often surprisingly hard to find in the literature. In particular, we will use them to derive \cref{obs:universal_property_rips_filtered_sset}.
\begin{remark}\label{rem:adjoint_to_NS}
    Recall that the category of simplicial complexes $\sCplx$ has all colimits. The colimit of a diagram of simplicial complexes $K^{\bullet} \colon I \to \sCplx$ is given by taking the colimit of the underlying sets of vertices, $K:=\varinjlim (K^i_0)$, and then inserting a simplex $\sigma$ in $K$ whenever $\sigma$ is the image of a simplex $\sigma_i $ in $K^i$ under the canonical map $K_0^i \to K_0$. It follows that the simplicial nerve functor $\sNerve \colon \sCplx \to \sSet$ has a left adjoint, which we denote by $\mathcal{L}$ here (for example, by \cite[Thm 1.1.10]{Cisinski2019HCHA}.) Explicitly, this left adjoint is given by mapping a simplicial set $S$ to the simplicial complex $\mathcal{L}(S)$ whose set of vertices is $S_0$, and where a subset $\sigma \subset S_0$ forms a simplex if and only if it is the set of $0$-dimensional faces of some simplex $\tilde{\sigma}$ in $S$.
    It is easy to see that the counit of adjunction $\mathcal{L}(\sNerve(K)) \to K$ is an isomorphism of simplicial complexes for every simplicial complex $K$. Equivalently, this means that $\sNerve$ is fully faithful (\cite[Prop. 3.1]{nlab:reflective_subcategory}). Observe also that the adjunction $\mathcal{L} \dashv \sNerve$ induces an adjunction on the level of categories of persistent objects $\mathcal{L} \colon \sSet^{\prodPosR} \rightleftharpoons \sCplx^{\prodPosR} \colon \sNerve$, which we denote the same by abuse of notation.
\end{remark}
Let us now give proof of \cref{obs:universal_property_rips_filtered_sset}.
\begin{proof}[Proof of \cref{obs:universal_property_rips_filtered_sset}]
    In this proof, we will write $\Rips[\bullet](\mapprxbull)$ for the bivariate Rips complex, considered as a simplicial complex, and $\sNerve \Rips[\bullet](\mapprxbull)$ to refer to its simplicial set version. By \cref{rem:adjoint_to_NS}, it follows that there is a canonical bijection
    \[
    \sSet^{\prodPosR}(X^{\bullet,\bullet},\sNerve\Rips[\bullet](\mapprxbull) ) \cong \sCplx^{\prodPosR}( \mathcal{L}(X^{\bullet,\bullet}),\Rips[\bullet](\mapprxbull)).
    \]
    Now, observe that when $X^{\bullet,\bullet}$ is a filtered simplicial set, then $\mathcal{L}(X^{\bullet,\bullet})$ is a filtered simplicial complex. It is an easily verifiable and classically known fact that filtered simplicial maps from a $\prodPosR$-filtered simplicial complex $\mathcal{L}(X^{\bullet,\bullet}) = K^{\bullet,\bullet}$ into the $\prodPosR$-filtered simplicial complex $\Rips[\bullet](\mapprxbull)$ are in (the obvious natural) bijection with maps $K_0^{\infty} \to \mathbb{M}$, that fulfill precisely the inequalities of \cref{con:requirements_for_universal_prop}. Composing this natural bijection with the one we have just derived from the adjunction $\mathcal{L} \dashv \sNerve$ yields $\eta$. 
\end{proof}
Finally, let us finish this section by describing the canonical equivalence between the realization of a simplicial complex and the realization of its simplicial nerve. 
\begin{construction}\label{rem:equ_of_nerve_w_complex}
    The classical way to consider a simplicial complex $K$ as a simplicial set is to first equip the simplices of $K$ with (compatible) total orderings. Indeed, the category of such ordered simplicial complexes (and order-preserving simplicial maps), which we denote $\sCplx^o$ here, also embeds into simplicial sets by mapping an ordered simplicial complex $O$ to the simplicial set $\sNerve^o(O)$ given at $n \geq 0$ by $\sCplx^o( \Delta^n_o, O)$ (with functoriality in $\Delta$ given by precomposition). Here, $\Delta^n_o$ denotes the ordered simplicial complex given by the set of non-empty subsets of $\{0, \dots, n\}$. The advantage of this approach is that there is a canonical natural isomorphism $\real{O} \cong \real{\sNerve^o(O)}$ (see \cite{OtterMagnitudePersistence2022}). 
    Now, suppose one chooses such an ordering for $K$, and denotes the associated ordered simplicial complex by $K^o$. Then there is a canonical inclusion of simplicial sets $\sNerve^o(K^o) \hookrightarrow \sNerve(K)$. This map is not an isomorphism. Far from it: The non-degenerate simplices of $\sNerve^o(K^o)$ are in bijection with the simplices of $K$, but the non-degenerate simplices of $\sNerve(K)$ are in bijection with all sequences of vertices $(x_0, \dots, x_n)$, for which $x_i \neq x_{i+1}$ for all $0\leq i <n$, such that the set $\{x_0, \dots,x_n\}$ is a simplex in $K$. 
    However, $\sNerve^o(K^o) \hookrightarrow \sNerve(K)$ is a weak homotopy equivalence of simplicial sets (see \cite{CamarenaSsetsFromComplexes}). In particular, one obtains a homotopy equivalence \[\real{K} = \real{K^o} \cong \real{\sNerve^o(K^o)} \xrightarrow{\simeq} \real{\sNerve(K)}.\]
    The problem with this approach is that this map is not natural (at least outside of the homotopy category), as it relies on a choice of ordering of $K$. However, it turns out that it admits a retraction (a map $r$ such that $r\circ i = 1_{\real{K}}$) that is natural and also a homotopy equivalence, as it is a one-sided inverse of a homotopy equivalence. Explicitly, it is defined as follows: Recall that every simplex $\sigma_f$ of $\sNerve(K)$ is a simplicial map $f \colon\Delta^n_c \to K$. The topological realization of $X:=\sNerve(K)$ is the colimit of the diagram 
    \begin{align*}
        \Delta_{/X} &\to \Top \\
         (\sigma_f \colon \Delta^n \to \sNerve{(K)}) &\mapsto \real{\Delta^n}
    \end{align*}
    indexed over the slice category $\Delta_{/X}$ of arrows from $\Delta$ into $X$. A cocone with target $\real{K}$ on this diagram is given by $\real{f} \colon \real{\Delta^n_c} \to \real{K}$ at $\sigma_f \colon \Delta^n \to \sNerve(K)$. The universal property of the colimit then defines a continuous map $\real{\sNerve{(K)}} \to \real{K}$. This map defines a retraction of $i\colon \real{K} \cong \real{\sNerve^o(K^o)} \xhookrightarrow{\simeq} \real{\sNerve(K)}$ and is evidently natural.
\end{construction}
\subsection{Proof of \cref{lem:comparison_lemma}}\label{proof:comparison_lemma}
Here we provide a proof of \cref{lem:comparison_lemma}. 
\begin{proof}[Proof of \cref{lem:comparison_lemma}]
	To prove this statement, we will need to freely make use of the language of model categories (see \cite{Hirschhorn2003}, for a good overview). 
    That the left Kan extension preserves colimits is immediate from it being left adjoint (see, for example, \cite[Thm 1.1.10]{Cisinski2019HCHA}). That $LF$ preserves inclusions if and only if it maps boundary inclusions into inclusions follows from the fact that inclusions in simplicial sets are generated by boundary inclusions under pushouts and transfinite compositions (see, for example, \cite[Ch. 11]{Hirschhorn2003}). For the final claim, observe that the map $L\eta_{X} \colon LF(X) \to LG(X)$ is the colimit of the morphism of diagrams obtained by pulling $\eta \colon F \to G$ back along the canonical functor $\pi \colon \Delta_{/X} \to \Delta$. Let us denote this morphism of $\Delta_{/X}$ indexed diagrams of simplicial sets by $\eta' \colon F \circ \pi \to G \circ \pi$. The condition that $LF(i_n)$ and $LG(i_n)$ are inclusions ensures that $F \circ \pi$ and $G \circ \pi$ are cofibrant with respect to the Reedy model structure and the Kan-Quillen model structure on the target (see the techniques in  \cite[Ch. 15]{Hirschhorn2003} for details). Thus, by making use of the assumption, we obtain that the map $\eta'$ is a weak equivalence of cofibrant diagrams.
    As $\Delta_{/X}$ has fibrant constants (see \cite[Prop 15.10.4]{Hirschhorn2003}), it follows from \cite[Thm. 15.8.10]{Hirschhorn2003} that $\varinjlim \eta' = \eta_X$ is also a weak equivalence.
\end{proof}
\subsection{Convexity of distance functions on the sphere at small scales}\label{appendix:elem_trig_proof}
Let us now provide the proof of \cref{prop:distance_to_point_conv}. 
	\begin{proof}
    By the assumption on $r$, we have that $\closedball[r](x)$ is a $\catK$ space.
	Note first that it suffices to show the case where $\kappa = 1$. Indeed, then the cases where $\kappa >0$ follow by rescaling the metric by $\frac{1}{\sqrt{\kappa}}$. As every $\catK[\kappa']$ space is also a $\catK$ space for $\kappa \leq \kappa'$, this also implies the case $\kappa \leq 0$, but at all radii less than $\frac{\varpi_{\kappa'}}{2}$, for some $\kappa' >0$. Letting $\kappa' \to 0$, for $\kappa' >0$ shows the case of arbitrarily large radius for $\kappa \leq 0$. Next, note that by Reshetnyak's majorization theorem (see \cite[Thm. 2.18]{BridsonHaefliger1999}), we can furthermore reduce to proving the claim in $M_{1}$, the $2$-dimensional unit sphere. \\
	Fix $x \in M_1$ and let $d_x$ be the distance function to $x$, which in this case is given by $d_x(y) = \arccos(\langle x,y \rangle)$, where $\langle -,- \rangle$ denotes the standard inner product on $\mathbb{R}^3$.
	Let $\gamma \colon [0,1] \to M_1$ be a constant speed geodesic, and denote $y_0 = \gamma(0)$ and $y_1 = \gamma(1)$. By assumption, we can furthermore assume that $d_x(y_0), d_x(y_1)< \frac{\pi}{2}$ and $d(y_0,y_1) < \pi$. We now show that $\varphi \colon [0,1] \to \mathbb{R}$, defined by $\varphi(t):= d_x(\gamma(t))$, is a convex function. Note that $\varphi$ is smooth, so it suffices to show that $\varphi''(t) \geq 0$ for every $t \in [0,1]$. \\
	We denote $\alpha = d(y_0,y_1)$. We assume $0<\alpha$, as otherwise the statement is trivial.
	Now, denote by $u \colon [0,1] \to \mathbb{R}$ the composition of $\varphi$ with the cosine function, i.e., $u(t) = \cos(\varphi(t)) = \langle x, \gamma(t) \rangle$.
	Observe first that $u$ satisfies the differential equation \[u'' = -\alpha^2 u.\] Indeed, by the classical parameterization of geodesics on the sphere, we have $\gamma(t) = \frac{\sin((1-t)\alpha)}{\sin(\alpha)}y_0 + \frac{\sin(t\alpha)}{\sin(\alpha)}y_1$, where $\alpha = d(y_0,y_1)$. Hence, we have 
\begin{align*}
u(t) & = \langle x, \gamma(t) \rangle = \frac{\sin((1-t)\alpha)}{\sin(\alpha)}\langle x,y_0 \rangle + \frac{\sin(t\alpha)}{\sin(\alpha)}\langle x,y_1 \rangle
\end{align*}
from which the identity $u'' = -\alpha^2 u$ follows by direct computation. Now, consider the identities
\begin{align*}
	u' = (\cos(\varphi))' & = -\sin(\varphi) \varphi',\\
u'' = (\cos(\varphi))'' & = -\cos(\varphi)(\varphi')^2 - \sin(\varphi) \varphi''.
\end{align*}
Plugging in the identity $u'' = -\alpha^2 u$ into the second equation, we obtain
\begin{align*}
\alpha^2 \cos (\varphi) & = \cos(\varphi)(\varphi')^2 + \sin(\varphi) \varphi'',
\end{align*}
or equivalently
\begin{align*}
\sin(\varphi) \varphi'' & = (\alpha^2-(\varphi')^2) \cos(\varphi).
\end{align*}
Now, observe that $\sin(\varphi),\cos(\varphi) > 0$ by the assumption that $d(x,y_0) < \frac{\pi}{2}$ and $d(x,y_1) < \frac{\pi}{2}$, which implies that $\varphi(t) = d_x(\gamma(t)) < \frac{\pi}{2}$ for every $t \in [0,1]$ by the convexity of balls of radius less than $\frac{\pi}{2}$.
Hence, to see that $\varphi$ is convex, it suffices to show that $(\alpha^2-(\varphi')^2) \geq 0$.
Now, recall that $\gamma$ is a constant speed geodesic with speed $\abs{\gamma'} = \abs{\alpha}$, it follows from the triangle inequality that $\abs{\varphi'} \leq \abs{\alpha}$, and thus that $(\alpha^2-(\varphi')^2) \geq 0$. 
\end{proof}
\subsection{Proof of \cref{thm:local_stability_function_rips} in the case $\conShrM = 0$}\label{appendix:discrete_case}
Let us provide the outstanding proof of \cref{thm:local_stability_function_rips} in the case where $\conShrM = 0$, with the assumption that $\epsmet < \delta$. 
\begin{proof}
	In this case, the assumptions imply $\lipConstRho = 0$. Hence, we need to construct an $\epsfun$-interleaving. Let us make a few simplifying observations, ignoring the filtrations for now.
	\begin{enumerate}
		\item By definition, any open ball of radius $\catRad$ in $M$ is a $\catK$ space of radius less than $\frac{\varpi_{\kappa}}{2}$, and thus path-connected. As $M$ is assumed to be discrete, this makes each such open ball a singleton. 
		\item As a first consequence, using that $\rho \leq \catRad$, it follows that any two points in $M$ have distance at least $\rho$ from each other. In particular, any such distance is at least $\delta$.
		\item By assumption, we have an $\epsmet$-correspondence between $\metapprx$ and $M$, where $\epsmet \leq \rho - \delta \leq \catRad - \delta$ and $\epsmet < \delta$. Through this correspondence, we can assign to each point in $\metapprx$ a point in $M$, obtaining a decomposition of $\metapprx = \bigsqcup_{x \in M}\metapprx_x.$ Note that for any two such clusters, $\metapprx_{x}, \metapprx_{x'}$ with $x,x' \in M$, and $y\in \metapprx_x, y' \in \metapprx_{x'}$, we have 
		\[ d(y,y') \geq d(x,x') - \epsmet \geq \rho - \epsmet \geq \delta. \]
		\item Summarizing the previous two observations, we obtain that both $\Rips(M)$ and $\Rips(\metapprx)$ are, respectively, coproducts of the Rips simplicial sets $\Rips[\delta](\{x\})$ and $\Rips[\delta](\metapprx_x)$, for $x \in M$. This statements immediately lifts to the filtered setting. The interleaving map induced by the correspondence (as in the proof of \cref{prop:bivariate_interleaving}) is compatible with this decomposition.
		\item Hence, we can without loss of generality reduce to the case where $M = \{x\}$ is a singleton.
		\item Having made this reduction, observe furthermore that for any two $y,y' \in \metapprx$, we have $d(y,y') \leq d(x,x)+ \epsmet = \epsmet < \delta$. Hence, $\Rips(\metapprx)$ is contractible.
	\end{enumerate}
	Let us now observe what these reductions produce on the level of persistent homotopy theory. Let $v = f(x)$. Then, $\Rips(\mbull)$ is the persistent homotopy type given by a point, for $u \geq v$ and by the empty set, otherwise. By assumption, we have that for every $y \in \mathbb{M}$, we have $\|f(x) - \fapprx(y)\|_{\infty} \leq \epsfun$. It follows that $\Rips(\mapprxbull[u])$ is contractible for $u \geq v + \epsfun(1,\dots, 1)$ and empty for any $u$, that does not fulfill $u \geq v - \epsfun(1,\dots, 1)$. We will not have to worry about any other values of $u$. 
	For any two such persistent homotopy types, there is an interleaving of degree $\epsfun$, and in fact only one such interleaving (up to homotopy). In fact, for any of the pairs \begin{align*}
	(X^\bullet,Y^\bullet) \in \{(\Rips(\mbull), \Rips(\mapprxbull[\bullet + \epsfun])), (\Rips(\mapprxbull), \Rips(\mbull[\bullet + \epsfun])), \\
	 (\Rips(\mbull), \Rips(\mbull[\bullet + 2\epsfun])), (\Rips(\mapprxbull), \Rips(\mapprxbull[\bullet + 2\epsfun]))\}	
	\end{align*}
	there is a single morphism in $\ho(\iSpaces^{\RN})$ from $X^{\bullet}$ to $Y^{\bullet}$ (see \cref{lem:contractible_homs} below). This immediately implies the existence and uniqueness of the desired interleaving. 
\end{proof}
	\begin{lemma}\label{lem:contractible_homs}
		Let $X^{\bullet},Y^{\bullet} \in \iSpaces^{\RN}$ be two persistent homotopy types, such that for every $u \in \RN$ the implication 
		\[
		X^u \textnormal{ non-empty} \implies Y^u \textnormal{ contractible}
		\]
		holds. Then the hom-set 
		\[
		\ho( \iSpaces^{\RN})(X^{\bullet},Y^{\bullet})
		\]
		is a singleton.
	\end{lemma}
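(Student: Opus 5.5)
The plan is to work in the model-categorical setting recalled in \cref{prop:compute_mapping_spaces,prop:cof_replacement}. First, by \cref{prop:cof_replacement}, choose a filtered cell complex $C^{\bullet}$ together with a weak equivalence $C^{\bullet} \xrightarrow{\simeq} X^{\bullet}$. Since this map is an isomorphism in $\ho(\iSpaces^{\RN})$, we get
\[
\ho(\iSpaces^{\RN})(X^{\bullet},Y^{\bullet}) \cong \ho(\iSpaces^{\RN})(C^{\bullet},Y^{\bullet}).
\]
Weak equivalences are indexwise, so $C^u$ is empty exactly when $X^u$ is empty. The hypothesis therefore transfers to $C^{\bullet}$. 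By \cref{prop:compute_mapping_spaces}, the latter hom-set is the set $[C^{\bullet},Y^{\bullet}]$ of persistent homotopy classes of maps $C^{\bullet} \to Y^{\bullet}$. So it remains to show that $[C^{\bullet},Y^{\bullet}]$ has exactly one element.

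For existence and uniqueness, it is cleanest to use the lifting property of the cofibrant object $C^{\bullet}$. In the projective model structure, $C^{\bullet}$ is a cell complex built from generating cells of the form $\RN_{\geq u_i}\times S^{n-1} \to \RN_{\geq u_i}\times D^n$, where $\RN_{\geq u}$ denotes the representable functor at $u$. A cell attached at level $u_i$ is nonempty at $u_i$, so every cell index $u_i$ satisfies $C^{u_i}\neq\emptyset$. Hence $Y^{u_i}$ is contractible by assumption. Maps out of a representable cell into $Y^{\bullet}$ correspond, by Yoneda, to maps $D^n \to Y^{u_i}$ extending a given map on $S^{n-1}$.

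\emph{Existence} then follows by transfinite induction over the cells: each extension problem $S^{n-1}\to Y^{u_i}$ extends over $D^n$ because $Y^{u_i}$ is contractible. For $n=0$ this just means $Y^{u_i}$ is nonempty. \emph{Uniqueness up to persistent homotopy} uses the same argument applied to $C^{\bullet}\times[0,1]$ relative to $C^{\bullet}\times\{0,1\}$. This relative cell complex has cells $\RN_{\geq u_i}\times D^n\times(0,1)$, again at the indices $u_i$. Given two maps $h_0,h_1$, we extend $h_0\sqcup h_1$ cell by cell. Each step is an extension of a map $S^{n}\to Y^{u_i}$ over $D^{n+1}$, which is possible since $Y^{u_i}$ is contractible. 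Equivalently, one can phrase this as the statement that $C^{\bullet}\to *$ has the left lifting property against $Y^{\bullet}\to *$ restricted to the relevant indices.

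The main obstacle is making precise that the hypothesis only needs to be checked at indices where cells are attached. A cell at level $u_i$ forces $C^{v}\neq\emptyset$ for all $v\geq u_i$, but the lifting problem only ever takes place in $Y^{u_i}$, via the Yoneda identification $\mathrm{Hom}(\RN_{\geq u_i}\times K, Y^{\bullet}) \cong \mathrm{Hom}(K, Y^{u_i})$. So contractibility at $u_i$ alone suffices. A minor point to check is that $C^{\bullet}$ may be taken with cells only at indices where it is nonempty, which holds automatically for any absolute cell complex. Alternatively, one could avoid cell-level arguments by noting that $Y^{\bullet}\to *$ restricted to the upset $\{u : X^u\neq\emptyset\}$ is a trivial fibration, but the cellular argument seems the most direct.
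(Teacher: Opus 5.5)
Your argument is correct, but it takes a different route from the paper.

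\textbf{The paper's proof.} The paper argues formally. It sets $\inPos=\{u \mid X^u\neq\emptyset\}$ and notes that this is an upset. It observes that $X^{\bullet}$ is the (homotopy) left Kan extension of $X^{\bullet}|_{\inPos}$ along $\inPos\hookrightarrow\RN$. The resulting adjunction gives $\ho(\iSpaces^{\RN})(X^{\bullet},Y^{\bullet})\cong\ho(\iSpaces^{\inPos})(X^{\bullet}|_{\inPos},Y^{\bullet}|_{\inPos})$. It then notes that $Y^{\bullet}|_{\inPos}$ is indexwise contractible, hence weakly equivalent to the constant point, hence terminal in $\ho(\iSpaces^{\inPos})$.

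\textbf{Your proof.} You replace $X^{\bullet}$ cofibrantly and do cell-by-cell obstruction theory, using \cref{prop:cof_replacement,prop:compute_mapping_spaces} and the Yoneda identification $\mathrm{Hom}(\RN_{\geq u}\times K,Y^{\bullet})\cong\mathrm{Hom}(K,Y^{u})$. Existence comes from extending over each cell. Uniqueness comes from the relative cell structure of $C^{\bullet}\times[0,1]$ over $C^{\bullet}\times\{0,1\}$. In effect you unpack by hand the content behind the paper's step ``$Y^{\bullet}|_{\inPos}$ is terminal''.

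\textbf{What each buys.} Your version makes explicit that only (weak) contractibility of $Y^{u}$ at the indices where cells are attached is used. It also relies only on model-categorical facts already recalled in the background section. The paper's version is shorter and more conceptual. However, it implicitly needs two facts. First, the strict left Kan extension from an upset is homotopically correct, since each comma category $\inPos_{/v}$ is either empty or has a terminal object. Second, restriction and extension form a Quillen adjunction that descends to the homotopy categories.

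\textbf{Wording fix.} Your aside should say that $\emptyset\to C^{\bullet}$ and $C^{\bullet}\times\{0,1\}\hookrightarrow C^{\bullet}\times[0,1]$ have the left lifting property against $Y^{\bullet}\to\ast$ at the relevant indices. As written, it attributes this to $C^{\bullet}\to\ast$. This aside is not used in your main argument, so nothing breaks.
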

	\begin{proof}
	Observe first that the set $\{ u \in \RN \mid X^u \textnormal{ non-empty}\}$ is an upset. Denote it by $\inPos \subset \RN$. $X^\bullet$ is the left Kan-extension of its restriction to $\inPos$ (both in the $1$-categorical and in the homotopy theoretical, i.e. $\infty$-categorical, sense). It follows that 
	\[
	\ho( \iSpaces^{\RN})(X^{\bullet},Y^{\bullet}) \cong \ho( \iSpaces^{\inPos})(X^{\bullet}|_{\inPos},Y^{\bullet}|_{\inPos}).
	\]
	Now, by assumption, $Y^{\bullet}|_{\inPos}$ is the terminal object in $\ho( \iSpaces^{\inPos})$. Hence,
	\[
	\ho( \iSpaces^{\inPos})(X^{\bullet}|_{\inPos},Y^{\bullet}|_{\inPos}) \cong \ast.
	\] 
	\end{proof}
\end{document}